\documentclass[a4paper]{amsart}

\usepackage[utf8]{inputenc}
\usepackage[T1]{fontenc}

\usepackage{lmodern}

\usepackage{amssymb}

\usepackage{subfig}

\usepackage{graphicx}

\usepackage{tikz}
\DeclareGraphicsExtensions{.pdf}

\graphicspath{{images/}}

\usepackage{xfrac}

\usepackage{hyperref}

\usepackage{cleveref} 

\usepackage{mathtools}

\usepackage{centernot}

\vfuzz=2pt

\theoremstyle{plain}

\newtheorem{theorem}{Theorem}

\newtheorem{proposition}[theorem]{Proposition}
\newtheorem{corollary}[theorem]{Corollary}
\newtheorem{lemma}[theorem]{Lemma}
\newtheorem*{conjecture}{Conjecture}
\newtheorem*{assertion}{Assertion}

\theoremstyle{definition}
\newtheorem{definition}[theorem]{Definition}

\newtheorem*{remark*}{Remark}

\newcommand{\co}{\mathit{co}}
\newcommand{\cal}[1]{\mathcal{#1}}
\newcommand{\ov}[1]{\overline{#1}}

\newcommand{\C}{\mathbb{C}}
\newcommand{\D}{\mathbb{D}}
\newcommand{\N}{\mathbb{N}}
\newcommand{\Q}{\mathbb{Q}}
\newcommand{\R}{\mathbb{R}}
\newcommand{\U}{\mathbb{U}}
\newcommand{\Z}{\mathbb{Z}}

\newcommand{\Hyp}{\mathcal{H}}
\newcommand{\Con}{\mathcal{C}}

\newcommand{\cri}{\mathit{cr}}

\newcommand{\on}{\operatorname}
\newcommand{\ds}{\displaystyle}
\newcommand{\Luloc}{L^1_{\mathrm{loc}}}

\newcommand{\stend}{\xrightharpoonup{\ast}}

\renewcommand{\o}[1]{e^{i 2 \pi #1}}
\newcommand{\opq}{\o{\pq}}
\newcommand{\pq}{{\sfrac{p}{q}}}
\newcommand{\pqn}{\sfrac{p_n}{q_n}}

\newcommand{\maps}{\longrightarrow}
\newcommand{\tend}{\longrightarrow}

\renewcommand{\phi}{\varphi}
\renewcommand{\epsilon}{\varepsilon}
\newcommand{\eps}{\epsilon}
\renewcommand{\Im}{\operatorname{Im}}
\renewcommand{\Re}{\operatorname{Re}}

\renewcommand{\emptyset}{\varnothing}

\DeclareMathOperator{\id}{\operatorname*{id}}       

\DeclareMathOperator{\Supp}{\operatorname*{{Supp}}}       

\let\oldtocsection=\tocsection
\let\oldtocsubsection=\tocsubsection
\let\oldtocsubsubsection=\tocsubsubsection
\renewcommand{\tocsection}[2]{\hspace{0em}\oldtocsection{#1}{#2}}
\renewcommand{\tocsubsection}[2]{\hspace{1em}\oldtocsubsection{#1}{#2}}
\renewcommand{\tocsubsubsection}[2]{\hspace{2em}\oldtocsubsubsection{#1}{#2}}

\title[Size of Siegel disks for cubics]{On the size of Siegel disks with fixed multiplier for cubic polynomials}
\author{Arnaud Chéritat}

\begin{document}

\begin{abstract}
We study the slices of the parameter space of cubic polynomials where we fix the multiplier of a fixed point to some value $\lambda$.
The main object of interest here is the radius of convergence of the linearizing parametrization.
The opposite of its logarithm turns out to be a sub-harmonic function of the parameter whose Laplacian $\mu_\lambda$ is of particular interest.
We relate its support to the Zakeri curve in the case the multiplier is neutral with a bounded type irrational rotation number.
In the attracting case, we define and study an analogue of the Zakeri curve, using work of Petersen and Tan.
In the parabolic case, we define an analogue using the notion of asymptotic size.
We prove a convergence theorem of $\mu_{\lambda_n}$ to $\mu_\lambda$ for
$\lambda _n= \exp(2\pi i \pqn)$ and $\lambda = \exp(2\pi i\theta)$ where $\theta$ is a bounded type irrational and $\pqn$ are its convergents.
\end{abstract}

\maketitle

\tableofcontents

\section*{Acknowledgements}

Part of the proof of the convergence theorem in \Cref{sec:7} was developed in collaboration with Ilies Zidane when he was a PhD student. The current notes are based on part of a draft written by Ilies and expanded by the author.

\section*{Structure of the document}

The first section defines and studies the parameter spaces of cubic polynomials under several normalization related to different markings, and the relations between these different spaces.

\Cref{sec:phi} recalls generalities on the linearizing power series and maps, in the case of an attracting fixed point. If the dynamics is given by a polynomial, we define a special subset of the basin that we call $U(P)$ and that is the image of the disk of convergence of the linearizing parametrization. It is strictly contained in the basin of the fixed point and will play a role for attracting multipliers similar to the role played by Siegel disks for neutral multipliers.

\Cref{sec:3} proves generalities about the radius of convergence $r$ of the linearizing power series, with a focus on its dependence on the polynomial. We prove that, if $\lambda$ is fixed, $-\log r$ is a subharmonic function of the remaining parameter. The measure $\mu_\lambda = \Delta -\log r$ is introduced. We prove that its total mass is $2\pi$.

\Cref{sec:attr} studies attracting slices when both critical points are marked. We deduce from the work of Petersen and Tan that the set of parameters for which both critical points are attracted to the fixed point is an annulus. We define the set $Z_\lambda$ for which both critical points are on $\partial U(P)$. We prove that the support of $\Delta -\log r$ is equal to $Z_\lambda$.

\Cref{sec:5} studies similar slices but in the case when $\theta$ is a bounded type number. Zakeri proved that the set of parameters for which both critical points are on the boundary of the Siegel disk is a Jordan curve $Z_\lambda$. We prove that the support of $\mu_\lambda$ is $Z_\lambda$.

\Cref{sec:parabo} is about parabolic slices, i.e.\ $\lambda$ is a root of unity. We use the asymptotic size $L$ of parabolic points as an analogue of the conformal radius of Siegel disk. 
We prove that $-\log L$ is a subharmonic function of the parameter and that its Laplacian is a sum of Dirac masses situated at parameters for which the fixed point is degenerate, i.e.\ has too many petals.

\Cref{sec:7} proves the convergence of $\mu_{\lambda_n}$ to $\mu_\lambda$ in the weak-$\ast$ topology for
$\lambda _n= \exp(2\pi i \pqn)$ and $\lambda = \exp(2\pi i\theta)$ where $\theta$ is a bounded type irrational and $\pqn$ are its convergents.

\section{Normalisations}\label{sec:normz}

Seminal works in the study of cubic polynomials include \cite{art:BH1,art:BH2,art:Zakeri}. We assume here that the reader is familiar with holomorphic dynamics.

We will consider conjugacy classes of cubic polynomials with or without marked points. The conjugacies will be by \emph{affine maps}, i.e.\ maps of the form $z\in\C \mapsto az+b$ with $a\in\C^*$ and $b\in \C$ and will have to respect the markings.

We will consider three different markings and their relations.

A priori the quotient spaces are just sets. However since we quotient an analytic manifold by analytic relations, there is more structure. This is not the object here to develop a general theory of such quotients. In our case there will be families of representatives $P_{a,b}$, defined for complex numbers $(a,b)$ varying in an open subset of $\C^2$, whose $4$ coefficients vary holomorphically with $(a,b)$, whose marked points vary holomorphically too, such all equivalence classes are represented, and such that either equivalence classes have only one representative, or at most two. The equivalence relation is still analytic in $(a,b)$ and the quotient is a priori only an orbifold. These orbifold turn out to still be an analytic manifolds in our particular three cases.

The choice of our three families are called here \emph{normalizations}. A more general notion of normalization can certainly be developped but this is not the object of the present article.

\subsection{First family}\label{sub:norm_1st}

We first consider the family of cubic polynomials with one non-critical fixed point marked and both critical points marked, up to affine conjugacy. We choose the following representative: the point $0$ is fixed with multiplier $\lambda \in \C^*$. The critical points are $1$ and $c \in \C^\ast$, the second one is taken as a parameter. It follows from an easy (and classical) computation that
\[ P_{\lambda,c}(z) = \lambda z \left( 1 - \frac{(1 + \sfrac{1}{c})}{2} z + \frac{\sfrac{1}{c}}{3} z^2 \right) \]
and that any marked polynomial is uniquely represented (see \cite{art:Zakeri}).

When $\lambda$ is a root of unity: $\lambda = \opq$, for simplicity we will sometimes denote by $P_{\pq,c}$ the polynomial $P_{\lambda,c}$.

\begin{proposition}
Some easy remarks:
\begin{itemize}
\item Switching the role of the critical points $1$ and $c$ is equivalent to replacing $c$ by $1/c$. We have the symmetry
\begin{equation}\label{eq:sym}
c^{-1}P_{\lambda,c}(cz) = P_{\lambda,1/c} (z).
\end{equation}
\item The map has a double critical point if and only if $c=1$.
\item When $c \to \infty$, $P_{\lambda,c}$ converges uniformly on compact subset of $\C$ to a quadratic polynomial witch fixes $0$ with multiplier $\lambda$, namely to $Q_\lambda(z) := \lambda z \left( 1 - \frac{z}{2} \right)$.
\end{itemize}
\end{proposition}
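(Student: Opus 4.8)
The plan is to verify each of the three assertions by a direct computation with the explicit formula $P_{\lambda,c}(z) = \lambda z\bigl(1 - \tfrac{1+1/c}{2}z + \tfrac{1/c}{3}z^2\bigr)$; none of them requires anything beyond elementary algebra together with a small amount of bookkeeping about the markings.

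For the symmetry I would substitute $w = cz$ into $P_{\lambda,c}(w)$ and divide by $c$: the overall factor $\lambda w = \lambda c z$ becomes $\lambda z$, the coefficient $\tfrac{1+1/c}{2}$ of $w$ is multiplied by $c$ and becomes $\tfrac{c+1}{2}$, and the coefficient $\tfrac{1/c}{3}$ of $w^2$ is multiplied by $c^2$ and becomes $\tfrac{c}{3}$. Thus $c^{-1}P_{\lambda,c}(cz) = \lambda z\bigl(1 - \tfrac{1+c}{2}z + \tfrac{c}{3}z^2\bigr)$, which is exactly $P_{\lambda,1/c}(z)$, giving \eqref{eq:sym}. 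It then remains to interpret this dynamically: the affine map $h(z) = cz$ fixes $0$ (so it respects the marked fixed point and preserves its multiplier $\lambda$) and conjugates $P_{\lambda,c}$ to $P_{\lambda,1/c}$; since the critical points of $P_{\lambda,c}$ are $1$ and $c$, their images under $h^{-1}$ are $1/c$ and $1$, which are precisely the two critical points of $P_{\lambda,1/c}$. Hence $h$ exchanges the roles of the two marked critical points while implementing the parameter change $c \mapsto 1/c$.

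For the second item I would differentiate and factor:
\[ P_{\lambda,c}'(z) = \lambda\Bigl(1 - (1+\tfrac1c)z + \tfrac1c z^2\Bigr) = \frac{\lambda}{c}\,(z-1)(z-c), \]
so, since $c \in \C^\ast$, the two critical roots $1$ and $c$ coincide — producing a double critical point, located at $z = 1$ — if and only if $c = 1$. For the third item, as $c \to \infty$ we have $1/c \to 0$, so the coefficients of $P_{\lambda,c}$ converge: that of $z$ is constantly $\lambda$, that of $z^2$ tends to $-\lambda/2$, and that of $z^3$ tends to $0$. As these are polynomials of degree $\le 3$ whose coefficients depend continuously on $1/c$, convergence of the coefficients forces uniform convergence on every compact subset of $\C$, and the limit $\lambda z(1 - z/2) = Q_\lambda(z)$ is a quadratic polynomial fixing $0$ with multiplier $\lambda$.

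I expect essentially no obstacle here; the only steps deserving care are the bookkeeping of the markings under $h(z) = cz$ in the first item — to confirm that the two labelled critical points are genuinely swapped, rather than merely permuted up to relabelling — and the observation in the third item that the degree drops in the limit, so that $Q_\lambda$ is quadratic and the convergence should be phrased on compact subsets of $\C$ rather than on $\widehat{\C}$.
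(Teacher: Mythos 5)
Your computations are correct and complete: the substitution $w=cz$ giving \eqref{eq:sym}, the factorization $P_{\lambda,c}'(z)=\tfrac{\lambda}{c}(z-1)(z-c)$, and the coefficientwise limit as $c\to\infty$ are exactly the elementary verifications intended, and the paper in fact omits a proof of these remarks precisely because they amount to this routine algebra. Your extra care about the marking bookkeeping (that $h(z)=cz$ genuinely swaps the two labelled critical points) and about phrasing the degree-dropping limit as locally uniform convergence on $\C$ is appropriate and consistent with how the paper uses these facts later.
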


\subsection{Second family}\label{sub:norm_2nd}

We consider the set of cubic polynomials with one non-critical fixed point marked but the critical points are not marked. It amounts to identifying $c$ and $1/c$. Since $c\neq 0$, because we assume that the fixed point is not critical, we can take
\[ v:=\frac{c+c^{-1}}{2} \]
as a parameter. The map $c\mapsto v$ is a ramified cover from $\C^*$ to $\C$, ramified at $c=1$ and $c=-1$ which are mapped respectively to $v=1$ and $v=-1$.\footnote{These are the same values but we are reluctant to call them fixed points.} The first case $v=1$ corresponds to maps with a double critical point, as noticed above. The second case $v=-1$ corresponds to the maps in the family $P_{\lambda,c}$ that commute with $z\mapsto -z$:
\[ P_{\lambda,-1}(z) = \lambda (z- \frac{1}{3}z^3)\]
This map is conjugated to $z\mapsto \lambda z + z^3$, which is a form that may or may not be more familiar to the reader.
Finally, note that $v=0$ is in the range of the map $c\mapsto v$: it corresponds to $c=\pm i$. However we do not have a special dynamical interpretation for this value of $c$.

\subsection{Third family}\label{sub:norm_3rd}

We consider the set of unmarked cubic polynomials up to affine conjugacy. It can be parametrized by the set of monic centred cubic polynomials up to conjugacy by $z\mapsto -z$, i.e.\ 
\[ 
P(z)= z^3+az+b
\]
with $(a,b)\sim (a,-b)$. The pair of parameters $(a,b^2)$ then gives a bijection from this family with $\C^2$.
We will sometimes denote $b_2 = b^2$ so that
\[(a,b_2) = (a,b^2)\] 
The map $P$ is unicritical if and only if $a=0$. The map $P$ commutes with a non-trivial affine map if and only if it commutes with $z\mapsto -z$, if and only if $b=0$.

\subsection{Topological aspect}

We have identified sets of affine conjugacy equivalence classes of polynomials (marked on not) with open subsets $U\subset\C^2$. We now interpret the sequential convergence for the topology of $\C^2$ in terms of those classes. 

\begin{proposition}
For any of the three families, a sequence of (marked) polynomials classes $[P_n]$ converges in $U$ to the (marked) polynomial class $[P]$ if and only if there exists representatives $Q_n\sim P_n$ and $Q\sim P$ such that $Q_n$ tends to $Q$ as degree $3$ polynomials (i.e.\ coefficient by coefficient; there are $4$ such coefficients) and such that each marked point of $Q_n$ converges to the corresponding marked point of $Q$.
\end{proposition}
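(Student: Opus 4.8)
The plan is to prove both implications by treating the three families in parallel and reducing everything to a single principle: in each normalization, the coefficients of the chosen representative and the positions of its marked points are \emph{continuous functions} of the parameter $(a,b)\in U$, and conversely the parameter $(a,b)$ is a continuous function of a representative polynomial together with its marked points. Since $U$ carries the subspace topology of $\C^2$, convergence $[P_n]\to[P]$ in $U$ means exactly convergence of the parameters, so the proposition is the assertion that these two ``normalization'' procedures are mutually inverse up to the finite ambiguity of the representative.

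\emph{From parameter convergence to representative convergence.} Assume $[P_n]\to[P]$, i.e.\ the parameters converge in $\C^2$. In the first family the representative $P_{\lambda,c}$ and its marked points $0,1,c$ depend rationally, hence continuously, on $(\lambda,c)$, so $Q_n:=P_{\lambda_n,c_n}$ and $Q:=P_{\lambda,c}$ work directly. In the second family the parameter is $(\lambda,v)$, and one first lifts $v_n\to v$ to $c_n\to c$ by choosing a branch of $\sqrt{v^2-1}$ making the two roots of $c^2-2vc+1=0$ converge (this is possible also at the ramification values $v=\pm1$), then proceeds as before. In the third family the parameter is $(a,b_2)=(a,b^2)$; fix a square root $b$ of $b_2$ and let $b_n$ be the square root of $b_{2,n}$ in the half-plane through $b$, so $b_n\to b$ and $Q_n=z^3+a_nz+b_n\to z^3+az+b=Q$. (The uniform-on-compacts convergence to $Q_\lambda$ as $c\to\infty$ recorded in the statement is of the same flavour but is not needed here.)

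\emph{From representative convergence to parameter convergence.} Conversely, suppose $Q_n\to Q$ as cubic polynomials with marked points converging. In each family we exhibit an affine map $A_n$, depending continuously on $Q_n$ and its marked points, carrying $Q_n$ to the normalized representative. In the first family $A_n$ sends the marked fixed point to $0$ and rescales so the first marked critical point becomes $1$; this rescaling factor stays bounded away from $0$ and $\infty$ precisely because the marked fixed point is non-critical. The second marked critical point goes to $c_n$, and $\lambda_n=Q_n'(\cdot)$ is the multiplier, so continuity gives $(\lambda_n,c_n)\to(\lambda,c)$, whence $[P_n]\to[P]$ in $U$. In the second family only the translation to $0$ is canonical; one reads off $\lambda_n=Q_n'(0)$ and recovers $v_n$ from the scaling-invariant symmetric function $v=\tfrac12\big(s^2/p-2\big)$ of the critical points, where $s,p$ are the sum and product of the critical points of a cubic fixing $0$ with multiplier $\lambda$ (a continuous function of the remaining coefficients as long as the map is genuinely cubic, which holds since $Q$ is). Thus $(\lambda_n,v_n)\to(\lambda,v)$. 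In the third family one conjugates $Q_n$ to monic centred form $z^3+a_nz+b_n$, with the residual ambiguity being exactly $b_n\mapsto-b_n$, so continuity yields $a_n\to a$ and $b_n^2\to b^2$.

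\emph{Main obstacle.} The only real issue is bookkeeping the non-uniqueness of representatives in the second and third families (the $c\leftrightarrow1/c$, resp.\ $b\leftrightarrow-b$, identifications) and the degenerate configurations $c=1$ and $b=0$: one must verify that the normalization maps extend continuously across the ramification and degeneracy loci, and when matching a given converging sequence of representatives to the parameter the cleanest route is a subsequence argument—every subsequence has a further subsequence along which the chosen square-root branches converge, necessarily to the limit dictated by $Q$—after which the standard subsequence principle promotes this to convergence of the whole sequence.
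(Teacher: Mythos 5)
Your proof is correct and follows essentially the same route as the paper: both directions reduce to continuity of the normalization/parametrization maps, with converging branch (or subsequence) choices handling the finite ambiguities in the forward direction and ambiguity-invariant continuous expressions for the parameters in the converse. The only cosmetic difference is that you recover $v$ from the symmetric functions $s,p$ of the critical points read off from the coefficients of $Q'$, where the paper first invokes Rouché's theorem for convergence of the critical set and then applies the same symmetric formula $v=\tfrac12\bigl(\tfrac{c_0-p}{c_1-p}+\tfrac{c_1-p}{c_0-p}\bigr)$; both work equally well.
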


\begin{proof}
The ``only if'' direction ($\Rightarrow$) of the equivalence is the easiest. We have families parametrized analytically, hence continuously, by either $(\lambda,c)$ or $(a,b)$. For one choice of the parameter $(\lambda,v)$ there is at most two values of $(\lambda,c)$ that realize it. Similarly, for one choice of the parameter $(a,b^2)$ there is at most two values of $(a,b)$ that realize it. Moreover, given a converging sequence of parameters, one can choose the realization so that it converges too. Last, the marked points depend continuously on the corresponding parameters: $0$ does not move at all and $c\mapsto c$ is trivially continuous\ldots

For the ``if'' direction ($\Leftarrow$), we treat first the $(a,b^2)$-parameter space.
The affine conjugacies from any cubic polynomial $Q=q_3 z^3 + q_2 z^2 + q_1 z + q_0$ to a monic centred polynomial correspond to the change of variable $z=\alpha w +\beta$ such that $\alpha^2 q_3 = 1$ and that send the centre $-q_2/3q_3$ to $0$, a condition that, once $\alpha$ is fixed, determines a unique $\beta$, more precisely $\beta = \alpha q_2/3q_3$.
Then $Q$ is conjugated by this affine map to $w\mapsto w^3 + aw+b$ with $a = q_1-q_2^2/3q_3$ and $b = \frac{2q_2^3/q_3^2+9(1-q_1)q_2/q_3+27q_0}{27\alpha}$. In particular, since $\alpha^2 = 1/q_3$, $(a,b^2)$ is a holomorphic, hence continuous, function of the coefficients $(q_0,\ldots,q_3)\in \C^3\times\C^*$. It follows that given $Q_n\tend Q$, the corresponding conjugate maps will have values of $(a_n,b_n^2)$ that converge to $(a,b^2)$. 

Now we go for the $(\lambda,c)$ and $(\lambda,v)$ spaces. Obviously if $Q_n\tend Q$ and the marked fixed point of $Q_n$ converges then $\lambda_n\tend \lambda$. Moreover, the set of critical points of $Q_n$ converges, by Rouché's theorem, to that of $Q$ (there is no drop of degree that would allow one critical point to tend to infinity). Now if the set of critical points of $Q$ is ${c_0,c_1}$ and the fixed point of $Q$ is $p$ then parameter $v$ is given by $\frac12\left(\frac{c_0-p}{c_1-p}+\frac{c_1-p}{c_0-p}\right)$, which implies the convergence for the $v$-parameter.
If moreover critical point are marked then $c= \frac{c_0-p}{c_1-p}$. This implies the convergence of $c$ if the convergence also concerns critical marked points.
\end{proof}

A more satisfying approach to the topology would be to consider the quotient topologies but we prefer to stay at more basic level.

\subsection{Correspondence between the normalizations}

There is a map associated to forgetting markings, from the first family to the second, and a similar map from the second to the third: they induce maps
\[ (\lambda,c) \mapsto (\lambda, v) \mapsto (a,b_2) \]
where $\lambda$, $c$, $v$, $a$, and $b_2 = b^2$ were introduced in the preceding sections.
Recall that
\[ v=\frac{c+c^{-1}}{2}
\]
A computation gives
\begin{align}
\label{eq:lvab1} a &=\frac{\lambda(1-v)}{2}
\\
\label{eq:lvab2} b_2 &= \frac{\lambda}{3}\cdot\frac{v+1}{2}\cdot\left(1+(v-2)\frac{\lambda}{3}\right)^2
\end{align}

Let us denote
\[
\Theta : 
\left\{
\begin{array}{rcl} 
\C^*\times\C&\to& \C^2
\\
(\lambda,v)&\mapsto& (a,b_2)
\end{array}
\right.
\]
and stress that even though $\Theta$ has a polynomial expression, we only consider its \emph{restriction} to non-vanishing values of $\lambda$.

Of particular interest is the set of \emph{unicritical polynomials}, which as we have seen in $(\lambda,c)$-space correspond exactly to those polynomials for which $c=1$, 
in $(\lambda,v)$-space to those for which $v=1$
and in $(a,b_2)$-space those for which $a=0$. We have $\Theta(\lambda,1) = (0,\frac{\lambda}{3}\left(\frac{\lambda}{3}-1\right)^2)$ i.e.\
\[
(\lambda, c = 1) \mapsto (\lambda, v=1) \mapsto (a = 0, b_2 = \frac{\lambda}{3}\left(\frac{\lambda}{3}-1\right)^2)
\]

Every cubic polynomial in the third family can be marked in at most $3$ different ways as an element of the second family, since there is at most three fixed points: hence the preimage of an element of $\C^2$ by $\Theta$ has at most $3$ elements. Also, a cubic polynomial cannot have all its fixed points critical, in particular $\Theta$ is surjective.
The following lemma sums up a precise analysis.

\begin{lemma}
Let $P$ an affine conjugacy class of cubic polynomials be represented by $(a,b_2)=(a,b^2)\in\C^2$.
The fibre $\Theta^{-1}(P)$ (in $(\lambda,v)$-space) has $3$ elements unless one of the following occurs:
\begin{enumerate}
\item $(a,b^2)=(0,0)$ i.e.\ $P$ is conjugate to $z^3$; then the fibre has $1$ element $(\lambda,v)=(3,1)$;
\item $(a,b^2)=(1,0)$ i.e.\ $P$ is conjugate to $z+z^3$ which has a triple fixed point; then the fibre has $1$ element $(\lambda,v)=(1,-1)$;
\item $(a,b^2)=(3/2,0)$, then $P$ has a symmetry and both critical points are fixed; the fibre has $1$ element $(\lambda,v)=(3/2,-1)$;
\item $b=0$ and $a\notin\{0,1,3/2\}$, then $P$ has a symmetry and the fibre has $2$ elements;
\item $(a,b^2)=(4/3,-4/3^6)$, then $P$ has a double fixed point and a critical fixed point; the fibre has $1$ element $(\lambda,v) = (1,-5/3)$;
\item $P$ has a double fixed point and another fixed point that is not critical; then the fibre has $2$ elements.
\end{enumerate}
\end{lemma}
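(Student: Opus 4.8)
The plan is to read $\Theta$ as the composite of the two marking‑forgetting maps $(\lambda,c)\mapsto(\lambda,v)\mapsto(a,b_2)$, so that a point of $\Theta^{-1}(P)$ is exactly a choice of a \emph{non‑critical} fixed point $p$ of a representative of $P$, recorded together with its multiplier $\lambda$ and the invariant $v=\tfrac12\big(\tfrac{\gamma_1-p}{\gamma_2-p}+\tfrac{\gamma_2-p}{\gamma_1-p}\big)$ built from $p$ and the two critical points $\gamma_1,\gamma_2$ of $P$ (which is symmetric in $\gamma_1,\gamma_2$). Two choices $p,p'$ yield the same point of the fibre if and only if some affine automorphism of $P$ carries $p$ to $p'$, so $\Theta^{-1}(P)$ is canonically in bijection with the set of non‑critical fixed points of $P$ modulo $\mathrm{Aut}(P)$. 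The first task is to make this bijection precise and to check that it respects the $(\lambda,v)$‑labels; this uses only the unicity statements already recorded for the three normalisations.

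Then I would assemble the elementary facts about the fixed points of a cubic $Q=z^3+az+b$: there are exactly three of them counted with multiplicity (roots of the degree‑$3$ polynomial $Q(z)-z$); a multiple fixed point has multiplier $1$, hence is never critical (write $Q(z)-z=(z-z_0)^2 S(z)$ and differentiate at $z_0$); if two fixed points are critical then $b=0$ (add the fixed‑point equations at the two critical points $\pm w$); a double fixed point forces $b\neq 0$ (for $b=0$ the only coincidence among fixed points is the triple one, at $a=1$); and $\mathrm{Aut}(Q)$ is trivial unless $b=0$, in which case it is generated by $z\mapsto -z$, which fixes one fixed point and swaps the other two.

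With these in hand the lemma becomes a finite case check, organised by the partition type of the multiset of fixed points ($(1,1,1)$, $(2,1)$, or $(3)$), by how many of them are critical, and by whether $b=0$. Three distinct non‑critical fixed points with $b\neq0$ gives the count $3$; $b=0$ with three distinct non‑critical fixed points gives $2$, which is case (4); $b=0$ with a critical fixed point occurs only at $a=0$ (case (1): $P\sim z^3$, fibre $1$, $(\lambda,v)=(3,1)$) and $a=3/2$ (case (3): fibre $1$, $(\lambda,v)=(3/2,-1)$); a genuine double fixed point together with a simple non‑critical fixed point gives $2$, case (6); the same configuration with the simple fixed point critical pins down $(a,b_2)=(4/3,-4/3^6)$, leaving the double point as the unique non‑critical fixed point, so the fibre is $1$, case (5), $(\lambda,v)=(1,-5/3)$; and a triple fixed point forces $a=1$, $b=0$, i.e.\ $P\sim z+z^3$, fibre $1$, case (2), $(\lambda,v)=(1,-1)$. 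Each exceptional pair then follows from a one‑line computation using \eqref{eq:lvab1}--\eqref{eq:lvab2} and the normal forms.

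The real work is exhaustiveness: one must verify that every stratum over which the fibre fails to have full size $3$ has been listed. The step I expect to be the main pitfall is the stratum of polynomials with a \emph{critical (superattracting) fixed point} but three distinct fixed points and $b\neq0$: there the critical fixed point cannot be marked, the two remaining ones are non‑critical and not swapped by any symmetry, so the fibre has only $2$ elements — for example $\Theta(6,\tfrac54)=\Theta(\tfrac94,\tfrac53)=(-\tfrac34,\tfrac{9}{16})$. This stratum is the curve $b_2=-\tfrac{a}{3}\big(1-\tfrac{2a}{3}\big)^2$ (its points $a\in\{0,\tfrac32,\tfrac43\}$ being precisely cases (1), (3), (5)); it should be added as a further exceptional case with fibre $2$, or the hypotheses of the lemma restricted accordingly. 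Everything else in the argument is routine.
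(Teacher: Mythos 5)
Your approach coincides with the paper's (which only sketches the argument): identify $\Theta^{-1}([P])$ with the set of non-critical fixed points of a monic centred representative modulo its affine automorphisms, note that the automorphism group is nontrivial exactly when $b=0$ (generated by $z\mapsto -z$), and sort the finitely many configurations by the partition type of the fixed points and by which of them are critical. Your preliminary facts (a multiple fixed point has multiplier $1$, hence is never critical; two critical fixed points force $b=0$; a double-but-not-triple fixed point forces $b\neq 0$) are all correct, and your counts agree with the paper's in cases (1)--(6). More importantly, the ``pitfall'' you single out is a genuine omission in the statement as printed: for $(a,b^2)\in\mathcal{E}$, i.e.\ $b^2=-\frac{a}{3}\left(1-\frac{2a}{3}\right)^2$, with $b\neq 0$ and $(a,b^2)\neq(4/3,-4/3^6)$, the polynomial has three distinct fixed points, exactly one of which is critical --- hence unmarkable, since $\lambda=0$ lies outside the domain of $\Theta$ --- and no symmetry, so the fibre has $2$ elements; this configuration falls under none of the cases (1)--(6). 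Your example checks out: $z^3-\frac34 z+\frac34$ has fixed points $1$, $\frac12$, $-\frac32$, the point $\frac12$ being critical, and the two admissible markings give $(\lambda,v)=(6,\frac54)$ and $(\frac94,\frac53)$, both sent by $\Theta$ to $(-\frac34,\frac{9}{16})\in\mathcal{E}$. The paper's proof sketch does list ``a fixed point is critical'' among the causes of a fibre drop, but the enumeration records it only at the three special points (1), (3), (5); so the lemma should be amended by adding this stratum of $\mathcal{E}$ with fibre of cardinality $2$ (or by restricting the statement off $\mathcal{E}$), exactly as you propose. This correction does not affect the subsequent statements (surjectivity and openness of $\Theta$, properness over $\C^2\setminus\mathcal{E}$, and the analysis of the preimages of $\Hyp_0$), which do not rely on the exact fibre count over $\mathcal{E}$.
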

\begin{proof}
(We only indicate the method of the proof, the details of the computations are not relevant.)
The fibre has less than three element if and only if either less than three different fixed points can be marked, or there exists an affine self-conjugacy sending two different marked points one to the other.
The first case occurs if and only if two fixed points coincide or if a fixed point is critical.
In the second case we must have $b=0$.
\end{proof}

The map from the first to the second family is much simpler. We recall it takes the form $(\lambda,c)\mapsto(\lambda,v=\frac{c+c^{-1}}{2})$. The map $c\mapsto v$ is well-known:\footnote{The map $z\mapsto z+z^{-1}$ it is nowadays often referred to as the \emph{Joukowsky transform}.} it is surjective from $\C^*$ to $\C$ and fibres have two elements unless $v=1$ or $v=-1$.

\medskip

Every polynomial in the second family corresponds to at most $2$ polynomials in the first because there is at most two critical points. Hence a cubic polynomial has at most $6$ representatives in $(\lambda,c)$-space. The map $(\lambda,c)\mapsto(a,b_2)$ is surjective because it is the composition of the surjective maps $\Theta$ and $(\lambda,c)\mapsto (\lambda,v)$.

\medskip

Recall that a mapping is open when it maps open subsets to open subsets.\footnote{Recall that a characterization of continuous functions is that the \emph{preimage} of an open subset is open.}
The following theorem can be found in \cite{book:cha}, Corollary page~328, Section~54.
\begin{theorem}[Osgoode]
Let $n\geq 1$ and consider a holomorphic map $f$ from an open subset $U\subset \C^n$ to $\C^n$.
If all fibres of $f$ are discrete then $f$ is open.
\end{theorem}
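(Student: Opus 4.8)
The plan is to prove openness \emph{locally}: $f$ is open if and only if it is open at every point $p\in U$, meaning that the image of every neighbourhood of $p$ contains a neighbourhood of $f(p)$. Fix $p$ and, after translating, assume $p=0$ and $f(0)=0$. Since the fibre $f^{-1}(0)$ is discrete, $0$ is isolated in it, so there is $r>0$ with $\overline{B}:=\overline{B(0,r)}\subset U$ and $f^{-1}(0)\cap\overline{B}=\{0\}$; in particular $f$ does not vanish on the compact set $\partial B$, so $\delta:=\min_{\partial B}|f|>0$. I would then bring in the Brouwer mapping degree $\deg(f,B,w)$, defined for $w\notin f(\partial B)$: for $|w|<\delta$ the straight‑line homotopy $t\mapsto f-tw$ avoids $0$ on $\partial B$, so $w\mapsto\deg(f,B,w)$ is constant on the connected set $B(0,\delta)$, equal to $\deg(f,B,0)$. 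If this common value is nonzero, then for every $w\in B(0,\delta)$ the equation $f(z)=w$ has a solution in $B$, hence $f(B)\supseteq B(0,\delta)$ and $f$ is open at $0$. So everything reduces to showing $\deg(f,B,0)\neq0$.

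For that, consider the critical set $C:=\{z\in U:J_f(z)=0\}$, where $J_f$ is the holomorphic Jacobian determinant, and I claim $J_f\not\equiv0$ on $B$. Indeed, if $f$ had complex rank $<n$ throughout some open subset $W\subseteq U$, then on the open dense subset of $W$ where the rank is locally constant the holomorphic rank theorem would present $f$ locally as a submersion onto a submanifold of dimension $<n$, whose fibres have positive dimension, contradicting the discreteness of the fibres. Hence $f$ has rank $n$ at some point of $B$, so $J_f\not\equiv0$ on the connected open set $B$, and by the identity theorem $\{J_f=0\}\cap B$ has empty interior; that is, $\{J_f\neq0\}$ is dense in $B$.

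Now choose $q\in B$ with $|f(q)|<\delta$ and $J_f(q)\neq0$: such $q$ exists because $\{|f|<\delta\}\cap B$ is a neighbourhood of $0$ and $\{J_f\neq0\}$ is dense. Near $q$, $f$ is a biholomorphism onto a neighbourhood of $f(q)$, so choosing (via Sard) a regular value $w_0$ of $f|_B$ close enough to $f(q)$ we get $f^{-1}(w_0)\cap B\neq\emptyset$, and $w_0\in B(0,\delta)$; since the real Jacobian of a holomorphic map at a point where $J_f\neq0$ equals $|J_f|^2>0$, each point of the finite set $f^{-1}(w_0)\cap B$ contributes $+1$, so $\deg(f,B,w_0)\geq1$. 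But $B(0,\delta)$ is connected and disjoint from $f(\partial B)$ and contains $0$, $f(q)$ and $w_0$, so $\deg(f,B,0)=\deg(f,B,w_0)\geq1$, which is what was needed. The base case $n=1$ of this scheme degenerates to the classical open mapping theorem.

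The one genuinely non‑formal ingredient — and the step I expect to be hardest to present cleanly — is the non‑vanishing (indeed positivity) of the local degree of a holomorphic map at an isolated zero; the argument above extracts it from the orientation‑preserving identity $|J_f|^2>0$ together with the fact that a proper analytic subset is nowhere dense. An alternative route is to identify $\deg(f,B,0)$ with the algebraic multiplicity $\dim_{\C}\mathcal{O}_{n,0}/(f_1,\dots,f_n)$, which is finite because $f^{-1}(0)=\{0\}$ as a germ forces some power of the maximal ideal into $(f_1,\dots,f_n)$ by the Nullstellensatz, and which is at least $1$ because that ideal is proper; or to run an induction on $n$ using the Weierstrass preparation theorem to reduce to the previous dimension.
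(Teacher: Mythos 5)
Your argument is correct. Note first that the paper does not prove this statement at all: it is quoted with a reference to Chabat's textbook (Corollary, Section~54, p.~328), where it arises as a corollary of the local analytic-geometry description of holomorphic maps with discrete fibres (Weierstrass preparation, analytic covers). Your route is genuinely different and self-contained: you localize, isolate the point in its fibre, and run the Brouwer-degree machine --- homotopy invariance gives constancy of $\deg(f,B,\cdot)$ on $B(0,\delta)$, the constant-rank theorem excludes $J_f\equiv 0$ (otherwise fibres would be positive-dimensional), the identity theorem gives density of $\{J_f\neq 0\}$, and Sard plus the orientation identity (real Jacobian $=|J_f|^2>0$) force $\deg(f,B,w_0)\geq 1$ at a nearby regular value, hence $\deg(f,B,0)\neq 0$ and $B(0,\delta)\subset f(B)$. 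All the delicate points are handled: $0\notin f(\partial B)$, the preimage of the regular value is a finite subset of $B$, and the chosen $q$ exists because $\{|f|<\delta\}\cap B$ is open and nonempty. Two cosmetic remarks: to get openness you should note explicitly that the radius $r$ can be taken arbitrarily small (the isolation of $0$ in its fibre persists under shrinking), so that $B$ fits inside any prescribed neighbourhood of $p$; and your closing alternative via the algebraic multiplicity $\dim_{\C}\mathcal{O}_{n,0}/(f_1,\dots,f_n)$ or a Weierstrass-preparation induction is essentially the route taken in the cited textbook, so either version would serve as a substitute for the citation. What your degree-theoretic proof buys is independence from local analytic geometry, at the price of invoking topological degree; the textbook route yields finer information (finiteness and multiplicity of fibres) that the paper does not need.
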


It follows that the map $\Theta$ is an open mapping. The simpler map $(\lambda,c)\mapsto (\lambda,\frac {c+c^{-1}}{2})$ is open too.

\begin{definition}\label{def:E}
Let $\cal E$ be the set of affine conjugacy classes of polynomials which have a fixed critical point.
\end{definition}

The set $\cal E$ is characterized by the equation
\[b_2 + \frac{a}{3}\left(1-\frac{2a}{3}\right)^2= 0\]
Its preimage in $(\lambda,c)$-space has equation $3-6\lambda^{-1} \in\{ c,c^{-1}\}$.
Recall that $\lambda=0$ is not part of this space.
Its preimage in $(\lambda,v)$-space has equation $\frac{3\lambda-6}\lambda + \frac\lambda{3\lambda-6} = 2v$ with $\lambda\neq 2$.

\begin{proposition}\label{prop:proper}
The map $\Theta$ is proper over $\C^2\setminus {\cal E}$.
\end{proposition}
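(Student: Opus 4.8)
The plan is to show that $\Theta$ restricted to the open set $V := \Theta^{-1}(\C^2\setminus\cal E)$ is proper, i.e.\ that preimages of compact subsets of $\C^2\setminus\cal E$ are compact. Since $\Theta$ is continuous, such a preimage is closed in $V$; the whole point is to rule out escape of sequences, either to $\lambda\to 0$ or to $\infty$ in the $(\lambda,v)$-variable, or to an accumulation point lying in $\cal E$ itself. So I would take a sequence $(\lambda_n,v_n)$ in $V$ with $\Theta(\lambda_n,v_n) = (a_n,b_{2,n})$ converging to some $(a,b_2)\in\C^2\setminus\cal E$, and argue that $(\lambda_n,v_n)$ has a subsequence converging in $\C^*\times\C$, necessarily to a point of $V$ by continuity and closedness of $\cal E$.

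First I would use the explicit formulas \eqref{eq:lvab1}–\eqref{eq:lvab2}. From \eqref{eq:lvab1}, $\lambda_n(1-v_n) = 2a_n$ is bounded, and from \eqref{eq:lvab2}, $\frac{\lambda_n}{3}\cdot\frac{v_n+1}{2}\cdot\bigl(1+(v_n-2)\tfrac{\lambda_n}{3}\bigr)^2 = b_{2,n}$ is bounded. The key observation connecting these to $\cal E$ is that the equation of $\cal E$ in $(\lambda,v)$-space, namely $\frac{3\lambda-6}{\lambda} + \frac{\lambda}{3\lambda-6} = 2v$ (with $\lambda\neq 2$), i.e.\ that one of the two critical points is the fixed critical value — this is exactly the locus where a certain combination of $\lambda$ and $v$ vanishes or blows up in a controlled way. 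I would instead work directly: set $w_n := 1 + (v_n-2)\tfrac{\lambda_n}{3}$, so that $\cal E$ corresponds (via the $c$-description $3-6\lambda^{-1}\in\{c,c^{-1}\}$, equivalently via the $v$-description) to the vanishing of $w_n$ up to the symmetry $c\leftrightarrow c^{-1}$. The hypothesis $(a,b_2)\notin\cal E$ should translate, in the limit, into a lower bound on $|w_n|$ (or on the relevant symmetric quantity) bounded away from $0$, because $\cal E$ is closed and its defining function is a polynomial in $(a,b_2)$.

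The heart of the argument is then a case analysis on the possible escapes. If $\lambda_n\to 0$: then \eqref{eq:lvab1} forces $v_n\to\infty$ with $\lambda_n v_n$ bounded, and plugging into \eqref{eq:lvab2}, $w_n = 1+(v_n-2)\tfrac{\lambda_n}{3} \to 1 - \tfrac{2a}{3}\cdot\tfrac{?}{}$ — more carefully, $\lambda_n v_n \to -2a + 0$ so $(v_n-2)\tfrac{\lambda_n}{3}\to -\tfrac{2a}{3}$, giving $w_n\to 1-\tfrac{2a}{3}$ and $\tfrac{\lambda_n}{3}\cdot\tfrac{v_n+1}{2}\to -\tfrac{a}{3}$, whence $b_{2,n}\to -\tfrac{a}{3}(1-\tfrac{2a}{3})^2$, which is precisely the equation $b_2 + \tfrac{a}{3}(1-\tfrac{2a}{3})^2 = 0$ defining $\cal E$ — contradiction with $(a,b_2)\notin\cal E$. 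If instead $\lambda_n$ stays bounded away from $0$ (along a subsequence) but $v_n\to\infty$: then $\lambda_n(1-v_n)$ bounded forces $\lambda_n\to 0$, contradiction; so $v_n$ is bounded. Then $\lambda_n$ is bounded (as $|\lambda_n| = |2a_n|/|1-v_n|$ when $v_n\not\to 1$, and when $v_n\to 1$ one uses \eqref{eq:lvab2} and the fact that $v=1$ corresponds to $a=0$, handled separately), and $\lambda_n$ is bounded away from $0$ unless we are in the previous case. So $(\lambda_n,v_n)$ subconverges to some $(\lambda_\infty,v_\infty)\in\C^*\times\C$ with $\Theta(\lambda_\infty,v_\infty) = (a,b_2)\notin\cal E$, hence $(\lambda_\infty,v_\infty)\in V$, proving properness.

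The main obstacle I anticipate is the bookkeeping around the degenerate sub-loci where the naive bound $|\lambda_n| = 2|a_n|/|1-v_n|$ fails, namely near $v=1$ (double critical point, $a=0$) and near $\lambda=2$: there one must extract the estimate from \eqref{eq:lvab2} instead, and one must be careful that the limit point does not secretly land in $\cal E$ after all. Concretely, the cleanest route is probably to show that the function $(\lambda,v)\mapsto 1/\bigl(|\lambda| + |v| + \text{dist}(\Theta(\lambda,v),\cal E)^{-1}\bigr)$ — or rather an honest compactness argument along the above lines — stays controlled; but since the statement only claims properness over $\C^2\setminus\cal E$, the sequence argument with the three escape scenarios ($\lambda\to0$, $v\to\infty$, accumulation in $\cal E$) each leading via \eqref{eq:lvab1}–\eqref{eq:lvab2} to membership in $\cal E$, should suffice, modulo the routine verification of the limit computations sketched above.
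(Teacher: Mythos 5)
Your argument is correct, but it is not the paper's: you work entirely through the explicit formulas \eqref{eq:lvab1}--\eqref{eq:lvab2}, ruling out the three escape scenarios ($\lambda_n\to 0$, $v_n\to\infty$, $\lambda_n\to\infty$) by showing each forces either $(a_n,b_{2,n})$ to be unbounded or the limit to satisfy $b_2+\frac a3\bigl(1-\frac{2a}3\bigr)^2=0$, i.e.\ to lie in ${\cal E}$; the paper instead argues dynamically, passing to the monic centred representatives $P_n(z)=z^3+a_nz+b_n$, extracting a convergent subsequence of the marked fixed points (which stay bounded), observing that avoidance of ${\cal E}$ makes the limiting marking valid (the limit fixed point is non-critical, so $\lambda\neq 0$), and getting convergence of $v_n$ from Rouch\'e applied to the critical points. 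Your computation in the case $\lambda_n\to 0$ — namely $\lambda_nv_n\to -2a$, hence $b_{2,n}\to-\frac a3\bigl(1-\frac{2a}3\bigr)^2$ — is exactly the algebraic shadow of the paper's remark that the marked fixed point would become critical in the limit, so the two proofs are two readings of the same phenomenon. What your route buys is elementarity and explicitness (no Rouch\'e, no tracking of fixed points, and the remark after the proposition about the behaviour over ${\cal E}$ becomes transparent); what the paper's route buys is robustness — it never touches the formulas for $\Theta$ and adapts verbatim to other markings, as is used again in the proof of \Cref{prop:0ra}. Two of your steps are only sketched but do go through and you should write them out: if $v_n\to\infty$ then \eqref{eq:lvab1} forces $\lambda_n\to0$, reducing to the first case; and if $v_n\to1$ with $\lambda_n\to\infty$ and $a_n$ bounded, then $1+(v_n-2)\frac{\lambda_n}3=\frac{3-\lambda_n-2a_n}{3}$ blows up, so $|b_{2,n}|\sim|\lambda_n|^3/27\to\infty$, a contradiction — this is the correct way to handle the $v\to1$ locus, rather than the vague appeal to ``$v=1$ corresponds to $a=0$''; the worry about $\lambda=2$ is a red herring once you test membership in ${\cal E}$ with its equation in $(a,b_2)$-coordinates, as you in fact do.
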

\begin{proof}
Consider a sequence of marked polynomials such that their unmarked equivalence classes $(a_n,b_n^2)$ converge in $\C^2\setminus {\cal E}$. Let $(a,b^2)$ be the limit class and $P=z^3+az+b$. We can extract a subsequence such that $a_n$ and $b_n$ converge. Replacing $b$ by $-b$ if necessary, we have $a=\lim a_n$ and $b=\lim b_n$. Let $P_n(z) = z^3+a_n z+b_n$. The fixed points of $P_n$ remain in a bounded subset of $\C$. We can extract a subsequence so that the marked fixed point converges. The limit will then be a fixed point of $P$. Since we assumed that $(a,b^2)$ is not in $\cal E$, it follows that we have a valid marking for $P$. The eigenvalue $\lambda_n$ of the fixed point obviously converges to that of $P$ and by Rouché's theorem the pair of critical points converge as a compact subset of $\C$ to that of $P$. 

By the sequential characterization of compact subsets of metric spaces, it follows that $\Theta$ is proper from $\Theta^{-1}(\C^2\setminus {\cal E})$ to $\C^2\setminus {\cal E}$.
\end{proof}

\begin{remark*}
Two remarks
\begin{itemize}
\item If we had decided that $\Theta$ is defined on all of $\C^2$ by the same polynomial formulae as in \eqref{eq:lvab1} and \eqref{eq:lvab2}, then $\Theta$ would be proper over all the target set $\C^2$.
\item But we defined $\Theta$ on $\C^*\times\C$ and it cannot be proper over a neighborhood of a point $[P]\in\cal E$. Indeed we can find nearby polynomials $P_n\tend P$ with $[P_n]\notin \cal E$ and which have a fixed point of very small eigenvalue, and if we mark this fixed point, the corresponding marked polynomial will not converge (they will converge to a polynomial with $\lambda = 0$ which we decided is outside of the domain of $\Theta$).
\end{itemize}
\end{remark*}

\subsection{Some special subsets of parameter space}\label{sub:spec}

Recall that a polynomial is hyperbolic when all critigcal points lie in basins of attracting periodic points and that this condition is stable by perturbation: the set of parameters for which the polynomials are hyperbolic forms an open subset in the parameter space.
By a \emph{hyperbolic component} for a given parametrization is meant a connected component of this subset.

\begin{definition}\label{def:clmc}
	The \emph{connectivity locus} of the cubic polynomials denoted by $\Con_3$ is the set of
    cubic polynomials up to affine conjugacy which have a connected Julia set.\\
   	The \emph{principal hyperbolic component}, $\Hyp_0$ is the hyperbolic component containing the affine conjugacy class of the polynomial $P(z) = z^3$.    
\end{definition}

We recall a classical theorem\footnote{Whose attribution is not easy to determine.}
\begin{theorem}
    The affine class of a polynomial belongs to $\Hyp_0$ if and only if both critical points belong to the immediate basin of an attracting fixed point. Its Julia set is a quasicircle.
\end{theorem}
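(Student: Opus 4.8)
The plan is to prove that the set $\mathcal U$ of affine classes of cubic polynomials both of whose critical points lie in the immediate basin $\Omega$ of an attracting fixed point $p$ coincides with $\Hyp_0$, and then to deduce the quasicircle property from the holomorphic motion of the Julia set over $\Hyp_0$.

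First I would analyse the structure of an arbitrary $[P]\in\mathcal U$. Since both critical points lie in $\Omega$, the basin of $\infty$ contains no finite critical point, so the B\"ottcher coordinate uniformizes it as $\widehat{\C}\setminus\overline{\D}$; hence $J(P)$ is connected and, as for any polynomial with connected Julia set, every bounded Fatou component is simply connected — in particular $\Omega$ is. Applying the Riemann--Hurwitz formula to the proper self-map $P|_\Omega$ (its ramification over $\Omega$ is at least $2$, coming from the two critical points, while $\chi(\Omega)=1$) forces $\deg(P|_\Omega)=3$; it follows that $P^{-1}(\Omega)=\Omega$, and then, $P$ being hyperbolic, Sullivan's no-wandering-domains theorem together with the fact that every attracting cycle captures a critical point shows that the Fatou set is exactly $\Omega$ and the basin of $\infty$. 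Thus every map in $\mathcal U$ realizes the same mapping scheme as $z\mapsto z^3$, which itself lies in $\mathcal U$.

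Next I would show that $\mathcal U$ is a hyperbolic component. It is open, because attracting fixed points, their multipliers, the critical points and a linearizing neighborhood of $p$ all depend continuously on the parameter, so the condition ``a high iterate of each critical point enters that neighborhood'' is open. It is contained in the hyperbolic locus $\Hyp$ and relatively closed there: by the $J$-stability of hyperbolic maps (Ma\~n\'e--Sad--Sullivan) the mapping scheme — including which Fatou component each critical point lies in — is locally constant on $\Hyp$, so $\mathcal U$, being the set of hyperbolic parameters with the scheme isolated above, is open and closed in $\Hyp$, i.e.\ a union of hyperbolic components. Finally, $\mathcal U$ is connected: given $[P]\in\mathcal U$, conjugating $P|_\Omega$ by a Riemann map $\Omega\to\D$ presents it as a degree-$3$ Blaschke product fixing $0$; these form a connected family, so one can deform it to $z\mapsto z^3$, leave the (already standard) action on the basin of $\infty$ untouched, and straighten the resulting family of invariant complex structures by the measurable Riemann mapping theorem, obtaining a path of cubic polynomials inside $\mathcal U$ from $[P]$ to $[z^3]$. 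Being open, relatively closed in $\Hyp$, connected and containing $z^3$, the set $\mathcal U$ is the hyperbolic component of $z^3$, that is $\mathcal U=\Hyp_0$; this gives the equivalence.

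For the last assertion, over the connected open set $\Hyp_0$ the repelling cycles move holomorphically without collisions, which yields a holomorphic motion of the Julia set based at $z^3$, where $J(z^3)=\partial\D$. By the $\lambda$-lemma and S\l odkowski's extension theorem this is the restriction of a holomorphic motion of $\widehat{\C}$ whose time-maps are quasiconformal; hence for $[P]\in\Hyp_0$ the set $J(P)$ is the image of the round circle under a quasiconformal self-homeomorphism of $\widehat{\C}$, i.e.\ a quasicircle. (Alternatively, $J(P)$ is a Jordan curve — connected, with $P$ hyperbolic and exactly two Fatou components — bounding conformally expanding dynamics on both sides, which is again a quasicircle.) The one substantial point is the connectedness of $\mathcal U$: the surgery must be arranged so that the deformed complex structures are genuinely $P$-invariant, have uniformly bounded dilatation, and vary holomorphically with the deformation parameter; the remaining steps are routine applications of classical results.
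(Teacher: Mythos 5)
Your overall frame (openness of the set $\mathcal U$ of classes with both critical points in the immediate basin; local constancy of this property on the hyperbolic locus via Ma\~n\'e--Sad--Sullivan, so that $\mathcal U$ is a union of hyperbolic components containing $\Hyp_0$; the quasicircle statement via the holomorphic motion of $J$ based at $z^3$, or via the criterion for hyperbolic maps with two completely invariant simply connected Fatou components) is sound, and the ``only if'' half essentially coincides with the paper's argument. The genuine gap is the connectedness of $\mathcal U$, which in your route carries the whole weight of the converse inclusion, and the mechanism you describe cannot work as stated: you ask that ``the deformed complex structures are genuinely $P$-invariant'' and then straighten them, but straightening a $P$-invariant Beltrami form yields a polynomial quasiconformally conjugate to $P$, and no cubic with two simple critical points is quasiconformally conjugate to $z^3$ (qc conjugacies preserve the local degree at critical points), so such a path can never reach $[z^3]$. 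What the Blaschke-product route really requires is a quasiregular surgery: for each $t$ a \emph{new} model map $F_t$ equal to the transported Blaschke product $B_t$ inside, equal to the degree-$3$ external dynamics outside, glued along the boundary by a quasisymmetric conjugacy between the two degree-$3$ circle coverings; then an $F_t$-invariant almost complex structure of uniformly bounded dilatation, straightening, and a verification that the straightened cubics remain in $\mathcal U$ and depend continuously on $t$. In particular ``leave the action on the basin of $\infty$ untouched'' is not literally possible once the inner dynamics is changed, and the boundary gluing is exactly the nontrivial part (this is in essence the Petersen--Tan construction); asserting it is where your proof stops being a proof.

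For comparison, the paper sidesteps all of this: given $[P]$ with both critical points in the immediate basin (hence hyperbolic), it invokes Milnor's theorem that the hyperbolic component of $[P]$ has a postcritically finite centre, transports the property to the centre by the same MSS argument you use, and then shows by an elementary ramified-covering argument (pulling back a small trap disk around the attracting fixed point and controlling the preimages of the fixed point) that a PCF cubic with both critical points in the immediate basin must have both critical points equal to the fixed point, so the centre is $[z^3]$ and the component is $\Hyp_0$. If you want to keep your route, you must either carry out the surgery in detail or replace the connectedness step by this centre argument. A minor further point: your openness argument (``a high iterate of each critical point enters a linearizing neighborhood'') only shows the critical point lies in the basin, not the \emph{immediate} basin; to fix it, transport a compact arc in $\Omega$ joining the critical point to the fixed point and note that its perturbation still lies in the basin and still connects the perturbed critical point to the perturbed fixed point.
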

\begin{proof}
  Let us consider a path from $z^3$ to $P$ whose class remains in $\Hyp_0$.
  By the Mañé-Sad-Sullivan theory, the Julia set follows a holomorphic motion while the class of $P$ remains in $\Hyp_0$. One consequence is that it is a quasicicle. The other is that the critical points cannot jump out of the immediate basin.
  
  The converse follows from a theorem of Milnor in \cite{art:mil}: every hyperbolic component has a centre, i.e.\ a map that is post critically finite. For the same reason as in the previous paragraph, this other polynomial map has both critical points in the immediate basin of an attracting fixed point $a$. We will prove below that the two critical points coincide with $a$. Since there is only one affine conjugacy class of polynomial with a fixed double critical point (the class of $P(z)=z^3$), the result will follow.
  
  We will use the following topological lemma: given a \emph{connected and non empty} ramified covering $U\to D$ over a topological disk $D$ and a point $b\in D$, if the set of ramification values is contained in $\{b\}$, then $b$ has a unique preimage and $U$ is a topological disk.
  
  Since $a$ is attracting (possibly superattracting), there is a disk $D=B(a,\epsilon)$ small enough so that every point in $D\setminus\{a\}$ have infinite orbit and $P(D)\subset D$. In particular, since $P$ is post critically finite, the first time a critical orbit enters this disk must be by hitting $a$ directly.
  By the topological lemma, it follows by induction that there exists a sequence $D_n$ of topological disks containing $a$ such that $D_0=D$, $D_{n+1}$ is the connected component of $P^{-1}(D_n)$ containing $a$ and $a$ is the only preimage of $a$ in $D_{n+1}$. We will also use the fact, proved by an easy induction, that $D_n$ is a connected component of $P^{-n}(D)$. From $P(D)\subset D$ it follows that $D_{n} \subset D_{n+1}$.
  The basin $B$ of $a$ is equal to the union $U$ of all $D_n$: indeed $U$ is open and the complement of $U$ in $B$ is open (see the next paragraph), hence empty by connectedness of $B$. Since $a$ is the only element of $P^{-1}(a)$ in $D_{n+1}$ it follows that $a$ is the only element of $P^{-1}(a)$ in $B$. We have seen that the critical points, which both belong to $B$ by hypothesis, eventually map to $a$. Hence they are both equal to $a$, Q.E.D.
  
  Let us justify the claim made in the previous paragraph: let $z\in B\setminus U$. Consider $n$ such that $P^n(z)\in D$. Then there is an open disk $V$ of centre $z$ and such that $P^n(V)\subset D$, and thus for all $k\geq 0$, $P^{n+k}(V)\subset D$. Hence the connected set $V$ is contained in a connected component of $P^{-(n+k)}(D)$. It follows that $V \cap D_{n+k} = \emptyset$ for otherwise $V$ would be contained in $D_{n+k}$ hence $z\in D_k$, contradicting $z\notin U$. Hence $V \cap U = \emptyset$. 
\end{proof}

\begin{lemma}\label{lem:top1}
In metric spaces, if a function is continuous open and proper from a non-empty set to a connected set then it is surjective.
\end{lemma}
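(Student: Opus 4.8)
The plan is to argue by a standard clopen argument on the image. Let $f\colon X\to Y$ be continuous, open, and proper, with $X$ non-empty and $Y$ connected (both metric spaces). The idea is that $f(X)$ is both open and closed in $Y$, hence equals $Y$ since $Y$ is connected and $f(X)$ is non-empty.

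First I would note that $f(X)$ is open in $Y$: this is immediate from the hypothesis that $f$ is an open map applied to the open set $X$. Next, and this is the part requiring a little care, I would show $f(X)$ is closed in $Y$. Let $y\in Y$ lie in the closure of $f(X)$; pick a sequence $y_n\in f(X)$ with $y_n\to y$, and points $x_n\in X$ with $f(x_n)=y_n$. The set $K=\{y_n:n\in\N\}\cup\{y\}$ is compact in $Y$, so by properness $f^{-1}(K)$ is compact in $X$. Since the $x_n$ all lie in $f^{-1}(K)$, and we are in a metric space (so compactness is sequential compactness), we can extract a subsequence $x_{n_k}\to x\in X$. By continuity $f(x)=\lim f(x_{n_k})=\lim y_{n_k}=y$, so $y\in f(X)$. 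Hence $f(X)$ is closed.

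Finally, since $Y$ is connected and $f(X)$ is a non-empty clopen subset of $Y$, we conclude $f(X)=Y$, i.e.\ $f$ is surjective.

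The main (and only real) obstacle is the closedness step, where properness is used: one must check that the relevant set in $Y$ is compact so that its preimage is compact, and then invoke sequential compactness of metric spaces to extract a convergent subsequence of preimages. The hypothesis that the spaces are metric is exactly what makes this sequential argument legitimate; in a general topological setting one would instead phrase ``$f$ proper and $Y$ locally compact Hausdorff implies $f$ closed'', but the metric formulation keeps things elementary and self-contained.
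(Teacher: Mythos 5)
Your argument is correct and coincides with the paper's own proof: openness of the image from openness of $f$, closedness via the compact set $\{f(x_n)\}_{n\in\N}\cup\{\lim f(x_n)\}$, properness to extract a convergent subsequence of preimages, and connectedness of the target to conclude. No gaps.
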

\begin{proof}
The image is everything because it is non-empty open and closed. The last point is proved by taking a convergent sequence $f(x_n)$ and the compact set $\{f(x_n)\}_{n\in\N}\cup\{\lim f(x_n)\}$. Its preimage being compact we can extract a subsequence of $x_n$ so that $x_n$ converges. Then by continuity $\lim f(x_n)=f(\lim x_n)$.
\end{proof}

The sets $\Hyp_0$, $\Con_3$ and $\cal E$ live in the $(a,b_2)$-space (see \Cref{def:E,def:clmc}: they are respectively the principal hyperbolic domain, the connectivity locus and the maps with a fixed critical point). We denote by $\Con_3'$ the preimage of $\Con_3$ in the $(\lambda,c)$-space. It is not relevant here whether $\Con_3'$ is connected or not.
\begin{proposition}\label{prop:0ra}
The preimage of $\Hyp_0$ in $(\lambda,v)$-space has two connected components. One contains $(\lambda=3,v=1)$ and is contained in $``|\lambda|>1"$ (call it $\Hyp^r$) and the other one contains $(|\lambda|<1, v=1)$ and is contained in $``|\lambda|<1"$ (call it $\Hyp^a$). The map from $\Hyp^a$ to $\Hyp_0\setminus\cal E$ is a homeomorphism.
\end{proposition}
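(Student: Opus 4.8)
The plan is to analyze the preimage $\Theta^{-1}(\Hyp_0)$ in $(\lambda,v)$-space by combining the dynamical characterization of $\Hyp_0$ (both critical points in the immediate basin of an attracting fixed point, with the fixed point's multiplier in $\D^*\cup\{0\}$ — but $\lambda\ne 0$ here, so $0<|\lambda|<1$) with the properness of $\Theta$ over $\C^2\setminus\cal E$ from \Cref{prop:proper}. First I would observe that a class $[P]\in\Hyp_0$ has a unique attracting fixed point in its immediate basin (the one whose basin contains both critical points), and its multiplier $\lambda$ satisfies $|\lambda|<1$. Marking that fixed point gives a well-defined point of $(\lambda,v)$-space, so there is a section $\Hyp_0\to\Theta^{-1}(\Hyp_0)$ landing in the region $|\lambda|<1$; call its image $\Hyp^a$. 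The other fixed points of such a $P$ are repelling (multiplier of modulus $>1$, in fact $\ge 1$ but equality is excluded on $\Hyp_0$), and marking one of those produces the component $\Hyp^r$ in the region $|\lambda|>1$. Note $\Hyp_0\cap\cal E=\{[z^3]\}$ since a fixed \emph{critical} point forces (by the classical theorem just proved) both critical points to equal that point, whence $[P]=[z^3]$; and over $[z^3]$ the fibre is the single point $(\lambda,v)=(3,1)$, which lies in the $|\lambda|>1$ region, so it belongs to $\Hyp^r$.

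The key step is to show $\Theta^{-1}(\Hyp_0)=\Hyp^a\sqcup\Hyp^r$ with exactly these two components. I would argue as follows. Restrict $\Theta$ to $\Theta^{-1}(\Hyp_0\setminus\cal E)\to\Hyp_0\setminus\cal E$. By \Cref{prop:proper} this restriction is proper; it is open by Osgood's theorem; and $\Hyp_0\setminus\cal E=\Hyp_0\setminus\{[z^3]\}$ is connected (removing one point from the connected open set $\Hyp_0\subset\C^2$). By the \Cref{lem:top1} it is surjective, so in fact it is a proper local-homeomorphism-type map, hence a (finite) covering map over each component of the target; since $\Hyp_0\setminus\{[z^3]\}$ is connected it is a covering of fixed degree. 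That degree is the number of fixed points one may mark, which splits as: one attracting fixed point (multiplier in $\D$) contributing the sheet $\Hyp^a$, and the repelling fixed points contributing $\Hyp^r$. The crucial sign/modulus input is that on $\Hyp_0$ the fixed-point multipliers are, respectively, of modulus $<1$ (exactly one such) and $>1$ (the others), with no multiplier on the unit circle — this is why $\Hyp^a$ and $\Hyp^r$ are disjoint and lie on opposite sides of $|\lambda|=1$, and it follows from the holomorphic-motion / quasicircle description of the Julia set of maps in $\Hyp_0$. Then I would add back the point over $[z^3]$: the fibre is the single point $(3,1)$, which is the limit of the $\Hyp^r$-sheet as $[P]\to[z^3]$ (the repelling fixed points of $P$ near $z^3$ merge to the triple fixed point with multiplier $\to 3$), so it attaches to $\Hyp^r$ without creating a new component and without touching $\Hyp^a$. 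Thus $\Hyp^r$ contains $(3,1)$ and is contained in $\{|\lambda|>1\}$, and $\Hyp^a$ contains all $(\lambda,v=1)$ with $|\lambda|<1$ (these are the unicritical maps $z\mapsto\lambda z + \text{(cubic term)}$ in $\Hyp_0$, marked at their unique attracting fixed point).

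Finally, the last assertion: the map $\Hyp^a\to\Hyp_0\setminus\cal E$ is a homeomorphism. Since $\Hyp^a$ is precisely the image of the section $[P]\mapsto([P],\text{the unique attracting fixed point})$, this map is injective; it is surjective onto $\Hyp_0\setminus\cal E$ because every class there has such a fixed point; and it is a continuous open bijection (open by Osgood, or simply because its inverse is the continuous forgetful map $\Theta$ restricted to $\Hyp^a$), hence a homeomorphism. One should double-check that $\Hyp^a$ is open and connected: it is a connected component of $\Theta^{-1}(\Hyp_0)$ by the covering argument, hence open; its connectedness follows since it is homeomorphic to the connected set $\Hyp_0\setminus\cal E$.

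\emph{Main obstacle.} The delicate point is justifying cleanly that over $\Hyp_0\setminus\{[z^3]\}$ the proper open map $\Theta$ is a genuine covering map with the sheets separating exactly according to $|\lambda|\lessgtr 1$ — i.e.\ that no fixed-point multiplier of a map in $\Hyp_0$ lies on the unit circle and that the attracting one is unique. Both facts come from the theorem proved above (Julia set is a quasicircle, both critical points in the immediate basin of an \emph{attracting} fixed point) together with the standard count that a cubic polynomial with an attracting fixed point whose immediate basin captures both critical points cannot have a second non-repelling fixed point; I would spell this out using the fixed-point index / holomorphic fixed-point formula $\sum 1/(1-\lambda_i)=1$ over the three fixed points (counted with multiplicity), which pins down the multiplier configuration and in particular forbids $|\lambda_i|=1$ on the open set $\Hyp_0$. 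Everything else is soft topology (properness $+$ openness $+$ connectedness $\Rightarrow$ covering, plus \Cref{lem:top1}) and the explicit limit computation at $[z^3]$.
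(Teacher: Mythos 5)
Your overall skeleton (split the preimage by $|\lambda|\lessgtr 1$ using the absence of neutral multipliers on $\Hyp_0$, and get the homeomorphism $\Hyp^a\to\Hyp_0\setminus\cal E$ from injectivity of the attracting marking plus openness/invariance of domain) agrees with the paper, but there are two genuine gaps. First, the identification $\Hyp_0\cap\cal E=\{[z^3]\}$ is false. The classical theorem forces both critical points to coincide with the fixed point only for the \emph{postcritically finite centre}; a general map of $\Hyp_0$ may have a superattracting fixed point together with a distinct second critical point in its immediate basin. For instance $P(z)=z^3+\epsilon z^2$ with $\epsilon$ small has the fixed critical point $0$ and the other critical point $-2\epsilon/3$ in the immediate basin of $0$, so $[P]\in\Hyp_0\cap\cal E$ but $[P]\neq[z^3]$; in fact $\cal E$ meets $\Hyp_0$ in a complex hypersurface, not a point. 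Consequently $\Hyp_0\setminus\cal E$ is not ``$\Hyp_0$ minus one point'' (its connectedness needs the proper-analytic-subset argument), and, more seriously, your ``add back the fibre over $[z^3]$'' step leaves untouched the fibres over all the other points of $\Hyp_0\cap\cal E$: these fibres are nonempty, consist of markings of \emph{repelling} fixed points, hence lie in $``|\lambda|>1"$, and must be shown to attach to $\Hyp^r$ without creating new components. This is exactly where the paper does extra work: it proves that $\Theta$ restricted to $``|\lambda|>1"\cap\Theta^{-1}(\Hyp_0)$ is proper over \emph{all} of $\Hyp_0$ (not just over $\Hyp_0\setminus\cal E$ as in \Cref{prop:proper}), by following the two repelling fixed points holomorphically.

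Second, the ``covering of fixed degree over $\Hyp_0\setminus\cal E$'' claim fails, and even if it held it would not give what you need. Along the symmetry locus $b=0$, which does meet $\Hyp_0\setminus\cal E$ (e.g.\ $z^3+az$ with $a\neq0$ small), the fibre of $\Theta$ has $2$ elements while nearby fibres have $3$ (case (4) of the paper's fibre lemma), so $\Theta$ is not a covering there; and a covering over a connected base can perfectly well be disconnected, so ``the repelling fixed points contribute the sheet $\Hyp^r$'' assumes precisely the connectedness of the $``|\lambda|>1"$ part, which is the heart of the proposition and is nowhere established in your argument (one would need, say, a monodromy argument showing the two repelling markings cannot be globally distinguished). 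The paper's route avoids this: properness of the $``|\lambda|>1"$ restriction over all of $\Hyp_0$, openness (Osgood), and \Cref{lem:top1} show that \emph{every} connected component of $``|\lambda|>1"\cap\Theta^{-1}(\Hyp_0)$ surjects onto $\Hyp_0$; since $[z^3]$ has a single preimage with $|\lambda|>1$, namely $(\lambda,v)=(3,1)$, there is only one such component. (Also a small slip: the fixed points of $z^3$ other than $0$ are $\pm1$, identified by the symmetry $z\mapsto-z$; they do not merge into a triple fixed point.)
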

\begin{proof}

The polynomial $(\lambda=3,v=1)$ corresponds to $P(z)=z^3$ and is hence in the preimage of $\Hyp_0$ and satisfies $|\lambda|>1$.
Let us denote $\Hyp^r$ the connected component of the preimage of $\Hyp_0$ that contains it.
The polynomials $(|\lambda|<1, v=1)$ have a unique critical point an it must be in the basin of the attracting fixed point by Fatou's theorem. 
Let us denote $\Hyp^a$ the connected component of the preimage of $\Hyp_0$ that contains this connected subset.

Consider the open subset $\Hyp_0$ of $\C^2$.
Its preimage by $\Theta$ is an open subset of $\C^*\times \C$, its connected components are hence open subsets of $\C^*\times \C$. The image by $(\lambda,v)\mapsto(a,b_2)$ of these components are open because $\Theta$ is an open mapping.

By classical theorems of Fatou, a map in $\Hyp_0$ cannot have a neutral cycle.
Hence the preimage of $\Hyp_0$ cannot meet $|\lambda|=1$: any connected component must be contained either in $|\lambda|<1$ or $|\lambda|>1$.

The map $\Theta$ is injective on the intersection of $|\lambda|<1$ with the preimage of $\Hyp_0$, because there is only one attracting point to mark, and its image is $\Hyp_0\setminus{\cal E}$: the attracting fixed point cannot be marked if and only if it is critical.
By the \emph{invariance of domain} theorem, it follows that $\Theta$ is a homeomorphism from 
$``|\lambda|<1" \cap\Theta^{-1}(\Hyp_0)$ to $\Hyp_0\setminus{\cal E}$. (Alternatively we could have used the fact that $\Theta$ is proper over $\C^2\setminus{\cal E}$---hence over $\Hyp_0\setminus{\cal E}$---because in metric spaces, a proper continuous bijective map is necessarily a homeomorphism.) In particular $``|\lambda|<1" \cap\Theta^{-1}(\Hyp_0)$ is connected and coincides with the set $\Hyp^a$ defined at the beginning of this proof.

Let us prove that $\Theta$ is proper from $``|\lambda|>1"\cap \Theta^{-1}(\Hyp_0)$ to $\Hyp_0$. This has already been proved over $\Hyp_0\setminus{\cal E}$, see \Cref{prop:proper}. The extension to all of $\Hyp_0$ essentially follows from the facts that in the marked point $z=0$ cannot have a multiplier that tend to $0$ since we are in $``|\lambda|>1"$, and that there is a uniformly bounded number of fixed points. Here is a detailed proof:
Let us assume that a sequence of polynomials $P_n$ in $\Hyp_0$ with a marked repelling fixed point is such that the affine conjugacy class of $P_n$, unmarked, converge to some polynomial $P$ in $\Hyp_0$.
Recall that the two repelling fixed points depend holomorphically on polynomials near $P$: there is a neighborhood $V$ of $P$ and two holomorphic functions $\xi_1$, $\xi_2$ from $V$ to $\C$ such that the repelling fixed points of any $Q\in V$ are $\xi_1(Q)$ and $\xi_2(Q)$.
The marked point may be any of these two fixed points and may occasionally jump from one to the other as $n$ varies. But we can extract a subsequence so that the marked point is always $\xi_1(P_n)$ or $\xi_2(P_n)$. It then converges to $\xi_i(P)$ for $i=1$ or $2$.
By the sequential characterization of compact subsets of metric spaces, this proves the claim.

By \Cref{lem:top1}, any component of ``$|\lambda|>1\cap \Theta^{-1}(\Hyp_0)$'' surjects to $\Hyp_0$. Since $z^3$ has only one preimage in  ``$|\lambda|>1\cap \Theta^{-1}(\Hyp_0)$'', it follows that  ``$|\lambda|>1\cap \Theta^{-1}(\Hyp_0)$'' is connected.
\end{proof}

\begin{proposition}
Each of the two components $\Hyp^a$, $\Hyp^r$ in the previous proposition, which sit in $(\lambda,v)$-space, has a preimage in $(\lambda,c)$-space that is connected. The first one contains $(\lambda=3,c=1)$ and is contained in ``$|\lambda|>1$''and the other one contains $(|\lambda|<1, c=1)$ and is contained in ``$|\lambda|<1$''. The map $(\lambda,c)\mapsto (\lambda, v=\frac{c+c^{-1}}{2})$ is a two-to-one covering over these sets minus $``v=1"\cup``v=-1''$.
\end{proposition}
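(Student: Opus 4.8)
The plan is to lift the two connected components $\Hyp^a,\Hyp^r\subset(\lambda,v)$-space along the two-to-one Joukowsky map $j:(\lambda,c)\mapsto(\lambda,\frac{c+c^{-1}}{2})$ and to show that each lift stays connected. First I would recall the general topological principle behind the previous proposition: a connected ramified cover over a connected base is connected provided one controls the ramification. Here $j$ is a proper holomorphic map $\C^*\times\C\to\C\times\C$, two-to-one, ramified exactly over $``v=1"$ and $``v=-1"$ (the critical values of the Joukowsky transform $c\mapsto\frac{c+c^{-1}}2$, attained at $c=1$ and $c=-1$ respectively). Away from these two complex lines, $j$ restricts to a genuine (unramified) two-sheeted covering.

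Next, fix one of the components, say $\Hyp^a$, which by \Cref{prop:0ra} is contained in $``|\lambda|<1"$. I would set $W:=j^{-1}(\Hyp^a)$ and $W_0:=j^{-1}(\Hyp^a\setminus(``v=1"\cup``v=-1"))$, so that $j:W_0\to\Hyp^a\setminus(``v=1"\cup``v=-1")$ is an honest two-to-one covering; the last sentence of the proposition is then immediate once connectedness of $W$ is known, since the symmetry \eqref{eq:sym} shows the two sheets are swapped by $c\mapsto 1/c$ and no point of $W_0$ is fixed by this involution. The key dynamical input is that $\Hyp^a$ consists of polynomials with an attracting fixed point whose immediate basin contains \emph{both} critical points; in $(\lambda,c)$-space the two critical points are the marked points $1$ and $c$, and marking them is exactly the extra datum that $j$ adds. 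Since for a polynomial in $\Hyp^a$ the set of critical points is a well-defined unordered pair and \emph{both} choices of which one to call ``$1$'' are legitimate (they are symmetric under \eqref{eq:sym}), the fibre of $j$ over a point of $\Hyp^a$ not on $``v=1"\cup``v=-1"$ has exactly two elements, and over $``v=1"$ (double critical point) exactly one, and the value $v=-1$ I claim does not occur in $\Hyp^a\setminus\cal E$ — or if it does, the fibre still has a single element because $c=-1$ is the unique preimage.

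To finish I would run the standard connectedness argument: $\Hyp^a$ is connected (\Cref{prop:0ra}) and open in $``|\lambda|<1"\subset\C^2$; the locus $``v=1"$ is a complex hypersurface, so $\Hyp^a\setminus``v=1"$ (and likewise after also removing $``v=-1"$) is still connected — removing a complex-analytic subset of complex codimension one from a connected complex manifold does not disconnect it. Over this connected, simply-punctured base, the covering $j:W_0\to\Hyp^a\setminus(``v=1"\cup``v=-1")$ has connected total space iff its monodromy is transitive; but the monodromy around a small loop in the base encircling a point of $``v=1"$ transposes the two sheets (this is just the monodromy of $\sqrt{\,\cdot\,}$ coming from the ramification of Joukowsky at $c=1$), hence is transitive, so $W_0$ is connected. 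Finally $W$ is obtained from $W_0$ by adding the points lying over $``v=1"\cup``v=-1"$, each of which is a limit of points of $W_0$ (the single preimage $c=\pm1$ is approached by the unramified nearby fibres), so $W=\overline{W_0}\cap W$ is connected too; it contains $(\lambda=3,c=1)$ (resp. $(|\lambda|<1,c=1)$) because that point maps to $(\lambda=3,v=1)$ (resp. $(|\lambda|<1,v=1)$). The same verbatim argument applies to $\Hyp^r$ in $``|\lambda|>1"$.

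The main obstacle I anticipate is \emph{not} the covering-space bookkeeping but pinning down the ramification/fibre structure of $j$ precisely over $\Hyp^a$ and $\Hyp^r$: I must make sure that no point of these components forces the fibre of $j$ to behave unexpectedly — in particular that $v=-1$ either is genuinely absent from the relevant part of the component or is handled by the same ``unique preimage $c=-1$'' clause as $v=1$ — and that the involution $c\mapsto 1/c$ has no fixed points on $W_0$ so that $j|_{W_0}$ really is a free two-to-one action (fixed points would be $c=\pm1$, exactly the removed locus). Once this local picture at the two special lines is nailed down, connectedness propagates by the monodromy argument and the closure argument with no further difficulty, mirroring the proof of \Cref{prop:0ra}.
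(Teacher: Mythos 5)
Your argument is correct, but it is not the paper's: the paper's proof never mentions monodromy. There, one notes that $c\mapsto v$ is proper from $\C^*$ to $\C$, hence $(\lambda,c)\mapsto(\lambda,v)$ is proper; then any connected component $A'$ of the preimage of $\Hyp^a$ is open (openness of the map) and proper over $\Hyp^a$, so by \Cref{lem:top1} it surjects onto $\Hyp^a$; since $(\lambda,v=1)\in\Hyp^a$ has the single preimage $(\lambda,c=1)$, two distinct components would have to share it, so there is only one. Your route instead removes the ramification locus $``v=\pm1"$ (harmless, being an analytic set of codimension one), observes that the local monodromy of the Joukowsky map around $v=1$ is a sheet transposition, hence the monodromy is transitive and the unramified part $W_0$ is connected, and then recovers $W$ as a subset of $\overline{W_0}$ containing $W_0$. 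Both arguments pivot on exactly the same dynamical input from \Cref{prop:0ra} — that each component of the base actually contains a point with $v=1$ — packaged either as ``the unique fibre point over $v=1$ glues the components'' (paper) or as ``the ramification at $v=1$ produces sheet-swapping monodromy'' (you); you should state this dependence explicitly, since without $\Hyp^a\cap``v=1"\neq\emptyset$ no such loop exists in your base and the covering could be trivial. The paper's version buys brevity and reuses machinery already set up (properness, \Cref{lem:top1}); yours buys a more transparent local picture of the $2:1$ covering and of how the two sheets are exchanged by $c\mapsto 1/c$. Two small corrections: your suggestion that $v=-1$ might not occur in $\Hyp^a\setminus\cal E$ is false — $(\lambda,v=-1)$ lies in $\Hyp^a$ for every $|\lambda|<1$, since $P_{\lambda,-1}$ commutes with $z\mapsto-z$ and so both critical points lie in the immediate basin — but your fallback (unique preimage $c=-1$, handled by the closure step exactly like $c=1$) is what is actually needed and is correct; and the source of your map should be $\C^*\times\C^*$, not $\C^*\times\C$, though nothing in your argument uses the misstated domain.
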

\begin{proof}
The claim on covering properties follows from $c\mapsto v$ being a $2:1$ covering from $\hat \C\setminus\{-1,1\}$ to itself. Moreover this map is proper from $\C^*$ to $\C$, and thus so is too the map $(\lambda,c)\mapsto(\lambda,v)$ from $(\C^*)^2$ to $\C^*\times \C$.
The claim on connectedness then follows from the fact that both preimages contain a point with $c=1$: consider for instance a component $A'$ of the preimage of $\Hyp^a$. Since the map is proper, it is proper from $A'$ to $\Hyp^a$. By \Cref{lem:top1} the restriction $A'\to \Hyp^a$ is surjective. But since $(\lambda,v=1)$ has only one preimage, $(\lambda, c=1)$, there can be only one such component.
\end{proof}
Let us define the following subset of $(\lambda,c)$-space:
\[
\Hyp_0' = \left\{(\lambda,c)\,;\, |\lambda|<1,\ \text{both critical points of  $P_{\lambda,c}$ lie in the immediate basin of $0$}\right\}
\]
Note that $\Hyp_0'$ contains $(\lambda,c=-1)$ for any $\lambda\in \D$. Indeed, the polynomial $P_{\lambda,-1}$ commutes with $z\mapsto -z$, which swaps both critical points. Since at least one critical point is in the immediate basin, which contains $0$ hence is invariant too by $-z$, it follows that both critical points are in the immediate basin. 
Recall that $\cal E$ denotes the subset of polynomials with a fixed critical point in the set $\C^2$ of unmarked classes.
By the analysis above:
\begin{itemize}
\item $\Hyp_0'$ contains $(|\lambda|<1,c=-1)$ and $(|\lambda|<1,c=1)$.
\item $\Hyp_0'$ is connected 
and the map $(\lambda,c)\mapsto (a,b_2)$ sends it to $\Hyp_0\setminus{\cal E}$ as a $2:1$ ramified cover.
\item More precisely the map $(\lambda,c)\mapsto (a,b_2)$ is a
\begin{itemize}
\item $1:1$ homeomorphism from $``c=1"\cap \Hyp_0'$ to $a=0 \cap (\Hyp_0\setminus{\cal E})$,
\item $1:1$ homeomorphism from $``c=-1"\cap \Hyp_0'$ to $b=0\cap (\Hyp_0\setminus{\cal E})$,
\item $2:1$ covering from $\Hyp_0'\setminus ``c\in\{-1,1\}"$ to $(\Hyp_0\setminus{\cal E})\setminus ``a=0, \text{ or } b=0"$.
\end{itemize}
\end{itemize}

We prove the following lemma here, for future reference in this document.

\begin{lemma}\label{lem:cesc}
  For each $\lambda\in\C^*$ there exists $\rho>0$ such that if $|c|>\rho$ then $c$ belongs to the basin of infinity for $P_{\lambda,c}$.
  For $|c|<\rho^{-1}$, then $1$ belongs to the basin of infinity for $P_{\lambda,c}$. 
\end{lemma}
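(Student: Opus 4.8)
The plan is to prove the statement about the critical point $c$ first, by a direct escape estimate for $P_{\lambda,c}$, and then to deduce the statement about the critical point $1$ from it using the symmetry relation \eqref{eq:sym}. The only real ingredient is a uniform escape criterion whose radius grows linearly in $|c|$.

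First I would establish that criterion: there is a constant $A=A(\lambda)\ge 1$ such that, for every $c$ with $|c|\ge 1$ and every $z$ with $|z|\ge A|c|$, one has $|P_{\lambda,c}(z)|\ge 2|z|$, so that by iteration every such $z$ lies in the basin of infinity. This is routine. Writing $P_{\lambda,c}(z)=\lambda z-\tfrac{\lambda(1+1/c)}{2}z^{2}+\tfrac{\lambda}{3c}z^{3}$ and setting $t=|z|/|c|\ge 1$, the cubic term has modulus $\tfrac{|\lambda|}{3}t^{3}|c|^{2}$, whereas the quadratic and linear terms are bounded above, using $|1+1/c|\le 2$ for $|c|\ge 1$, by $|\lambda|t^{2}|c|^{2}+|\lambda|t|c|$; for $t$ larger than an absolute constant and $|c|\ge 1$ the cubic term dominates and gives $|P_{\lambda,c}(z)|\ge 2|z|$.

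Next I would compute $P_{\lambda,c}(c)=\tfrac{\lambda c}{6}(3-c)$, whence $|P_{\lambda,c}(c)|\ge \tfrac{|\lambda|}{6}\,|c|\,(|c|-3)$ once $|c|\ge 3$. Taking $\rho=\rho(\lambda)$ large enough, of the order of $3+6A/|\lambda|$ (in particular $\rho\ge 3$), the hypothesis $|c|>\rho$ forces $|P_{\lambda,c}(c)|>A|c|$, so $P_{\lambda,c}(c)$, and hence $c$ itself, lies in the basin of infinity; this gives the first assertion. For the second, note that by \eqref{eq:sym} the affine map $z\mapsto cz$ conjugates $P_{\lambda,1/c}$ to $P_{\lambda,c}$ and sends $1/c$ to $1$; an affine conjugacy fixes $\infty$ and therefore maps basins of infinity to basins of infinity, so $1$ lies in the basin of infinity of $P_{\lambda,c}$ if and only if $1/c$ lies in the basin of infinity of $P_{\lambda,1/c}$. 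If $|c|<\rho^{-1}$, i.e.\ $|1/c|>\rho$, the first assertion applied to $P_{\lambda,1/c}$ (the member of the family with parameters $(\lambda,1/c)$) shows that $1/c$ is in the basin of infinity of $P_{\lambda,1/c}$, and hence $1$ is in the basin of infinity of $P_{\lambda,c}$, with the same $\rho$.

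The one point needing care is the $c$-dependence in the escape estimate: the leading coefficient $\lambda/(3c)$ degenerates as $|c|\to\infty$, so the escape radius necessarily grows like $|c|$. The argument nonetheless closes because the critical value $P_{\lambda,c}(c)$ grows quadratically in $|c|$ and so eventually overtakes that escape radius; everything else is bookkeeping together with the symmetry \eqref{eq:sym}.
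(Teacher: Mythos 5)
Your proof is correct and follows essentially the same route as the paper: an escape trap whose radius grows (at most) linearly in $|c|$, the computation $P_{\lambda,c}(c)=\lambda c\,(3-c)/6$ showing the critical value grows quadratically and hence lands in the trap for $|c|$ large, and the symmetry \eqref{eq:sym} to transfer the statement to the critical point $1$. The only cosmetic point is that the threshold on $t=|z|/|c|$ in your escape criterion is not an absolute constant but depends on $\lambda$ (roughly $\sqrt{12/|\lambda|}$), which is harmless since you allowed $A=A(\lambda)$ from the start.
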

\begin{proof}
  Given a polynomial $f(z) = a_1 z + \cdots + a_3 z^3$, a trap in the basin of infinity is given by $|z|>R$ with
  \[ R=\max(\sqrt{\frac{2}{|a_3|}}, \frac{4|a_2|}{|a_3|}, \sqrt{\frac{4|a_1|}{|a_3|}}).\]
  The first lower bound on $R$ ensures that the term $a_3 z^3$ has modulus $>2|z|$. The other two that the rest has modulus $<\frac{1}{2}|a_3 z^3|$.
  Given the formula of $P_{\lambda,c}$, which we recall here:
  \[ P_{\lambda,c}(z) = \lambda z \left( 1 - \frac{(1 + \sfrac{1}{c})}{2} z + \frac{\sfrac{1}{c}}{3} z^2 \right) \]
  this yields
  \[ R = \max(\sqrt{\frac{6|c|}{|\lambda|}} , 6|c+1| , \sqrt{12 |c|})\] 
  The point $c$ is not in the trap for $c$ big, but its first iterate is:
 indeed one computes
  \[P_{\lambda,c}(c) = \lambda c \frac{3-c}{6}.\]
  This proves the first claim.
  The second claim follows from the symmetry relation $c^{-1}P_{\lambda,c}(cz) = P_{\lambda,1/c} (z)$.
\end{proof}

\section{The linearizing power series}\label{sec:phi}

Given a map $f(z) = \lambda z + \cdots$, with $\lambda\neq 0$, we define by abuse of notations the function
\[
\lambda(z)= \lambda z
\]
There are two variants of the linearizing maps: it is either a map $\phi(z) = z +\cdots$ such that
\[
\phi \circ f = \lambda \circ \phi
\]
holds near $0$, or a map $\psi(z) = z + \cdots$ such that
\[
f\circ \psi = \psi \circ \lambda
\]
holds near $0$.
Douady used to call the first one a \emph{linearizing coordinate}, and the second one a \emph{linearizing parametrization}. We shall use both variants and, by imitation of Douady's conventions for Fatou coordinates, we use the symbol $\phi$ for the first and $\psi$ for the second.

We recall\footnote{without proof, this is very classical} that if $\lambda$ is not a root of unity, then as formal power series normalized by the condition to be of the form $z + {\cal O}(z^2)$ and satisfying the above equations, $\psi$ and $\phi$ exist and are unique. (If $\lambda$ is a root of unity then $\phi$ and $\psi$ may or may not exist, and that if they exist, then they are not unique.)

We recall without proof the following classical facts, valid for all $f$.

\begin{proposition}
  Assume that $\lambda\in\C^*$ is not a root of unity.
  Let $r\in[0,+\infty]$ denote the radius of convergence of $\psi$ and $r'$ the radius of convergence of $\phi$.
  \begin{itemize}
    \item $r>0$ iff $r'>0$
    \item $r>0$ iff the origin is linearizable
    \item if $|\lambda|\neq 1$ then the origin is always linearizable
    \item if $|\lambda|<1$ then a holomorphic extended linearizing coordinate $\phi$ with $\phi'(0)=1$ is defined on the basin of attraction of $f$; it satisfies $\phi\circ f = \lambda \circ \phi$ on the basin but is not necessarily injective if $f$ is not injective; $\phi(z) = \lim f^n(z)/\lambda^n$, which converges locally uniformly on the basin
    \item if $|\lambda|=1$ and $f$ is linearizable then a holomorphic linearizing coordinate $\phi$ with $\phi'(0)=1$ is defined on the Siegel disk of $f$, is necessarily injective thereon and maps it to a Euclidean disk
    \item in the last two cases above, the power series expansion at the origin of the holomorphic map $\phi$ coincides with the formal linearizing power series $\phi$ introduced earlier
  \end{itemize}
\end{proposition}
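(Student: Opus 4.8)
The plan is to base everything on Koenigs' classical linearization theorem and then reduce the remaining items to the uniqueness of the formal linearization. \emph{Equivalence of $r>0$, $r'>0$, and linearizability.} If $r>0$, then $\psi$ is a holomorphic germ at $0$ with $\psi(0)=0$ and $\psi'(0)=1$, hence invertible as a germ; applying $\psi^{-1}$ to both sides of $f\circ\psi=\psi\circ\lambda$ gives $\psi^{-1}\circ f=\lambda\circ\psi^{-1}$ near $0$. Since $\lambda$ is not a root of unity, the formal solution of $\phi\circ f=\lambda\circ\phi$ normalized as $z+\mathcal{O}(z^2)$ is unique, so the Taylor series of the germ $\psi^{-1}$ is exactly the formal series $\phi$; hence $r'>0$. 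The converse is the same argument with $\phi$ and $\psi$ exchanged. For linearizability: a holomorphic germ $h$ with $h(0)=0$, $h'(0)\neq0$ conjugating $f$ to $z\mapsto\lambda z$ on either side becomes, after dividing by $h'(0)$ and using uniqueness once more, the germ $\phi$ or $\psi$; so $f$ is linearizable iff $r>0$ iff $r'>0$.

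\emph{The case $|\lambda|\neq1$.} For $|\lambda|<1$ invoke Koenigs: choose $\rho>0$ and $\kappa\in(|\lambda|,\sqrt{|\lambda|})$ with $|f(z)|\le\kappa|z|$, $f(D(0,\rho))\subset D(0,\rho)$, and $|f(z)-\lambda z|\le C|z|^2$ on $D(0,\rho)$. The telescoping estimate then shows $\phi_n:=\lambda^{-n}f^n$ is uniformly Cauchy on $D(0,\rho)$, with error controlled by the summable ratio $\kappa^2/|\lambda|<1$; the limit $\phi$ is holomorphic with $\phi(0)=0$, $\phi'(0)=1$, and satisfies $\phi\circ f=\lambda\circ\phi$ by passing to the limit in $\phi_n\circ f=\lambda\circ\phi_{n+1}$. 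For $|\lambda|>1$, a local inverse branch $g$ of $f$ exists since $f'(0)=\lambda\neq0$; it has $g'(0)=1/\lambda$ with $|1/\lambda|<1$, a linearizing coordinate for $g$ is one for $f$, and we are reduced to the previous case. This gives the third bullet, and, combined with the first part, the first and second.

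\emph{The attracting extension.} Given the Koenigs germ $\phi$ on $D(0,\rho)$, define for $z$ in the basin $B$ of $0$ the value $\tilde\phi(z):=\lambda^{-n}\phi(f^n(z))$, where $n$ is any integer with $f^n(z)\in D(0,\rho)$; independence of $n$ comes from $\phi\circ f=\lambda\circ\phi$, and $\tilde\phi$ is holomorphic (locally a fixed such composition), agrees with $\phi$ near $0$, and satisfies $\tilde\phi\circ f=\lambda\circ\tilde\phi$ on $B$. If $f$ is a polynomial with a critical point $c\in B$ then $\tilde\phi'(c)=0$, so $\tilde\phi$ need not be injective. The formula $\tilde\phi=\lim f^n/\lambda^n$ holds on all of $B$: it is Koenigs on $D(0,\rho)$, and for general $z\in B$ with $f^N(z)\in D(0,\rho)$ one has $\lambda^{-n}f^n(z)=\lambda^{-N}\bigl(\lambda^{-(n-N)}f^{n-N}(f^N(z))\bigr)\to\tilde\phi(z)$, locally uniformly because $B$ is exhausted by open sets sent into $D(0,\rho)$ by a fixed iterate.

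\emph{The Siegel case, germ agreement, and the main obstacle.} Assume $|\lambda|=1$ and $f$ linearizable, so $\phi$ converges on $D(0,r')$ and conjugates $f$ to $z\mapsto\lambda z$ there; restricting to a round neighborhood of $0$ on which $f$ acts as a rotation shows that neighborhood is contained in the Siegel disk $\Delta$. Let $h\colon D(0,\mathfrak r)\to\Delta$ be the conformal isomorphism with $h(0)=0$, $h'(0)=1$, where $\mathfrak r$ is the conformal radius of $\Delta$; then $h^{-1}\circ f\circ h$ is an automorphism of $D(0,\mathfrak r)$ fixing $0$ with derivative $\lambda$, hence equals $z\mapsto\lambda z$. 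The germ $k:=\phi\circ h$ satisfies $k(0)=0$, $k'(0)=1$, and $k(\lambda z)=\lambda k(z)$; comparing Taylor coefficients and using that $\lambda$ is not a root of unity forces $k=\mathrm{id}$, so $\phi=h^{-1}$ as germs. Thus $h^{-1}$ is the holomorphic extension of $\phi$ to $\Delta$, it is injective, and it maps $\Delta$ onto the Euclidean disk $D(0,\mathfrak r)$. In both cases the extended map has the same germ at $0$ as the locally constructed $\phi$, whose Taylor series is a formal $z+\mathcal{O}(z^2)$-solution of $\phi\circ f=\lambda\circ\phi$ and hence equals the formal series $\phi$. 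The one place requiring care is precisely this Siegel extension: unlike the attracting case, $\phi$ cannot be extended by the dynamical formula, since orbits in $\Delta$ need not return near $0$; the remedy is the coefficient-comparison argument above, after which the extension is simply \emph{defined} to be $h^{-1}$. Everything else is bookkeeping with uniqueness of formal linearizations.
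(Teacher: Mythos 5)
Your argument is correct. Note, however, that the paper offers no proof to compare against: this proposition is explicitly ``recalled without proof'' as a collection of classical facts (Koenigs' theorem, the dynamical extension of the linearizing coordinate to the basin, the Riemann-map description of the Siegel disk, and uniqueness of the normalized formal linearization when $\lambda$ is not a root of unity). Your write-up is a standard, self-contained derivation of exactly these facts, and the pieces fit together as you describe: the germ-level equivalences follow from invertibility of $z+\mathcal{O}(z^2)$ germs plus formal uniqueness; Koenigs handles $|\lambda|<1$ and the inverse branch reduces $|\lambda|>1$ to it; the pull-back formula $\lambda^{-n}\phi\circ f^n$ extends $\phi$ to the basin and yields the locally uniform limit $\lim f^n/\lambda^n$; and in the Siegel case the coefficient comparison showing $\phi\circ h=\mathrm{id}$ correctly identifies the extension with the inverse of the normalized Riemann map $h$ of the Siegel disk. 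Two small points you are implicitly using and could make explicit: that $f$ restricted to its Siegel disk $\Delta$ is an automorphism of $\Delta$ (true by the definition of $\Delta$ as the maximal rotation domain, which is what licenses the Schwarz-lemma step identifying $h^{-1}\circ f\circ h$ with $z\mapsto\lambda z$), and that $\Delta\neq\C$ so that the Riemann map onto a round disk of finite conformal radius exists; the latter is automatic for the polynomials of degree at least $2$ considered in the paper, and in the degenerate case $\Delta=\C$ the statement holds trivially with $r=+\infty$.
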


We will be mainly interested in
$r(\lambda,c) := r(P_{\lambda,c})$ the radius of convergence of the power series $\psi$ associated to $f = P_{\lambda,c}$.
If context makes it clear, we will use the shorter notations $r(c)$ or even $r$.
Even though $\psi$ originally designates a formal power series, we will use the same symbol $\psi$ to also denote the holomorphic function defined on its disk of convergence $B(0,r)$ by the sum of the power series.

\medskip

We recall the following facts, specific to polynomials. Let $P$ be a polynomial of degree at least $2$ with $P(z) = \lambda z + {\cal O}(z^2)$ near $0$ and $\lambda\in\C^*$ which is not a root of unity:

\begin{itemize}
  \item If $|\lambda|\leq 1$ then the sum of the power series $\psi$ on its disk of convergence\footnote{Which may be empty\ldots\ in which case the statements gives no information.} defines an injective function.
  \item If $|\lambda|=1$ and $P$ is linearizable then $r'$ is equal to the distance from $0$ to the boundary of its Siegel disk and $r$ is equal to the conformal radius of the Siegel disk w.r.t.\ $0$.
  \item If $|\lambda|<1$, then $r'$ is equal to the distance from $0$ to the boundary of its attracting basin, and $r$ is equal to the conformal radius w.r.t.\ $0$ of the special subset $U$ defined below in \Cref{prop:U}.
\end{itemize}

\noindent{\bf Note:} Concerning the first point (which also trivially holds if $f$ is linear): if $\lambda$ has modulus one but is not a root of unity, there is a nice proof in \cite{book:Milnor}. And in the case $|\lambda|<1$ it follows from the following argument: the map $\psi$ satisfies $\psi (\lambda z) = f (\psi(z))$ on its disk of convergence. So if $\psi(x)=\psi(y)$ then $\psi(\lambda^n x) = \psi(\lambda^n y)$. Since $\psi'(0)=1$ the map $\psi$ is injective near $0$, so $\lambda^n x=\lambda^n y$ for $n$ big enough. Hence $x=y$.

\medskip

We recall the following classic fact, essentially a consequence of the maximum principle.
\begin{lemma}[folk.]\label{lem:folk}
 if $V\subset \C$ is a bounded Jordan domain and $P$ a non-constant polynomial then all the connected components $U$ of $P^{-1}(V)$ are (bounded) Jordan domains and $P:\partial U\to \partial V$ is a covering whose degree coincides with the degree of the proper map $P:U\to V$.
\end{lemma}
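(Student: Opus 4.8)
The plan is to fix one connected component $U$ of $P^{-1}(V)$ and establish, in order, that $P\colon U\to V$ is proper holomorphic of some degree $k\ge1$, that $U$ is simply connected, that $\partial U$ is a Jordan curve, and finally that $P|_{\partial U}$ covers $\partial V$ with degree $k$. The first point is routine: $V$ bounded and $P$ proper as a self-map of $\C$ make $P^{-1}(\overline V)$ compact, so $U$ is bounded; $U$ being open and closed in $P^{-1}(V)$ forces $\partial U\subseteq P^{-1}(\C\setminus V)$, which together with $\overline U\subseteq P^{-1}(\overline V)$ gives $P(\partial U)\subseteq\partial V$; and $P\colon U\to V$ is proper (a union of components of a preimage under a proper map maps properly onto it), hence a branched covering of some degree $k\ge1$, which is the $\deg(P\colon U\to V)$ of the statement. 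Everything below concerns $\partial U$.

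For simple connectivity I would run the standard ``filling'' argument. Given a Jordan curve $\sigma\subseteq U$ bounding a Jordan domain $W\subseteq\C$, the curve $P\circ\sigma$ lies in $V$; since $\C\setminus V$ is connected (Jordan curve theorem), misses $P\circ\sigma$, and reaches infinity, the winding number $\operatorname{ind}(P\circ\sigma,\cdot)$ is identically $0$ on $\C\setminus V$. By the argument principle $P$ omits all values of $\C\setminus V$ on $W$, so $P(W)\subseteq V$, hence $\overline W\subseteq P^{-1}(V)$, hence (connectedness, and $\overline W$ meets $U$) $\overline W\subseteq U$. Thus every Jordan curve in $U$ bounds a disk in $U$, i.e.\ $\widehat\C\setminus U$ is connected, i.e.\ $U$ is simply connected; in particular $\partial U$, the common frontier of the two connected sets $U$ and $\widehat\C\setminus U$, is connected.

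The heart of the matter is showing $\partial U$ is a $1$-manifold --- then, being compact and connected, it is a single Jordan curve $J$, and $U$ (connected, disjoint from $J$, clopen in its component of $\widehat\C\setminus J$) is the bounded Jordan domain cut out by $J$. Since $\partial U\subseteq P^{-1}(\partial V)$, I would describe $P^{-1}(\partial V)$ locally. Away from critical points $P$ is a local homeomorphism, so there $P^{-1}(\partial V)$ is locally a Jordan arc separating a ``$V$-side'' from a ``$D$-side'', $D:=\widehat\C\setminus\overline V$; the $V$-side is a connected subset of $P^{-1}(V)$ accumulating on $\partial U$, hence lies in $U$, so $\partial U$ is locally that arc. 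At a critical point $z$ of local degree $m\ge2$ with $P(z)\in\partial V$, in local coordinates $P$ is $w\mapsto w^m$; since $\partial V$ is a Jordan curve, $P(z)$ has a connected ``one-sided'' neighbourhood basis inside $V$ (Carathéodory), whose pullback near $z$ consists of $m$ sectors $S_1,\dots,S_m$ of $P^{-1}(V)$, each mapped homeomorphically onto that piece, cyclically interleaved with $m$ sectors $D_1,\dots,D_m$ of $P^{-1}(D)$. The crucial claim is that $U$ meets at most one of the $S_i$; granting it, $\partial U$ near $z$ is again a single arc (the two edges bounding the unique $U$-sector), so $\partial U$ is a $1$-manifold everywhere. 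To prove the claim, suppose $S_i,S_j\subseteq U$ with $i\ne j$; join $q_i\in S_i$ to $q_j\in S_j$ by a simple arc $\gamma\subseteq U$ meeting $\overline{S_i}$, $\overline{S_j}$ only at $q_i$, $q_j$, and prolong it by simple arcs inside $S_i$ and $S_j$ to a Jordan curve $L\subseteq\C$ with $L\setminus\{z\}\subseteq U$ and $P(L)\subseteq\overline V$. The winding-number/argument-principle computation above, applied to $L$, shows the bounded component $W$ of $\C\setminus L$ satisfies $P(W)\subseteq\overline V$, i.e.\ $W\cap P^{-1}(D)=\emptyset$. But $L$ leaves $z$ towards $S_i$ and returns to $z$ from $S_j$, so at $z$ one of the two complementary wedges of $L$ --- necessarily the bounded one $W$ --- contains the germ of some $D$-sector $D_\ell$ (between $S_i$ and $S_j$, either way round $z$, there sits a $D$-sector), and as $D_\ell$ is connected and misses $L$ this forces $D_\ell\subseteq W$, contradicting $W\cap P^{-1}(D)=\emptyset$.

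For the covering statement: by the local description, $P|_{\partial U}$ is a local homeomorphism at every point of $\partial U$ (at a critical point $z$ this is exactly where ``at most one sector'' is used: the unique $U$-sector maps homeomorphically onto a one-sided piece of $V$ at $P(z)$, hence its two boundary edges map homeomorphically onto the arc of $\partial V$ through $P(z)$), and it is onto $\partial V$ (preimages of $y_n\to y\in\partial V$ with $y_n\in V$ accumulate on $\partial U$); a surjective local homeomorphism between compact connected $1$-manifolds is a covering. Its degree $k'$ equals $\operatorname{ind}(P\circ\partial U,y)$ for $y\in V$ (the positively oriented $\partial U$ maps $k'$-to-one onto $\partial V$), which by the argument principle is the number of solutions of $P=y$ in $U$, i.e.\ $k$. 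The step I expect to be the real obstacle is precisely the ``at most one sector'' claim: I see no soft (monodromy-only) reason forbidding $U$ to wrap around a critical point lying over $\partial V$, and the argument above via the auxiliary curve $L$, while short, is somewhat ad hoc; a lesser technical nuisance is making precise the ``connected one-sided neighbourhood'' of a boundary point of $V$ for a possibly wild Jordan curve $\partial V$, which is what forces the appeal to Carathéodory's theorem.
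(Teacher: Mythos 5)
The paper itself never proves this lemma (it is quoted as folklore, ``essentially a consequence of the maximum principle''), so your argument has to stand on its own. Most of it does: properness and the degree $k$, simple connectivity of $U$ via the winding-number/argument-principle filling argument, the reduction of ``$\partial U$ is a Jordan curve'' to the claim that $U$ meets at most one $V$-sector at a critical point of $\partial U$, and the final covering/degree identification are all sound in outline. The genuine gap is exactly where you feared. The sentence ``one of the two complementary wedges of $L$ --- necessarily the bounded one $W$ --- contains the germ of some $D$-sector'' is an assertion, not a proof, and the picture behind it is not available: $\partial V$ is an arbitrary Jordan curve, so the sectors at $z$ and the two branches of $L$ may spiral, and $L\cap B(z,\eps)$ need not cut a small disk into two ``wedges''. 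Worse, the clause ``necessarily the bounded one'' cannot be justified as it stands, because by the maximum principle every connected component of $P^{-1}(\C\setminus\ov V)$ is unbounded (a bounded component $\Omega$ would carry the positive harmonic function $g\circ P$, where $g$ is the Green's function of $\widehat\C\setminus\ov V$ with pole at $\infty$, extending continuously by $0$ to $\partial\Omega$); consequently, for \emph{any} Jordan curve $L$ disjoint from $P^{-1}(\C\setminus\ov V)$, all the $D$-sectors lie in the unbounded complementary component. What your reductio actually needs is the separation statement that the two $D$-sectors flanking the pair $(\sigma_i,\sigma_j)$ in the cyclic order at $z$ lie in \emph{different} components of $\C\setminus L$ --- equivalently, that an arc through $z$ entering by one flanking $D$-sector and leaving by the other \emph{crosses} $L$ at $z$ --- which, against the previous fact, yields the contradiction. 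That crossing statement is a genuine plane-topology lemma about arcs landing at a point of a Jordan curve in alternating position (a Janiszewski/$\theta$-curve type argument); meeting $L$ at the single point $z$ does not by itself imply crossing, and nothing in your text supplies the alternation-implies-separation step. So the crux of the ``at most one sector'' claim is missing, even though the claim itself is true.

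For comparison, the standard folklore proof bypasses the local analysis at critical points entirely, and it reuses the very winding-number/maximum-principle step you already have. Take Riemann maps $\psi:\D\to U$ and $\phi:\D\to V$; by Carath\'eodory, $\phi$ extends to a homeomorphism $\ov\D\to\ov V$, and $g=\phi^{-1}\circ P\circ\psi$ is proper of degree $k$, hence a finite Blaschke product, so it extends to a degree-$k$ covering $\partial\D\to\partial\D$. The cluster set of $\psi$ at a point of $\partial\D$ is connected and contained in the finite set $P^{-1}\bigl(\phi(g(\zeta))\bigr)$, hence a point, so $\psi$ extends continuously to $\ov\D$. If two boundary points had the same image $p$, the Jordan curve made of the two corresponding radii (plus $p$) would, by your winding argument, bound a region mapping into $V$, hence contained in $U$; this forces $\ov\psi$ to be constant on one of the two boundary arcs, and then $\psi$ is constant by reflection --- a contradiction. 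Thus $\ov\psi$ is a homeomorphism, $\partial U=\ov\psi(\partial\D)$ is a Jordan curve, and $P|_{\partial U}=\phi\circ\ov g\circ(\ov\psi|_{\partial\D})^{-1}$ is a degree-$k$ covering of $\partial V$. If you prefer to keep your direct topological route, you must prove the separation lemma above; otherwise this classical argument is the shortest complete path.
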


We recall a classical result, the holomorphic dependence of $\phi_P$ on $P$:
\begin{lemma}[folk.]\label{lem:holodep}
Let $U$ be a complex manifold (parameter set).
Consider any analytic family of polynomials $\zeta\in U\mapsto P_\zeta$ (not necessarily of constant degree), all fixing the origin with the same multiplier $\lambda$ with $0<|\lambda|<1$.
Let $\cal B$ denote the fibred union of basins of $0$: $\cal B = \{(\zeta,z)\,;\,z\in B(P_\zeta)\}$. Then $\cal B$ is an open subset of $U\times \C$ and
the map $(\zeta,z)\mapsto \phi_{P_\zeta}(z)$ is analytic.
\end{lemma}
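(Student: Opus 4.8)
The plan is to show analyticity locally, and to this end to exhibit $\phi_{P_\zeta}(z)$, near any point $(\zeta_0,z_0)\in\cal B$, as a locally uniform limit of a sequence of maps that are manifestly analytic in $(\zeta,z)$. The natural candidate, supplied by the Proposition on linearizing coordinates quoted earlier, is the sequence $(\zeta,z)\mapsto P_\zeta^{\circ n}(z)/\lambda^n$: for each fixed $\zeta$ this converges locally uniformly on $B(P_\zeta)$ to $\phi_{P_\zeta}$, and each term is analytic in $(\zeta,z)$ because $P_\zeta$ depends analytically on $\zeta$ and $\lambda$ is a fixed nonzero constant. If I can upgrade the fiberwise convergence to locally uniform convergence on $\cal B$ as a subset of $U\times\C$, then analyticity of the limit follows from the standard theorem that a locally uniform limit of analytic functions of several complex variables is analytic.

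First I would check that $\cal B$ is open. Fix $(\zeta_0,z_0)\in\cal B$. Since $z_0$ lies in the basin of $0$ for $P_{\zeta_0}$, there is an integer $N$ with $P_{\zeta_0}^{\circ N}(z_0)\in B(0,s)$ for some small $s$ with $\overline{B(0,s)}$ contained in the region where a fixed attracting neighborhood works — concretely, choose $s$ so small that $|\lambda|+\epsilon<1$-type estimates give $P_{\zeta}(\overline{B(0,s)})\subset B(0,s)$ for all $\zeta$ in a neighborhood $W$ of $\zeta_0$; this is possible because $P_\zeta(w)=\lambda w+O(w^2)$ with the $O$-term controlled locally uniformly in $\zeta$. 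Then, by continuity of $(\zeta,z)\mapsto P_\zeta^{\circ N}(z)$, there is a neighborhood $W'\times V$ of $(\zeta_0,z_0)$ mapped by $P_\zeta^{\circ N}$ into $B(0,s)$, and hence $W'\times V\subset\cal B$. This also gives me the uniform control I need: on $W'\times V$ the orbit enters the common attracting disk $B(0,s)$ after $N$ steps, and on $\overline{B(0,s)}$ one has the contraction estimate $|P_\zeta(w)|\le \kappa|w|$ with a single $\kappa<1$ valid for all $\zeta\in W$. Standard telescoping of $P_\zeta^{\circ n}(z)/\lambda^n$ then shows the sequence is uniformly Cauchy on $W'\times V$: writing $w_n=P_\zeta^{\circ n}(z)$, the increments $w_{n+1}/\lambda^{n+1}-w_n/\lambda^n=(P_\zeta(w_n)-\lambda w_n)/\lambda^{n+1}$ are bounded by $C|w_n|^2/|\lambda|^{n+1}\le C(\kappa^n|w_N|)^2/|\lambda|^{n+1}$, which is summable since $\kappa^2<|\lambda|$ can be arranged by shrinking $s$ (as $\kappa\to|\lambda|$ when $s\to 0$).

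With uniform convergence on a neighborhood of every point of $\cal B$ established, the limit $(\zeta,z)\mapsto\phi_{P_\zeta}(z)$ is analytic on $\cal B$, and it agrees fiberwise with the normalized linearizing coordinate (same normalization $\phi'(0)=1$, same functional equation), which is exactly the assertion. The main obstacle, and the only point requiring care, is getting the attracting-disk radius $s$ and the contraction constant $\kappa$ to be uniform in $\zeta$ near $\zeta_0$ \emph{and} to satisfy $\kappa^2<|\lambda|$ simultaneously; once $s$ is taken small enough this is automatic because $\kappa$ can be taken as close to $|\lambda|$ as desired, but it must be recorded explicitly since it is what makes the telescoped series converge uniformly rather than just fiberwise. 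Everything else is routine continuity and the several-variable Weierstrass convergence theorem.
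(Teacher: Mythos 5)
Your proposal is correct and follows the same route as the paper's proof: openness of $\cal B$ via a stable attracting trap near $0$ that persists under small perturbations of $\zeta$, and analyticity from the locally uniform convergence of $(\zeta,z)\mapsto P_\zeta^{\circ n}(z)/\lambda^n$ to $\phi_{P_\zeta}(z)$. The paper states these two points without detail, whereas you carry out the standard telescoping estimate (including the harmless requirement $\kappa^2<|\lambda|$, achievable since $\kappa\to|\lambda|$ as the trap shrinks), which is exactly the omitted routine verification.
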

\begin{proof}
Openness follows from the existence of a stable trap near $0$, for small perturbation of the parameter $\zeta$.
Analyticity follows from the local uniform convergence of the following formula:
\[\phi_f(z)=\lim_{n\to\infty} \frac{ f^n(z)}{\lambda^n}\]
\end{proof}

\begin{proposition}\label{prop:U}
  Assume that $P$ is a degree $\geq 2$ polynomial fixing $0$ with attracting multiplier $\lambda\neq 0$. Then
  the map $\psi$ is injective on $B(0,r)$. The set
  \[U:=\psi(B(0,r))\]
 is compactly contained in the basin of $P$. It is a Jordan domain and $P$ is injective on its boundary.
  There is no critical point of $f$ in $U$ and there is at least one critical point of $f$ on $\partial U$.
\end{proposition}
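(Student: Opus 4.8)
Throughout write $f=P$ and keep in mind the functional equation $P\circ\psi=\psi\circ\lambda$, i.e.\ $P(\psi(z))=\psi(\lambda z)$ for $|z|<r$, where $\lambda(z)=\lambda z$. The plan is to deduce everything from this identity together with $|\lambda|<1$ and the finiteness of the fibres of $P$. First the easy parts: by the classical facts recalled above $r>0$, and $\psi$ is injective on $B(0,r)$ by the argument given in the Note above; hence $\psi'$ has no zero there and $\psi$ is a biholomorphism onto the open set $U$. Also $r<\infty$: otherwise $\psi$ would be an injective entire function, hence affine, hence (being tangent to the identity) equal to the identity, which would force $P(z)=\lambda z$ and contradict $\deg P\geq 2$. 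Next, if $w=\psi(z)\in U$ then $P(w)=\psi(\lambda z)\in U$, so $P(U)\subseteq U$, and $P^n(w)=\psi(\lambda^n z)\to 0$, so $U$ lies in the basin and, being connected and containing $0$, in the immediate basin $B_0$. Finally, if $P$ had a critical point $w=\psi(z)$ in $U$, differentiating $P\circ\psi=\psi\circ\lambda$ at $z$ would give $\lambda\psi'(\lambda z)=P'(w)\psi'(z)=0$, contradicting $\psi'\neq 0$; so there is no critical point of $P$ in $U$.

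The conceptual crux — the step I expect to be the main obstacle, since everything afterwards relies on it — is that $\overline U$ is compact and contained in $B_0$. Let $p$ lie in the closure of $U$ in $\hat\C$, say $p=\lim\psi(z_n)$ with $z_n\to z^\ast\in\overline{B(0,r)}$ (possible after extraction, $B(0,r)$ being bounded). If $|z^\ast|<r$ then $p=\psi(z^\ast)\in U$; so assume $|z^\ast|=r$. Since $|\lambda z^\ast|=|\lambda|r<r$, the sequence $P(\psi(z_n))=\psi(\lambda z_n)$ converges to $q:=\psi(\lambda z^\ast)\in U$. This already rules out $p=\infty$ (else $P(\psi(z_n))\to\infty$), so $U$ is bounded and $P(p)=q$; now $q$ lies in the Fatou set, whereas if $p$ were in the Julia set then $P(p)$ would be too. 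Hence every point of $\overline U$ lies in the Fatou set; as $\overline U$ is connected, meets $B_0$ and the Fatou components are open and disjoint, $\overline U\subseteq B_0$.

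Using $\overline U\subset\subset B_0$, I would extend $\psi$ continuously to $\overline{B(0,r)}$. Fix $w_0$ with $|w_0|=r$; for small $\delta>0$ the lune $B(0,r)\cap B(w_0,\delta)$ is connected, so $\psi(B(0,r)\cap B(w_0,\delta))$ is a connected subset of $\overline U$ whose image under $P$ is $\psi(\lambda(B(0,r)\cap B(w_0,\delta)))$, shrinking to the single point $\psi(\lambda w_0)$ as $\delta\to 0$. Hence the nested intersection $K:=\bigcap_{\delta>0}\overline{\psi(B(0,r)\cap B(w_0,\delta))}$ is a non-empty connected compact subset of the finite set $P^{-1}(\psi(\lambda w_0))$, i.e.\ a single point, which is the sought continuous limit. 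Thus $\psi$ extends continuously to $\overline{B(0,r)}$, and by continuity $P\circ\psi=\psi\circ\lambda$ persists there (note $\lambda\cdot\overline{B(0,r)}\subseteq B(0,r)$). If $\psi(w_0)=\psi(w_1)$ with $w_0,w_1\in\overline{B(0,r)}$, applying $P$ gives $\psi(\lambda w_0)=\psi(\lambda w_1)$ with $\lambda w_i\in B(0,r)$, so $\lambda w_0=\lambda w_1$ and $w_0=w_1$. Therefore $\psi:\overline{B(0,r)}\to\overline U$ is a continuous bijection of a compact space onto a Hausdorff space, hence a homeomorphism: $\partial U=\psi(\partial B(0,r))$ is a Jordan curve, $U$ is a Jordan domain, and the same computation shows $P$ is injective on $\partial U$ (indeed on $\overline U$).

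It remains to find a critical point of $P$ on $\partial U$. Suppose there is none; with the first paragraph, $P'$ does not vanish on the compact set $\overline U$, and $P$ is injective there, so $P$ is injective with non-vanishing derivative on some open neighbourhood $N$ of $\overline U$ with $N\subseteq B_0$. Then $(P|_N)^{-1}$ is holomorphic on the open set $P(N)$, which contains the compact set $P(\overline U)=\psi(\overline{B(0,|\lambda|r)})$, hence contains $\psi(\overline{B(0,|\lambda|r+\delta)})$ for some $\delta>0$, which we shrink so that $|\lambda|r+\delta<r$. For $z\in B(0,\,r+\delta/|\lambda|)$ one has $\lambda z\in B(0,|\lambda|r+\delta)\subseteq B(0,r)$, so $\psi(\lambda z)\in P(N)$ and $\widetilde\psi(z):=(P|_N)^{-1}(\psi(\lambda z))$ defines a holomorphic function on a disk strictly larger than $B(0,r)$; and for $z\in B(0,r)$, since $\psi(z)\in U\subseteq N$ and $P(\psi(z))=\psi(\lambda z)$, we get $\widetilde\psi(z)=\psi(z)$. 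Thus $\widetilde\psi$ extends $\psi$ beyond its disk of convergence — absurd. Hence $P$ has a critical point on $\partial U$, which finishes the proof.
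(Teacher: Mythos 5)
Your proof is correct, and it reaches all the conclusions of \Cref{prop:U} by a route that genuinely differs from the paper's. The paper obtains injectivity of $\psi$ from the identity $\phi\circ\psi=\mathrm{id}$ (with $\phi$ the extended linearizing coordinate on the basin), proves compact containment by viewing $U$ inside $P^{-1}(P(U))$ with $P(U)=\psi(B(0,|\lambda|r))$ compactly contained in $U$, and gets the Jordan-domain property together with injectivity of $P$ on $\partial U$ from the folklore \Cref{lem:folk} on polynomial preimages of Jordan domains; the continuous boundary extension of $\psi$ is only derived afterwards, in \Cref{lem:psiext}, as a consequence of the proposition. You instead argue directly: compact containment via total invariance of the Julia set (any limit point $p$ of $\psi(z_n)$ with $|z_n|\to r$ satisfies $P(p)=\psi(\lambda z^\ast)\in U$, so $p$ cannot be a Julia point), and you then build the extension $\ov\psi$ to $\ov B(0,r)$ by hand, using finiteness of the fibre $P^{-1}(\psi(\lambda w_0))$ to force the images of shrinking lunes at $w_0$ to nest down to a single point; applying $P$ and the injectivity of $\psi$ on $B(0,r)$ makes $\ov\psi$ injective on the closed disk, which yields at once the Jordan curve, the Jordan domain, the injectivity of $P$ on all of $\ov U$, and, as a by-product, \Cref{lem:psiext} before the fact. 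For the critical point on $\partial U$, both arguments run the same contradiction (a holomorphic extension of $\psi$ past its radius of convergence), but the paper realizes the inverse branch through the component $U'(\rho)$ of $P^{-1}(U(\rho))$ being an unbranched covering of a disk, whereas you invert $P$ on a neighbourhood $N$ of $\ov U$, using that injectivity on a compact set plus non-vanishing derivative gives injectivity on a neighbourhood. What your route buys is self-containedness (no appeal to \Cref{lem:folk}, and the boundary extension comes for free); what the paper's buys is brevity, the folklore lemma doing the topological work. Two steps you use without proof are standard and harmless but would deserve a sentence in a write-up: the neighbourhood-injectivity lemma just mentioned (a routine compactness argument), and the fact that a bounded domain whose boundary is a Jordan curve is necessarily the bounded complementary component of that curve.
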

\begin{proof}
Since $P(\psi(z)) = \psi(\lambda z)$ is true at the level of power series, it is true on all $B(0,r)$. It follows that $P^n(\psi(z)) = \psi(\lambda^n z)\tend \psi (0) = 0$ as $n\to+\infty$. Thus the image of $\psi$ is contained in the basin of attraction of $0$ for $P$. Hence $\phi$ is defined on the image of $\psi$. Since $\phi$ has a power series expansion at the origin that is the inverse of that of $\phi$ it follows that $\phi \circ \psi$ is the identity near $0$, and hence on $B(0,r)$ by analytic continuation. In particular $\psi$ is injective on $B(0,r)$. 
The absence of critical point of $P$ in $U$ follows easily from this and $P\circ\psi = \psi\circ \lambda$.
The set $U$ is the preimage by $P$ of $P(U) = \psi(B(0,|\lambda|r))$. Since the latter is compactly contained in $U$, hence in the basin, it follows that all components of $P^{-1}(P(U))$ are compactly contained in their respective Fatou components. In particular $U$ is compactly contained in the immediate basin.
Also, from $P(U) = \psi(B(0,|\lambda|r))$ it follows that $P(U)$ is a Jordan domain (with analytic boundary) and hence $U$, like every connected component of $P^{-1}(P(U))$, must be a Jordan domain by the first part of \Cref{lem:folk}. Moreover, by the second part of the lemma, since $P$ is injective on $U$ it follows that $P$ is injective on $\partial U$.
For the last point we proceed by contradiction.
Consider the sets $U(\rho)=\psi(B(0,\rho))$ and let $U'(\rho)$ be the connected component containing $0$ of $P^{-1}(U(\rho))$.
Then $U(r)=U = U'(|\lambda|r)$.
Assume by way of contradiction that $P$ has no critical point on $\partial U$. Then $U$ sits at a positive distance from the other components of $P^{-1}(P(U))$.
It follows that given a neighborhood $V$ of $\overline U$, then for $\epsilon>0$ small enough and $\rho=|\lambda|r+\epsilon$ we have $U'(\rho)\subset V$ and $U'(\rho)$ does not contain critical points of $P$. Since $P$ is is proper and without critical points from $U'(\rho)$ to $U(\rho)$, it is a cover, and since the image is a topological disk, it is injective. But then we get a contradiction with the definition of $r$: indeed one could extend $\psi$ to $B(0,\rho/|\lambda|)$ by letting $\psi(z)$ be the unique point of $P^{-1}(\psi(\lambda z))$ in $U'(\rho)$.
\end{proof}
Since $\psi'(0)=1$ it follows that $r$ is the conformal radius of $U:=\psi(B(0,r))$ w.r.t.\ the origin. Moreover, we have
$r=|\phi(z)|$ where $z$ is any point on $\partial U$. This will be particularly useful when we take $z$ to be (one of) the critical point(s) on $\partial U$ when we study how $r$ depends on the polynomial, so we number this equation for future reference:
\begin{equation}\label{eq:rpc}
\forall \text{ critical point } c\in\partial U,\ r=|\phi(c)|.
\end{equation}

If only one critical point is on $\partial U$ we call it the \emph{main critical point}. For each $\lambda\in\D^*$ there are values of $c$ such that there is more that one critical point on $\partial U$ for $P_{\lambda,c}$: for instance this is the case for $c=-1$, for which we recall that the polynomial commutes with $z\mapsto -z$.

Two remarks:
\begin{enumerate}
\item
Morally the main critical point, if there is one, is the closest to the attracting fixed point. It is not the closest for the Euclidean distance but it is indeed for some other notion of distance defined using $\phi$. However we will not need this here.
\item
It is important to realize that the main critical point does not necessarily have the least value of $|\phi|$ among all critical points: sometimes there is another critical point, possibly in the immediate basin, that maps under some iterate $P^k$ to a point that is ``closer'' to $0$ that the same iterate $P^k$ applied to the main critical point. It may even happen with $k=1$.
\end{enumerate}

\begin{lemma}\label{lem:scsP}
Let $P_n$ be a sequence of polynomials fixing $0$ with multiplier $\lambda_n\in \ov \D \setminus\{0\}$ not a root of unity and assume that $P_n$ tends to a polynomial $P$ of degree at least $2$ uniformly on compact subsets of $\C$. \begin{itemize}
\item Then
\[r(P)\geq \limsup r(P_n)\]
where $r(P)$ is defined to be $0$ if the the fixed point $0$ of $P$ is parabolic or superattracting.
\item Moreover denoting $r_0=\liminf r(P_n)$, the sequence $\psi_{P_n}$ converges to $\psi_{P}$ on every compact subset of $B(0,r_0)$.
\end{itemize}
\end{lemma}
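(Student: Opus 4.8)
The plan is to isolate the following claim and derive both assertions from it: \emph{if $\rho>0$ is such that $r(P_n)>\rho$ for all large $n$, then the fixed point $0$ is attracting or Siegel for $P$, one has $r(P)\geq\rho$, and $\psi_{P_n}\to\psi_P$ locally uniformly on $B(0,\rho)$.} Granting this, the second assertion follows by letting $\rho\uparrow r_0=\liminf r(P_n)$ (there is nothing to prove if $r_0=0$). For the first assertion, set $R=\limsup r(P_n)$; if $R=0$ it is trivial since $r(P)\geq0$, and otherwise extract a subsequence $(P_{n_k})$ with $r(P_{n_k})\to R$, apply the claim to that subsequence for every $\rho<R$ to get $r(P)\geq\rho$, and let $\rho\uparrow R$.

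To prove the claim, fix $\rho$ and pass to a tail so that $r(P_n)>\rho$ for all $n$. By \Cref{prop:U} and the facts recalled in \Cref{sec:phi}, since $|\lambda_n|\leq1$ the map $\psi_{P_n}$ is holomorphic and injective on $B(0,\rho)$ with $\psi_{P_n}(0)=0$, $\psi_{P_n}'(0)=1$, and the functional equation $P_n\circ\psi_{P_n}=\psi_{P_n}\circ\lambda_n$ holds throughout $B(0,\rho)$ (it holds as an identity of power series, and both sides are holomorphic there because $|\lambda_n|\rho\leq\rho<r(P_n)$). Applying the Koebe growth theorem to the rescaled maps $w\mapsto\psi_{P_n}(\rho w)/\rho$ bounds $|\psi_{P_n}|$ on each $\ov{B(0,\rho')}$, $\rho'<\rho$, by a constant independent of $n$, so $(\psi_{P_n})$ is a normal family on $B(0,\rho)$. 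Let $g$ be the locally uniform limit of a convergent subsequence $(\psi_{P_{n_j}})$; then $g(0)=0$, $g'(0)=1$, and $g$ is univalent by Hurwitz. Passing to the limit in $P_{n_j}\circ\psi_{P_{n_j}}=\psi_{P_{n_j}}\circ\lambda_{n_j}$ gives $P\circ g=g\circ\lambda$ on $B(0,\rho)$: the left side tends to $P\circ g$ because the $\psi_{P_{n_j}}$ map $\ov{B(0,\rho')}$ into a fixed compact set on which $P_n\to P$ uniformly, and the right side tends to $g\circ\lambda$ because $\lambda_{n_j}\to\lambda$ and the $\psi_{P_{n_j}}$ are equicontinuous.

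It remains to exclude degenerate values of $\lambda=\lim\lambda_n\in\ov{\D}$. If $\lambda=0$, then $P\circ g\equiv g(0)=0$, so $P$ vanishes on the nonempty open set $g(B(0,\rho))$, hence $P\equiv0$, contradicting $\deg P\geq2$. If $\lambda$ is a root of unity with $\lambda^q=1$, conjugating by the local inverse of $g$ shows $P$ is analytically conjugate near $0$ to $z\mapsto\lambda z$, so $P^{\circ q}=\id$ on a nonempty open set and hence identically, impossible since $\deg(P^{\circ q})=(\deg P)^q>1$. Thus $\lambda$ is not a root of unity and $0<|\lambda|\leq1$, and if $|\lambda|=1$ the existence of the univalent solution $g$ shows $P$ is linearizable; in every case $\psi_P$ is the unique formal power series with this normalization satisfying this functional equation, so it coincides with the Taylor expansion of $g$. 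Since $g$ is holomorphic on $B(0,\rho)$, this gives $r(P)\geq\rho$ and $g=\psi_P|_{B(0,\rho)}$; as every convergent subsequence of $(\psi_{P_n})$ has this same limit, the whole sequence converges to $\psi_P$ locally uniformly on $B(0,\rho)$, proving the claim. The only genuinely delicate points are the uniform-in-$n$ control of the images of $\psi_{P_n}$, for which the Koebe estimate is exactly the right tool, and the passage to the limit in the functional equation while both $P_n$ and $\lambda_n$ vary; once these are in hand, the rigidity arguments ruling out parabolic and superattracting limits are immediate.
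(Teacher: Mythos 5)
Your proposal is correct and follows essentially the same route as the paper: normality of the normalized univalent maps $\psi_{P_n}$, passage to the limit in the functional equation $P_n\circ\psi_{P_n}=\psi_{P_n}\circ\lambda_n$, exclusion of superattracting and parabolic limit multipliers (which the paper dispatches by noting the limit linearizes $P$), and uniqueness of the formal linearizing series to identify the limit with $\psi_P$ and conclude both $r(P)\geq\limsup r(P_n)$ and the locally uniform convergence on $B(0,\liminf r(P_n))$. Your version merely adds explicit detail (Koebe growth bound for normality, the rigidity arguments $P\equiv0$ and $P^{\circ q}=\id$) that the paper leaves implicit.
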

\begin{proof}
Let $\psi_n=\psi_{P_n}$.
The identity $P_n \circ \psi_n(z) = \psi_n (\lambda_n z)$ holds on $B(0,r(P_n))$. 

Let $\tilde r=\limsup r(P_n)$. If $\tilde r=0$ there is nothing to prove, so we assume $\tilde r>0$.
To prove the first claim of the lemma, let us extract a subsequence such that $r(P_n)$ converges to $\tilde r$.
The maps $\psi_n$ being univalent and normalized by $\psi_n(0)=0$ and $\psi_n'(0)=1$, they form a normal sequence on $B(0,\tilde r)$.\footnote{Usually a normal \emph{family} is defined for maps defined on a common set of definition. By a normal sequence on $B(0,\tilde r)$ we mean the following: the domain eventually contains every compact subset of $B(0,\tilde r)$ and any subsequence has a subsubsequence that converges uniformly on compact subsets of $B(0,\tilde r)$.} 
By continuity, any extracted limit $\ell$ must satisfy $P \circ \ell (z)= \ell (\lambda z)$ for $z\in B(0,\tilde r)$, $\ell(0)=0$ and $\ell'(0)=1$.
In particular $P$ is linearizable, hence $P'(0)$ cannot be $0$ nor a root of unity.
Hence $\ell$ must have the same power series expansion as $\psi_P$, so the limit is unique. Moreover, the radius of convergence of $\psi_P$ is at least $\tilde r$.

Let $\hat r=\liminf r(P_n)$. 
The proof of the second claim is very similar, but this time we do not extract subsequences. The family $\psi_n$ is normal on $B(0,\hat r)$. Any extracted limit must linearize, hence this limit is unique and coincides with the restriction of $\psi_P$ to $B(0,\hat r)$.
\end{proof}

\begin{lemma}\label{lem:psiext}
The function $\psi$ extends as a homeromorphism $\ov\psi : \ov B(0,r) \to \ov U$.
\end{lemma}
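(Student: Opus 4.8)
The plan is to show that $\psi$ extends continuously and bijectively to the closed disk, and then use compactness to upgrade this to a homeomorphism. By \Cref{prop:U}, $U=\psi(B(0,r))$ is a Jordan domain and $\psi : B(0,r)\to U$ is a conformal (in particular biholomorphic) map with $\psi'(0)=1$. By Carathéodory's theorem on the boundary behaviour of conformal maps between Jordan domains, $\psi$ extends to a homeomorphism $\ov\psi : \ov B(0,r)\to \ov U$. So in a sense there is nothing to prove beyond invoking Carathéodory — but I would prefer to give a dynamically flavoured argument that does not merely cite it as a black box, since the whole point of the surrounding section is to keep things at an elementary level and the dynamics makes the boundary extension transparent.

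Here is the dynamical route. First I would establish that $\ov U$ is compactly contained in the basin of $0$ (already in \Cref{prop:U}) and that $P(\ov U)=\ov{P(U)}=\ov{\psi(\ov B(0,|\lambda|r))}$, which is compactly contained in $U$. The idea is then to reconstruct $\psi$ on the boundary as a limit: for $z_0\in\partial B(0,r)$, pick any sequence $z_k\in B(0,r)$ with $z_k\to z_0$; I claim $\psi(z_k)$ converges. Indeed $\psi(z_k)$ lies in the compact set $\ov U$, and any two subsequential limits $w,w'$ satisfy, for every $n\ge 1$, $P^n(w)=\lim \psi(\lambda^n z_k)=\psi(\lambda^n z_0)$ and likewise $P^n(w')=\psi(\lambda^n z_0)$, because $\lambda^n z_0\in B(0,r)$ when $|\lambda|<1$ and $n\ge 1$, so $\psi$ is already continuous there. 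Hence $P^n(w)=P^n(w')$ for all $n\ge 1$; since $w,w'\in\ov U\subset$ basin and $P$ is eventually a contraction in linearizing coordinates near $0$, applying $\phi$ gives $\lambda^n\phi(w)=\lambda^n\phi(w')$, so $\phi(w)=\phi(w')$, and since $\phi$ is injective on $U$ and $\ov U\subset$ basin... here one needs $\phi$ injective on a neighbourhood of $\ov U$, which holds because $\ov U$ is compactly contained in the basin and $\phi$ restricted to $U$ is injective with $\ov{\phi(U)}=\ov B(0,r)$; a point of $\partial U$ maps under $\phi$ into $\partial B(0,r)$ and no interior point of $U$ does, so $w=w'$. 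This defines $\ov\psi$ on $\partial B(0,r)$ and shows it is independent of the approximating sequence, i.e.\ $\ov\psi$ is well defined; a standard diagonal argument then gives continuity of $\ov\psi$ on all of $\ov B(0,r)$.

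Next, surjectivity onto $\ov U$: $\ov\psi(\ov B(0,r))$ is compact, contains $U$, hence contains $\ov U$, and is contained in $\ov U$ since each $\ov\psi(z_0)\in\ov U$. For injectivity, suppose $\ov\psi(z_1)=\ov\psi(z_2)=:w$. If $w\in U$ then both $z_i\in B(0,r)$ (a boundary point cannot map into $U$, as $\phi(w)\in\partial B(0,r)$ would force $z_i\in\partial B(0,r)$, contradiction) and injectivity of $\psi$ gives $z_1=z_2$. If $w\in\partial U$, then $\phi(w)\in\partial B(0,r)$, and since $P^n\circ\ov\psi(z_i)$ and $\psi(\lambda^n z_i)$ agree for $n\ge 1$ while $\phi\circ\psi=\id$ on $B(0,r)$, taking $\phi$ of the identity $P(w)=\psi(\lambda z_1)=\psi(\lambda z_2)$ yields $\lambda z_1=\lambda z_2$, hence $z_1=z_2$. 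Finally, a continuous bijection from the compact space $\ov B(0,r)$ to the Hausdorff space $\ov U$ is automatically a homeomorphism, which completes the proof. The main obstacle, and the only place requiring genuine care, is the injectivity of $\phi$ (equivalently of $P$-dynamics in linearizing coordinates) on the closed set $\ov U$ rather than on the open $U$: one must check that no boundary point of $U$ shares a $\phi$-value with an interior point and that two distinct boundary points cannot be identified, which is exactly where the relation $P(\ov U)\Subset U$ and the injectivity of $P$ on $\partial U$ from \Cref{prop:U} are used. (For readers who prefer, the whole lemma is immediate from Carathéodory's theorem since $U$ is a Jordan domain, and I would state that as an alternative one-line proof.)
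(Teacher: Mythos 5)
Your fallback proof via Carath\'eodory is complete and correct: \Cref{prop:U} gives that $U$ is a bounded Jordan domain and that $\psi$ is a conformal bijection from $B(0,r)$ onto $U$, so the extension to a homeomorphism $\ov B(0,r)\to\ov U$ is immediate. This is a genuinely different route from the paper's, which does not invoke boundary-behaviour theory at all: it notes that $P$ is injective on $\ov U$ (injectivity on $\partial U$ being part of \Cref{prop:U}), hence by compactness $P|_{\ov U}$ is a homeomorphism onto its image, and it defines the extension by the closed formula $\ov\psi(z)=g\circ\psi(\lambda z)$ with $g=\left(P|_{\ov U}\right)^{-1}$ --- no limit points, no sequence extraction. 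What the paper's construction buys is exactly the elementary, dynamics-only flavour you say you are after; what Carath\'eodory buys is a one-line proof.

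Your preferred dynamical variant, however, has a gap at its crux. The assertion that $\phi$ is injective on a neighbourhood of $\ov U$ does not follow from ``$\phi|_U$ injective and $\ov U$ compactly contained in the basin'', and the argument you actually give ($\phi(\partial U)\subset\partial B(0,r)$ while $\phi(U)=B(0,r)$) only excludes a boundary point of $U$ sharing its $\phi$-value with an interior point; it says nothing about two \emph{distinct} points of $\partial U$ being identified by $\phi$, which is precisely the case you must rule out --- in your well-definedness step both subsequential limits $w,w'$ lie on $\partial U$, since $\phi(w)=\lim z_k=z_0\in\partial B(0,r)$ (note that this identity also makes the detour through $P^n$ and $\lambda^n$ superfluous). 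You flag this in your closing sentence and name the right ingredient, but you never carry out the step. It takes two lines: for $w\in\partial U$ one has $P(w)=\psi(\lambda\phi(w))$, by passing to the limit in $P(w_k)=\psi(\lambda\phi(w_k))$ for $w_k\in U$, $w_k\to w$, using continuity of $\phi$ on the basin and $|\lambda\phi(w)|\le|\lambda|r<r$; hence $\phi(w)=\phi(w')$ with $w,w'\in\partial U$ forces $P(w)=P(w')$, and the injectivity of $P$ on $\partial U$ from \Cref{prop:U} gives $w=w'$. With that inserted, the rest of your dynamical argument (continuity, surjectivity, injectivity of $\ov\psi$, and the compactness upgrade) goes through; your injectivity step for $\ov\psi$ is in fact already sound, as it only uses injectivity of $\psi$ on $B(0,r)$.
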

\begin{proof}
In \Cref{prop:U} we saw that $P$ is injective on $\partial U$. Hence $P$ is injective on $\ov U$ and since $\ov U$ is compact, $P$ is a homeomorphism from $\ov U$ to its image. Let $g$ be the inverse homeomorphism and let $\ov{\psi}(z) = g\circ \psi (\lambda z)$ for $z\in \ov B(0,r)$. Then $\ov\psi$ is a continuous extension.
\end{proof}

\begin{lemma}\label{lem:cP}
Let $P_n$ be a sequence of polynomials of degree $\geq 2$, fixing $0$ with multiplier $\lambda_n \in \D^*$ and assume $P_n$ tends to a polynomial $P$ of degree at least $2$ uniformly on compact subsets of $\C$. Assume moreover that $\lambda = P'(0) \in \D^*$. Then 
\begin{enumerate}
\item $\phi_{P_n}\tend \phi_P$ uniformly on compact subsets of the basin of $0$ for $P$,
\item $r(P_n) \tend r(P)$,
\item for all sequence $z_n\in \ov B(0,r(P_n))$, such that $z_n$ converges to some $z_\infty\in\C$, then $\ov\psi_{P_n}(z_n) \tend \ov\psi_P(z_\infty)$,
\item if $c_n\in\partial U(P_n)$ is a critical point such that $c_n$ converges, then its limit $c_\infty$ is a critical point of $P$ that belongs to $\partial U(P)$.
\end{enumerate}
\end{lemma}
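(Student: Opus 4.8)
The plan is to establish the four items in order, bootstrapping each from the previous ones, and using the general convergence machinery of \Cref{lem:holodep} and \Cref{lem:scsP}. For item~(1), I would invoke \Cref{lem:holodep} directly: form the parameter space $U = \D^* \cup \{$the sequence$\}$, or more simply observe that the analytic family consisting of $P$ together with the $P_n$ (parametrized by $\{0\}\cup\{1/n\}$, say, with the obvious analytic structure near $0$ — or one can just argue along the explicit limit formula) satisfies the hypotheses of \Cref{lem:holodep} since all multipliers lie in $\D^*$. The formula $\phi_f(z)=\lim_{k\to\infty} f^k(z)/\lambda^k$ converges locally uniformly on the fibred basin, and \Cref{lem:holodep} tells us this fibred basin is open; hence $\phi_{P_n}\to\phi_P$ locally uniformly on the basin of $0$ for $P$.

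For item~(2), the inequality $r(P)\ge\limsup r(P_n)$ is exactly the first bullet of \Cref{lem:scsP}. For the reverse inequality I would use \eqref{eq:rpc}: by \Cref{prop:U} there is a critical point $c_n$ of $P_n$ on $\partial U(P_n)$ with $r(P_n)=|\phi_{P_n}(c_n)|$. The critical points of $P_n$ converge (as a set, by Rouché, since the degree does not drop in the limit — here one should note $P$ has degree $\ge 2$, so at least this many critical points persist; if $P_n$ has a critical point escaping to $\infty$ one must check it does not interfere, but such a point has $\phi_{P_n}$-value either undefined or large, so it cannot be the one realizing the small quantity $r(P_n)$, which is bounded). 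Passing to a subsequence realizing $\liminf r(P_n)$ and along which $c_n\to c_\infty$, we get $c_\infty$ a critical point of $P$. If we knew $c_\infty\in\partial U(P)$ we would get $\liminf r(P_n)=|\phi_P(c_\infty)|\le r(P)$ — but actually the cleaner route is: by \Cref{lem:scsP} the maps $\psi_{P_n}$ converge to $\psi_P$ on compacts of $B(0,r_0)$ with $r_0=\liminf r(P_n)$; combined with $r(P)\ge\limsup r(P_n)$ this already pins down $r(P_n)\to r(P)$ once we rule out $r_0<r(P)$. To rule that out, suppose $r_0 < r(P)$; then $\psi_P$ is defined and injective on $\overline{B(0,r_0)}$ with image compactly inside $U(P)$, hence $\overline{\psi_{P_n}(B(0,r(P_n)))}=\overline{U(P_n)}$ would be forced to stay inside a fixed compact subset of the basin avoiding the critical points, contradicting the last clause of \Cref{prop:U} that each $\partial U(P_n)$ carries a critical point of $P_n$ (those critical points converge into $U(P)$, away from $\partial U(P_n)\to$ interior). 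I expect this comparison argument to be the main obstacle: one has to handle carefully the possibility that a spurious critical point of $P_n$ (one not on $\partial U(P_n)$, or one escaping) confuses the bookkeeping, and to make the "forced into a compact subset" contradiction precise, likely by using \Cref{lem:psiext} and the uniform control on $\overline\psi_{P_n}$.

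Items~(3) and~(4) then follow quickly. For~(3): on compacts of $B(0,r(P))$, item~(2) plus \Cref{lem:scsP} give $\psi_{P_n}\to\psi_P$ uniformly, so if $|z_\infty|<r(P)$ we are done; for $|z_\infty|=r(P)$ one writes $\overline\psi_{P_n}(z_n)$ via the formula in \Cref{lem:psiext}, $\overline\psi_{P_n}(z_n)=g_n(\psi_{P_n}(\lambda_n z_n))$ where $g_n=(P_n|_{\overline{U(P_n)}})^{-1}$, noting $\lambda_n z_n\to\lambda z_\infty$ has modulus $<r(P)$ so $\psi_{P_n}(\lambda_n z_n)\to\psi_P(\lambda z_\infty)\in P(U(P))$, a point in the open set $P(U(P))$ over which the branch $g_n$ converges to $g$ (since $P_n\to P$ uniformly and the relevant sheet is stable, using that $U(P)$ is compactly contained in the basin and $P$ injective on $\overline{U(P)}$ by \Cref{prop:U}); hence $\overline\psi_{P_n}(z_n)\to g(\psi_P(\lambda z_\infty))=\overline\psi_P(z_\infty)$. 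Finally~(4): if $c_n\in\partial U(P_n)$ is critical with $c_n\to c_\infty$, then $c_\infty$ is a critical point of $P$ (limit of critical points, degree stable); writing $c_n=\overline\psi_{P_n}(z_n)$ with $z_n\in\partial B(0,r(P_n))$, pass to a subsequence with $z_n\to z_\infty\in\partial B(0,r(P))$ (using item~(2)), and item~(3) gives $c_\infty=\overline\psi_P(z_\infty)\in\partial U(P)$ by \Cref{lem:psiext}.
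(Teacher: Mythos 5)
Your items (1) and (4) are fine and essentially the paper's. The problems are in (2) and (3), and in both cases they sit exactly at the step you leave as a sketch. For (2), the upper bound via \Cref{lem:scsP} is the paper's; for the lower bound you argue by contradiction from $r_0=\liminf r(P_n)<r(P)$, but the claim that $\ov{U(P_n)}$ is ``forced to stay inside a fixed compact subset'' of $U(P)$ (so that the critical point on $\partial U(P_n)$ limits inside $U(P)$) is precisely the missing content, and the tools you point to do not supply it: \Cref{lem:scsP} controls $\psi_{P_n}$ only on compact subsets of $B(0,r_0)$, hence says nothing about the image of the thin annulus $r_0-\delta<|z|<r(P_n)$, and \Cref{lem:psiext} is a statement for each fixed $n$ with no uniformity in $n$ --- uniform boundary control of $\ov\psi_{P_n}$ is essentially item (3), which is proved after and by means of item (2), so invoking it here is circular. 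The gap can be closed (for instance: $U(P_n)$ is the connected component through $0$ of $\{|\phi_{P_n}|<r(P_n)\}$, and by item (1), on the curve $\psi_P(\partial B(0,\rho))$ with $r_0<\rho<r(P)$ one has $|\phi_{P_n}|\to\rho>r(P_n)$, which traps $U(P_n)$ inside $\psi_P(B(0,\rho))$ and then a limit of the critical points $c_n\in\partial U(P_n)$ is a critical point of $P$ inside $U(P)$, contradicting \Cref{prop:U}), but you do not carry this out; the paper sidesteps it entirely with a direct argument: by a Hurwitz-type statement $\phi_{P_n}$ is eventually injective on $\psi_P(B(0,r(P)-\eps/2))$ with image containing $B(0,r(P)-\eps)$, and its inverse there has the formal power series of $\psi_{P_n}$, whence $r(P_n)\ge r(P)-\eps$ directly.

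For (3), in the boundary case $|z_\infty|=r(P)$ you write $\ov\psi_{P_n}(z_n)=g_n(\psi_{P_n}(\lambda_n z_n))$ with $g_n=(P_n|_{\ov U(P_n)})^{-1}$ and assert that ``the relevant sheet is stable''. Two genuine difficulties are hidden there. First, to know that $g_n(w_n)$ tends to the preimage $\ov\psi_P(z_\infty)$ of $w_\infty$ rather than to some other $P$-preimage of $w_\infty$, you need to know that $\ov U(P_n)$ accumulates only on $\ov U(P)$, which again is essentially the statement being proved. Second, and more seriously, $\ov\psi_P(z_\infty)$ may be a critical point of $P$ on $\partial U(P)$ --- \Cref{prop:U} guarantees at least one such point --- and at such a point there is no single-valued holomorphic inverse branch to stabilize; this is exactly the case to which the paper devotes the bulk of its proof of item (3), via a separate contradiction argument using the finiteness of the critical points of $\phi_P$ on $\partial U(P)$, a well-chosen compact $K$, and the identification of local inverse branches of $\phi_{P_n}$ with $\psi_{P_n}$ on a connected open set. (A smaller slip: when $|z_\infty|=r(P)$, the point $\psi_P(\lambda z_\infty)$ lies on the boundary of $P(U(P))$, not in that open set, though it does lie in $U(P)$.) So as written the proposal has real gaps at the two places where the lemma is actually delicate.
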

\begin{proof}
The first point follows from the local uniform convergence\footnote{The proof is classical so we omit it here.} of the following formula:
\[\phi_f(z)=\lim_{n\to\infty} \frac{ f^n(z)}{\lambda^n}.\]

For the second point, we have seen in \Cref{lem:scsP} that $\limsup r(P_n)\leq r(P)$, so there remains to prove that $\liminf r(P_n) \geq r(P)$.
The map $P$ is injective on $U(P)$, so by a variant of Hurwitz's theorem, for all compact subset $K$ of $U(P)$, for $n$ big enough the map $\phi_{P_n}$ is injective on $K$. If we take $K=\psi_P(B(0,r(P)-\eps/2))$ we get that for $n$ big enough, the image of the restriction $\phi_{P_n}|_K$ contains $B'=B(0,r(P)-\eps)$ and since it is injective, its reciprocal is defined on $B'$. As a formal power series, this reciprocal coincides with $\psi_{P_n}$ by uniqueness of the linearizing formal power series, and thus $r(P_n)\geq r(P)-\eps$.

Let us prove the third point.
The case where $|z_\infty|<r(P)$ is already covered by the last point of \Cref{lem:scsP}, so we assume that $|z_\infty|=r(P)$.
Denote $w_\infty = \ov\psi_P(z_\infty)$ and $w_n = \ov\psi_{P_n}(z_n)$.
Then $w_\infty\in \partial U(P)$ and the objective is to prove that $w_n\tend w_\infty$.

Let us first treat the case where $w_\infty$ is not a critical point of $\phi_P$.
Then there exists $\eps$ such that for $n$ big enough, $\phi_{P_n}$ has an inverse branch $h_n$ defined on $B(z_\infty,\eps)$ that converges uniformly to an inverse branch $h$ of $\phi_P$ that satisfies $h(z_\infty) = w_\infty$.
Note that $h(z') = \psi_P(z')$ for all $z'$ in the non-empty connected set $L = B(0,r(P)) \cap B(z_\infty,\eps)$. 
Then by the above and by by \Cref{lem:scsP}, $h_n$ and $\psi_{P_n}$ both tend to $h$ uniformly on compact subsets of $L$.
Since $h$ is non-constant, we in particular have that for all ball $B$ compactly contained in $L$, then for $n$ big enough: $\psi_{P_n}(B) \cap h_n(B)\neq \emptyset$.
But $\psi_{P_n}$ and $h_n$ are both inverse branches of the same map $\phi_{P_n}$.
It follows that they coincide on the connected component $W_n$ of the intersection of their domain that contains $B$.
The set $W_n = B(z_\infty,\eps) \cap B(0,r(P_n))$ eventually contains $z_n\tend z\infty$ and we have $\psi_{P_n}(z_n) = h_n(z_n) \tend h(z_\infty) = \psi_P(z_\infty)$.

We now treat the case where $w_\infty$ is a critical point of $\phi_P$.
Note that $\phi_P$ has only finitely many critical points on $\partial U$.
For $\eps>0$ let $K = \ov B(0,r(P)) \cap \ov B(z_\infty,\eps)$ and choose $\eps$ small enough so that in $\ov\psi_P(K)$ the point $w_\infty$ is the only critical point of $\phi_P$  and the only preimage of $z_\infty$ by $\phi_P$.
Let us proceed by contradiction and assume that $w_n = \psi_{P_n}(z_n) \centernot\tend w_\infty$.
Then either $w_n$ leaves every compact of the basin of $0$ for $P$, or it has a subsequence that converges to some $w'$ in this basin, and then by the first point of the current lemma, $\phi_P(w') = z_\infty$.
In both cases $w_n$ is eventually out of some fixed neighborhood $V$ of $\ov\psi_P(K)$. Since $K$ is connected, this means there exists another sequence $z'_n$ with $w'_n:=\psi_{P_n}(z'_n)$ that satisfies: $w'_n \tend w' \notin V \cup \phi_P^{-1}(z_\infty)$.
By the first point of the current lemma, $z'_n = \phi_{P_n}(w'_n) \tend z':= \phi_P(w')$. Since $K$ is closed, we have $z'\in K$.
By definition $\phi_P(w')\neq z_\infty$, i.e.\ $z'\neq z_\infty$.
Let us prove that $w'=\ov\psi_P(z')$:
if $|z|<r(P)$ then this is covered by the last point of \Cref{lem:scsP};
if $|z'|=r(P)$, since $z'\neq z_\infty$ then by the choice of $K$, the point $\ov\psi_P(z')$ is not a critical of $\phi_P$ and by the analysis in the previous paragraph, $w'=\ov\psi_P(z')$. So $w' \in \ov\psi_P(K)$, which contradicts $w'\notin V$.

Last, we prove the fourth point. By passing to the limit in $P_n(c_n)=0$ we get that $c_\infty$ is a critical point of $P$. Let $z_n = (\ov\psi_{P_n})^{-1}(c_n)$. By the first point of \Cref{lem:scsP}, $z_n$ is a bounded sequence and any extracted limit $z'$ satisfies $|z'|\leq r(P)$. By the third point of the present lemma, for the extracted sequence we have $c_n \ov \psi_{P_n}(z_n) \tend \ov \psi_P(z')$, hence $\ov\psi_P(z') = c_\infty$.  
Hence $c_\infty$ belongs to $\ov \psi_P(\ov B(0,r(P)) = U(P)$. Since it is critical it cannot belong $U(P)$ so $c_\infty \in \partial U(P)$.
\end{proof}

\subsection{Applications to our family of cubic polynomials}

Let $\U_\Q$ denote the set of roots of unity. If $\lambda\in\U_\Q$ then the linearizing power series is not defined. We set
\[r(P_{\lambda,c})=0\]
in this case. This is a natural choice because we know in advance that $P_{\lambda,c}$ is not linearizable.\footnote{No rationally indifferent periodic point of a degree $\geq 2$ rational map can be linearizable.}

The following lemma is a direct application of \Cref{lem:scsP}:
\begin{lemma}
The map $(\lambda,c)\mapsto r(\lambda,c)$ restricted to values of $\lambda \in \ov{\D}^*$ is upper semi-continuous.
\end{lemma}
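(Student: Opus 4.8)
The plan is to deduce this directly from the first bullet point of \Cref{lem:scsP}. We need to show that $(\lambda,c)\mapsto r(\lambda,c)$ is upper semi-continuous on $\ov\D^*\times\C^*$ (the domain of the family in $(\lambda,c)$-coordinates). Recall that, by convention, $r(P_{\lambda,c})=0$ whenever $\lambda$ is a root of unity, and that $r$ also designates the radius of convergence in the attracting and Siegel cases.

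First I would fix a point $(\lambda_0,c_0)$ with $\lambda_0\in\ov\D^*$ and take any sequence $(\lambda_n,c_n)\to(\lambda_0,c_0)$ with $\lambda_n\in\ov\D^*$. The goal is to show $\limsup_n r(\lambda_n,c_n)\leq r(\lambda_0,c_0)$. Since the coefficients of $P_{\lambda,c}$ depend continuously (in fact holomorphically) on $(\lambda,c)\in\C^*\times\C^*$, the polynomials $P_{\lambda_n,c_n}$ converge to $P_{\lambda_0,c_0}$ uniformly on compact subsets of $\C$, and all of them have degree $3$ (the degree does not drop since $c_n,c_0\in\C^*$, so the leading coefficient $\lambda_n/(3c_n)$ stays away from $0$ and $\infty$).

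Next I would split according to the nature of $\lambda_0$. If $\lambda_0$ is a root of unity, then $r(\lambda_0,c_0)=0$ by convention, so I must argue that $\limsup_n r(\lambda_n,c_n)=0$ as well; but this is exactly the first bullet of \Cref{lem:scsP} with limit polynomial $P=P_{\lambda_0,c_0}$ having a parabolic fixed point at $0$ (the lemma defines $r(P)=0$ in that case, and asserts $r(P)\geq\limsup r(P_n)$, provided infinitely many $\lambda_n$ are themselves not roots of unity; for those $\lambda_n$ that are roots of unity we have $r(\lambda_n,c_n)=0$ by convention and they do not affect the $\limsup$). Wait — \Cref{lem:scsP} requires the $\lambda_n$ appearing in it to be non-roots of unity, so I would first discard from the sequence all indices with $\lambda_n\in\U_\Q$ (for those, $r=0$ contributes nothing to the $\limsup$), and apply the lemma to the remaining subsequence. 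If $\lambda_0$ is \emph{not} a root of unity, then $P_{\lambda_0,c_0}$ is a degree-$3$ polynomial fixing $0$ with multiplier $\lambda_0\in\ov\D^*$ that is not a root of unity, so again \Cref{lem:scsP} applies verbatim (after discarding the finitely-contributing root-of-unity indices among the $\lambda_n$) and gives $r(\lambda_0,c_0)=r(P_{\lambda_0,c_0})\geq\limsup_n r(P_{\lambda_n,c_n})$.

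In either case we obtain $\limsup_n r(\lambda_n,c_n)\leq r(\lambda_0,c_0)$ along the original sequence, which is precisely upper semi-continuity at $(\lambda_0,c_0)$. Since $(\lambda_0,c_0)$ was arbitrary in $\ov\D^*\times\C^*$, the map is upper semi-continuous there. I do not anticipate a genuine obstacle here: the only point requiring a little care is the bookkeeping around the root-of-unity convention $r=0$ — namely that when some $\lambda_n$ are roots of unity we cannot feed them into \Cref{lem:scsP}, but since they carry the value $0$ they are harmless for a $\limsup$ — and the observation that the degree does not drop in the limit so that \Cref{lem:scsP}'s hypothesis ``$P$ of degree at least $2$'' is met.
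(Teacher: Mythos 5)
Your proof is correct and follows exactly the route the paper intends: the paper gives no details and simply states that the lemma is a direct application of \Cref{lem:scsP}, which is what you carry out, including the (harmless) bookkeeping for indices where $\lambda_n$ is a root of unity. Nothing further is needed.
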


\section{About the radius of convergence of the linearizing parametrization}\label{sec:3}

Given $f(z)=\lambda z +\cal O(z^2)$ with $\lambda \neq 0$ nor equal to a root of unity, we noted $\psi$ the formal power series solution of $\psi = z + \cal O(z^2)$ and $f\circ \psi = \psi \circ \lambda$ where by abuse of notation $\lambda$ denotes the function $z\mapsto \lambda z$. Let us write $\psi = \psi_f$ to highlight the dependence on $f$ and let
\[f = \sum a_n z^n\]
\[\psi_f = \sum b_n z^n\]
be the respective (formal) power series expansions. We will sometimes write $b_n(f)$ to emphasize the dependence on $f$.

Below we will denote, for a given formal power series $s$ in $z$, its $z^n$ coefficient  by $[s]_n$. We recall here a few well-know facts:
\begin{itemize}
\item (Cauchy-Hadamard formula) The radius of convergence $r$ of $\psi_f$ is given by \[\frac{1}{r}=\limsup |b_n|^{1/n}\]
\item $b_n$ is uniquely determined by the strong recursion formula
\[\lambda^n b_n = \lambda b_n + \sum_{k=2}^{n} a_n [\psi^k(z)]_n\]
where $\psi^k$ stands for the multiplicative $k$-th power (not the $k$-th iterate).
Note that the sum starts with $k=2$ and that $[\psi^k(z)]_n$ only depends on the coefficients $b_1,\ldots,b_{n-k+1}$ (here $b_1=1$).
\end{itemize}
Let us apply this to the family $P_{\lambda,c}$ with a fixed $\lambda$. We recall the definition:
\[ P_{\lambda,c}(z) = \lambda z \left( 1 - \frac{(1 + \sfrac{1}{c})}{2} z + \frac{\sfrac{1}{c}}{3} z^2 \right) \]
We thus have $a_1 = \lambda$, $a_2 = -\lambda\frac{1+c^{-1}}{2}$, $a_3 = \lambda c^{-1}/3$ and all other $a_n$ are equal to $0$. It follows that
\begin{lemma}
  For a fixed $\lambda$ that is not a root of unity, nor $0$, the coefficient $b_n(P_{\lambda,c})$ is a polynomial in $c^{-1}$, of degree at most $n-1$.
\end{lemma}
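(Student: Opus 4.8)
The plan is to prove the claim by strong induction on $n$ using the strong recursion formula for the coefficients $b_n$, tracking the degree in the variable $w := c^{-1}$ at each stage. Set $b_n = b_n(P_{\lambda,c})$ and regard each $b_n$ as a function of $w$; we want to show it is a polynomial in $w$ of degree at most $n-1$. The base case is immediate: $b_1 = 1$, which is a constant, hence a polynomial in $w$ of degree $0 = 1-1$.

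For the inductive step, fix $n \geq 2$ and assume that for every $m$ with $1 \le m \le n-1$, $b_m$ is a polynomial in $w$ of degree at most $m-1$. The recursion reads
\[
(\lambda^n - \lambda)\, b_n = \sum_{k=2}^{n} a_k\, [\psi^k(z)]_n,
\]
where $\lambda^n - \lambda \neq 0$ because $\lambda$ is not a root of unity (and $\lambda \neq 0$), so division is harmless and does not affect polynomiality in $w$. Only $k \in \{2,3\}$ contribute since $a_k = 0$ for $k \geq 4$. Now $a_2 = -\lambda(1+w)/2$ and $a_3 = \lambda w/3$ are polynomials in $w$ of degrees $1$ and $1$ respectively. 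The quantity $[\psi^k(z)]_n$ is, by the remark recalled just before the lemma, a sum of products $b_{i_1}\cdots b_{i_k}$ with $i_1 + \cdots + i_k = n$ and each $i_j \geq 1$; in particular each $i_j \leq n-k+1 \leq n-1$, so the induction hypothesis applies to every factor. Such a product is therefore a polynomial in $w$ of degree at most $\sum_j (i_j - 1) = n - k$.

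Combining: the $k=2$ term $a_2[\psi^2(z)]_n$ has degree at most $1 + (n-2) = n-1$, and the $k=3$ term $a_3[\psi^3(z)]_n$ has degree at most $1 + (n-3) = n-2 \le n-1$. Hence the right-hand side, and therefore $b_n$, is a polynomial in $w = c^{-1}$ of degree at most $n-1$, completing the induction.

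I do not expect any serious obstacle here; the only point requiring a little care is the bookkeeping on the degree bound in the $k=2$ term, which is exactly what forces the bound $n-1$ rather than something smaller — one must use that $a_2$ genuinely has degree $1$ in $w$ and that $[\psi^2(z)]_n$ can attain degree $n-2$ (e.g.\ via the term $2 b_1 b_{n-1}$, with $b_{n-1}$ of degree $n-2$). One should also note explicitly that the formal recursion is legitimate precisely because $\lambda$ is neither $0$ nor a root of unity, which is part of the hypothesis.
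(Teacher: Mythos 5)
Your proof is correct and follows essentially the same route as the paper: induction on $n$ using the recursion $b_n = (\lambda^n-\lambda)^{-1}\bigl(a_2\sum_{i+j=n} b_i b_j + a_3\sum_{i+j+k=n} b_i b_j b_k\bigr)$, with the degree count $1+(i-1)+(j-1)=n-1$ for the $a_2$ term and $1+(i-1)+(j-1)+(k-1)=n-2$ for the $a_3$ term. Your bookkeeping in $w=c^{-1}$ and the remark on why $\lambda^n-\lambda\neq 0$ match the paper's argument exactly.
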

\begin{proof}
This is proved by induction on $n$. By definition $b_1=1$.
Then $b_n = (\lambda^n-\lambda)^{-1}(a_2\sum_{i+j=n} b_ib_j + a_3\sum_{i+j+k=n}b_ib_jb_k)$.
If $n\geq 2$ and the claim holds up to $n-1$ then the term involving $a_3$ has degree at most $1+(i-1)+(j-1)+(k-1) = n-2$ and the term involving $a_2$ at most $1+(i-1)+(j-1)=n-1$.
\end{proof}

\begin{lemma}\label{lem:cont}
Let $r(P_{\lambda,c})$ denote the radius of convergence of $\psi_{P_{\lambda,c}}$.
If either $0<|\lambda| <1$ or $\theta\in\R$ is a Brjuno number and $\lambda=e^{2\pi i\theta}$, 
then
\[c\mapsto r(P_{\lambda,c})\]
is a continuous function of $c\in\C^*$.
\end{lemma}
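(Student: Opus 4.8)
The plan is to split into the attracting case, which is essentially already done, and the Brjuno case, which I would reduce to a quantitative form of Brjuno's linearization theorem.

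\emph{The case $0<|\lambda|<1$.} On $\C^*$ the assignment $c\mapsto P_{\lambda,c}$ is an analytic family of honest degree‑$3$ polynomials (the leading coefficient $\lambda/(3c)$ never vanishes), all fixing $0$ with the same multiplier $\lambda\in\D^*$. If $c_n\to c$ in $\C^*$ then $P_{\lambda,c_n}\to P_{\lambda,c}$ uniformly on compact subsets of $\C$, so item~(2) of \Cref{lem:cP} gives $r(P_{\lambda,c_n})\to r(P_{\lambda,c})$; this is exactly the (sequential) continuity of $c\mapsto r(P_{\lambda,c})$ on $\C^*$.

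\emph{The case $\lambda=e^{2\pi i\theta}$ with $\theta$ Brjuno.} Fix $c_0\in\C^*$ and set $r_0:=r(P_{\lambda,c_0})$, which is positive by Brjuno's theorem; let $\psi_0:=\psi_{P_{\lambda,c_0}}\colon B(0,r_0)\to\Delta_0$ be the linearizing parametrization onto the Siegel disk $\Delta_0$, with inverse $\phi_0$. Upper semicontinuity at $c_0$ is immediate from the first part of \Cref{lem:scsP}, applied to the constant sequence of multipliers $\lambda_n\equiv\lambda\in\ov\D\setminus\{0\}$ (not a root of unity, as $\theta$ is irrational): $\limsup_{c\to c_0}r(P_{\lambda,c})\le r_0$. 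The whole content is the lower bound $\liminf_{c\to c_0}r(P_{\lambda,c})\ge r_0$, which I would obtain as follows. Fix $\rho\in(0,r_0)$. Since $P_{\lambda,c_0}\circ\psi_0=\psi_0\circ R_\theta$ with $R_\theta(w)=\lambda w$ and $|\lambda|=1$, the compact set $\psi_0(\ov B(0,\rho))$ is forward invariant and compactly contained in $\Delta_0$; hence, choosing $\rho'\in(\rho,r_0)$ with $\psi_0(\ov B(0,\rho'))$ also compactly contained in $\Delta_0$, for all $c$ close enough to $c_0$ the map
\[ g_c:=\phi_0\circ P_{\lambda,c}\circ\psi_0 \]
is well defined and holomorphic on $B(0,\rho')$, fixes $0$ with $g_c'(0)=\lambda$, is univalent there, satisfies $g_{c_0}=R_\theta$, and $g_c\to R_\theta$ uniformly on $\ov B(0,\rho)$ as $c\to c_0$. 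Now invoke the quantitative (parametrized) form of Brjuno's linearization theorem: for $\theta$ a Brjuno number, a holomorphic perturbation of the rotation $R_\theta$ on $B(0,\rho)$ is linearizable on a disk whose conformal radius tends to $\rho$ as the size of the perturbation tends to $0$. Applied to $g_c$ this gives that its linearizing parametrization $\tilde\psi_c$ has $r(\tilde\psi_c)\ge\rho-o(1)$ as $c\to c_0$. Finally $\psi_0\circ\tilde\psi_c$ linearizes $P_{\lambda,c}$ — immediate from $P_{\lambda,c}\circ\psi_0=\psi_0\circ g_c$ together with forward invariance of the Siegel disk of $g_c$ — and has derivative $1$ at the origin, so by uniqueness of the linearizing power series $\psi_0\circ\tilde\psi_c=\psi_{P_{\lambda,c}}$ on $B(0,r(\tilde\psi_c))$; hence $r(P_{\lambda,c})\ge r(\tilde\psi_c)\ge\rho-o(1)$. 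Letting $c\to c_0$ and then $\rho\uparrow r_0$ gives the lower bound, and combined with upper semicontinuity this proves continuity at the (arbitrary) point $c_0$.

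\emph{Expected main obstacle.} Everything above is soft — normalizations, the already established \Cref{lem:cP} and \Cref{lem:scsP}, and uniqueness of the linearizing series — \emph{except} the quantitative Brjuno estimate for perturbations of an irrational rotation of Brjuno type: that is the single place where the Brjuno hypothesis is genuinely used, and it is where the small‑divisor bounds (in parametrized form) enter. The reduction through $g_c=\phi_0\circ P_{\lambda,c}\circ\psi_0$, which conjugates $P_{\lambda,c_0}$ to a true rotation on its Siegel disk so that the nearby maps $P_{\lambda,c}$ become perturbations of a rotation, is precisely what makes that estimate applicable in the shape in which it is available; granting it, the remainder is bookkeeping.
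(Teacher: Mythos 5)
Your argument is correct. In the attracting case it is essentially the paper's own proof: both rest on \Cref{lem:cP} (you invoke its second point directly, the paper combines \cref{eq:rpc}, \Cref{lem:holodep} and the fourth point — same ingredients). In the Brjuno case the paper gives no argument at all beyond citing a proposition from \cite{thesis:Cheritat} (Chapter 3, page 79) that yields the continuity for such families; you instead make explicit the standard reduction — conjugate by $\psi_0$ so that the nearby maps $g_c=\phi_0\circ P_{\lambda,c}\circ\psi_0$ become perturbations of the rotation $R_\theta$ on $B(0,\rho')$, then transport the linearization back via $\psi_0\circ\tilde\psi_c$ and uniqueness of the normalized linearizer — and you quote a quantitative perturbation-of-rotation statement ("the linearization radius of a small perturbation of $R_\theta$ on $B(0,\rho)$ tends to $\rho$"). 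That reduction is sound (well-definedness of $g_c$, $g_c'(0)=\lambda$, and the identity $P_{\lambda,c}\circ(\psi_0\circ\tilde\psi_c)=(\psi_0\circ\tilde\psi_c)\circ R_\theta$ all check out; upper semicontinuity via \Cref{lem:scsP} is as in the paper), but be aware that the quantitative statement you invoke is exactly the nontrivial small-divisor content of the result the paper cites — it is not provable by soft/compactness arguments, since $r$ is a priori only upper semicontinuous — so your proof, like the paper's, ultimately stands on that external Yoccoz-type estimate; you correctly identified this as the sole genuine obstacle, and granting it your write-up is complete.
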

\begin{proof}
In the Brjuno case, it is a direct application of a theorem in \cite{thesis:Cheritat}: the proposition on page 79, Chapter 3.

In the attracting case, by \cref{eq:rpc} we have $r=|\phi_P(c_P)|$ where $c_P$ is one of the two critical points of $P=P_{\lambda,c}$.\footnote{So $c_P = c$ or $c_P =1$, depending on the value of $c$. It is \emph{not} true that $c_P$ depends continuously on $P$, not even locally (when the two critical points belong to $\partial U$ but are distinct, the point $c_P$ may jump from one to the other for nearby parameters. And it will: it follows from the analysis that we will make later of the curve $Z_\lambda$, see \Cref{sec:attr}.}
We saw in \Cref{lem:holodep} that $\phi_P$ depends continuously (holomorphically!) on $P$.
The claim then follows from the fourth point of \Cref{lem:cP}.
\end{proof}

By the symmetry $c^{-1}P_{\lambda,c}(cz) = P_{\lambda,1/c} (z)$ valid for all $c\in\C^*$ and $z\in\C$, we get that 
\begin{equation}\label{eq:symr}
  -\log r(P_{\lambda,c}) + \log |c|= -\log r(P_{\lambda,1/c}).
\end{equation}

We refer to \cite{Ra} for the definition of a subharmonic function (definition~2.2.1 page~28): it is a function from some open subset $U$ of $\C$ to $[-\infty,+\infty)$ that is upper semi-continuous and satisfies the \emph{local submean inequality}.
\begin{proposition}\label{prop:sh}
Under the same assumptions,
\[c\mapsto -\log r(P_{\lambda,c})\]
is a subharmonic function of $c\in\C^*$.
\end{proposition}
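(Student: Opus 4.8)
The plan is to show that $g(c) := -\log r(P_{\lambda,c})$ is subharmonic on $\C^*$ by verifying the two defining properties: upper semi-continuity and the local submean inequality. Upper semi-continuity is immediate: by \Cref{lem:cont}, $g$ is in fact continuous on $\C^*$ under either hypothesis (attracting or Brjuno), so this part needs no further work. The substance is the submean inequality, and the natural route is to exhibit $g$ as a locally uniform limit (or a decreasing limit, or a $\limsup$) of a family of subharmonic — ideally harmonic — functions, and then invoke the standard stability of subharmonicity under such limits (see \cite{Ra}).

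The key observation is the one recorded just before the statement: for polynomials we have the identity \eqref{eq:rpc}, namely $r = |\phi_{P_{\lambda,c}}(c_P)|$ where $c_P$ is a critical point of $P_{\lambda,c}$ lying on $\partial U$. However $c_P$ jumps between the two critical points $1$ and $c$, so one cannot naively differentiate this. Instead I would work with the two auxiliary functions
\[
g_1(c) = -\log|\phi_{P_{\lambda,c}}(1)|, \qquad g_c(c) = -\log|\phi_{P_{\lambda,c}}(c)|,
\]
each defined (and finite) on the open set where the relevant critical point lies in the basin of $0$. On such an open set, $c\mapsto \phi_{P_{\lambda,c}}(1)$ is holomorphic and nonvanishing by \Cref{lem:holodep} (the critical point is not the fixed point), hence $g_1$ is harmonic there; similarly $g_c$ is harmonic wherever $c$ lies in the basin and $c\mapsto\phi_{P_{\lambda,c}}(c)$ doesn't vanish — note $\phi_{P_{\lambda,c}}(c)=0$ forces $c=0$, excluded. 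The point of \Cref{prop:U} is that at least one critical point sits on $\partial U$, and a point on $\partial U$ has $|\phi|$ exactly equal to $r$ while any critical point in the basin has $|\phi|\le r$ (because $\phi$ maps the basin to $\C$ with $|\phi|<r$ precisely on $U$, and $\phi$ of a boundary point of $U$ has modulus $r$; critical points in $U$ are excluded, critical points on $\partial U$ give $|\phi|=r$, and a critical point strictly outside $\overline U$ but in the basin maps under $\phi$ to a point of modulus... actually $\ge r$). After sorting the correct inequality direction, one gets
\[
g(c) = \min\bigl(g_1(c),\, g_c(c)\bigr)
\]
on the region where both critical points lie in the basin, and $g(c)=g_i(c)$ (the one that makes sense) elsewhere. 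A minimum of harmonic functions is not subharmonic — that's the wrong sign — so this decomposition by itself is not enough.

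The cleaner approach, which I expect to be the one that works, is to go back to \Cref{lem:scsP} and the explicit power-series structure. Fix $\lambda$ not a root of unity. By Cauchy–Hadamard, $\tfrac1r = \limsup_n |b_n(P_{\lambda,c})|^{1/n}$, and by the earlier lemma each $b_n(P_{\lambda,c})$ is a \emph{polynomial} in $c^{-1}$, hence holomorphic in $c\in\C^*$. Therefore each function $c\mapsto \tfrac1n\log|b_n(P_{\lambda,c})|$ is subharmonic on $\C^*$ (log-modulus of a holomorphic function), and
\[
-\log r(P_{\lambda,c}) = \limsup_{n\to\infty} \frac1n \log\bigl|b_n(P_{\lambda,c})\bigr|.
\]
The $\limsup$ of a sequence of subharmonic functions is not automatically subharmonic, but its upper semi-continuous regularization is, and — this is the standard Hartogs/Brelot-type argument — if in addition the $\limsup$ is already known to be upper semi-continuous (which we have, since $g$ is continuous by \Cref{lem:cont}) and the family $\{\tfrac1n\log|b_n|\}$ is locally uniformly bounded above, then the $\limsup$ itself is subharmonic. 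The local upper bound on $\tfrac1n\log|b_n(P_{\lambda,c})|$ follows because $r(P_{\lambda,c})$ is bounded below by a positive constant on compact subsets of $\C^*$ (in the attracting case, $r$ is continuous and positive by \Cref{lem:cont}; in the Brjuno case likewise, and a uniform Brjuno-type lower bound holds), so $\limsup_n\tfrac1n\log|b_n|$ is bounded above locally, and a Hartogs lemma upgrades this to a local uniform bound on the sequence.

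The main obstacle is exactly this last step — justifying rigorously that the pointwise $\limsup$ of the subharmonic functions $\tfrac1n\log|b_n(P_{\lambda,c})|$ is subharmonic, rather than merely that its regularization is. The safe way to sidestep it is to avoid $\limsup$ altogether: use \Cref{lem:cont}'s continuity together with the representation $g=\min(g_1,g_c)$ is unhelpful as noted, so instead use the \emph{potential-theoretic} characterization — verify the submean inequality directly at each point $c_0\in\C^*$ by choosing a small disk $D=\overline B(c_0,\rho)$ on which, after possibly relabelling, the critical point realizing the radius stays in the basin; there $g = g_i$ is harmonic on a neighborhood of $c_0$ except possibly where the two functions $g_1,g_c$ cross, and on the crossing locus $g=\min(g_1,g_c)$ is continuous and equals a harmonic function on each side. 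A minimum of two harmonic functions agreeing on their crossing set is \emph{superharmonic}, not subharmonic, so this genuinely fails — confirming that the only viable route is the Cauchy–Hadamard / Hartogs one. I would therefore present the proof as: (1) upper semi-continuity from \Cref{lem:cont}; (2) each $\tfrac1n\log|b_n(P_{\lambda,c})|$ subharmonic since $b_n$ is holomorphic in $c$; (3) local uniform upper bound on the sequence from the local positive lower bound on $r$; (4) conclude by the standard result (\cite{Ra}, Hartogs' lemma / Theorem on $\limsup$ of subharmonic functions) that the limit-sup, being furthermore upper semi-continuous by (1), is subharmonic.
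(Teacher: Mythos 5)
Your overall strategy (continuity from \Cref{lem:cont} for upper semi-continuity, subharmonicity of each $c\mapsto\frac1n\log|b_n(P_{\lambda,c})|$ since $b_n$ is a polynomial in $c^{-1}$, a local uniform upper bound, then the limsup theorem for subharmonic functions from \cite{Ra}) is exactly the paper's argument, so the architecture is right; the detour through $\min(g_1,g_c)$ is a dead end you correctly discard. But step (3) contains a genuine gap: you try to obtain the local uniform upper bound on the sequence $\frac1n\log|b_n|$ from the pointwise statement ``$r\geq r_0>0$ on compacts, hence $\limsup_n\frac1n\log|b_n|\leq-\log r_0$ pointwise'' and then say ``a Hartogs lemma upgrades this to a local uniform bound.'' That is circular: Hartogs' lemma (and likewise the Brelot--Cartan-type Theorem~3.4.3 in \cite{Ra} you invoke in step (4)) takes local uniform boundedness above as a \emph{hypothesis} and cannot produce it; Cauchy--Hadamard only controls the tail of the sequence at each fixed $c$, with no uniformity in $c$ and no control of individual terms.

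The missing ingredient, which is how the paper closes this step, is that $\psi_{P_{\lambda,c}}$ is \emph{univalent} on its disk of convergence (recalled in \Cref{sec:phi} for $|\lambda|\leq 1$ not a root of unity; in the attracting case it also follows from \Cref{prop:U}). Combined with the continuity of $r$ (\Cref{lem:cont}), which gives $r\geq r_0>0$ on the closure of a parameter disk, coefficient estimates for normalized univalent functions (Bieberbach--de Branges, or any of the weaker classical bounds) yield the explicit uniform estimate $|b_n(P_{\lambda,c})|\leq n/r_0^{n}$ for all $n$ and all $c$ in that disk, hence $\frac1n\log|b_n|\leq M$ locally uniformly. (Alternatively, one could use that $\psi$ takes values in the filled Julia set, which is bounded locally uniformly in $c$, and apply Cauchy estimates on $B(0,r_0/2)$.) With that bound in hand, your step (4) — or, as in the paper's first variant, a direct verification of the submean inequality via the decreasing, bounded-above sequence $u_n=\sup_{k\geq n}\frac1k\log|b_k|$ and monotone convergence — completes the proof.
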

\begin{proof}
First it is upper semi-continuous: indeed it is continuous according to the previous lemma.
We then have to check the local submean inequality.
Recall that
\[-\log r = \limsup \frac1n\log b_n\]
So $-\log r$ is the decreasing limit of the functions $u_n = \sup_{k\geq n} \frac1k\log b_k$.
We can't claim that $u_n$ is subharmonic because we do not know if it is upper semi-continuous.
However we will still check that $u_n$, then $-\log r$, satisfy the submean inequality on any disk $B(0,\rho)$ compactly contained in $\C^*$.
Consider such a disk. Then the continuous function $r$ reaches a minimum $r_0>0$ on its closure. Recall that the power series $\psi_r$ defines an \emph{injective} function on its domain of convergence.
It follows then from the Bieberbach-De Branges theorem\footnote{Most previously known bounds on the coefficients of univalent function are easier to prove and are also sufficient for this purpose.} that $|b_n|\leq n/r_0^n$. In particular : $\frac1n\log b_n$ is a sequence of functions on $B(0,\rho)$ that is bounded from above by some constant $M\in \R$.
The functions $u_n$ are hence bounded from above by the same $M$.
Each function $\frac1k\log b_k$ being subharmonic, it satisfies the submean inequality on any disk contained in $\C$. It easily follows that $u_n$ does too. 
Since this weakly decreasing\footnote{By this we mean $u_{n+1}\leq u_n$.} sequence is bounded from above, the monotone convergence theorem holds and its limit thus satisfies the submean inequality.

As an alternative proof, one can use the extension of the Brelot-Cartan theorem called Theorem~3.4.3 in \cite{Ra}. The limsup $u$ satisfies $u^*=u$ since $u$ is continuous. We saw in the previous paragraph that the sequence of functions is locally uniformly bounded from above. So the hypotheses of the theorem are satisfied.
\end{proof}

As $c\tend\infty$, the map  $P_{\lambda,c}$ converges uniformly on every compact subset of $\C$ to $Q_\lambda:z\mapsto \lambda z (1-\frac{z}{2})$.
\begin{lemma}\label{lem:qlr}
If $|\lambda|\leq 1$ and
\[|c|\geq \frac{7^2}{7^2/2-7-10}\left(\frac 12 + \frac{7}{3|\lambda|}\right)\]
then
$P_{\lambda,c}$ has a quadratic-like restriction $P_{\lambda,c}: V\mapsto B(0,10/|\lambda|)$ where $V$ is the connected component containing $0$ of $P^{-1}(B(0,10/|\lambda|))$. The critical point of this quadratic-like restriction is $z=1$.
\end{lemma}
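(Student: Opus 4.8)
The plan is to obtain the quadratic-like restriction by the classical ``outer barrier'' construction. Set $R:=10/|\lambda|$ and $\rho:=7/|\lambda|$; note $\rho<R$ since $|\lambda|\le 1$. I will show that $|P_{\lambda,c}(z)|\ge R$ on the whole circle $\{|z|=\rho\}$; then the connected component $V$ of $P_{\lambda,c}^{-1}(B(0,R))$ containing $0$ cannot meet that circle, so by connectedness $V\subset B(0,\rho)$, hence $\overline V\subset\overline{B(0,\rho)}\subset B(0,R)$: the component is compactly contained in the target disk. By \Cref{lem:folk}, $V$ is then a bounded Jordan domain and $P_{\lambda,c}\colon V\to B(0,R)$ is a proper holomorphic map; a count of critical points will force its degree to be $2$. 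The only non-routine choice here is the \emph{scale} $\rho=7/|\lambda|$: a radius independent of $|\lambda|$ fails for small $|\lambda|$, and $7/|\lambda|$ is precisely the scale for which the hypothesis on $|c|$ is tailored.

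The computational core is the claimed estimate on $\{|z|=\rho\}$. I would write the cubic factor of $P_{\lambda,c}$ as $\bigl(1-\tfrac z2\bigr)+\tfrac{z(2z-3)}{6c}$, so that on that circle, using $\rho\ge 2$ and the triangle inequality,
\[
|P_{\lambda,c}(z)|\;\ge\;|\lambda|\rho\Bigl(\Bigl|1-\tfrac z2\Bigr|-\tfrac{|z(2z-3)|}{6|c|}\Bigr)\;\ge\;|\lambda|\rho\Bigl(\tfrac\rho2-1-\tfrac{\rho(2\rho+3)}{6|c|}\Bigr).
\]
With $\rho=7/|\lambda|$ one has $|\lambda|\rho=7$, and an elementary manipulation shows that the hypothesis $|c|\ge\frac{49}{49/2-7-10}\bigl(\tfrac12+\tfrac{7}{3|\lambda|}\bigr)$ is exactly equivalent to $7|\lambda|\cdot\tfrac{\rho(2\rho+3)}{6|c|}\le\frac{49}{2}-7-10=\tfrac{15}{2}$, i.e.\ $7\cdot\tfrac{\rho(2\rho+3)}{6|c|}\le\tfrac{15}{2|\lambda|}$. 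Feeding this into the displayed bound gives $|P_{\lambda,c}(z)|\ge\frac{49}{2|\lambda|}-7-\frac{15}{2|\lambda|}=\frac{17}{|\lambda|}-7\ge\frac{10}{|\lambda|}=R$, the last inequality being again $|\lambda|\le 1$. I expect this to be the step carrying the weight of the proof, but it is a short, essentially forced one-variable estimate; the one thing to watch is that there is no slack, so the constants must be tracked exactly rather than bounded generously.

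The last step is to identify the degree. By the previous paragraph $V$ is a bounded Jordan domain compactly contained in $B(0,R)$ and $P_{\lambda,c}\colon V\to B(0,R)$ is proper; let $d$ be its degree. Differentiating, $P_{\lambda,c}'(z)=\lambda c^{-1}(z-1)(z-c)$, so the critical points of $P_{\lambda,c}$ are the simple points $z=1$ and $z=c$. A crude consequence of the hypothesis is $|c|>R>\rho$, hence $c\notin\overline{B(0,\rho)}\supset V$. On the other hand $|1|=1<\rho$, $|P_{\lambda,c}(1)|=|\lambda|\bigl|\tfrac12-\tfrac1{6c}\bigr|<R$, and $P_{\lambda,c}$ maps the segment $[0,1]$ into $B(0,R)$ (a one-line bound using $|c|\ge 1$), so $1$ lies in the same component of $P_{\lambda,c}^{-1}(B(0,R))$ as $0$; that is, $1\in V$. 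Thus $z=1$ is the unique critical point of $P_{\lambda,c}$ in $V$ and it is simple, so the Riemann--Hurwitz formula for the proper holomorphic map $P_{\lambda,c}\colon V\to B(0,R)$ between topological disks gives $1=d-1$, i.e.\ $d=2$. Hence $P_{\lambda,c}\colon V\to B(0,10/|\lambda|)$ is a quadratic-like map, with critical point $z=1$, which is the assertion.
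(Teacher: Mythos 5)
Your proof is correct and follows essentially the same route as the paper: the paper performs the identical barrier estimate after the rescaling $w=\lambda z$ (its circle $|w|=7$ is your $|z|=7/|\lambda|$), with the same splitting of the cubic factor into the quadratic main term plus the $c^{-1}$ correction and the same tight use of the hypothesis on $|c|$. The only cosmetic difference is the degree identification: the paper gets degree $2$ by Rouché on the boundary circle and then checks that the critical value at the point corresponding to $z=1$ lies in the target disk, whereas you place $z=1$ inside $V$ via the segment $[0,1]$, exclude $z=c$, and invoke Riemann--Hurwitz; both arguments settle the same point.
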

\begin{proof}
We first change variable with $z= \lambda^{-1} w$.
Then $P_{\lambda,c}$ is conjugated to
\[F(w) = \lambda w -\frac{w^2}{2} + c^{-1} w^2 \left(\frac{-1}{2} +\frac{w}{3\lambda}\right)\]
The condition above ensures that the rightmost term has modulus $\leq 7^2/2-7-10$ whenever $|w|\leq 7$. When $|w|=7$ we have
\[|\lambda w -\frac{w^2}{2}| \geq 7^2/2-7 \]
and hence
\[|F(w)|\geq 10.\]
Moreover, by Rouché's theorem, $F(w)$ winds the same number of times around $0$ than $-\frac{w^2}{2}$ does, i.e.\ $2$ times.
Let $V$ be the preimage of $B(0,10)$ by $F$ restricted to $B(0,7)$.
Then $F$ is proper from $V$ to $B(0,10)$ and has degree $2$.
Either set $V$ has two connected components and $F$ has no critical point on $V$, or $F$ has a critical point on $V$ and $V$ has one component.
Now the point $w=\lambda$ is critical and
\[F(\lambda) = \lambda^2\left(\frac12-\frac{c^{-1}}{6} \right).\]
Note that 
\[|c|\geq \frac{7^2}{7^2/2-7-10}\left(\frac 12 + \frac{7}{3}\right) = \frac{833}{45}\]
hence
\[|F(\lambda)| \leq \frac{1}{2} + \frac{1}{6}\cdot\frac{45}{833} < 10\]
and of course $|\lambda|<7$.
Hence we are in the second case and $P:\lambda^{-1}V\to B(0,10/|\lambda|)$ is quadratic-like, with $\lambda^{-1}V \subset B(0,7/|\lambda|)$.
By hypothesis $|c| \geq \frac{7^2}{7^2/2-7-10}\left(\frac 12 + \frac{7}{3|\lambda|}\right)$, hence $|c| \geq \frac{7^2}{7^2/2-7-10}\cdot\frac{7}{3\lambda } > 7$, hence $z=c$ cannot be the critical point of the quadratic-like restriction, so this has to be $z=1$.
\end{proof}

\begin{remark*}
 In fact, if $|\lambda|\leq 1$, to ensure the existence of a quadratic-like restriction it is enough that one critical point escapes to infinity.
  We could then have used \Cref{lem:cesc}.
  One advantage of \Cref{lem:qlr} is that we have an explicit and simple range for the quadratic-like restriction.
\end{remark*}

According to \cite{Ra}, section 3.7, the Laplacian, in the sense of distributions, of a subharmonic function is represented by a Radon\footnote{The general definition of a Radon measure is elaborate, but on $\R^n$ it is just a (positive) measure on the Borel sets that is locally finite, i.e.\ finite on every compact set.
There is a correspondence between such Radon measures and positive linear operators on the set of continuous functions $\R^n \to \R$ with compact support.} measure, let us call it $\mu_\lambda$.

\begin{proposition}\label{prop:harmo}
Assume that we have an open subset $W$ of $\C$,
and a family of quadratic-like maps for $c\in W$, $f_c : U_c \to V_c$ that all satisfy $f_c(z) = \lambda z + \cal O(z^2)$. Assume that the fibred union $\bigcup_{c\in W} \{c\}\times U_c$ is open in $\C^2$ and that $f_c(z)$ varies analytically with $c$.
Then the function $c\mapsto -\log r(f_c)$ is harmonic on $W$.
\end{proposition}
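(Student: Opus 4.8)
The plan is to show that $c \mapsto -\log r(f_c)$ is not merely subharmonic but has vanishing Laplacian on $W$, which by Weyl's lemma (or the characterization of harmonicity via the distributional Laplacian) gives harmonicity. The key is that for a quadratic-like map $f_c : U_c \to V_c$ the radius of convergence $r(f_c)$ admits an interpretation analogous to \Cref{eq:rpc}: $-\log r(f_c)$ equals $-\log|\phi_{f_c}(\omega_c)|$ where $\omega_c$ is the unique critical point of the quadratic-like restriction (a quadratic-like map has exactly one critical point, so there is no ambiguity of ``main critical point'' here), and $\phi_{f_c}$ is the linearizing coordinate extended over the filled quadratic-like domain. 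Here one uses that the restriction $f_c$ being quadratic-like of degree $2$, the set $U(f_c) := \psi_{f_c}(B(0,r))$ is compactly contained in the domain and its boundary carries exactly the one critical point $\omega_c$, exactly as in \Cref{prop:U}; the proof there only used that $f_c$ is a proper degree $\geq 2$ map onto a topological disk with $0$ attracting, which is precisely the quadratic-like hypothesis. So $r(f_c) = |\phi_{f_c}(\omega_c)|$.

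Next I would establish that both $c \mapsto \omega_c$ and $(c,z) \mapsto \phi_{f_c}(z)$ depend holomorphically on $c$. For the critical point: since $f_c(z) = \lambda z + \mathcal{O}(z^2)$ varies analytically and the quadratic-like restriction has a single simple critical point, $\partial_z f_c$ has a unique simple zero $\omega_c$ in $U_c$, which by the implicit function theorem (or the argument principle applied to $\partial_z f_c$) depends holomorphically on $c$; note $\omega_c \neq 0$ since $f_c'(0) = \lambda \neq 0$. For $\phi_{f_c}$: this is the content of \Cref{lem:holodep} applied with the analytic family of quadratic-like maps — the openness of the fibred union $\bigcup_c \{c\} \times U_c$ is exactly the hypothesis we are given, and the local uniform convergence of $\phi_f(z) = \lim f^n(z)/\lambda^n$ on the (fibred) basin gives analyticity of $(c,z) \mapsto \phi_{f_c}(z)$. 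Evaluating along the holomorphic section $c \mapsto (c, \omega_c)$, we conclude $c \mapsto \phi_{f_c}(\omega_c)$ is holomorphic on $W$.

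The final step: $\phi_{f_c}(\omega_c)$ is a holomorphic function of $c$ on $W$, and it is nowhere zero there, because $\omega_c$ lies on $\partial U(f_c)$ which sits at Euclidean distance bounded below from $0$ (indeed $\phi_{f_c}(\omega_c) = r(f_c) \neq 0$ as the quadratic-like map is linearizable with a genuine Siegel disk / attracting domain of positive conformal radius). Therefore $\log|\phi_{f_c}(\omega_c)|$ is harmonic on $W$, being the real part (up to sign and the locally-defined $\log$ of a zero-free holomorphic function) of a holomorphic function — more precisely $\log|g|$ is harmonic wherever $g$ is holomorphic and non-vanishing. Hence $-\log r(f_c) = -\log|\phi_{f_c}(\omega_c)|$ is harmonic on $W$, as claimed.

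The main obstacle I anticipate is the first step: carefully transporting \Cref{prop:U} and \Cref{eq:rpc} from the global polynomial setting to the quadratic-like setting, i.e.\ verifying that $U(f_c)$ is compactly contained in $U_c$, is a Jordan domain, and carries exactly the unique critical point of the quadratic-like map on its boundary, using only the quadratic-like structure and the already-cited folklore lemmas (\Cref{lem:folk}). One must also be slightly careful that the single critical point of the quadratic-like restriction is genuinely on $\partial U(f_c)$ and not strictly inside — this follows from the last part of \Cref{prop:U}'s argument (extending $\psi$ past $r$ would contradict the definition of $r$), which again only uses properness and the disk structure. Once that is in place, the holomorphic-dependence statements and the harmonicity of $\log|\,\cdot\,|$ of a zero-free holomorphic function are routine.
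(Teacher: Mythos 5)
Your argument is sound in the attracting case and in fact coincides with the paper's treatment of that case: for $|\lambda|<1$ the extended linearizing coordinate of the quadratic-like restriction gives $r(f_c)=|\phi_{f_c}(\omega_c)|$ with $\omega_c$ the unique (simple) critical point, both depend holomorphically on $c$, and $\log|\cdot|$ of a zero-free holomorphic function is harmonic. But the proposition is not only about $|\lambda|<1$: it is invoked (via \Cref{prop:p2}, ``under the same assumptions as \Cref{lem:cont}'') also when $\lambda=e^{2\pi i\theta}$ with $\theta$ a Brjuno number, and there your proof breaks down at every step. In the neutral case there is no basin and no extension of $\phi_{f_c}$ beyond the Siegel disk, so \Cref{lem:holodep} (which rests on $\phi_f=\lim f^n/\lambda^n$, a formula that only converges for $|\lambda|<1$) is unavailable; worse, for a general Brjuno $\theta$ the critical point of the quadratic-like restriction need not lie on the boundary of the Siegel disk at all, so the identity $r(f_c)=|\phi_{f_c}(\omega_c)|$, i.e.\ the analogue of \eqref{eq:rpc}, is simply false in general — $\phi_{f_c}$ is not even defined at $\omega_c$.

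The paper handles this second case by a genuinely different mechanism: since the multiplier at $0$ is constant and non-repelling along the family, all other periodic orbits of the quadratic-like restriction stay repelling, hence the Julia set (and with it $\partial\Delta$) undergoes a holomorphic motion in $c$ by the $\lambda$-lemma, and then Sullivan's harmonicity result (the statement recorded later as \Cref{prop:sullivan}, cf.\ \cite{Zakeri2}) gives that $c\mapsto\log r(\Delta_c)$ is harmonic. To repair your proof you would need either to restrict the proposition to $|\lambda|<1$ (which would not suffice for its use in the Siegel slices) or to add this holomorphic-motion-plus-Sullivan argument as a separate case for $|\lambda|=1$, $\theta$ Brjuno.
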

\begin{proof}
  
Case 1: $|\lambda|<1$. Then the immediate basin of $0$ for $f_c$ is the basin of $0$ for its restriction. We thus have $r(f_c) = |\phi_{f_c}(1)|$, whence the harmonicity of $-\log(r)$ in $1/c$.

Case 2: $|\lambda|=1$ and $\theta$ is Brjuno. 
  The Julia set $J$ of the restriction undergoes a holomorphic motion as $c$ varies locally in $W$: its multiplier at the origin remains constant and non-repelling, so all other cycles remain repelling, hence undergo a holomorphic motion, so their closure $J$ too by the $\lambda$-lemma. We can then apply the analysis of Sullivan (see \cite{Zakeri2},  or \cite{BC1}, proposition~2.14): when a holomorphic family of maps with an indifferent fixed point $0$ has a Siegel disk whose boundary undergoes a holomorphic motion w.r.t.\ the parameter, then $-\log r$ is a harmonic function of $1/c$.
\end{proof}

\begin{proposition}\label{prop:p2}
Under the same assumptions as \Cref{lem:cont},
the function $f:c\in\C^*\mapsto -\log r(P_{\lambda,c})$ is harmonic near $0$ and near $\infty$.
The support of the measure $\mu_\lambda = \Delta f$ is bounded away from $0$ and $\infty$.
The function $f$ has a limit as $c\tend \infty$.
The function $f(1/c)$ has an harmonic extension near $0$ whose value
is $-\log r(Q_\lambda)$.
\end{proposition}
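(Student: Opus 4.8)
The plan is to combine the quadratic-like structure near $\infty$ (from \Cref{lem:qlr}) with the harmonicity result (\Cref{prop:harmo}) and the continuity of $r$ (\Cref{lem:cont}), and then upgrade the resulting local harmonicity of $f(1/c)$ near $0$ to an \emph{identification} of the boundary value with $-\log r(Q_\lambda)$ using the convergence lemma \Cref{lem:cP} (or \Cref{lem:scsP} in the neutral case).

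First I would fix $\rho>0$ large enough that the hypothesis of \Cref{lem:qlr} holds for all $|c|\ge\rho$, so that for such $c$ the polynomial $P_{\lambda,c}$ admits a quadratic-like restriction $f_c : V_c \to B(0,10/|\lambda|)$ with $f_c(z)=\lambda z+\cal O(z^2)$ and critical point $z=1$. I would check that the fibred union $\bigcup_{|c|>\rho}\{c\}\times V_c$ is open in $\C^2$ and that $f_c$ varies analytically with $c$: this is immediate since $f_c$ is simply the restriction of $P_{\lambda,c}$, which depends polynomially on $c$, and openness of the fibred union follows exactly as in \Cref{lem:holodep} from the stability of a trap near $0$ under perturbation. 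Since $r(f_c)=r(P_{\lambda,c})$ (the disk of convergence of $\psi$ only sees a neighborhood of $0$, and $U(P_{\lambda,c})$ is compactly contained in the immediate basin, which coincides with the basin of the quadratic-like restriction), \Cref{prop:harmo} applies with $W=\{|c|>\rho\}$ and gives that $c\mapsto -\log r(P_{\lambda,c})$ is harmonic there. Composing with the biholomorphism $c\mapsto 1/c$ from a punctured neighborhood of $0$, the function $g(w):=f(1/w)$ is harmonic on $0<|w|<\rho^{-1}$. Combined with \Cref{prop:p2}'s statement that $f$ is harmonic near $0$ too, this in particular shows $\Supp\mu_\lambda$ is compact in $\C^*$, but the point here is the behavior at $\infty$.

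Next I would show $g$ is bounded near $0$, so that the isolated singularity at $w=0$ is removable and $g$ extends harmonically across $0$. Boundedness follows because $r(P_{\lambda,c})$ stays bounded away from $0$ and $\infty$ as $|c|\to\infty$: the upper bound $r\le$ const comes from Bieberbach/Koebe-type bounds applied to the univalent $\psi$ together with the fact that $U(P_{\lambda,c})$ sits inside a fixed trap (e.g.\ $B(0,10/|\lambda|)$ in the quadratic-like picture, whose conformal radius at $0$ is controlled), while a lower bound $r\ge$ const $>0$ follows from \Cref{lem:scsP} applied to the sequence $P_{\lambda,c}\to Q_\lambda$ (the limit $Q_\lambda$ has a linearizable — parabolic-free, since $\lambda$ is not a root of unity, or attracting — fixed point, so $r(Q_\lambda)>0$, and $r(P_{\lambda,c})\ge r_0$ eventually). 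Hence $g$ extends to a harmonic function on $|w|<\rho^{-1}$, and in particular $f(1/c)=g(1/c)$ has a harmonic extension near $0$; equivalently $f$ has a limit as $c\to\infty$, namely $g(0)$.

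Finally I would identify $g(0)=-\log r(Q_\lambda)$. In the attracting case $|\lambda|<1$, this is the cleanest: by \Cref{lem:cP}(2), since $P_{\lambda,c}\to Q_\lambda$ uniformly on compacts and $Q_\lambda'(0)=\lambda\in\D^*$, we get $r(P_{\lambda,c})\to r(Q_\lambda)$ as $c\to\infty$, so $g(0)=\lim_{c\to\infty} f(1/c)=-\log r(Q_\lambda)$. In the neutral Brjuno case $|\lambda|=1$, \Cref{lem:cP} does not directly apply, but \Cref{lem:scsP} gives $r(Q_\lambda)\ge\limsup r(P_{\lambda,c})$; for the reverse inequality I would invoke the continuity statement of \Cref{lem:cont} as extended to the limiting quadratic polynomial — that is, the cited theorem from \cite{thesis:Cheritat} gives continuity of $r$ on the whole Brjuno-parameter family including the degenerate limit — or equivalently use that $Q_\lambda$ is a limit of linearizable quadratics and the lower semicontinuity that accompanies the holomorphic-motion argument of \Cref{prop:harmo}. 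Either way $\lim_{c\to\infty}r(P_{\lambda,c})=r(Q_\lambda)$, so $g(0)=-\log r(Q_\lambda)$, completing the proof.

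\textbf{Main obstacle.} The delicate point is the neutral case: \Cref{lem:cP}, which provides the clean convergence $r(P_{\lambda,c})\to r(Q_\lambda)$, is stated only for attracting multipliers, and one must supply the missing reverse inequality $\liminf r(P_{\lambda,c})\ge r(Q_\lambda)$ by hand — either by carefully invoking the precise continuity statement of \cite{thesis:Cheritat} (which should cover degeneration of cubics to quadratics within a bounded-type, or Brjuno, slice), or by a Hurwitz-type argument on the linearizing parametrizations using that the Siegel disk of $Q_\lambda$ persists. Everything else (the quadratic-like structure, harmonicity, removability of the singularity, the uniform bounds on $r$) is routine given the lemmas already established.
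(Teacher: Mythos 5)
There is a genuine gap, and it is concentrated in the neutral case, which is the one your own \emph{Main obstacle} paragraph flags but does not resolve. Your route (harmonicity on $|c|>\rho$ via \Cref{prop:harmo}, then removability of the singularity of $g(w)=f(1/w)$ at $w=0$, then identification of $g(0)$) needs $r(P_{\lambda,c})$ bounded away from $0$ as $c\to\infty$, and later $\liminf_{c\to\infty} r(P_{\lambda,c})\geq r(Q_\lambda)$. But \Cref{lem:scsP}, which you invoke for the lower bound, gives the \emph{opposite} inequality ($r$ of the limit dominates the $\limsup$ along the sequence, i.e.\ upper semicontinuity), so it contributes nothing to the boundedness of $g$ from above nor to the identification of the limit. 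In the attracting case the second point of \Cref{lem:cP} saves you, but for $|\lambda|=1$ your two proposed fixes (the continuity statement of \Cref{lem:cont} ``extended'' to the degenerate quadratic limit, or an unspecified lower semicontinuity accompanying \Cref{prop:harmo}) are precisely the missing content and are not available in the form stated in the paper. The paper's proof avoids this entirely: by \Cref{lem:qlr} the quadratic-like restrictions form a family that is analytic in the parameter $1/c$ on a full, \emph{unpunctured} disk around $1/c=0$, the map at the centre being the quadratic-like restriction of $Q_\lambda$; applying \Cref{prop:harmo} on that whole disk (rather than only on $|c|>\rho$) gives harmonicity of $-\log r$ across $1/c=0$, hence at once the harmonic extension, the existence of the limit, and its value $-\log r(Q_\lambda)$, with no removable-singularity step and no separate semicontinuity argument. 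The repair of your scheme is therefore not to patch the bounds but to enlarge the parameter domain in \Cref{prop:harmo} so as to include $c=\infty$.

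A second, smaller gap: you never prove that $f$ itself is harmonic near $c=0$. The sentence ``combined with \Cref{prop:p2}'s statement that $f$ is harmonic near $0$'' assumes the claim, and your harmonic function $g(w)=f(1/w)$ near $w=0$ is a statement about $f$ near $\infty$, not near $0$. The missing ingredient is the symmetry \eqref{eq:symr}, $-\log r(P_{\lambda,c})+\log|c| = -\log r(P_{\lambda,1/c})$, which transfers harmonicity near $\infty$ to harmonicity near $0$ since $\log|c|$ is harmonic on $\C^*$; this is also what makes the support of $\mu_\lambda$ bounded away from $0$. It is an easy fix, but as written the argument is circular at that point.
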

\begin{proof}
The second claim follows from the first.
The last claim follows from the third, but we will prove it directly.

From \cref{eq:symr} we can deduce harmonicity near $0$ from the harmonicity near $\infty$. We will simlutaneously prove the existence of a limit as $c\to\infty$.

These follow from the following remark, already done by Yoccoz in \cite{Yoccoz}.
By \Cref{lem:qlr} there is some $R>0$, depending on $\lambda$, such that $|c|>R$ implies that the following map is quadratic-like with critical point $z=1$: the restriction $P_{\lambda,c}: V\mapsto B(0,10/|\lambda|)$ where $V$ is the connected component containing $0$ of $P^{-1}(B(0,10/|\lambda|))$. Moreover, its domain converges as $c\tend\infty$ and the restriction depends holomorphically on $1/c$, including when $c=\infty$, as $P$ tends to $Q$ on every compact subset of $\C$.
We can then apply \Cref{prop:harmo}.
\end{proof}

We stress the following fact, that is kind of hidden in the previous statement:
\begin{equation}\label{eq:limr}
  r(P_{\lambda,c}) \underset{c\to\infty}{\tend} r(Q_\lambda).
\end{equation}

Note that the proof of \Cref{prop:p2} and the formula in the statement of \Cref{lem:qlr} gives that the support of $\mu_\lambda$ is contained in the closed ball of radius
\begin{equation}\label{eq:R}
R = \kappa_0 + \kappa_1/|\lambda|
\end{equation}
for some explicit positive constants $\kappa_0$, $\kappa_1$.

\medskip

From \Cref{prop:p2}, it follows that the measure $\mu_\lambda$ has a support that is a compact subset of $\C^*$. Since it is locally finite, it follows that it is finite.

\begin{proposition}\label{prop:totalmass}
The total mass of $\mu_\lambda$ is $2\pi$.
\end{proposition}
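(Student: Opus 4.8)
The plan is to determine the total mass of $\mu_\lambda=\Delta f$, where $f(c):=-\log r(P_{\lambda,c})$, by examining the circular means of $f$ near the two ends $c=0$ and $c=\infty$ of $\C^\ast$ and applying the Jensen/Riesz formula on an annulus. Recall from \Cref{prop:p2} that $f$ is subharmonic on $\C^\ast$, harmonic on punctured neighbourhoods of $0$ and of $\infty$, that $\mu_\lambda$ is supported in some annulus $\{\eps\le|c|\le R\}$ with $0<\eps<R<\infty$, and that $g(c):=f(1/c)$ extends harmonically across $c=0$; moreover $g(0)=-\log r(Q_\lambda)$ by \eqref{eq:limr}.

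For $\rho>0$ set $M(\rho):=\frac1{2\pi}\int_0^{2\pi} f(\rho e^{i\theta})\,d\theta$. For $\rho$ large one has $f(c)=g(1/c)$, so the mean value property for the harmonic function $g$ gives $M(\rho)=g(0)=-\log r(Q_\lambda)$. For $\rho$ small, the symmetry \eqref{eq:symr} reads $f(c)=f(1/c)-\log|c|=g(c)-\log|c|$, whence $M(\rho)=g(0)-\log\rho=-\log r(Q_\lambda)-\log\rho$. In particular $M$ is affine in $\log\rho$ near each end, with $\frac{dM}{d\log\rho}\equiv 0$ near $\infty$ and $\frac{dM}{d\log\rho}\equiv -1$ near $0$.

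Since $\mu_\lambda$ is supported in $\{\eps\le|c|\le R\}$, choosing $0<\eps'<\eps$ and $R'>R$ and invoking the classical identity expressing the Riesz mass of a subharmonic function on an annulus in terms of the one-sided slopes of its circular means (a form of Jensen's formula; see \cite{Ra}, \S\,3.7), with the normalisation of the statement in which $\Delta\log|\cdot|$ has mass $2\pi$:
\[
\mu_\lambda(\C^\ast)=\mu_\lambda\big(\{\eps'<|c|<R'\}\big)
=2\pi\left(\frac{dM}{d\log\rho}\Big|_{\rho=R'}-\frac{dM}{d\log\rho}\Big|_{\rho=\eps'}\right)
=2\pi\big(0-(-1)\big)=2\pi .
\]

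The computation is essentially forced by what is already available; the only genuine points of care are the bookkeeping of normalisation constants and the fact that the annulus identity, a priori an instance of Green's formula for smooth functions, applies here to the merely subharmonic $f$. The latter is immediate because $f$ is harmonic --- hence $C^\infty$ --- on neighbourhoods of the circles $|c|=\eps'$ and $|c|=R'$, so the identity is literally the divergence theorem on the smooth annulus $\{\eps'\le|c|\le R'\}$, the entire mass being accounted for by the logarithmic term $-\log|c|$ appearing in the local model $f=g-\log|c|$ of $f$ near $c=0$. I foresee no serious obstacle.
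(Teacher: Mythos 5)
Your argument is correct, and it reaches the conclusion by a genuinely different route than the paper's. You read the mass directly off the circular means: by \Cref{prop:p2} and \eqref{eq:limr} the mean $M(\rho)$ is constant for large $\rho$, the symmetry \eqref{eq:symr} gives $M(\rho)=-\log r(Q_\lambda)-\log\rho$ for small $\rho$, and the Jensen-type identity on an annulus converts the jump of the slope of $M$ in $\log\rho$ into the mass of $\mu_\lambda$; with the normalisation $\Delta\log|\cdot|=2\pi\delta_0$ this gives exactly $2\pi$, and your sign and bookkeeping check out. The paper instead introduces the potential $\int\frac{1}{2\pi}\log|c-z|\,d\mu_\lambda(z)$, invokes Weyl's lemma to conclude that $-\log r(P_{\lambda,c})+\log|c|$ minus this potential is harmonic on $\C^*$, and uses the prescribed behaviour at $0$ and $\infty$ to force this harmonic function to be constant, which pins the mass at $2\pi$; as a by-product it yields the representation formula \eqref{eq:logr} that is used later (e.g.\ in \Cref{sec:7}). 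So your route is more elementary and self-contained (no Weyl's lemma, no potential of $\mu_\lambda$), while the paper's route buys the explicit formula \eqref{eq:logr}. One phrasing in your final paragraph should be tightened: $f$ is smooth only \emph{near the two boundary circles}, not on the whole closed annulus (inside, it is merely subharmonic and $\mu_\lambda$ lives there), so the identity is not ``literally'' the divergence theorem for smooth functions. It is Green's identity with a distributional Laplacian that is a measure, which is justified either by the standard fact that $M$ is convex in $\log\rho$ with second derivative equal to $\frac{1}{2\pi}$ times the push-forward of $\mu_\lambda$ by $\log|\cdot|$, or by testing $\Delta f$ against a radial cutoff equal to $1$ on a neighbourhood of $\Supp\mu_\lambda$ and integrating by parts only in the transition annuli, where $f$ is indeed harmonic; this is a two-line fix, not a gap in substance.
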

\begin{proof}
As $c\to \infty$, the map $P_{\lambda,c}$ converges on every compact subset of $\C$ to the polynomial $Q_\lambda(z)=\lambda z(1-\frac{z}2)$, whose unique critical point is $z=1$.
We saw that the function $-\log r$ has a limit
\[a=-\log r(Q_\lambda)\]
as $c\to\infty$.
Using the symmetry $c^{-1}P_{\lambda,c}(cz) = P_{\lambda,1/c} (z)$ valid for all $c\in\C^*$ and $z\in\C$, we get that 
\[-\log r(P_{\lambda,c}) + \log |c|= -\log r(P_{\lambda,1/c}).\]
Hence
\[-\log r(P_{\lambda,c}) \underset{c\to 0}= -\log |c| +a+ o(1).\]
Let
\[f(c) = \big\langle \mu_\lambda , \frac{1}{2\pi}\log |z| \big\rangle = \int_{z\in\C} \frac{1}{2\pi}\log|c-z| d\mu_\lambda(z).\]
Then $\Delta f = \mu_\lambda$ (\cite{Ra}, Theorem 3.7.4).
By Weyl's lemma, \cite{Ra} Theorem 3.7.10, two subharmonic functions with the same Laplacian differ by a harmonic function.
In particular $-\log r(P_{\lambda,c})+\log|c|-f(c)$ is a harmonic function of $c\in\C^*$ that has a limit when $c\to 0$ and is $= (1-\frac{\mathrm{mass}}{2\pi})\log|c|+o(\log|c|)$ when $c\to\infty$. Such a harmonic function is necessarily constant, hence the mass is $2\pi$.
\end{proof}

As a bonus, we get the representation formula below:
\begin{equation}\label{eq:logr}
-\log r(P_{\lambda,c}) = -\log r(Q_\lambda) - \log|c| + \int_{z\in\C} \frac{1}{2\pi}\log|c-z| d\mu_\lambda(z)
\end{equation}
where $Q_\lambda(z)=\lambda z(1-\frac{z}2)$.

\section{Attracting slices}\label{sec:attr}

Recall the set $\cal H_0$, which is the principal hyperbolic component in the family of unmarked affine conjugacy classes of cubic polynomials, and one of whose charaterization is that there is an attracting fixed point whose immediate basin contains all critical points, see \Cref{sub:spec}. We see $\cal H_0$ as a subset of $\C\times\C$ via the $(a,b^2)$ parameterization of such conjugacy classes, see \Cref{sub:norm_3rd}.

\begin{figure}[ht]
	\begin{center}
   \includegraphics[width=11.5cm]{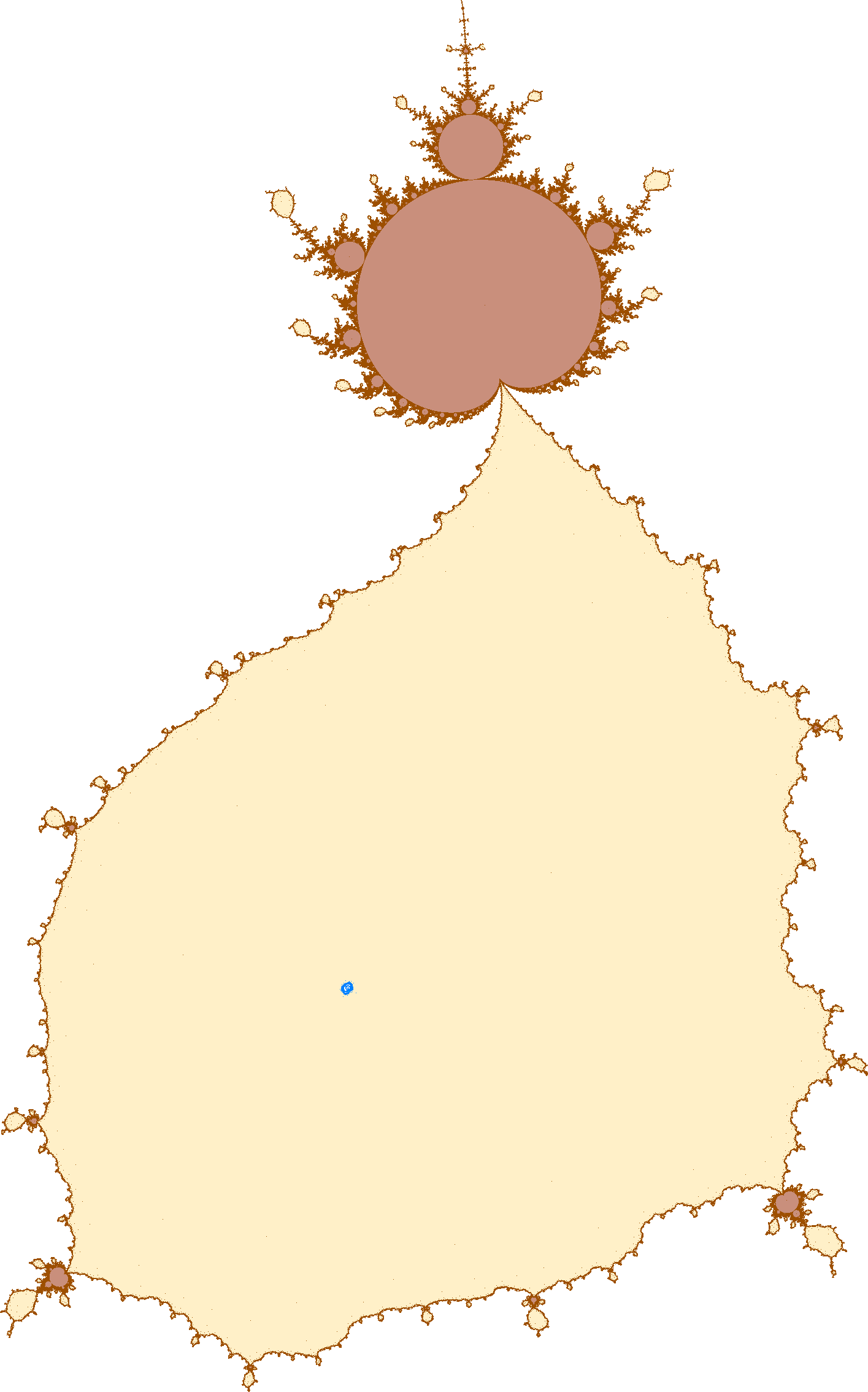}
   \end{center}
   \caption{The bifurcation locus of the $\lambda$-slice for $\lambda = 0.4i$, represented in the $c$-coordinate. The critical point $z=1$ bifurcates on the dark red set. The critical point $z=c$ bifurcates on the dark blue set, barely visible in the centre of the picture. This set is the image of the dark red one by the inversion $c\mapsto 1/c$. The isolated dark dots are artifacts of the method used to detect the bifurcation locus.}
   \label{fig:bifattrc}
\end{figure}

Petersen and Tan Lei described in \cite{PT} the fibres of the map: $\pi : \Hyp_0 \longrightarrow \D$ which associate to each polynomial the multiplier of the attracting fixed point. Denote
\[\Hyp_0(\lambda) =\pi^{-1}(\lambda)\]
\begin{theorem}[Petersen and Tan]
Let $\lambda \in \D^\ast$, then $\Hyp_0(\lambda)$ is a topological disk.
\label{thm:attr}
\end{theorem}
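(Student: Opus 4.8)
The plan is to reduce the statement to the connectivity and contractibility of the fibre as a subset of $(\lambda,c)$-space and then push it down to the $(a,b_2)$-space. First I would recall from the structural results already established in \Cref{sub:spec} that the preimage $\Hyp_0'$ of $\Hyp_0\setminus\cal E$ in the $(\lambda,c)$-space is connected, contains $(\lambda,c=\pm1)$ for every $\lambda\in\D^*$, and maps onto $\Hyp_0\setminus\cal E$ as a $2:1$ ramified cover branched over the unicritical locus $a=0$ and the symmetry locus $b=0$. Intersecting with the (closed and open) condition $\pi=\lambda$, i.e.\ fixing the first coordinate, I get a set $\Hyp_0'(\lambda)\subset\{\lambda\}\times\C^*_c$ which maps onto $\Hyp_0(\lambda)\setminus\cal E$ and which is invariant under $c\mapsto 1/c$; the quotient by this involution is exactly $\Hyp_0(\lambda)$ minus the (at most one) point of $\cal E$ in that fibre. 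So it suffices to understand $\Hyp_0'(\lambda)$ as a planar domain together with the involution $c\mapsto1/c$, and then re-attach the puncture.

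The heart of the matter is thus to show that $\Hyp_0'(\lambda)$, as an open subset of $\C^*_c$, is connected and simply connected, and that modding out by the inversion and filling in the single removed point yields a topological disk. For connectedness I would argue as in \Cref{prop:0ra}: the relevant map restricted to $|\lambda|<1$ is proper over $\Hyp_0\setminus\cal E$ (\Cref{prop:proper}), so by \Cref{lem:top1} each component of $\Hyp_0'(\lambda)$ surjects onto $\Hyp_0(\lambda)\setminus\cal E$, and since the point $c=1$ (the unicritical polynomial with multiplier $\lambda$, which is in $\Hyp_0$) has a unique preimage, there is only one component. For simple connectedness, the natural route is to use the quasiconformal surgery / Teichm\"uller-theoretic description of $\Hyp_0(\lambda)$ coming from \cite{PT}: a map in $\Hyp_0(\lambda)$ is determined, via the standard Blaschke-product model for the dynamics on the immediate basin, by the relative position of the two critical points inside a fixed model disk, and this identifies $\Hyp_0(\lambda)$ with a quotient of a configuration space that is manifestly a disk. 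Alternatively one can invoke that $\Hyp_0$ itself is known to be homeomorphic to a ball (it is the principal hyperbolic component, with a unique centre $z^3$ by Milnor's theorem quoted above, and the straightening map gives a homeomorphism onto the relevant $\hat{\mathrm{M}}$-space), and combine this with the fibration $\pi:\Hyp_0\to\D$ over the contractible base $\D$ to conclude each fibre is contractible; a careful version of this last step, checking that $\pi$ is a (topological) fibre bundle or at least a fibration, is what Petersen and Tan carry out.

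The main obstacle, then, is not the soft topology of sets that map onto $\Hyp_0(\lambda)$ — that is handled by properness and \Cref{lem:top1} — but rather establishing that the fibre is \emph{simply connected}, i.e.\ genuinely a disk rather than merely a connected open planar set. This is exactly the content of \cite{PT} and is not formal: it requires the Blaschke model, the construction of the holomorphic motion / surgery giving a section or a product structure, and control over the boundary behaviour (the ``pinching'' of the model as critical points collide or escape). In this paper I would therefore simply cite \cite{PT} for Theorem~\ref{thm:attr} and devote my own effort only to the translation between their normalization and the $(\lambda,c)$ and $(a,b_2)$ pictures used here, namely: (i) the fibre in $(a,b_2)$-space is the inversion-quotient of $\Hyp_0'(\lambda)$ with one boundary-type point of $\cal E$ re-attached, (ii) the inversion $c\mapsto1/c$ has its two fixed points $c=\pm1$ in $\Hyp_0'(\lambda)$, so the quotient of the disk $\Hyp_0'(\lambda)$ is again a disk (an orbifold disk with two cone points of order $2$, which is a topological disk), and (iii) adding back the single point of $\cal E$, which the analysis places on $\partial U$-data in a controlled way, still leaves a topological disk. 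Items (i)–(iii) are routine once \Cref{thm:attr} is granted, so the real input is the cited theorem of Petersen and Tan.
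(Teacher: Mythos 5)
Your proposal ultimately rests on citing \cite{PT} for the substance of the statement, and that is in fact all the paper does: Theorem~\ref{thm:attr} is quoted from Petersen--Tan with no proof, and \Cref{sub:PT} only recalls (explicitly without proof) their bijection $\Phi$ onto the model disk $D_\lambda$. So on the load-bearing step you and the paper agree. The problem is the reduction you wrap around the citation: it is not correct as stated, and it also inverts the paper's logic, since the paper \emph{deduces} the structure of the marked slices from the cited theorem rather than the other way round.

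Concretely: (i) $\Hyp_0'(\lambda)$ is not simply connected. It is the preimage of $\Hyp_0^a(\lambda)$ under the degree-two map $c\mapsto v=\frac{c+c^{-1}}{2}$, branched over $v=\pm1$, and by Riemann--Hurwitz it is a topological \emph{annulus} (this is exactly the corollary the paper derives from Theorem~\ref{thm:attr}); one can also see directly from \Cref{lem:cesc} that $\Hyp_0'(\lambda)$ separates $0$ from $\infty$ in $\C^*$. So the plan ``prove $\Hyp_0'(\lambda)$ is a disk, then quotient by $c\mapsto 1/c$'' cannot be carried out; and the corrected version (quotient of the annulus by the involution with fixed points $c=\pm1$ is a disk) presupposes knowing the annulus structure, which in the paper comes \emph{from} the theorem you are trying to prove --- the argument is circular. (ii) There is no point of $\cal E$ to re-attach: for $\lambda\in\D^*$, a class in $\Hyp_0(\lambda)$ has both critical points in the immediate basin of a fixed point of multiplier $\lambda\neq0$, so no critical point can itself be fixed; hence $\Hyp_0(\lambda)\subset\Hyp_0\setminus{\cal E}$ and the quotient of $\Hyp_0'(\lambda)$ is all of $\Hyp_0(\lambda)$, with no puncture and no filling step. (iii) The alternative route via ``$\Hyp_0$ is a ball and $\pi$ is a fibration'' is likewise not available as an independent argument, since (as you yourself note) establishing that product/fibration structure is part of what \cite{PT} prove. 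In short, keep the citation, drop or correct the surrounding translation: the correct statements about $\Hyp_0^a(\lambda)$ and $\Hyp_0'(\lambda)$ are consequences of Theorem~\ref{thm:attr}, not inputs to it.
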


Recall the sets $\Hyp_0^a$ and $\Hyp_0'$ : they are the connected components contained in ``$|\lambda|<1$'' of the preimage of $\Hyp_0$ by respectively the maps $(\lambda,v)\mapsto (a,b^2)$ and $(\lambda,c)\mapsto (a,b^2)$ introduced in \Cref{sec:normz}.
For $\lambda\in\D^*$ let $\Hyp_0'(\lambda) \subset \C$ denote the $\lambda$-slice of $\Hyp_0'$, and $\Hyp_0^a(\lambda)$ be defined similarly, i.e.\ 
\begin{align*}
\Hyp_0^a(\lambda) & = \{v\in\C\,;\,(\lambda,v)\in\Hyp_0^a\}
\\
\Hyp_0'(\lambda) & = \{c\in\C^*\,;\,(\lambda,c)\in\Hyp_0'\}
\end{align*}
Let $F$ denote the rational map given by $F(z) = \frac{z+z^{-1}}{2}$. Then
\begin{equation}\label{eq:hap}
\Hyp_0'(\lambda) = F^{-1}\left(\Hyp_0^a(\lambda)\right)
\end{equation}

\begin{corollary}
The set $\Hyp_0^a(\lambda)$ is a topological disk.
The set $\Hyp_0'(\lambda)$ is a topological annulus.
\end{corollary}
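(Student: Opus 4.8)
The plan is to deduce both statements from \Cref{thm:attr} of Petersen and Tan together with the correspondence between normalizations established in \Cref{sec:normz}. First I would handle $\Hyp_0^a(\lambda)$. Recall that the map $\Theta$ (equivalently $(\lambda,v)\mapsto(a,b_2)$) restricts, on $``|\lambda|<1"\cap\Theta^{-1}(\Hyp_0)$, to a homeomorphism onto $\Hyp_0\setminus\cal E$, by \Cref{prop:0ra}; and that this set is exactly $\Hyp^a$, whose $\lambda$-slice is by definition $\Hyp_0^a(\lambda)$. Slicing at a fixed $\lambda\in\D^*$, the homeomorphism $\Theta$ sends $\{\lambda\}\times\Hyp_0^a(\lambda)$ onto $\Hyp_0(\lambda)\setminus\cal E$, i.e.\ onto $\Hyp_0(\lambda)$ minus the (at most one, since there is a single fixed point to mark) point corresponding to a polynomial with a fixed critical point. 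Since $\Hyp_0(\lambda)$ is a topological disk by \Cref{thm:attr}, and removing at most one interior point from a topological disk still leaves something homeomorphic to a disk only if that point is on the ``boundary''—so here I must check that the removed point, if any, is not an interior point of $\Hyp_0(\lambda)$. In fact $\cal E\cap\Hyp_0$ lies on the boundary of $\Hyp_0$ within its slice: a polynomial with a fixed critical point is not hyperbolic with all critical points in the immediate basin in the required open sense, or more precisely it sits on $\partial\Hyp_0$. Alternatively, one notes that $\Hyp_0^a(\lambda)$ is an open subset of $\C$ (image of an open set under the open map $\Theta^{-1}$, or directly: it is a slice of an open set) which is homeomorphic via $\Theta$ to an open subset of the disk $\Hyp_0(\lambda)$; since $\Theta$ extends continuously and properly over $\Hyp_0$ on the repelling side but here on the attracting side $\Theta$ is a homeomorphism onto $\Hyp_0\setminus\cal E$, the slice $\Hyp_0^a(\lambda)$ is homeomorphic to $\Hyp_0(\lambda)\setminus(\cal E\cap\Hyp_0(\lambda))$, and this set is a topological disk because the puncture set is either empty or a single boundary point of the slice.

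For the second statement, apply \eqref{eq:hap}: $\Hyp_0'(\lambda)=F^{-1}(\Hyp_0^a(\lambda))$ where $F(z)=\frac{z+z^{-1}}{2}$ is the Joukowsky map, a proper degree-$2$ branched cover of $\C^*$ onto $\C$ ramified exactly over $v=1$ and $v=-1$. Now $\Hyp_0^a(\lambda)$ is a topological disk containing both $v=1$ (the unicritical locus, which lies in $\Hyp_0^a(\lambda)$ since $(\lambda,v=1)$ corresponds to a quadratic-like-ish unicritical map in the principal component) and $v=-1$ (the symmetric locus $P_{\lambda,-1}$, which we saw lies in $\Hyp_0'$, hence its image $v=-1$ lies in $\Hyp_0^a(\lambda)$). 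So $F^{-1}$ of a disk containing both ramification values: the preimage is a connected (shown earlier), unbranched—wait, it is branched over exactly the two points $v=\pm1$—degree-$2$ cover. Writing $D=\Hyp_0^a(\lambda)$, the restriction $F:\Hyp_0'(\lambda)\to D$ is a proper degree-$2$ map branched over the two interior points $\pm1$ of the disk $D$. By the Riemann–Hurwitz formula, $\chi(\Hyp_0'(\lambda))=2\chi(D)-2=2\cdot1-2=0$, and $\Hyp_0'(\lambda)$ is a connected open subset of $\C$ with Euler characteristic $0$ and no punctures to worry about (it is open in $\C^*$; the two points of $\C^*$ over $\pm1$ are genuine interior points of $\Hyp_0'(\lambda)$). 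An open connected subsurface of the plane with $\chi=0$ is a topological annulus, which gives the claim. I would also remark that $\Hyp_0'(\lambda)$ is nondegenerate (a true annulus, not a punctured disk) precisely because it contains the two distinct points $c=\pm1$ over the two distinct ramification values, so the two ``ends'' are separated by an essential curve—concretely, the core curve can be taken as $F^{-1}$ of a small loop around $[-1,1]$ in $D$.

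\medskip

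The main obstacle is the first step: verifying that the point of $\cal E$ that is deleted from the slice $\Hyp_0(\lambda)$ is \emph{not an interior point} of that slice (so that $\Hyp_0(\lambda)\setminus\cal E$ remains a topological disk rather than a once-punctured disk). This amounts to showing that a cubic in $\Hyp_0$ with a fixed critical point lies on the boundary of the $\lambda$-slice $\Hyp_0(\lambda)$ of the principal hyperbolic component—equivalently, that one cannot have an open neighborhood (within the slice) of such a map consisting entirely of classes in $\Hyp_0$ with the attracting multiplier equal to $\lambda$. Intuitively this is clear because having a fixed critical point forces the multiplier to the value dictated by $\cal E$'s defining equation and any genuine perturbation staying in $\Hyp_0(\lambda)$ moves the critical point off the fixed point; but it needs to be argued, e.g.\ by noting that near such a map the second critical point and the fixed point vary holomorphically and the condition ``the marked fixed point is critical'' cuts out a proper analytic subset, while the complement contains points whose marked (attracting) fixed point is the \emph{other} preimage and hence does not survive in the attracting slice. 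Once this local picture is pinned down, both conclusions follow cleanly from Riemann–Hurwitz and the classification of planar surfaces by Euler characteristic.
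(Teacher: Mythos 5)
Your treatment of the annulus statement is essentially the paper's argument: use \cref{eq:hap}, check that both ramification values $v=1$ and $v=-1$ of the Joukowsky map lie in the disk $\Hyp_0^a(\lambda)$ (unicritical case and the $z\mapsto -z$ symmetric case), observe that the preimage is connected, and conclude with Riemann--Hurwitz and the classification of planar surfaces by Euler characteristic. That half is fine as written.

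The problem is the first half, precisely at the step you yourself single out as ``the main obstacle'', so it is genuinely unresolved in your write-up; worse, the patch you sketch rests on a false assertion. It is not true that a class in $\cal E\cap\Hyp_0$ ``sits on $\partial\Hyp_0$'': the centre $z^3$ has a fixed critical point and is an interior point of $\Hyp_0$, and more generally $\cal E\cap\Hyp_0$ is the whole superattracting fibre $\pi^{-1}(0)$, which is interior to $\Hyp_0$. The correct observation, which makes the entire puncture discussion evaporate, is elementary and dynamical: if $[P]\in\Hyp_0$ has a fixed critical point, then that critical point lies in the immediate basin of the attracting fixed point (both critical points do, by the characterization of $\Hyp_0$ recalled in the paper), and a fixed point contained in the basin of attraction of an attracting fixed point must coincide with that attracting fixed point; hence the attracting multiplier is $0$. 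Consequently, for $\lambda\in\D^*$ the slice $\Hyp_0(\lambda)=\pi^{-1}(\lambda)$ is disjoint from $\cal E$, so the homeomorphism of \Cref{prop:0ra} restricts to a homeomorphism from $\Hyp_0^a(\lambda)$ onto \emph{all} of $\Hyp_0(\lambda)$, a topological disk by \Cref{thm:attr}: there is no deleted point and no interior-versus-boundary issue, and the analytic-subset argument you outline is not needed. Note that this step cannot be left at the intuitive level you give it, because if the deleted point really were an interior point of the slice your first claim would simply be false --- a once-punctured disk is not a topological disk --- so the whole first statement hinges on showing the intersection with $\cal E$ is empty, which is the one-line argument above.
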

\begin{proof}
Recall that ${\cal E}\subset \C^2$ denotes the unmarked polynomial classes that have a fixed critical point. This set is disjoint from $\Hyp_0$.
In the $(\lambda,v)$-coordinate, $\lambda$ is precisely the multiplier of the attracting cycle, so $\Hyp_0^a(\lambda)$ is exactly the preimage of $\pi^{-1}(\lambda)$ by $(\lambda,v)\mapsto (a,b^2)$.
The first statement is thus immediate since $(\lambda,v)\mapsto(a,b^2)$ is a homeomorphism from $\Hyp_0^a$ to $\Hyp_0$ by \Cref{prop:0ra}.

The second statement then follows from topological properties of $F$ and the fact that $c=-1$ and $c=1$ both belong to $\Hyp_0'(\lambda)$. Indeed $P_{\lambda,c=-1}$ is conjugate to $\lambda z+z^3$ which commutes with $z\mapsto -z$; since one critical point $z_0$ is in the immediate basin $0$, the (distinct) critical point $-z_0$ is too; concerning the polynomial $P_{\lambda,c=1}$, it is unicritical and hence all critical points are in the immediate basin. The preimage by $F$ of a topological disk $D$ containing $-1$ and $1$ is connected (because it is a ramified cover over $D$ and $1$ has only one preimage) and we conclude using the Riemann Hurwitz formula.
\end{proof}

In \Cref{sec:phi} we introduced the quantity $r=r(P_{\lambda,c})$, which is the conformal radius of a special subset $U=U(P_{\lambda,c})$ of the basin of attraction of $0$ for $P_{\lambda,c}$. We recall that set $U$ contains $0$, its boundary contains at least one critical point but $U$ contains none and the linearizing coordinate $\phi(z)=\phi_{\lambda,c}(z) = z+\cdots$ is a bijection from $U$ to the round disk $B(0,r)$.
Let
\[
Z_\lambda = \{c\in\C^*\,;\,\text{both critical points of $P_{\lambda,c}$ belong to }\partial U\}.
\]
We call this a Z-curve, chosen after the name of Zakeri, who defined a similar set in the bounded type indifferent case instead of the attracting case, and proved, using quasiconformal surgery, that his set is a Jordan curve. Another name for this set could have been the Petersen-Tan set, as it is the pull-back of the seam that they define (see \Cref{sub:PT}).
In this section we will prove the following:
\begin{theorem}\label{thm:ll}
The set $Z_\lambda$ is a Jordan curve. Let $c\in\C^*$. If $c$ lies in the bounded component of $\C\setminus Z_\lambda$ then the unique critical point of $P_{\lambda,c}$ that belongs to $\partial U$ is $c$, and it is $1$ if $c$ belongs to the unbounded component.
For each fixed $\lambda\in\D^*$, the function $c\in\C^*\mapsto -\log r(P_{\lambda,c})$ is subharmonic and continuous. It is harmonic on $\C^* \setminus Z_\lambda$.
Its Laplacian has total mass $2\pi$ and its support is equal to $Z_\lambda$.
\end{theorem}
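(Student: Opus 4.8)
The plan is to build the Jordan curve $Z_\lambda$ by transporting, via the rational map $F(z)=\frac{z+z^{-1}}{2}$, the analogous ``seam'' that Petersen and Tan construct inside $\Hyp_0^a(\lambda)$ (the $\lambda$-slice of $\Hyp_0^a$), and then to read off the behaviour of $-\log r$ from the structure of $\partial U$ in each region. First I would recall from \Cref{sec:phi} that for a polynomial $P=P_{\lambda,c}$ with $\lambda\in\D^*$, the set $U(P)$ always carries at least one critical point on $\partial U$, and that $r=|\phi(c_P)|$ for any critical point $c_P\in\partial U$. So $\C^*$ partitions into three pieces: $A_1=\{c\,;\,\text{only }1\in\partial U\}$, $A_c=\{c\,;\,\text{only }c\in\partial U\}$, and $Z_\lambda=A_1\cap\overline{A_c}$'s common boundary, i.e.\ the set where both are on $\partial U$. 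The heart of the matter is to show $Z_\lambda$ is a Jordan curve separating $A_1$ (unbounded side) from $A_c$ (bounded side). For this I would use the Petersen--Tan description (\Cref{thm:attr} and its corollary): $\Hyp_0^a(\lambda)$ is a topological disk, and within it Petersen--Tan identify a ``seam'' — the locus where a certain marked critical value lies on the boundary of the image of the linearizing disk — splitting the disk into the part where critical point $z=1$ is ``dominant'' and the part where $z=c$ is. Pulling this seam back by $F$ (using \eqref{eq:hap}, $\Hyp_0'(\lambda)=F^{-1}(\Hyp_0^a(\lambda))$, and the fact that $F$ is a $2:1$ branched cover sending the topological annulus $\Hyp_0'(\lambda)$ onto the disk $\Hyp_0^a(\lambda)$, branched over $v=1$ with unique preimage $c=1$) produces a Jordan curve in $\Hyp_0'(\lambda)$. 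Outside $\Hyp_0'(\lambda)$ one critical point has escaped to infinity by \Cref{lem:cesc}, so there only one critical point can be on $\partial U$; by the symmetry \eqref{eq:sym} the escaping critical point is $c$ for $|c|$ large and $1$ for $|c|$ small, which pins down which side of $Z_\lambda$ is which.

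Next I would establish the statement about $c_P$. On the unbounded component, for $|c|$ large this is \Cref{lem:qlr}/\Cref{lem:cesc} directly ($c$ escapes, so $1\in\partial U$); harmonicity of $-\log r$ there (\Cref{prop:harmo}, \Cref{prop:p2}) plus connectedness and a monodromy/continuity argument using the fourth point of \Cref{lem:cP} propagates ``$c_P=1$'' across the whole unbounded component. Symmetrically, via \eqref{eq:sym} and \eqref{eq:symr}, on the bounded component $c_P=c$. On $Z_\lambda$ itself both are on $\partial U$ and $|\phi(1)|=|\phi(c)|=r$. For the subharmonicity and continuity of $c\mapsto-\log r(P_{\lambda,c})$ I simply invoke \Cref{lem:cont} and \Cref{prop:sh} (the attracting case is included there). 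Harmonicity on $\C^*\setminus Z_\lambda$ follows from \Cref{prop:harmo}: on the unbounded component $-\log r=-\log|\phi_{P_{\lambda,c}}(1)|$ with $\phi$ depending holomorphically on $c$ (\Cref{lem:holodep}) and non-vanishing (since $1\notin U$), hence $-\log r$ is harmonic in $c$; on the bounded component use instead $-\log r=-\log|\phi_{P_{\lambda,c}}(c)|$, again harmonic by \Cref{lem:holodep}.

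The total mass $2\pi$ is exactly \Cref{prop:totalmass}. It then remains to show $\Supp\mu_\lambda=Z_\lambda$. The inclusion $\Supp\mu_\lambda\subset Z_\lambda$ is the harmonicity on the complement just proved. For the reverse inclusion I would argue that $-\log r$ cannot be harmonic across any point $c_0\in Z_\lambda$: near such a point, on the bounded side $-\log r=-\log|\phi_{P}(c)|$ while on the unbounded side $-\log r=-\log|\phi_{P}(1)|$, two distinct harmonic functions (distinct because the two critical points are genuinely different for $c_0\neq\pm 1$, and they agree only along $Z_\lambda$); if $-\log r$ were harmonic in a neighbourhood of $c_0$ it would equal each of these harmonic germs on an open set and hence everywhere, forcing $\phi_P(1)\equiv\phi_P(c)$ up to modulus on an open set, which is absurd. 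The two exceptional points $c=\pm1\in Z_\lambda$ are handled by density: $Z_\lambda\setminus\{\pm1\}$ is dense in $Z_\lambda$ and $\Supp\mu_\lambda$ is closed. I expect the main obstacle to be the topological step — rigorously extracting from \cite{PT} that the seam in $\Hyp_0^a(\lambda)$ is a Jordan arc/curve with the right separation properties, and checking that its $F$-pullback is a single Jordan curve rather than a more complicated set, handling carefully the branch point $c=1$ and the symmetric point $c=-1$ (where the polynomial commutes with $z\mapsto -z$ and both critical points are automatically on $\partial U$, so $-1\in Z_\lambda$ forces $Z_\lambda$ to be $F$-symmetric, $c\leftrightarrow 1/c$).
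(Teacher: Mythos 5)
Your plan reproduces the paper's architecture (Petersen--Tan model pulled back through the $v$- and then $c$-coordinates, the large-$|c|$ asymptotics plus local constancy from the fourth point of \Cref{lem:cP} to decide which critical point is on $\partial U$ on each side, \Cref{lem:cont} and \Cref{prop:sh} for continuity and subharmonicity, \cref{eq:rpc} together with \Cref{lem:holodep} for harmonicity off $Z_\lambda$, and \Cref{prop:totalmass} for the mass), but the step you defer as ``the main obstacle'' is precisely the missing idea, not a routine extraction from \cite{PT}. Petersen and Tan provide a homeomorphism $\Phi$ from $\Hyp_0(\lambda)$ onto the model $D_\lambda$, and only the easy implication (both critical points on $\partial U(P)$ $\Rightarrow$ $\Phi([P])$ on the seam) is immediate, via \Cref{lem:ppu}. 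The converse --- that every seam point is realized by a polynomial with \emph{both} critical points on $\partial U$ --- is not in \cite{PT}; the paper proves it (\Cref{prop:PTo}) by a quasiconformal deformation of $P_{\lambda,-1}$ producing, for every $\theta$, a polynomial with both critical points on $\partial U$ and critical ratio $e^{i\theta}$ (\Cref{lem:alltheta}), combined with injectivity of $\Phi$. Without that surjectivity argument your construction only gives $Z_\lambda\subset F^{-1}(\text{preimage of the seam})$, not equality, and the Jordan-curve statement does not follow. (Minor slip in the same paragraph: outside $\Hyp_0'(\lambda)$ a critical point has merely left the immediate basin, not necessarily escaped to infinity; \Cref{lem:cesc} only covers $|c|$ large or small, though the conclusion you want is true since $\partial U$ lies in the immediate basin.)

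The second gap is the final contradiction in your proof of $Z_\lambda\subset\Supp\mu_\lambda$. Your justification ``they agree only along $Z_\lambda$'' is false: the paper warns, right after the statement of \Cref{thm:ll}, that $|\phi(c)|=|\phi(1)|$ does \emph{not} imply $c\in Z_\lambda$, so concluding $|\phi_P(1)|\equiv|\phi_P(c)|$ on a ball is not by itself absurd and must actually be ruled out. It can be: your continuation step makes the ratio $c\mapsto \phi_P(c)/\phi_P(1)$ a unimodular constant on an open set, hence (identity theorem, off the discrete zero set of the denominator) constant on the connected set $\Hyp_0'(\lambda)$; evaluating at $c=1$ gives the value $1$ and at $c=-1$ the value $-1$ by the $z\mapsto -z$ symmetry, both evaluations being legitimate because at those parameters the critical points lie on $\partial U$ so $\phi\neq 0$ there. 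But this argument is not in your proposal, and the paper's own proof goes a different way (Avila's argument): write $r=|h(c)|$ with $h$ holomorphic non-vanishing, transport the boundary point by $\zeta(c)=\ov\psi_{P_{\lambda,c}}(u\,h(c))$, show $\zeta$ is holomorphic (bounded pointwise limit of holomorphic functions, using the boundary extension of \Cref{lem:psiext} and the third point of \Cref{lem:cP}) and coincides with a critical point on one side of $Z_\lambda$, hence on the whole ball, contradicting that the two sides carry different critical points on $\partial U$. Either route works, but a concrete argument is required where you wrote ``which is absurd''.
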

Recall that $c$ and $1$ are the two critical points of $P_{\lambda,c}$ so on $Z_\lambda$ we have $|\phi(c)|=|\phi(1)|$, but the converse does not hold.
We will see that $Z_\lambda$ is naturally parametrized by $\arg\left(\phi(c)/\phi(1)\right)$.

\begin{figure}[ht]
	\begin{center}
   \includegraphics[width=12cm]{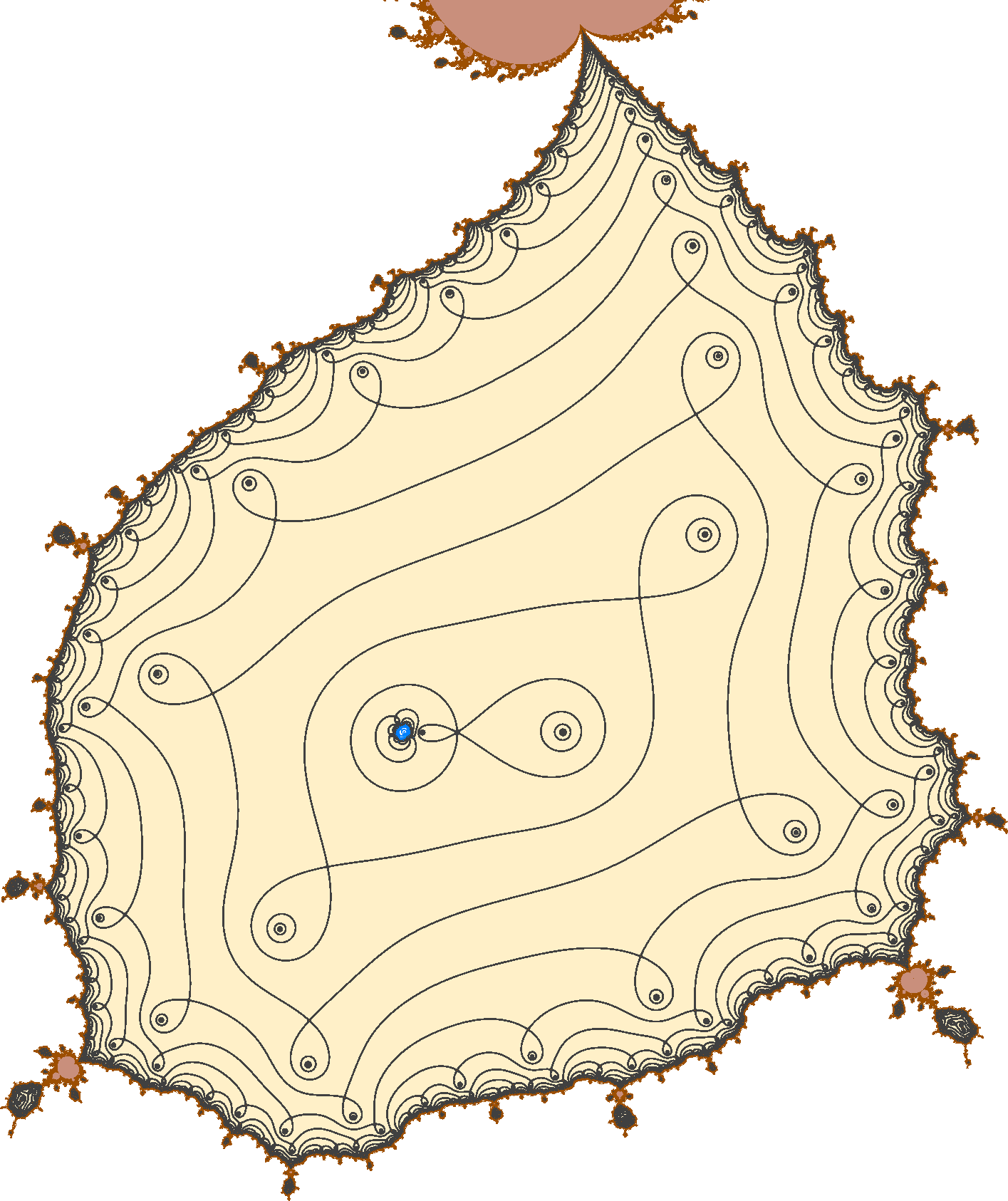}
   \end{center}
   \caption{We enriched \Cref{fig:bifattrc} with lines corresponding to the locus where both critical points $z=1$ and $z=c$ belong to the (whole) basin of $0$ and have the following property: $|\phi_P(c)/\phi_P(1)|\in|\lambda|^\Z$.}
   \label{fig:bifattrclines}
\end{figure}

\begin{figure}[ht]
	\begin{center}
   \includegraphics[width=\textwidth]{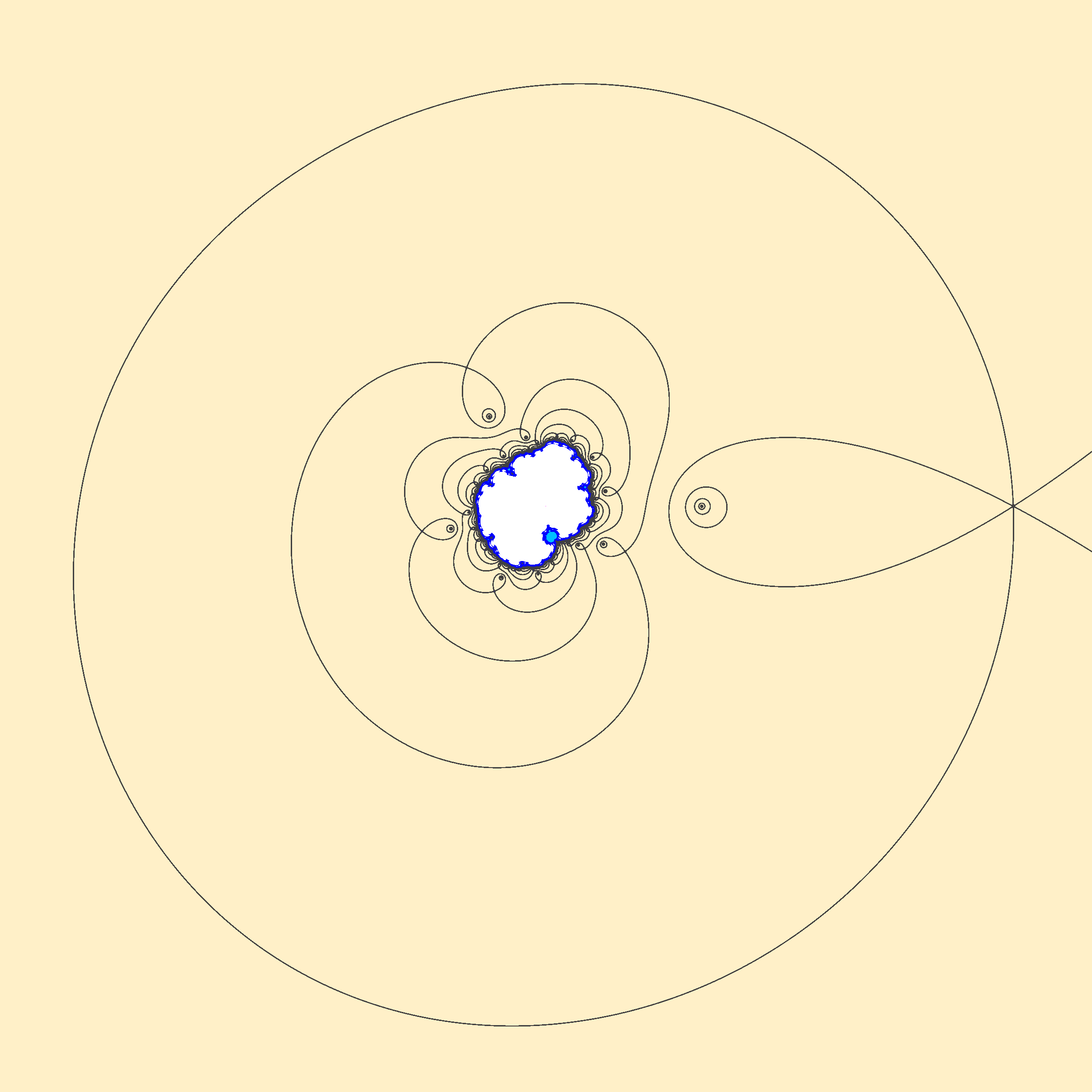}
   \end{center}
   \caption{Zoom on the central part of \Cref{fig:bifattrclines}. The set $Z_\lambda$ is the outermost circular-shaped curve.}
   \label{fig:zoom1}
\end{figure}

\begin{figure}[ht]
	\begin{center}
   \includegraphics[width=12cm]{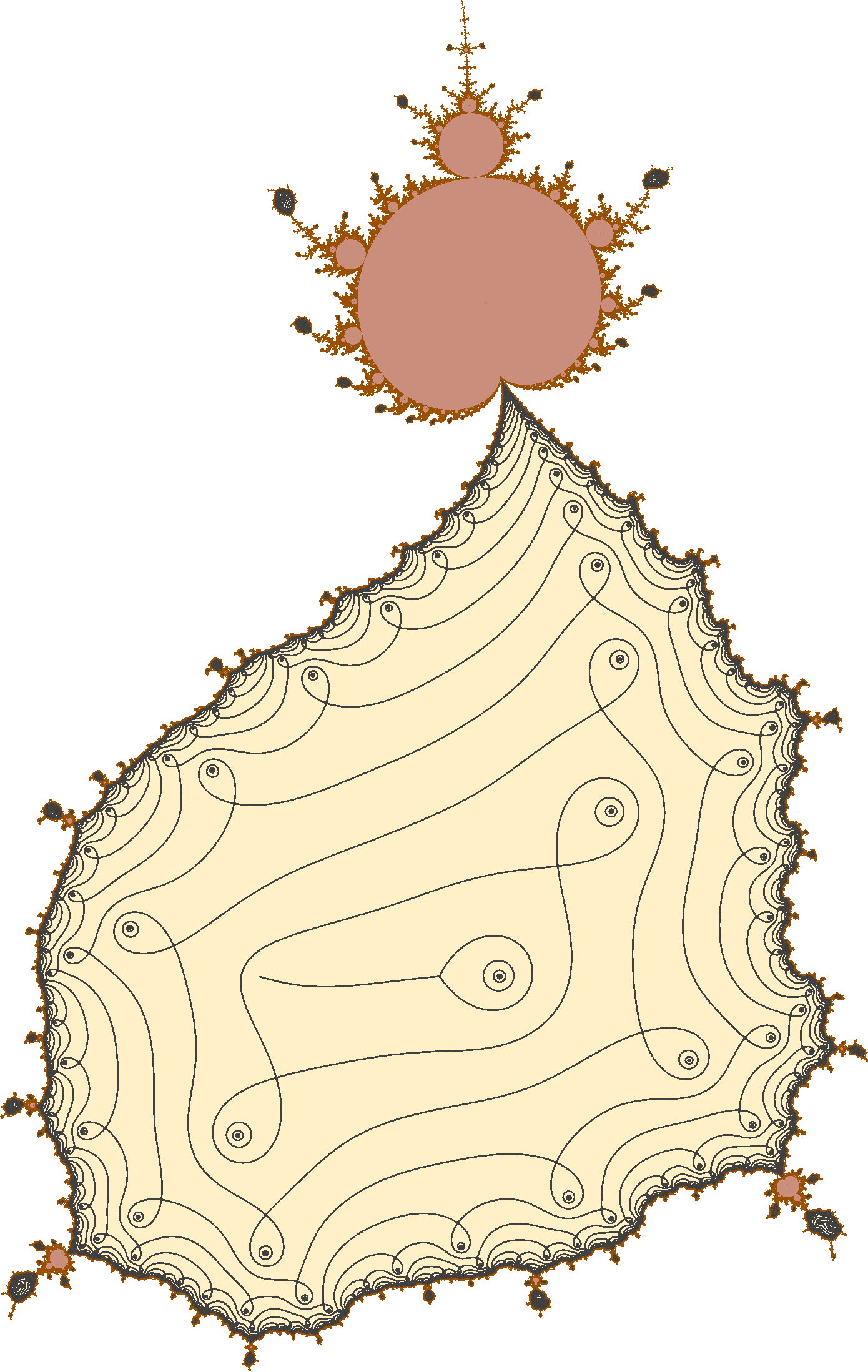}
   \end{center}
   \caption{Analog of \Cref{fig:bifattrclines} but this time drawn in the $v$-coordinate. It is not anymore possible to decree which critical point is red or blue, so we used only red for the bifurcation locus. The scar is quite visible: it is the tail of the tadpole-shaped figure in the centre.}
   \label{fig:bifattrvlines}
\end{figure}

\subsection{About the Petersen-Tan theorem}\label{sub:PT}

Recall that $\pi^{-1}(\lambda)$ is the set of affine conjugacy classes of cubic polynomials in ${\Hyp}_0$ whose attracting fixed point has multiplier $\lambda$.
In \cite{PT} is defined a bijection from $\pi^{-1}(\lambda)$ to a topological disk $D_\lambda$. More precisely $D_\lambda$ is obtained as follows: take the basin of attraction $B(Q_\lambda)$ of $0$ for the quadratic polynomial $Q_\lambda(z)=\lambda z + z^2$.
To simplify notations we write
\[P=P_{\lambda,c}\text{ and }Q=Q_\lambda.\]
Remove $U(Q)$ from it (the set $U(\cdots)$ has been defined in \Cref{prop:U}). Close the hole thus created by gluing $\partial U(Q)$ to itself according to the following rule: $z_1\sim z_2$ iff ($z_1,z_2\in\partial U(Q)$ and $\phi_\lambda(z_1)\phi_\lambda(z_2) = \phi_Q(c_Q)^2$) where: $c_Q$ denotes the critical point of $Q$, $\phi_\lambda:B(P)\to \C$ the extended linearizing coordinate of $P$, and $\phi_Q$ the analogue for $Q$ (see \Cref{sec:phi}).
This last relation can be understood as follows: the three complex numbers $\phi_\lambda(z_1)$, $\phi_\lambda(z_2)$ and $\phi_Q(c_Q)$ all belong to a circle of centre $0$ and we ask the first two to be symmetric with respect to the reflection along the line passing through $0$ and $\phi_Q(c_Q)$.\footnote{There is a difference with \cite{PT} because we took another normalizing convention for the functions $\phi$.}
Let
\[ D_\lambda:=(B(Q)\setminus U(Q))\,/\!\sim\]
and let
\[ \Pi: B(Q)\setminus U(Q) \to D_\lambda\]
be the quotient map.

The target $D_\lambda = (B(Q)\setminus U(Q))\,/\!\sim$ is a priori just a topological disk. We give it a complex structure with an atlas as follows: one chart is the identity on the following open subset of $\C$: $B(Q)\setminus \ov U(Q)$. For $z\in \partial U(Q)$ such that $\phi_Q(z)/\phi_Q(1)\neq \pm 1$, so that there is a point $z'\neq z$ on $\partial U(Q)$ that is equivalent to $z$. We can use the map $z\mapsto F(\phi_Q(z)/\phi_Q(1))$, where $F(z)=(z+z^{-1})/2$,  to define a chart near $z$ in the quotient, since the map $\phi_Q$ is invertible near $z$ and $z'$ and maps nearby points in $B(Q)\setminus U(Q)$ to points in $\C\setminus\D$. See \Cref{fig:seam-2}. The same also works if $\phi_Q(z)/\phi_Q(1) = -1$ because $\phi_Q$ is a bijection near $z=z'$, but will not work near $z=1$ where $\phi_Q$ has a critical point. There, one can use instead a branch of $z\mapsto\sqrt[3]{1-F(\phi_Q(z)/\phi_Q(1))}$. See \Cref{fig:seam-2}.
With this atlas, the map $\Pi$ is holomorphic on $B(Q)\setminus\ov U(Q)$ and has a holomorphic extension to neighborhoods of points of $\partial U(Q)\setminus\{1\}$.

\begin{figure}[ht]
	\begin{tikzpicture}
	\node at (0,0) {\includegraphics[width=\textwidth-.5cm]{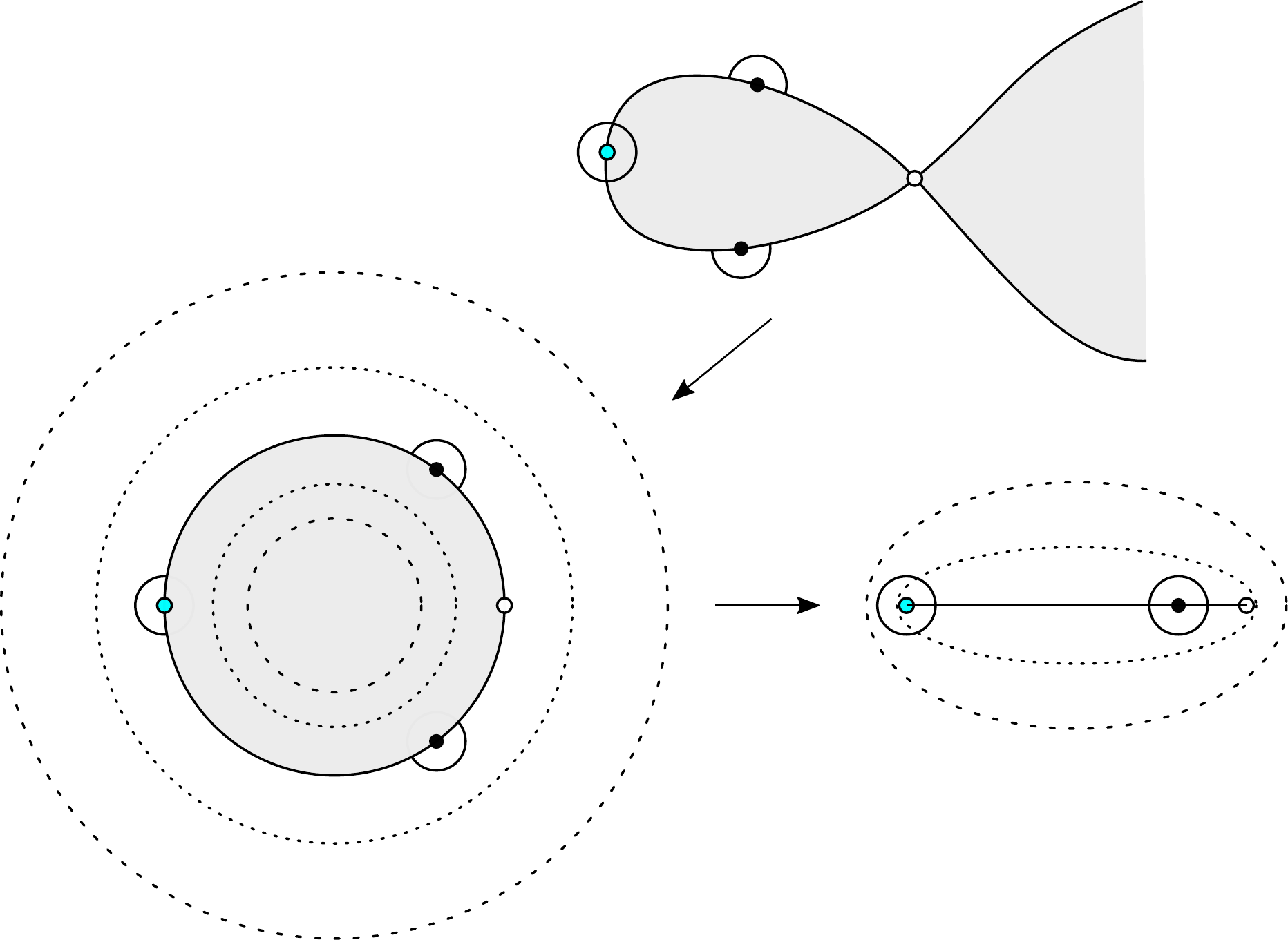}};
	\node at (2.3,0.7) {$z\mapsto \phi_Q(z)/\phi_Q(1)$};
	\node at (1.3,-2.5) {$F: z\mapsto \frac{z+z^{-1}}{2}$};
	\end{tikzpicture}
	\caption{Holomorphic charts for the quotient $(B(Q)\setminus U(Q))\ /\sim $ where $Q(z)=\lambda (z-\frac{z^2}{2}$. Illustration near a point not in $\ov U(Q)$ (green point), near a cardinal two fibre on $\partial U(Q)$ (pair of black points), near the point on $\partial U(Q)$ such that $\phi_1(z)/\phi_Q(1)=-1$. The case of the critical point $z=1$ is treated in \Cref{fig:seam-2}}
	\label{fig:seam-1}
\end{figure}

\begin{figure}[ht]
	\begin{tikzpicture}
	\node at (0,0) {\includegraphics[width=\textwidth-.5cm]{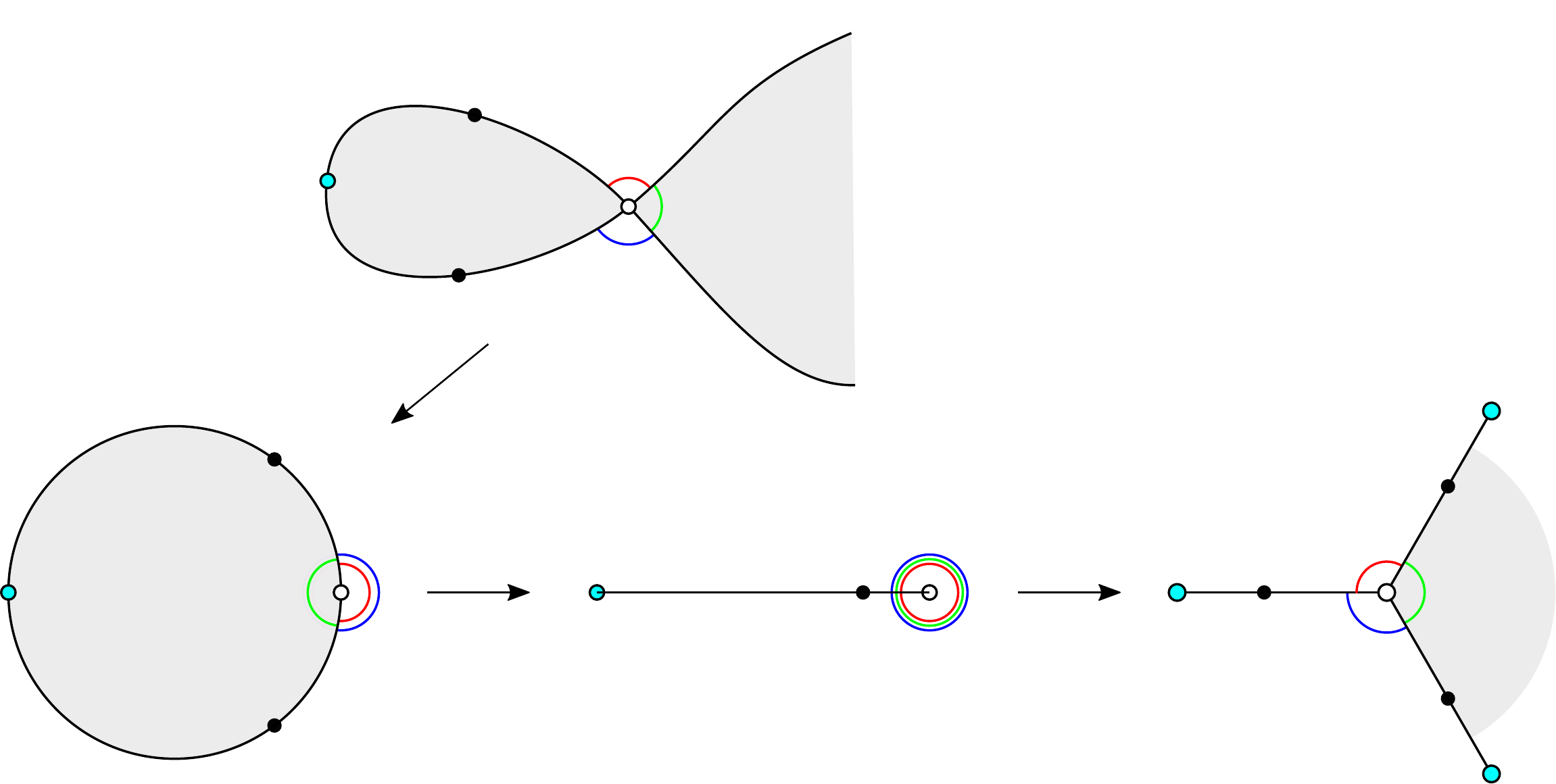}};
	\node at (-1.6,-0.1) {$\phi_Q/\phi_Q(1)$};
	\node at (-2.4,-1.2) {$F$};
	\node at (2.3,-1.05) {$z\mapsto \sqrt[3]{1-z}$};
	\end{tikzpicture}
	\caption{Continuation of \Cref{fig:seam-1}; holomorphic charts near the critical point on $\partial U(Q)$.}
	\label{fig:seam-2}
\end{figure}

The \emph{seam} $\partial U(Q)\ / \sim$ is a Jordan arc (it is homeomorphic the quotient of a circle by a reflection).
Petersen and Tan call it the \emph{scar}.

\begin{figure}[ht]
	\begin{center}
   \includegraphics[width=10cm]{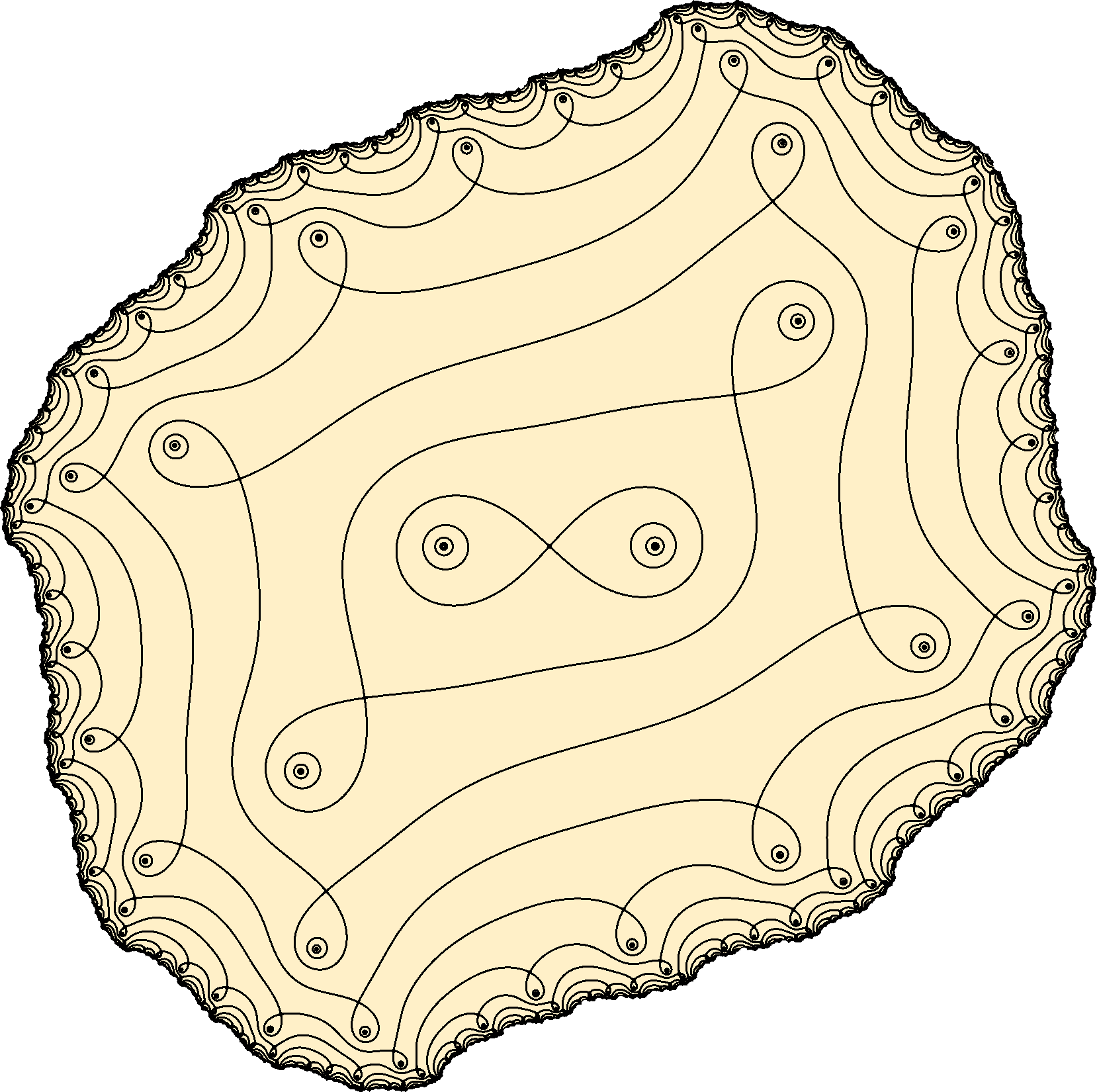}
   \end{center}
   \caption{The filled-in Julia set of $Q:z\mapsto \lambda(z-\frac{z^2}2)$ for $\lambda = 0.4i$ has been drawn in yellow and in its interior we represented ``equipotential'' lines, defined as the locus where $|\phi_Q(z)|/|\phi_Q(c_Q)| \in |\lambda|^\Z$, where $c_Q=1$ is the critical point of $Q$.
   The eye is the set $U(Q)$, which is the left lobe of the central lemniscate shaped curve.
   The attracting fixed point is at the centre of the pupil of the eye.}
   \label{pic:Q}
\end{figure}

\begin{figure}[ht]
	\begin{center}
   \includegraphics[width=7cm]{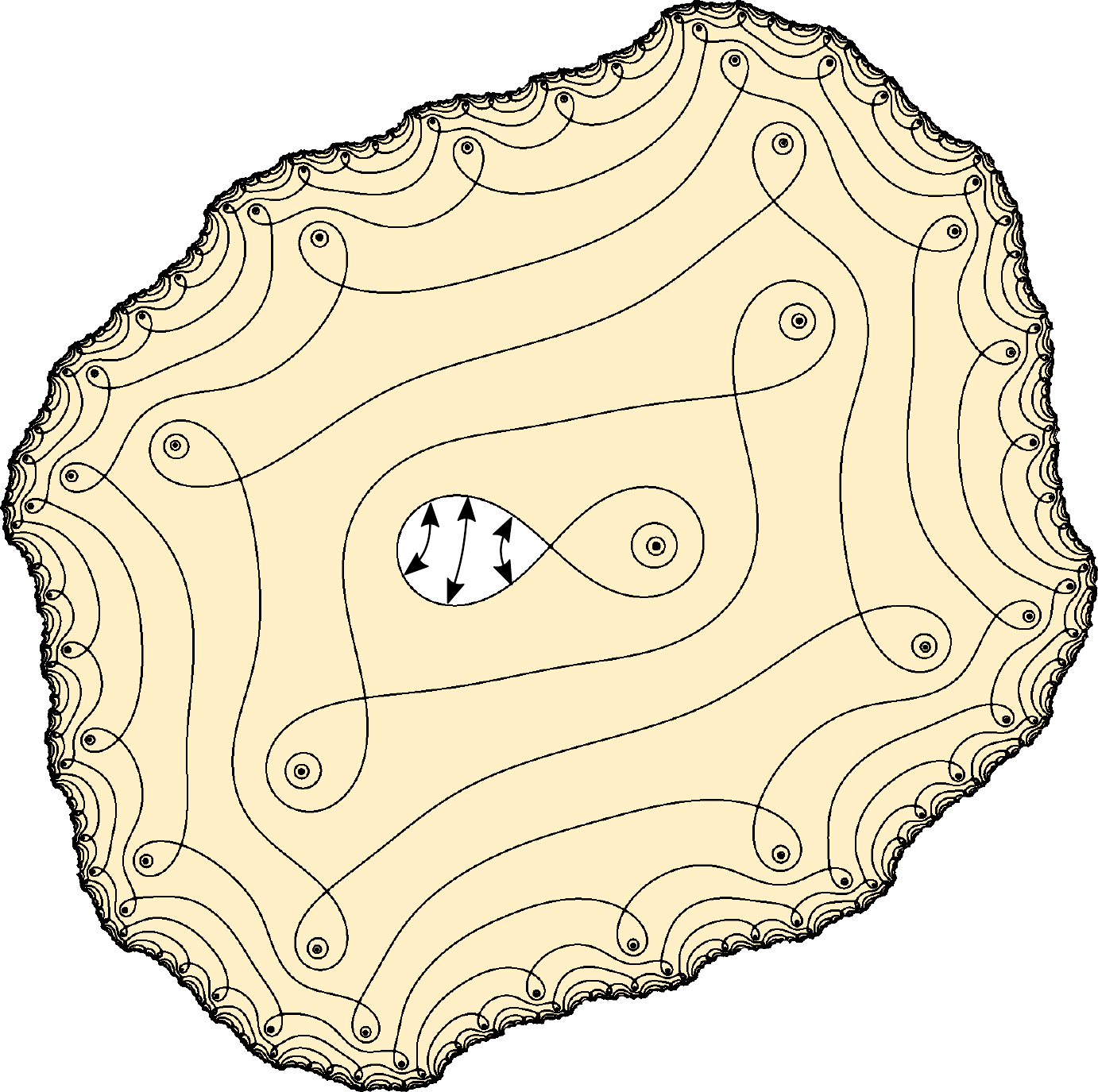}
   \end{center}
   \caption{To define the model space, remove the eye $U(Q)$ and glue the top and bottom eyelids together.}
   \label{pic:Qglue}
\end{figure}

Then they define a bijection from $\Hyp_0(\lambda)$ to $D_\lambda = (B(Q)\setminus U(Q))\,/\!\sim$ 
and prove that it is holomorphic (they also prove more properties).
We explain here without proof the definition of the bijection, the interested reader may look at \cite{PT} for more details.

\medskip

Let $P=P_{\lambda,c}$ with $[P] \in \Hyp_0$, i.e. both critical points in the immediate basin of the attracting fixed point $0$. Denote $U = U(P)$. 
Recall that there must be at least one critical point on $\partial U$. Sometimes both are on $\partial U$.
Denote by $c_0$ such a critical point and denote $c_1$ be the other one (possibly equal to $c_0$ if $P$ is unicritical). If there are two critical points on $\partial U$ then there is a choice of which one we call $c_0$.
The point $c_0$ is called the \emph{first critical point}.
We will also consider the \emph{co-critical} points $\co_0$ and $\co_1$, defined by $\{c_i,\co_i\} = P^{-1}(P(c_i))$, for $i=1,2$. If $c_0=c_1$ then $\co_0=c_0=c_1=\co_1$.

Recall that $\psi_P$ is the linearizing parametrization, maps its disks of convergence to $U$ and has a continuous extension $\ov \psi_P$ to a homeomorphism from the closed disk to $\ov U$, whose reciprocal is the restriction of $\phi_P$ to $\ov U$. The same statements hold for $Q$.
There is hence a natural conjugacy $\tilde \eta_P : \ov{U} \to \ov{U}(Q)$ of $P$ to $Q$ sending $c_0$ to $c_Q$ and obtained by
\[\tilde\eta_P(z) = \ov\psi_Q \left(\frac{\phi_Q(c_Q)}{\phi_P(c_0)}
\phi_P(z)\right)\]
It only depends on the affine conjugacy class of $P$: if $s\in\C^*$ and $f(z) = sP(s^{-1}z)$ then $\tilde\eta_f(z) = \tilde\eta_P(s^{-1}z)$.

Petersen and Tan prove that there exist:
\begin{itemize}
\item a special connected, simply connected and compact subset $\Omega$ of the basin $B(P)$ containing the closure of $U(P)$ and both critical points and such that $\co_0$ is either not in $\Omega$, or is a non-separating point of $\Omega$ contained in its boundary;
\item a semi-conjugacy $\eta_P:\Omega\to B(Q)$ of $P$ to $Q$, extending $\tilde \eta_P$.
\end{itemize}
The construction above only depends on the affine conjugacy class of $P$: if $s\in\C^*$ and $f(z) = sP(s^{-1}z)$ then $\eta_f(z) = \eta_P(s^{-1}z)$.
\begin{definition}[Petersen-Tan bijection]
To a an affine class $[P]\in\Hyp_0(\lambda)$ (without marked point), let $\Phi$ associate the point
\[\Phi([P]):=\Pi(\eta_P(c_1))\in (B(Q)\setminus U(Q))\ /\sim\]
where $\eta$ is the extension mentioned above.
\end{definition}
Note that this value does not depend on the chosen representative $P$ of the class.
The following immediate consequence will be useful later.
\begin{lemma}\label{lem:ppu}
If both critical points of $P$ are on $\partial U$, then
\[\eta_P(c_1) = \ov\psi_Q \left(\frac{\phi_Q(c_Q)\phi_P(c_1)}{\phi_P(c_0)}\right).\]
\end{lemma}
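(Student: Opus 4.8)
The plan is to reduce the statement to the closed-form expression for the ``baby'' conjugacy $\tilde\eta_P$ displayed just above, so that no new construction is needed. First I would record that the hypothesis that both critical points of $P$ lie on $\partial U = \partial U(P)$ gives in particular $c_1 \in \partial U \subset \ov U$; thus $c_1$ belongs to the domain of $\tilde\eta_P$.

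Next I would invoke the compatibility built into the Petersen--Tan construction recalled above: the semi-conjugacy $\eta_P : \Omega \to B(Q)$ extends $\tilde\eta_P : \ov U \to \ov U(Q)$, and $\ov U \subset \Omega$. Hence $\eta_P$ and $\tilde\eta_P$ coincide on all of $\ov U$, and in particular $\eta_P(c_1) = \tilde\eta_P(c_1)$.

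It then remains to substitute $z = c_1$ into the formula
\[
\tilde\eta_P(z) = \ov\psi_Q\!\left(\frac{\phi_Q(c_Q)}{\phi_P(c_0)}\,\phi_P(z)\right)
\]
and rearrange the fraction, which yields $\eta_P(c_1) = \ov\psi_Q\!\left(\dfrac{\phi_Q(c_Q)\,\phi_P(c_1)}{\phi_P(c_0)}\right)$. The one point worth a sentence of justification is that the argument of $\ov\psi_Q$ really lies in the closed disk on which $\ov\psi_Q$ is defined: by \eqref{eq:rpc} applied to $P$ one has $|\phi_P(c_0)| = |\phi_P(c_1)| = r(P)$, since both $c_0$ and $c_1$ are critical points on $\partial U$ (this is exactly the hypothesis), while $|\phi_Q(c_Q)| = r(Q)$ by \eqref{eq:rpc} applied to $Q$; so the modulus of $\tfrac{\phi_Q(c_Q)\phi_P(c_1)}{\phi_P(c_0)}$ equals $r(Q)$, and $\ov\psi_Q$, the homeomorphic extension of $\psi_Q$ from \Cref{lem:psiext}, is evaluated at a boundary point of $\ov B(0,r(Q))$.

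There is essentially no obstacle here: the entire content lies in the previously recalled construction of $\eta_P$ and its agreement with $\tilde\eta_P$ on $\ov U$; the lemma merely observes that when $c_1$ happens to sit on $\partial U$, one may evaluate the already-available closed form for $\tilde\eta_P$ directly at $c_1$.
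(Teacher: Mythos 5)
Your proposal is correct and is exactly the argument the paper has in mind: the paper labels the lemma an ``immediate consequence'' because $c_1\in\partial U\subset\ov U\subset\Omega$ and $\eta_P$ extends $\tilde\eta_P$, so one just evaluates the displayed formula for $\tilde\eta_P$ at $z=c_1$. Your extra check via \eqref{eq:rpc} that the argument of $\ov\psi_Q$ has modulus $r(Q)$, hence lies in the closed disk of \Cref{lem:psiext}, is a welcome (if routine) verification that the paper leaves implicit.
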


Since Petersen and Tan proved that $\Phi$ is a homeomorphism\footnote{They even proved that it is analytic for some natural complex structures on the domain and the range of the map $\Phi$.} it follows obviously that
\begin{equation}\label{eq:inj}
\text{the map $\Phi$ is injective.}
\end{equation}
We numbered that fact for future reference.

Curiously, the following statement is not present in \cite{PT}. For completeness we give here a proof using quasiconformal deformation and injectivity of the map $\Pi$.

\begin{proposition}\label{prop:PTo}
Let $[P]\in\Hyp_0(\lambda)$. Then $\Phi([P])$ belongs to the seam if and only if both critical points of $P$ belong to $\partial U(P)$.
\end{proposition}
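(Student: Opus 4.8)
\textbf{Proof proposal for Proposition~\ref{prop:PTo}.}

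The plan is to prove both implications by relating the location of $\eta_P(c_1)$ to whether the two critical points of $P$ lie on $\partial U(P)$, exploiting that $\Pi$ is injective \eqref{eq:inj} on the quotient and that the seam is exactly $\Pi(\partial U(Q))$. First I would dispose of the easy direction. Suppose both critical points of $P$ are on $\partial U(P)$. By \Cref{lem:ppu} we have $\eta_P(c_1) = \ov\psi_Q\!\left(\phi_Q(c_Q)\phi_P(c_1)/\phi_P(c_0)\right)$. Since $c_0,c_1\in\partial U(P)$ we have $|\phi_P(c_0)| = |\phi_P(c_1)| = r(P)$ by \eqref{eq:rpc}, so the argument of $\ov\psi_Q$ has modulus $|\phi_Q(c_Q)| = r(Q)$, i.e.\ it lies on $\partial B(0,r(Q))$; hence $\eta_P(c_1) \in \ov\psi_Q(\partial B(0,r(Q))) = \partial U(Q)$, and $\Phi([P]) = \Pi(\eta_P(c_1))$ is in the seam $\Pi(\partial U(Q))$.

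For the converse, suppose $\Phi([P])$ lies on the seam, i.e.\ $\Pi(\eta_P(c_1)) = \Pi(w)$ for some $w\in\partial U(Q)$. Here I would argue by contradiction: assume only one critical point of $P$ lies on $\partial U(P)$ (so $c_0\in\partial U(P)$, $c_1\notin\partial U(P)$, and $c_1$ is distinct from $c_0$; in particular $P$ is not unicritical). Since $c_1$ lies in the immediate basin $B(P)$ but not on $\partial U(P) = \partial\ov\psi_P(\ov B(0,r(P)))$, and since $U(P)$ contains no critical point, we have $|\phi_P(c_1)| > r(P) = |\phi_P(c_0)|$ (the point $c_1$ is strictly "outside" $U(P)$ in the $\phi_P$-coordinate). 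By the Petersen--Tan semiconjugacy property, $\eta_P$ maps $B(P)$ into $B(Q)$ and $\tilde\eta_P = \ov\psi_Q\circ\bigl(\tfrac{\phi_Q(c_Q)}{\phi_P(c_0)}\phi_P(\cdot)\bigr)$ on $\ov U(P)$, hence $\phi_Q\circ\eta_P = \tfrac{\phi_Q(c_Q)}{\phi_P(c_0)}\phi_P$ on all of $\Omega$ by the semiconjugacy relation (both sides conjugate $P$ to multiplication by $\lambda$ and agree on $\ov U(P)$, so they agree on the connected set $\Omega$ by analytic continuation). Therefore $|\phi_Q(\eta_P(c_1))| = \tfrac{|\phi_Q(c_Q)|}{|\phi_P(c_0)|}\,|\phi_P(c_1)| > |\phi_Q(c_Q)| = r(Q)$, so $\eta_P(c_1)\in B(Q)\setminus\ov U(Q)$. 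On that open set $\Pi$ is the identity chart and is injective, and moreover $\Pi\bigl(B(Q)\setminus\ov U(Q)\bigr)$ is disjoint from the seam $\Pi(\partial U(Q))$ (a point of $\partial U(Q)$ is never $\sim$-equivalent to a point outside $\ov U(Q)$). This contradicts $\Pi(\eta_P(c_1))$ being on the seam, completing the converse.

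The main obstacle I expect is justifying the identity $\phi_Q\circ\eta_P = \tfrac{\phi_Q(c_Q)}{\phi_P(c_0)}\,\phi_P$ on all of $\Omega$ (not just on $\ov U(P)$ where it is the definition of $\tilde\eta_P$), and more generally pinning down precisely where $\eta_P(c_1)$ lands relative to $U(Q)$. This requires care because $\eta_P$ is only a \emph{semi}conjugacy and $\Omega$ need not be a nice domain near the co-critical point $\co_0$; one has to invoke the structural properties of $\eta_P$ from \cite{PT} — in particular that $\eta_P$ intertwines $\phi_P$ and $\phi_Q$ up to the scaling constant, which is really built into how $\eta_P$ is constructed (it is the identity in linearizing coordinates away from the surgery). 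An alternative, perhaps cleaner, route for the converse is the quasiconformal deformation argument hinted at in the statement: if $c_1\notin\partial U(P)$ one can perform a qc surgery supported away from $\ov U(P)$ moving $c_1$ slightly while keeping $[P]\in\Hyp_0(\lambda)$ and keeping $c_0$ the first critical point, producing a one-(real-)parameter family along which $\Phi$ moves but whose image cannot stay on the one-dimensional seam unless forced; combined with injectivity \eqref{eq:inj} of $\Phi$ this yields the contradiction. I would present the first (direct) argument as the main proof and mention the surgery argument as an alternative.
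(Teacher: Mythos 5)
Your forward direction (both critical points on $\partial U(P)$ $\Rightarrow$ seam) is exactly the paper's argument via \Cref{lem:ppu}, and it is fine. The converse, however, has a genuine gap. The pivotal step is your claim that $c_1\in B(P)\setminus\partial U(P)$ forces $|\phi_P(c_1)|>r(P)$, ``the point $c_1$ is strictly outside $U(P)$ in the $\phi_P$-coordinate.'' This is a non sequitur: the extended linearizer $\phi_P$ is not injective on the basin, and $|\phi_P|$ does not detect membership in $\ov U(P)$. A critical point lying in another component of $P^{-1}(P(U))$, or mapping under some iterate closer to $0$ than the main critical point does, can perfectly well satisfy $|\phi_P(c_1)|<r(P)$ or $|\phi_P(c_1)|=r(P)$ while $c_1\notin\ov U(P)$ --- the paper warns about precisely this phenomenon in the second remark following \Cref{prop:U}. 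Since your contradiction rests entirely on reading off the position of $\eta_P(c_1)$ relative to $\partial U(Q)$ from the modulus $|\phi_P(c_1)|$, the argument collapses at this point: even granting your intertwining identity, $|\phi_P(c_1)|\neq r(P)$ is what you would need, and that is not implied by $c_1\notin\partial U(P)$. In addition, the identity $\phi_Q\circ\eta_P=\tfrac{\phi_Q(c_Q)}{\phi_P(c_0)}\phi_P$ on all of $\Omega$ is not available from what is stated here: $\eta_P$ is only given as a semi-conjugacy on the compact set $\Omega$, so the analytic-continuation step needs holomorphy of $\eta_P$ on a connected open set containing $U(P)$ and $c_1$, which you flag but do not supply; this is not a cosmetic issue, since the whole Petersen--Tan construction exists because the naive extension by linearizers breaks down past the co-critical point.

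For comparison, the paper avoids any such pointwise analysis of $\eta_P$ away from $\ov U(P)$. It proves the converse globally: \Cref{lem:alltheta} constructs, by quasiconformal deformation of $P_{\lambda,-1}$, a polynomial $P_\theta$ with \emph{both} critical points on $\partial U(P_\theta)$ realizing every angle $\phi_P(c_1)/\phi_P(c_0)=e^{i\theta}$; by \Cref{lem:ppu} these classes already cover the entire seam, and injectivity of $\Phi$ (Eq.~\eqref{eq:inj}) then forces any class landing on the seam to be one of them. Your closing suggestion of a surgery moving $c_1$ off $\partial U$ is in the same spirit but is only sketched; if you want to salvage a direct dynamical-plane proof you would have to control where $\eta_P(c_1)$ lands using the actual structure of the Petersen--Tan extension, not the modulus of $\phi_P$.
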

\begin{proof}
If both critical points are on $\partial U(P)$ then $|\psi(c_1)|=|\psi(c_0)|$ hence $\frac{\phi_Q(c_Q)\phi_P(c_1)}{\phi_P(c_0)}$ has the same modulus as $\phi_Q(c_Q)$ hence
$\eta_P(c_1)\in\partial U(Q)$, so $\Phi([P])$ belongs to the seam. 

For the converse, we will prove below that for all $\theta\in\R$ there is a cubic polynomial $P=P_\theta$ whose critical points $c_0$, $c_1$ both belong $\partial U(P)$ and such that $\phi_P(c_1)/\phi_P(c_0) = e^{i\theta}$.
Once this claim is proved, consider any $z\in\partial U(Q)$.
Then $\phi_Q(z)/\phi_Q(c_Q)$ has modulus one, hence is of the form $e^{i\theta}$ for some $\theta\in\R$. 
From \Cref{lem:ppu} we get $\Phi([P_\theta]) = z$.
In other words: any point on the seam is the image by $\Pi$ of the class of some of the maps $P_\theta$ of the claim.
Now injectivity of $\Pi$ implies that a cubic map $P$ whose class is mapped to the seam by $\Pi$ must be one of the $P_\theta$ so must have both critical points on $\partial U(P)$.
\end{proof}

The proposition above used the following fact, that we prove now.

\begin{lemma}\label{lem:alltheta}
For all $\theta\in\R$ there is a cubic polynomial $P=P_\theta$ whose critical points $c_0$, $c_1$ both belong $\partial U(P)$ and such that $\phi_P(c_1)/\phi_P(c_0) = e^{i\theta}$.
\end{lemma}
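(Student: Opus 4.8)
The plan is to realize the desired polynomial by a surgery starting from the quadratic model $Q = Q_\lambda$. Fix $\theta \in \R$. The point on $\partial U(Q)$ we are aiming for is $w_0 := \ov\psi_Q(e^{i\theta} \phi_Q(c_Q))$, i.e.\ the point of $\partial U(Q)$ whose $\phi_Q$-image is $e^{i\theta}$ times that of the critical point $c_Q = 1$ of $Q$. We want to produce a cubic $P$ in $\Hyp_0(\lambda)$ whose \emph{second} critical point $c_1$, after transport by the Petersen-Tan semi-conjugacy, lands exactly on $w_0$; by \Cref{lem:ppu} this forces $\phi_P(c_1)/\phi_P(c_0) = \phi_Q(w_0)/\phi_Q(c_Q) = e^{i\theta}$, which is what we want (and automatically both critical points are then on $\partial U(P)$, since $|e^{i\theta}| = 1$).

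First I would carry out the quasiconformal blow-up of the point $w_0$. Concretely: choose a small round disk $B$ around $w_0$ contained in $B(Q) \setminus \ov U(Q)$ and such that $Q$ is injective on $B$ and on all of its forward orbit up to the first return to a fixed fundamental annulus of the linearization; inside $\phi_Q(B)$ perform the standard "opening of a critical point" modification of the dynamics — replace the injective branch of $Q$ on $B$ by a degree-two branched cover of $Q(B)$ with a single critical point sitting over $w_0$ — and spread this modified dynamics around by the linearizing dynamics and by pulling back through $Q$ along the basin. This produces a quasiregular map $g$ of the sphere, equal to $Q$ outside a neighborhood of the grand orbit of $B$, which is degree $3$, has an attracting fixed point of multiplier $\lambda$ with the same local structure, and has a second critical point. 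Since the modification is supported on a set that meets each orbit finitely often (because $B$ is compactly contained in the basin, away from $U(Q)$), the Beltrami coefficient of $g$ has a $g$-invariant representative, and by the Measurable Riemann Mapping Theorem there is a quasiconformal $h$ with $P := h \circ g \circ h^{-1}$ holomorphic. Normalizing $h$ to fix $0$ with derivative $1$ and to respect the affine normalization, $P$ is a cubic polynomial with an attracting fixed point of multiplier $\lambda$; both critical points are in its immediate basin (one comes from $c_Q$, the other from the blown-up point $w_0$), so $[P] \in \Hyp_0(\lambda)$.

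It then remains to identify $\Phi([P])$. The conjugacy $h$ restricted to the relevant part of the basin intertwines the new critical point of $g$ (living over $w_0$) with the second critical point $c_1$ of $P$, and since $g = Q$ on $U(Q)$ and on a neighborhood of $\partial U(Q)$, the map $h$ is conformal there and conjugates $Q|_{\ov U(Q)}$ to $P|_{\ov U(P)}$; comparing with the definition of $\tilde\eta_P$ and $\eta_P$ one gets that $\eta_P(c_1)$ is exactly $w_0$, whence by \Cref{lem:ppu}, $\phi_P(c_1)/\phi_P(c_0) = e^{i\theta}$, and $P_\theta := P$ works. (Alternatively, one can avoid explicit bookkeeping of $\eta_P$: the surgery is designed so that $P$ is quasiconformally conjugate to a model whose Petersen-Tan invariant is visibly $\Pi(w_0)$, and one invokes that $\Phi$ depends only on the combinatorial/conformal position of the second critical point.)

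The main obstacle is the surgery step: one must arrange the blow-up so that the resulting quasiregular map is genuinely of degree $3$ with \emph{exactly} two critical points in the basin (no spurious critical points created by the interpolation), so that the dilatation stays bounded, and so that the support of the Beltrami form is a set meeting each grand orbit only finitely often — this last point is what makes the invariant Beltrami coefficient integrable and the MRMT applicable. The fact that $w_0 \in B(Q) \setminus \ov U(Q)$, so that its whole backward orbit avoids the "eye" $U(Q)$ and eventually escapes into a strict fundamental domain for the linearizing dynamics, is exactly what guarantees this finiteness; the hyperbolicity of $Q$ on its basin then gives the uniform dilatation bound. Once the MRMT is invoked, everything else — degree count, multiplier of the fixed point, membership in $\Hyp_0(\lambda)$, and the identification of $\eta_P(c_1)$ — is routine tracking of the conjugacy.
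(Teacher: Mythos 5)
Your construction is internally inconsistent at the very first step, and the inconsistency is not cosmetic. Since $c_Q\in\partial U(Q)$ and $|e^{i\theta}\phi_Q(c_Q)|=|\phi_Q(c_Q)|=r(Q)$, the point $w_0=\ov\psi_Q(e^{i\theta}\phi_Q(c_Q))$ lies \emph{on} $\partial U(Q)$, not in $B(Q)\setminus\ov U(Q)$. So there is no disk $B$ around $w_0$ contained in $B(Q)\setminus\ov U(Q)$, your later claim that ``$g=Q$ on a neighborhood of $\partial U(Q)$'' contradicts the fact that the modification is supported on a disk centered at a point of $\partial U(Q)$, and the finiteness/dilatation argument you base on ``$w_0\in B(Q)\setminus\ov U(Q)$, so its backward orbit avoids the eye'' rests on a false premise. (That part could be patched, since orbits cross the fundamental annulus near $\partial U(Q)$ only boundedly many times, but the patch does not save the rest.) There is also a circularity in the logic: \Cref{lem:ppu} is stated \emph{under the hypothesis} that both critical points of $P$ lie on $\partial U(P)$, so you cannot use it to conclude that they do; and since the straightening $h$ fails to be conformal on the part of $U(Q)$ meeting the support of the invariant Beltrami form, you cannot identify $h(U(Q))$ with $U(P)$ nor evaluate $\phi_P\circ h$ at $h(w_0)$ by analytic continuation, so membership of the new critical point in $\partial U(P)$ is genuinely unproven in your setup.

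The deeper gap is quantitative: even granting a correct blow-up producing a cubic $P\in\Hyp_0(\lambda)$ whose second critical point ``sits over'' $w_0$, nothing in your argument controls the \emph{exact} value of $\phi_P(c_1)/\phi_P(c_0)$. The straightening map is only quasiconformal on the region that determines the linearizing coordinate near the new critical point, so the angular position gets distorted in an uncontrolled way; you would obtain \emph{some} parameter with both critical points near or on $\partial U$, but not one realizing the prescribed angle $e^{i\theta}$ — and exact realization for every $\theta$ is precisely what the lemma asserts and what the proof of \Cref{prop:PTo} consumes. The paper's proof is designed around exactly this difficulty: it starts from the symmetric cubic $P_{\lambda,-1}$, which already has both critical points on $\partial U$ with ratio $-1$, and deforms it by a Beltrami form pulled back from a quasiconformal map $V$ that \emph{commutes with $z\mapsto\lambda z$} and sends $-1$ to $e^{i\theta}$; this equivariance makes $H=V\circ\phi_{P_0}\circ S^{-1}$ a genuine linearizer of the straightened map ($H=a'\phi_{P_1}$), so the image of $U(P_0)$ is $U(P_1)$ and the ratio of the critical values of $\phi_{P_1}$ is exactly $e^{i\theta}$. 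If you want to keep your ``create the second critical point at a prescribed seam position of the quadratic model'' strategy, you would need an analogous equivariance (or an a posteriori continuity/surjectivity argument plus the injectivity of $\Phi$) to pin down the angle; as written, that key mechanism is missing.
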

\begin{proof}
For $\theta=0$ the map $P_{\lambda,c=1}$ satisfies the assumption: its critical points coincide.

For $\theta=\pi$, the map $P_{\lambda,c=-1}$, for which $c_1=-c_0$, satisfies the assumption: it has at least one critical point $c_0\in\partial U$ and since $U$ is invariant by $z\mapsto -z$, the other critical point $-c_0$ is also on $\partial U$. Moreover $\Phi_P$ commutes with $z\mapsto -z$, whence the claim.

For another value of $\theta$, we build $P$ by quasiconformal deformation of $P_0 = P_{\lambda,-1}$, i.e.\ the map $P$ will by the conjugate of $P_0$ by the straightening of a $P_0$-invariant Beltrami form $\mu_1$. Such a conjugate is holomorphic and is a self-map of $\C$ of topological degree $3$, hence a cubic polynomial.

To find $\mu_1$, we proceed as follows: The map $\phi_{P_0}$ sends $\ov U(P_0)$ to a closed round disk $B(0,R)$ and sends both critical points $c_0$, $c_1$ to antipodal points $a_0$, $a_1$ on its bounding circle.
Let $f$ be a (Lipschitz) homeomorphism of $[0,2\pi]$ fixing both ends and sending $\pi$ to $\theta$. We can assume that $f$ is linear on $[0,\pi]$ and $[\pi,2\pi]$ but it is not necessary. We can periodize $f$ into a Lipschitz homeomorphism of $\R$ commuting with $x \mapsto x+2\pi$.
Let
\[b = -\log \lambda\]
(we can take any determination of its logarithm).
Then $\Re b>0$.
Let
\[a = \frac{\Im b}{\Re b}.\]
Then let
\[W:\begin{array}{rcl}
\C&\to&\C
\\
x+iy &\mapsto& x+i (ax+f(y-ax))
\end{array}\]
Which commutes with $z\mapsto z+b$.
The map $W$ is semi-conjugate via $\exp$ to a map $V:\C\to\C$:
\[\exp\circ W = V \circ \exp\]
The map $V$ has been designed to commute with $z\mapsto \lambda z$,
to be quasiconformal and to send $-1$ to $e^{i\theta}$.
Let $\mu_V$ be the pull-back by $V$ of the null Beltrami form.
Let $\mu_0$ be defined on $U(P_0)$ as the pull-back of $\mu$ by the map $z\mapsto \phi_P(z /\phi_P(c_0))$.
Since $\mu_0$ is invariant by $P_0$ on the forward invariant set $U(P_0)$, we can complete $\mu_0$ into a $P_0$-invariant Beltrami form $\mu_1$ on $\C$ in the usual way: it is null outside the basin of attraction of $0$ and in the basin it is obtained by iterated pull-backs of $\mu_0$ by $P_0$.

Now let $S$ be the straightening of $\mu_1$, i.e.\ $S$ sends $\mu_1$ to the null form.
Let $P_1 = S\circ P_0 \circ S^{-1}$. The map $H= V\circ \phi_{P_0}\circ S^{-1}$ sends the null-form to the null-form hence is holomorphic.
It is defined on the basin of $P_1$ and moreover a direct computation shows that it conjugates $P_1$ to the multiplication by $\lambda$. Hence by uniqueness of the linearizing maps, we get that $H = a' \phi_{P_1}$ for some $a'\in\C^*$.
In particular the image of $U(P_0)$ by $S$ is $U(P_1)$: indeed it contains a critical point (in fact, both) of $P_1$ and $H$ is injective on it and maps it to a round disk.
One also checks that $H$ sends the critical points of $P_1$ to two points whose quotient is $e^{i\theta}$.
The lemma follows.
\end{proof}

\subsubsection{About semi-conjugacies}

Here, we discuss the impossibility of having a semi-conjugacy on the whole basin, this section has no application in the present document.

Let $P=P_{\lambda,c}$ with both critical points in the immediate basin of the attracting fixed point $0$. Let $U_n = U_n(P_{\lambda,c})$ denote the connected component containing $0$ of $ P_{\lambda,c}^{-n}(U)$. One can prove by induction that $U_n$ is simply connected.\footnote{If $U_{n-1}$ simply connected but not $U_n$ then the complement of $U_n$ in the Riemann sphere would have a bounded component $C$, whose image $P(C)$ is disjoint from $U_{n-1}$ and whose boundary is contained in $\partial U_{n-1}$. With these properties, $P(C)$ must contain infinity, and since $P$ is a polynomial, $C$ must contain infinity, but $C$ is bounded, leading to a contradiction.} 
We have $U_0 = U$ and $U_{n-1}\Subset U_n$.
Note that no critical points belong to $U_0$ and that at least one critical point belongs to $U_1$ because it contains $\partial U$.
There is some $n_0\geq 1$ such that both critical points belong to $U_n$ iff $n\geq n_0$. There is at least one critical point of $P$ on $\partial U_0$, let $c_0$ denote one of them ($c_0$ is a first critical point according to the terminology above) and let $\{c_0,c_1\}$ be the set of critical points of $P$.
The map $P$ is a ramified covering from $U_n$ to $U_{n-1}$ of degree $3$ if $n\geq n_0$, and of degree $2$ otherwise.\footnote{The value of degree can be deduced from the Riemann-Hurwitz formula: since the sets $U_n$ are simply connected, the degree is $1+$ the number of critical points of $P$ in $U_n$.} Note that $\co_0$ and $\co_1$ also belong to $U_{n_0}$, since $P$ has degree $3$ on $U_{n_0}$ and $U_{n_0-1}$ contains the two critical values.

For $n=0$, there are non-unique conjugacies $\zeta_0:U_0\to U_0(Q)$ of $P$ to $Q$. They are also the conformal maps from $U_0$ to $U_0(Q)$ that map $0$ to $0$. They all have a continuous extension to $\partial U_0$ because $U_0$ and $U_0(Q)$ are Jordan domains.
There is a unique conjugacy whose extension maps $c_0$ to the critical point of $Q$, and we now call this one $\zeta_0$.

As long as $n<n_0$ there is a conjugacy $\zeta_n$ of $P_{\lambda,c}$ to $Q$, extending $\zeta_{0}$, defined on $U_n$ and mapping to $U_n(Q)$ and if $n>0$ then $\zeta_{n}$ extends $\zeta_{n-1}$. 

Now there is a complication: there is no conjugacy $\zeta_{n_0}:U_{n_0}\to U_{n_0}(Q)$ from $P$ to $Q$ extending $\zeta_{n_0}$. Indeed $P$ is a degree $3$ ramified covering from $U_{n_0}$ to its image whereas $Q$ is a degree $2$ ramified covering from $U_{n_0}(Q)$ to its image. One may hope that allowing $\zeta_{n_0}$ to have critical points may solve the problem, however:

\begin{lemma}There is no holomorphic extension $\zeta$ of $\zeta_{n_0-1}$ to $U_{n_0}$.
\end{lemma}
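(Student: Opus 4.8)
The plan is to derive a contradiction from the existence of a holomorphic extension $\zeta:U_{n_0}\to\C$ of $\zeta_{n_0-1}$ by a degree-counting argument, exactly as in the informal remark preceding the statement. First I would record that if such a $\zeta$ exists, then the functional equation $Q\circ\zeta_{n_0-1}=\zeta_{n_0-1}\circ P$ on $U_{n_0-1}$, together with the identity theorem, forces $Q\circ\zeta=\zeta\circ P$ to hold on all of $U_{n_0}$ (both sides are holomorphic on $U_{n_0}$ and agree on the non-empty open subset $P^{-1}(U_{n_0-1})\cap U_{n_0}\supset U_{n_0-1}$, actually on a neighbourhood of $U_{n_0-1}$). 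So $\zeta$ is a semi-conjugacy of $P|_{U_{n_0}}$ to $Q|_{Q(\zeta(U_{n_0}))}$.

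The heart of the argument is then to compare mapping degrees. Since $\zeta$ extends $\zeta_{n_0-1}$, which is a conformal isomorphism $U_{n_0-1}\to U_{n_0-1}(Q)$, the map $\zeta$ is non-constant, hence open, and $\zeta(U_{n_0})$ is an open connected subset of $B(Q)$ containing $\ov{U_{n_0-1}(Q)}$. I would argue that $\zeta(U_{n_0})=U_{n_0}(Q)$: by the functional equation, $Q(\zeta(U_{n_0}))=\zeta(P(U_{n_0}))=\zeta(U_{n_0-1})=U_{n_0-1}(Q)$, so $\zeta(U_{n_0})$ is contained in the union of components of $Q^{-1}(U_{n_0-1}(Q))$; being connected and containing $U_{n_0-1}(Q)$ (hence $0$), it lies in the component $U_{n_0}(Q)$. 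For the reverse inclusion one checks that $\zeta(U_{n_0})$ is also closed in $U_{n_0}(Q)$: if $z_k\in U_{n_0}$ with $\zeta(z_k)\to w\in U_{n_0}(Q)$, then $Q(\zeta(z_k))=\zeta(P(z_k))$ converges in $\ov{U_{n_0-1}(Q)}$, and since $P(z_k)$ stays in the compact $\ov{U_{n_0-1}}$ and $\zeta_{n_0-1}$ is a homeomorphism onto $\ov{U_{n_0-1}(Q)}$, one recovers a limit $z\in\ov{U_{n_0}}$ with $\zeta(z)=w$; a small argument using that $w\notin\partial U_{n_0}(Q)=\zeta(\partial U_{n_0})$ places $z$ inside $U_{n_0}$. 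Hence $\zeta:U_{n_0}\to U_{n_0}(Q)$ is a proper surjection, so it is a ramified covering of some degree $d\ge 1$.

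Now count. Because $U_{n_0}$ and $U_{n_0}(Q)$ are simply connected (this was noted just above in the text, with the footnote), the Riemann–Hurwitz formula gives $d-1$ equal to the number of critical points of $\zeta$ in $U_{n_0}$ counted with multiplicity. Composing, $Q\circ\zeta=\zeta\circ P:U_{n_0}\to U_{n_0-1}(Q)$ is a proper map of degree $2d$ on one side (since $\zeta$ has degree $d$ and $Q:U_{n_0}(Q)\to U_{n_0-1}(Q)$ has degree $2$, as $U_{n_0-1}(Q)$ contains the single critical value of $Q$) and of degree $3$ on the other side (since $P:U_{n_0}\to U_{n_0-1}$ has degree $3$ by the choice of $n_0$, both critical values of $P$ lying in $U_{n_0-1}$, and $\zeta:U_{n_0-1}\to U_{n_0-1}(Q)$ has degree $1$). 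Wait — I need to be slightly careful: $\zeta\circ P$ lands in $\zeta(U_{n_0-1})=U_{n_0-1}(Q)$ with degree $3\cdot 1=3$, while $Q\circ\zeta$ has degree $d\cdot 2$. Equating, $2d=3$, which is impossible for an integer $d$. This contradiction proves the lemma.

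**Main obstacle.** The routine part is the Riemann–Hurwitz bookkeeping; the one genuinely delicate point is establishing that $\zeta(U_{n_0})$ is exactly $U_{n_0}(Q)$ — i.e. that the extended $\zeta$ is proper onto the \emph{correct} component, so that the degree comparison is legitimate rather than merely comparing a map to a submap of smaller degree. This requires the closedness argument sketched above, leaning on the fact (from \Cref{lem:folk} and \Cref{prop:U}) that $U_{n_0-1}$, $U_{n_0}$ and their $Q$-counterparts are Jordan domains with $\zeta_{n_0-1}$ extending homeomorphically to the boundary, so that one can track limits of preimages. Once properness is in hand the parity obstruction $2d=3$ is immediate.
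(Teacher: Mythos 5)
Your argument is correct, but it is not the paper's argument. The paper works locally: from $\zeta\circ P=Q\circ\zeta$ it derives the local-degree identity $\deg(P,z)=\deg(Q,\zeta(z))\,\deg(\zeta,z)$ (using injectivity of $\zeta_{n_0-1}$ on $P(U_{n_0})$), deduces that any point sent by $\zeta$ to the critical point of $Q$ must be a \emph{simple} critical point of $P$, rules out the double critical point case, and then shows the co-critical point $\co_0$ would have to be critical --- a contradiction that moreover identifies $\co_0$ as the precise obstruction, which is exactly the point exploited in the discussion that follows the lemma. You instead run a global degree count: $\zeta$ proper onto $U_{n_0}(Q)$ of some degree $d$, so $Q\circ\zeta$ has degree $2d$ while $\zeta\circ P=\zeta_{n_0-1}\circ P$ has degree $3$, and $2d=3$ is impossible. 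Your route is slightly cleaner in that no case distinction for $c_0=c_1$ is needed (the double critical point is simply counted with multiplicity in $\deg P=3$), at the price of having to establish properness of $\zeta$; the paper's route avoids properness altogether and yields the sharper structural information about $\co_0$.

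One step of yours is imprecisely written, though fixable. The phrase ``$w\notin\partial U_{n_0}(Q)=\zeta(\partial U_{n_0})$'' is not meaningful: $\zeta$ is only defined on the open set $U_{n_0}$, and nothing guarantees a continuous extension to $\partial U_{n_0}$, so you cannot speak of $\zeta(\partial U_{n_0})$ or of ``a limit $z\in\ov{U_{n_0}}$ with $\zeta(z)=w$''. The cleanest repair avoids boundary behaviour entirely: $Q\circ\zeta=\zeta_{n_0-1}\circ P$ is proper from $U_{n_0}$ to $U_{n_0-1}(Q)$ as a composition of proper maps, and then for any compact $K\subset U_{n_0}(Q)$ the set $\zeta^{-1}(K)$ is closed in $U_{n_0}$ and contained in the compact set $(Q\circ\zeta)^{-1}(Q(K))$, hence compact; so $\zeta:U_{n_0}\to U_{n_0}(Q)$ is proper, surjective, and has a well-defined degree $d$, after which your parity contradiction $2d=3$ goes through. (Alternatively one can argue as you sketch, but via $P(z^*)\in\partial U_{n_0-1}\subset U_{n_0}$, where $\zeta$ \emph{is} defined and sends $\partial U_{n_0-1}$ into $\partial U_{n_0-1}(Q)$ by properness of the biholomorphism $\zeta_{n_0-1}$, using \Cref{lem:folk} and $U_{n_0-1}\Subset U_{n_0}$.)
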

\begin{proof}
 Let us work by contradiction and assume there is such a $\zeta$. 
 By holomorphic continuation, the relation $\zeta \circ P = Q\circ \zeta$ holds on $U_{n_0}$.
 This implies, denoting $\deg(f,z)$ the local degree at $z$ of a holomorphic map $f$ :
  \begin{equation}\deg(\zeta,P(z)) \deg(P,z) = \deg(Q,\zeta(z)) \deg(\zeta,z)\end{equation}
  Now since $\zeta_{n_0-1}$ is injective on $U_{n_0-1}=P(U_{n_0})$, it follows that $\forall z\in U_{n_0}$, we have $\deg(\zeta,P(z)) = 1$ so
  \begin{equation}\label{eq:degs2} \deg(P,z) = \deg(Q,\zeta(z)) \deg(\zeta,z)\end{equation}
 in particular if $z\in U_{n_0}$ and $\zeta(z)$ is the critical point of $Q$ then $z$ is a critical point of $P$, and its local degree is even, thus equal to $2$.
  This immediately rules out the possiblity that $c_0=c_1$, for the local degree of this double critical point would be $3$. Otherwise $P(\co_0) = P(c_0)$ hence $Q(\zeta(\co_0)) = \zeta(P(\co_0)) = \zeta(P(c_0)) = Q(\zeta(c_0)) = Q(c)$ where $c$ denotes the critical point of $Q$, so $\zeta(\co_0) = c$ because $c$ is the only preimage of $Q(c)$ by $Q$. Hence $\co_0$ must be critical as we already remarked. But this is not the case, leading to a contradiction.
\end{proof}

As the proof above shows, the obstruction is essentially due to the co-critical point $\co_0$.
This is why Petersen and Tan had to extend the conjugacy $\zeta_{n_0-1}$ into a semi-conjugcacy $\zeta$ defined only on some subset of $U_{n_0}$ containing $c_1$ and either not containing $\co_0$ or at least with $\co_0$ not ``in the way''. For this, they had to consider many cases, and we will not review them here.

\subsection{Proof of Theorem~\ref{thm:ll}}
First note that the claim on the total mass of $\mu_\lambda$ was proven in \Cref{prop:totalmass}.

Recall that $0<|\lambda|<1$ and that
\[
Z_\lambda = \{c\in\C^*\,;\,\text{both critical points of $P_{\lambda,c}$ belong to }\partial U\}
\]
where $U = U(P_{\lambda,c})$ is the set defined in \Cref{prop:U}.
The critical points of $P_{\lambda,c}$ are $z=1$ and $z=c$. The set $Z_\lambda$ contains $1$ and when $c\in\C^* \setminus Z_\lambda$, there is only one critical point on $\partial U$.

We invite the reader to read the statement of the fourt point of \Cref{lem:cP} again, about limits of critical points on $\partial U$ when the polynomial varies.
\begin{assertion}
The set $Z_\lambda$ is closed in $\C^*$. 
On any connected component of the complement of $Z_\lambda$, it is always the same critical point that belongs to $\partial U$.
\end{assertion}
The two assertions follow from the fourth point of \Cref{lem:cP} and
continuity of the two critical points of $P_{\lambda,c}$ with respect to $c$.

\medskip

Let us prove that $Z_\lambda$ is a Jordan curve (which gives an independent proof of the fact that it is closed).
Recall that $P_{\lambda,c}$ and $P_{\lambda,1/c}$ are conjugate by an affine map fixing $0$, in particular $Z_\lambda$ is invariant by $c\mapsto 1/c$.
It contains $c=1$ because both critical points are then identical.
It also contains $c=-1$ for then $P_{\lambda,-1}$ commutes with $z\mapsto -z$, hence $U$, which contains $0$, is invariant by $-z$ too and the two critical point $-1$ and $1$ thus belong to $\partial U$ simultaneously.

Let $I_\lambda$ denote the image of $Z_\lambda$ by $c\mapsto v = \frac{c+c^{-1}}{2}$. It contains $v=1$ and $v=-1$ and $Z_\lambda$ is its whole preimage.

\begin{lemma}\label{lem:Il}
  The set $I_\lambda$ is a Jordan arc.
\end{lemma}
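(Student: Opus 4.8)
The plan is to recognize $I_\lambda$, through the Petersen--Tan uniformization, as the scar of the model space $D_\lambda$, which is already known to be a Jordan arc.

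First I would check that $I_\lambda$ lives inside $\Hyp_0^a(\lambda)$ and has a clean description there. By \Cref{prop:U} the set $U=U(P_{\lambda,c})$ is compactly contained in the immediate basin of $0$, so $\partial U$ lies in that (open) immediate basin; hence $c\in Z_\lambda$ forces both critical points $1$ and $c$ into the immediate basin of $0$, i.e.\ $[P_{\lambda,c}]\in\Hyp_0$ and $c\in\Hyp_0'(\lambda)$. The symmetry \eqref{eq:sym} conjugates $P_{\lambda,c}$ to $P_{\lambda,1/c}$ by an affine map fixing $0$, hence carries $U(P_{\lambda,c})$ onto $U(P_{\lambda,1/c})$; so the property ``both critical points lie on $\partial U$'' is invariant under $c\mapsto 1/c$ and descends to the parameter $v=\frac{c+c^{-1}}{2}$. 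This gives
\[
I_\lambda=\{\,v\in\Hyp_0^a(\lambda)\;;\;\text{both critical points of the corresponding polynomial lie on }\partial U\,\}.
\]

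Next I would transport this description through two homeomorphisms. Since on $\Hyp_0^a$ the coordinate $\lambda$ is exactly the multiplier of the unique attracting fixed point, and since for $\lambda\in\D^*$ the fibre $\pi^{-1}(\lambda)$ is disjoint from $\cal E$, \Cref{prop:0ra} shows that $(\lambda,v)\mapsto(a,b^2)$ restricts to a homeomorphism $\Hyp_0^a(\lambda)\xrightarrow{\sim}\pi^{-1}(\lambda)=\Hyp_0(\lambda)$, carrying the condition ``both critical points on $\partial U$'' to the same condition on the unmarked class. Post-composing with the Petersen--Tan homeomorphism $\Phi:\Hyp_0(\lambda)\to D_\lambda$ yields a homeomorphism $G:\Hyp_0^a(\lambda)\to D_\lambda$, and by \Cref{prop:PTo} one has $G(v)\in(\partial U(Q)/\!\sim)$ precisely when $v\in I_\lambda$. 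Hence $I_\lambda=G^{-1}(\partial U(Q)/\!\sim)$. As the scar $\partial U(Q)/\!\sim$ is a Jordan arc --- it is the quotient of the Jordan curve $\partial U(Q)$ by a reflection, as recalled above --- and $G$ is a homeomorphism, $I_\lambda$ is a Jordan arc; it is moreover compact, hence closed in $\C$.

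I do not expect a genuinely hard step here: the work is in bookkeeping rather than analysis. One must make sure the markings match up (no marked critical points on the $v$-side, the attracting fixed point automatically marked once $\lambda\in\D^*$ so that $\cal E$ is irrelevant) and, most importantly, that ``$\Phi([P])$ on the scar'' is genuinely equivalent to ``both critical points of $P$ on $\partial U(P)$'' --- but this is exactly \Cref{prop:PTo}, whose quasiconformal-surgery proof (via \Cref{lem:alltheta}) has already been given above, so the substantive content is in place.
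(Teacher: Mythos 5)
Your argument is correct and follows essentially the same route as the paper: identify $I_\lambda$ inside $\Hyp_0^a(\lambda)$, transport it via the homeomorphism of \Cref{prop:0ra} and the Petersen--Tan homeomorphism $\Phi$, and use \Cref{prop:PTo} to recognize it as the preimage of the scar, which is a Jordan arc. The only difference is that you spell out the bookkeeping (e.g.\ that $Z_\lambda\subset\Hyp_0'(\lambda)$ and that $\pi^{-1}(\lambda)$ avoids $\cal E$) which the paper leaves implicit.
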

\begin{proof}
  Recall that $\Hyp_0(\lambda)$ can be seen as a subset of $\C^2$ via the $(a,b^2)$ coordinates.
  By \Cref{prop:0ra} this set is homeomorphic by $(\lambda,v)\mapsto(a,b^2)$ to the subset that we denoted $\Hyp^a(\lambda)$ of $(\lambda,v)$-space, of classes of polynomials with an attracting fixed point marked of multiplier $\lambda$. According to Petersen and Tan (see \Cref{sub:PT}, \Cref{prop:PTo}), the fact that both critical points are on $\partial U(P)$ is equivalent to the fact that $\Phi([P])$ belongs to the seam $\Pi(\partial U(Q))$. Since the seam is a Jordan curve, and $\Phi$ is a homeomorphism from $\Hyp_0(\lambda)$ to its image, the lemma follows.
\end{proof}
 
\begin{corollary}
  The set $Z_\lambda$ is a Jordan curve.
\end{corollary}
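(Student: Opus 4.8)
The plan is to deduce the corollary from \Cref{lem:Il} by pulling back the Jordan arc $I_\lambda$ through the Joukowski map $F(c)=\frac{c+c^{-1}}{2}$, exploiting that $Z_\lambda=F^{-1}(I_\lambda)$ and that the two critical values $v=1$, $v=-1$ of $F$ are precisely the two endpoints of $I_\lambda$.

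First I would pin down the endpoints of the arc $I_\lambda$. Under the homeomorphism $(\lambda,v)\mapsto(a,b^2)$ of \Cref{prop:0ra} and the Petersen--Tan homeomorphism $\Phi$, the set $I_\lambda$ corresponds to the seam $\Pi(\partial U(Q))$, i.e.\ to the quotient of the Jordan curve $\partial U(Q)$ by the reflection $\sim$. That reflection has exactly two fixed points, namely the two points of $\partial U(Q)$ on which $\phi_Q$ takes the values $\pm\phi_Q(c_Q)$, so the seam is a Jordan arc whose endpoints are the $\Pi$-images of these two points. By the cases $\theta=0$ and $\theta=\pi$ of \Cref{lem:alltheta} together with \Cref{lem:ppu}, these endpoints are $\Phi([P_{\lambda,1}])$ and $\Phi([P_{\lambda,-1}])$; undoing the two homeomorphisms, the endpoints of $I_\lambda$ are $v=1$ and $v=-1$. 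In particular $I_\lambda\setminus\{1,-1\}$ is an open arc, connected and simply connected, contained in $\C\setminus\{1,-1\}$.

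Next comes a soft covering-space argument. The map $F$ restricts to a degree-$2$ covering $\C^*\setminus\{1,-1\}\to\C\setminus\{1,-1\}$: it is a local homeomorphism there (its critical points are exactly $c=\pm1$) and it is proper, as it extends to a rational map of $\hat\C$ with $F(\pm1)=\pm1$ and $F(0)=F(\infty)=\infty$. Since the base $I_\lambda\setminus\{1,-1\}$ is simply connected and connected, its $F$-preimage is a disjoint union of two open arcs $A_1$, $A_2$, each mapped homeomorphically onto it. By properness, the closure $\ov{A_j}$ in $\hat\C$ is obtained by adjoining two points, and these can only be $1$ and $-1$ (each the unique preimage of a critical value); hence $F|_{\ov{A_j}}$ is a continuous bijection from the compactum $\ov{A_j}=A_j\cup\{1,-1\}$ onto $I_\lambda$, therefore a homeomorphism, and $\ov{A_j}$ is a Jordan arc joining $1$ to $-1$. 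Since $Z_\lambda=F^{-1}(I_\lambda)=\ov{A_1}\cup\ov{A_2}$ with $\ov{A_1}\cap\ov{A_2}=\{1,-1\}$ (because $A_1\cap A_2=\emptyset$), $Z_\lambda$ is the union of two Jordan arcs meeting exactly in their common pair of endpoints, which is a Jordan curve.

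The only point I expect to require care is the identification of the endpoints of $I_\lambda$ with $\{1,-1\}$: were $v=1$ or $v=-1$ an interior point of the arc $I_\lambda$, the preimage $F^{-1}(I_\lambda)$ would fail to be a topological $1$-manifold at $c=1$ or $c=-1$ (a ``cross'' rather than an arc), and the statement would be false. Everything else --- that $Z_\lambda=F^{-1}(I_\lambda)$, that $\{1,-1\}\subset Z_\lambda$, and that $I_\lambda$ is a Jordan arc --- is already available from the discussion preceding the statement.
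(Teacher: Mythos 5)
Your proof is correct and follows essentially the same route as the paper: both arguments pull the Jordan arc $I_\lambda$ back through the degree-$2$ branched cover $c\mapsto v=\frac{c+c^{-1}}{2}$, obtaining two disjoint lifts of the open arc whose closures meet exactly at $c=\pm1$, hence a Jordan curve. Your explicit verification that $v=\pm1$ are precisely the \emph{endpoints} of $I_\lambda$ (via the two fixed points of the reflection defining the seam) is a point the paper leaves implicit in asserting that the ends of the lifts converge to $\pm1$, and it is a welcome clarification rather than a different method.
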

\begin{proof}
  By lifting properties of coverings, the Jordan arc minus its ends has two disjoint lifts starting from its middle point by $c\mapsto v$. The ends of each lift must converge to $-1$ and $1$. The union of these two Jordan arcs is then a simple closed curve and equal to $Z_\lambda$.
\end{proof}

In particular $Z_\lambda$ is bounded. Since it is invariant by $c\mapsto 1/c$, it is also bounded away from $0$.
In particular, the complement of $Z_\lambda$ in $\C$ has two components, one that is bounded and one that is unbounded.

\begin{assertion}
The unique critical point of $P_{\lambda,c}$ that belongs to $\partial U$ is $c$ if $c\neq 0$ belongs to the bounded component of $\C\setminus Z_\lambda$, and it is $1$ if $c$ belongs to the unbounded component.
\end{assertion}

By the discussion at the beginning of this section, it is enough to prove that the unbounded component contains \emph{at least one parameter} for which $c\in \partial U$ and similarly that the bounded component minus $0$ contains at least one parameter for which $1\in \partial U$.

When $c$ tends to infinity, the map $P_{\lambda,c}$ tends on every compact subset of $\C$ to the quadratic polynomial $Q_\lambda(z)=\lambda z(1-\frac{z}2)$.
We have $J(Q_{\lambda}) \subset B(0,10)$.
The restriction of $Q_\lambda$ as a map from $Q^{-1}(B(0,10))$ to $B(0,10)$ is quadratic-like. For $|c|$ big enough, there is a quadratic-like restriction of $P_{\lambda,c}$ whose domain contains $0$ and is contained in $B(0,10)$ (a perturbation of a quadratic like map is still quadratic like, up to reducing its domain). This restriction has an attracting fixed point $z=0$, hence there is a critical point of the restriction in its basin. This critical point must belong to $B(0,10)$ hence cannot be equal to $c$ if $c$ is big enough. Hence it must be the critical point $z=1$. The boundary of the basin is contained in the Julia set of the restriction, hence in the Julia set of the full polynomial. It implies that $c$ is not in the immediate basin of $0$ for $P_{\lambda,c}$, a fortiori not in $\partial U$. 

Recall that $P_{\lambda,1/c}$ is conjugate to $P_{\lambda,c}$ by $z\mapsto cz$:
\[c^{-1}P_{\lambda,c}(cz) = P_{\lambda,1/c} (z).\]
The conjugacy $z\mapsto cz$ sends respectively the critical points $1$ and $1/c$ of $P_{\lambda,1/c}$ to the critical points $c$ and $1$ of $P_{\lambda,c}$.
Applying this change of variable, it follows from the above discussion that the critical point on $\partial U$ is $c$ when $|c|$ is small.

\medskip

\begin{assertion}
  The function $c\mapsto -\log r(P_{\lambda,c})$ defined on $\C^*$ is subharmonic and continuous. It is harmonic on $\C^*\setminus Z_\lambda$.
\end{assertion}
Continuity has been proven in \Cref{lem:cont} and subharmonicity in \Cref{prop:sh}.
To prove harmonicity on the complement of $Z_\lambda$, we use \cref{eq:rpc} on page~\pageref{eq:rpc}, according to which $r(P)=|\phi_P(c_P)|$ where $c_P$ is the critical point on $\partial U(P)$, and we use holomorphic dependence of $\phi_P$ on $P$, \Cref{lem:holodep}. We saw that $c_P=1$ on one component and $c_P=c$ on the other component.

\medskip

The (distribution) Laplacian of a subharmonic function is known to be a Radon measure, let us call it $\mu_\lambda$.
\begin{lemma}
The support of $\mu_\lambda$ is equal to $Z_\lambda$.
\end{lemma}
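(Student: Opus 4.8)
The plan is to prove the two inclusions $\Supp\mu_\lambda\subset Z_\lambda$ and $Z_\lambda\subset\Supp\mu_\lambda$ separately. The first inclusion is immediate: we have just shown that $f(c)=-\log r(P_{\lambda,c})$ is harmonic on the open set $\C^*\setminus Z_\lambda$, hence $\mu_\lambda=\Delta f$ vanishes there as a distribution, so $\Supp\mu_\lambda$ is contained in the closed set $Z_\lambda$. The work is entirely in the reverse inclusion: I must show $\mu_\lambda$ charges every neighborhood of every point $c_0\in Z_\lambda$.

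For the reverse inclusion I would argue by contradiction. Suppose some $c_0\in Z_\lambda$ has a disk neighborhood $D\Subset\C^*$ with $\mu_\lambda(D)=0$; then $f$ is harmonic on $D$. The idea is to exploit the parametrization of $Z_\lambda$ by $t=\arg(\phi(c)/\phi(1))$ hinted at in the text, together with the two explicit ``inverse-branch'' formulas $r=|\phi_P(1)|$ (on the unbounded side) and $r=|\phi_P(c)|$ (on the bounded side). Both $\log|\phi_P(1)|$ and $\log|\phi_P(c)|$ are harmonic functions of $c$ wherever the relevant critical point lies in the open basin and the linearizing coordinate $\phi_P$ is holomorphic there — by \Cref{lem:holodep} and \Cref{lem:cP}. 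On one side of $Z_\lambda$ near $c_0$, $f=\log|\phi_P(c)|^{-1}$; on the other side $f=\log|\phi_P(1)|^{-1}$; and these two harmonic functions agree along $Z_\lambda$ but are genuinely distinct germs. If $f$ were harmonic across $Z_\lambda$, then by uniqueness of harmonic extension these two distinct harmonic functions would have to coincide on all of $D$, which forces $|\phi_P(c)|=|\phi_P(1)|$ on an open set, hence (a real-analytic identity) on a whole neighborhood — meaning $Z_\lambda$ would have nonempty interior, contradicting that it is a Jordan curve.

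To turn this into a clean argument I would: (i) pick $c_0\in Z_\lambda$ where $Z_\lambda$ is locally a graph and both critical points $1$ and $c$ lie in the whole basin $B(P_{\lambda,c_0})$ (a dense condition on $Z_\lambda$; the set of such $c_0$ is open and nonempty in $Z_\lambda$, and since $\Supp\mu_\lambda$ is closed it suffices to hit a dense subset of $Z_\lambda$); (ii) near such $c_0$, both $g_c:=\log|\phi_{P_{\lambda,c}}(c)|$ and $h_c:=\log|\phi_{P_{\lambda,c}}(1)|$ are defined and harmonic, with $-f=g_c$ on the bounded side and $-f=h_c$ on the unbounded side, by \cref{eq:rpc} and the already-proved identification of which critical point is on $\partial U$; (iii) invoke harmonicity of $f$ on $D$ and the fact that $g_c,h_c,f$ all extend harmonically, to get $g_c\equiv h_c$ on $D$ (two harmonic functions equal on a half-disk are equal); (iv) deduce $|\phi_{P_{\lambda,c}}(c)|=|\phi_{P_{\lambda,c}}(1)|$ on $D$, hence both critical points on $\partial U$ throughout $D$ (after checking neither leaves the basin, using \Cref{lem:cP}), contradicting that $Z_\lambda$, being a Jordan curve, has empty interior.

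The main obstacle is step (iii)–(iv): making rigorous the claim that on the two sides of the Jordan curve $Z_\lambda$ the function $f$ is given by the two \emph{different} explicit harmonic formulas, and that these formulas are genuinely distinct germs rather than accidentally equal. The potential subtlety is that near $c_0$ the critical point that is ``extra'' (in the basin but not on $\partial U$) could fail to be in the basin at all, or $\phi_P$ could develop a critical value issue; this is exactly what \Cref{lem:cP} and the openness of the basin in \Cref{lem:holodep} are designed to control, so I would lean on those. A secondary point is ensuring the argument covers a dense subset of $Z_\lambda$ so that $\Supp\mu_\lambda\supset\overline{\text{(dense subset)}}=Z_\lambda$; this uses that $\Supp\mu_\lambda$ is closed and that the ``good'' points (both critical points in the basin, $Z_\lambda$ locally a graph) form a relatively open dense subset of $Z_\lambda$, the density coming from the fact that the bad points form a relatively closed set with empty interior in $Z_\lambda$ (e.g.\ the single point $c=1$ where the two critical points collide, and any point where a critical point lands exactly on the Julia set, which by the Petersen–Tan picture is nowhere dense in $Z_\lambda$).
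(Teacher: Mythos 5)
Your overall strategy matches the paper's: the inclusion $\Supp\mu_\lambda\subset Z_\lambda$ is exactly the paper's first step, and for the reverse inclusion the paper also argues by contradiction on a ball $B(c_0,\rho)$, $c_0\in Z_\lambda$, on which $-\log r$ is harmonic, and plays the two sides of the Jordan curve against each other using the fact that the critical point on $\partial U$ is $z=c$ on the bounded side and $z=1$ on the unbounded side. (A minor remark: your ``dense subset of good points'' precaution is unnecessary, since for every $c_0\in Z_\lambda$ both critical points lie on $\partial U\subset B(P)$ by \Cref{prop:U}, so $\phi_P(1)$ and $\phi_P(c)$ are defined, nonvanishing and holomorphic in $c$ near any point of $Z_\lambda$.) However, there is a genuine gap at your step (iv). From $g\equiv h$ on $D$ you only obtain $|\phi_{P_{\lambda,c}}(c)|=|\phi_{P_{\lambda,c}}(1)|$ on $D$, and this does \emph{not} imply that both critical points lie on $\partial U$: the set $U$ is a proper subset of the basin, and the level set $\{z\in B(P)\,;\,|\phi_P(z)|=r\}$ is much larger than $\partial U$. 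The paper warns about precisely this right after the statement of \Cref{thm:ll}: on $Z_\lambda$ one has $|\phi(c)|=|\phi(1)|$, \emph{but the converse does not hold}. So ``hence both critical points on $\partial U$ throughout $D$, contradicting that $Z_\lambda$ has empty interior'' does not follow; all you actually get is that the holomorphic function $c\mapsto\phi_P(c)/\phi_P(1)$ is a unimodular constant on $D$, and ruling that out requires a further argument which you have not supplied.

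The paper closes exactly this gap by a different device (attributed to \'Avila): writing $r(P_{\lambda,c})=|h(c)|$ with $h$ holomorphic and non-vanishing on $B$, it observes that on one component $C'$ of $B\setminus Z_\lambda$ the quantity $\phi_P(c_P)/h(c)$ is a unimodular constant $u$, and then defines $\zeta(c)=\ov\psi_{P_{\lambda,c}}\bigl(u\,h(c)\bigr)$ using the boundary extension of \Cref{lem:psiext}. By construction $\zeta(c)$ \emph{is} a point of $\partial U(P_{\lambda,c})$ for every $c\in B$, because $|u\,h(c)|=r(P_{\lambda,c})$ exactly; continuity follows from the third point of \Cref{lem:cP}, and holomorphy because $\zeta$ is a locally uniformly bounded pointwise limit of the holomorphic maps $c\mapsto\ov\psi_{P_{\lambda,c}}\bigl(u(1-\eps)h(c)\bigr)$. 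Since $\zeta$ coincides with one of the two critical points ($z=c$ or $z=1$) on $C'$, the identity theorem forces it to equal that same critical point on all of $B$, so one and the same critical point stays on $\partial U$ on both sides of $Z_\lambda$ near $c_0$ --- contradicting the switch of the boundary critical point across $Z_\lambda$. The essential difference is that the paper manufactures a point guaranteed to lie on $\partial U$ and identifies it with a critical point by analytic continuation, rather than trying to pass from an equality of moduli of $\phi$-values back to membership in $\partial U$, which is exactly the step that fails in your proposal.
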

\begin{proof}
The support is the complement of the biggest open set on which $-\log r$ is harmonic.
We proved that $-\log r$ is harmonic on the complement of the closed set $Z_\lambda$, hence the support of $\mu_\lambda$ is contained in $Z_\lambda$.

To prove the converse inclusion, we adapt to our setting an argument that was explained to us by \'Avila in the setting of Siegel disks. We will proceed by contradiction and assume that there is some ball $B=B(c_0,\rho)$ with $c_0\in Z_\lambda$ and on which the function $-\log r$ is harmonic.
Let us deduce from this that one of the critical points is on $\partial U$ for all $c\in B$, i.e.\ that either $\forall c\in B$, $1\in\partial U(P_{\lambda,c})$ or $\forall c\in B$, $c\in \partial U(P_{\lambda,c})$.
This leads to a contradiction since $c_0\in Z_\lambda$ is accumulated by points in each of the two complementary components of $Z_\lambda$, and on one of those components the only critical point on $\partial U(P_{\lambda,c})$ is $z=1$ and on the other it is $z=c$.

A harmonic function on a simply connected open set is the real part of some holomorphic function, so $\log r(P_{\lambda,c}) = \Re g(c)$ with $g : B\to\C$ holomorphic.
In other words,
\[\forall c\in B,\ r(P_{\lambda,c}) = | h(c) |\]
for the non-vanishing holomorphic function $h=\exp \circ \,g$. On the other hand, $r(P) = |\phi_P(c_P)|$ for any critical point $c_P\in\partial U(P)$. Recall that the critical point $c_P$ with $P=P_{\lambda,c}$ is unique if $c$ is not in $Z_\lambda$ and that is a holomorphic function of $c$ in the complement of $Z_\lambda$.
Since $Z_\lambda$ is a Jordan curve, $c_0$ is in the closure of both complementary components of $Z_\lambda$.
Taking the intersection of a complementary component with $B$ may disconnected it, but at least on each component $C'$ of this intersection, the function $\phi(c_P)/h(c)$ has constant modulus equal to $1$, so is constant on $C'$. Choose one such component $C'$ and call $u$ this constant:
\[|u|=1,\text{ and}\]
\[\forall c\in C',\ \phi(c_P) = u h(c).\]
Recall that $\psi_P$ has a continuous extension $\ov\psi_P$ to a homeomorphism from $\ov B(0,r(P))$ to $\ov U(P)$ (\Cref{lem:psiext}).
For $c\in B$ let
\[\zeta(c) = \ov{\psi}_{P_{\lambda,c}}(uh(c)),\]
which is defined since by assumption $r(P_{\lambda,c}) = |h(c)|$ and $|u|=1$.
The function $\zeta$ is continuous by the third point of \Cref{lem:cP}.
For $c\in C'$ we have $\zeta(c)=c_P$.

Let us prove that the function $\zeta$ is homlomorphic.
It is the pointwise limit as $\epsilon\to 0$ of the holomorphic functions $c \mapsto  \ov{\psi}_{P_{\lambda,c}}(u(1-\epsilon)h(c))$. These functions are uniformly bounded on compact subsets of $B$: one argument for that is that they take value in the filled-in Julia set, which are contained in a common ball when the parameter varies little. A uniformly bounded pointwise limit of holomorphic functions is holomorphic. It follows that $\zeta$ is locally holomorphic, hence holomorphic.

Now note that $\zeta$ coincides with one of the two critical points $z=c$ or $z=1$ of $P$ on the component $C'$. By holomorphic continuation of equalities, $\zeta$ is this critical point on all $B$. It follows that one of the critical points is always on $\partial U$ for $c\in B$, leading to the aforementioned contradiction.
\end{proof}

\subsection{Parametrizing the Z-curve in the attracting case}

Let $\U=\partial \D$ and consider the map
\[\Psi:\begin{array}{rcl}
Z_\lambda & \to & \U
\\
c & \mapsto & \phi_{P}(c)/\phi_P(1)
\end{array}\]
where, as usual, $P = P_{\lambda,c}$ and $1$ and $c$ are the critical points of $P$.

Recall that $c=1$ and $c=-1$ both belong to $Z_\lambda$. We have:
\begin{itemize}
\item $\Psi(1) = 1$, since in this case, the two critical points coincide,
\item $\Psi(-1) = -1$, because in this case the map $\phi_P$ commutes with $z\mapsto-z$.
\end{itemize}

Recall that $c\in Z_\lambda$ iff $1/c\in Z_\lambda$.
\begin{lemma}\label{lem:Psiinvc}
  $\Psi(1/c) = 1/\Psi(c)$.
\end{lemma}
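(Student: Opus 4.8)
The plan is to exploit the affine conjugacy $z\mapsto cz$ relating $P_{\lambda,c}$ and $P_{\lambda,1/c}$, recorded in \eqref{eq:sym} as $c^{-1}P_{\lambda,c}(cz) = P_{\lambda,1/c}(z)$, and to track how the extended linearizing coordinate $\phi$ transforms under it. First one checks that the statement is meaningful: since $c\in Z_\lambda$ iff $1/c\in Z_\lambda$, both $\Psi(c)$ and $\Psi(1/c)$ are defined; and for $c\in Z_\lambda$ the two critical points of $P_{\lambda,c}$ lie on $\partial U(P_{\lambda,c})$, which is contained in the basin of $0$, so $\phi_{P_{\lambda,c}}$ is holomorphic there and non-vanishing at either critical point (its modulus equals $r(P_{\lambda,c})>0$ by \eqref{eq:rpc}).

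The key step is the transformation identity
\[\phi_{P_{\lambda,1/c}}(z) = \tfrac1c\,\phi_{P_{\lambda,c}}(cz).\]
To prove it, write $A(z)=cz$, so that $P_{\lambda,1/c} = A^{-1}\circ P_{\lambda,c}\circ A$ by \eqref{eq:sym}. Then $\phi_{P_{\lambda,c}}\circ A$ conjugates $P_{\lambda,1/c}$ to $z\mapsto\lambda z$ on the basin of $0$, and $(\phi_{P_{\lambda,c}}\circ A)'(0)=c$; dividing by $c$ yields a map that is again such a conjugacy, fixes $0$ with derivative $1$, and hence coincides with $\phi_{P_{\lambda,1/c}}$ by uniqueness of the normalized linearizing coordinate (equivalently, uniqueness of its power series at $0$, which since $|\lambda|<1$ extends to the whole basin).

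It then remains to substitute at the critical points. The conjugacy $A:z\mapsto cz$ carries the critical points $1$ and $1/c$ of $P_{\lambda,1/c}$ to the critical points $c$ and $1$ of $P_{\lambda,c}$ respectively, so
\[\phi_{P_{\lambda,1/c}}(1/c) = \tfrac1c\,\phi_{P_{\lambda,c}}(1),\qquad \phi_{P_{\lambda,1/c}}(1) = \tfrac1c\,\phi_{P_{\lambda,c}}(c),\]
and dividing these the factor $1/c$ cancels, giving
\[\Psi(1/c) = \frac{\phi_{P_{\lambda,1/c}}(1/c)}{\phi_{P_{\lambda,1/c}}(1)} = \frac{\phi_{P_{\lambda,c}}(1)}{\phi_{P_{\lambda,c}}(c)} = \frac{1}{\Psi(c)}.\]
There is essentially no obstacle here: the only points requiring care are the transformation rule for $\phi$ (a routine consequence of uniqueness of the linearizing coordinate) and the bookkeeping of which labelled critical point maps to which under $A$.
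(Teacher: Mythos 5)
Your proposal is correct and follows essentially the same route as the paper: both rest on the identity $\phi_{P_{\lambda,1/c}}(z)=\tfrac1c\,\phi_{P_{\lambda,c}}(cz)$ coming from the conjugacy \eqref{eq:sym}, followed by evaluation at the critical points. You merely spell out the uniqueness argument behind this identity, which the paper states without proof.
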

\begin{proof}
  Let $P=P_{\lambda,c}$ and $B = P_{\lambda,1/c}$.
  We have $B(z) = P(cz)/c$ and $\phi_{B}(z)=\phi_P(cz)/c$, hence
$\Psi(1/c)$ $=$ $\phi_{B}(1/c)/\phi_{B}(1)$ $=$  $\phi_{P}(c\times 1/c)/\phi_P(c\times 1)$ $=$ $\phi_{P}(1)/\phi_P(c)$ $=$ $1/\Psi(c)$.
\end{proof}
Let us prove that:
\begin{lemma}The map
$\Psi$ is a homeomorphism.
\end{lemma}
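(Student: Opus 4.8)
The plan is to establish that $\Psi$ is a continuous bijection from the compact set $Z_\lambda$ to $\U$, and then conclude by the standard fact that a continuous bijection from a compact space to a Hausdorff space is a homeomorphism. The three things to verify are therefore: continuity, injectivity, and surjectivity.

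\textbf{Continuity.} On $Z_\lambda$, both critical points $1$ and $c$ lie on $\partial U(P_{\lambda,c})$. The map $c\mapsto \phi_P(1)$ is continuous and non-vanishing (we may use \Cref{lem:holodep} for the holomorphic dependence of $\phi_P$ on $P$, together with the fact that $1\in\partial U(P)$ is in the closure of the basin, where $\phi_P$ extends continuously). Similarly $c\mapsto\phi_P(c)$ is continuous on $Z_\lambda$: here one combines \Cref{lem:holodep} with the continuous dependence of $c$ itself and the fact that on $Z_\lambda$ the point $c$ stays on $\partial U(P)$. So $\Psi$ is continuous, and it indeed lands in $\U$ since $|\phi_P(c)|=|\phi_P(1)|=r(P)$ on $Z_\lambda$ by \cref{eq:rpc}.

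\textbf{Surjectivity.} This is essentially \Cref{lem:alltheta}: for every $\theta\in\R$ there is a cubic polynomial $P_\theta$ with both critical points on $\partial U(P_\theta)$ and $\phi_{P_\theta}(c_1)/\phi_{P_\theta}(c_0)=e^{i\theta}$. One must check that $P_\theta$ can be taken, after affine conjugacy, to be one of our normal forms $P_{\lambda,c}$ with the \emph{labelled} critical points $1$ and $c$ matching $c_0$ and $c_1$ in the right order; since $P_\theta$ is a cubic fixing $0$ with multiplier $\lambda$ whose critical points are distinct (for $\theta\neq 0$) and both non-critical-fixed, it is affinely conjugate to some $P_{\lambda,c}$ with $c\in Z_\lambda$, and the conjugacy can be chosen to send $c_0\mapsto 1$, giving $\Psi(c)=e^{i\theta}$. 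Thus $\Psi(Z_\lambda)=\U$.

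\textbf{Injectivity.} This is where I expect the real work, and the natural route is to transport the question through the Petersen--Tan machinery. Recall from \Cref{sub:PT} that for $[P]\in\Hyp_0(\lambda)$ with both critical points on $\partial U$, \Cref{lem:ppu} gives $\eta_P(c_1)=\ov\psi_Q\!\left(\phi_Q(c_Q)\,\phi_P(c_1)/\phi_P(c_0)\right)$, and the Petersen--Tan map $\Phi([P])=\Pi(\eta_P(c_1))$ is injective by \eqref{eq:inj}. So if $\Psi(c)=\Psi(c')$ with $c,c'\in Z_\lambda$, i.e.\ $\phi_{P}(c)/\phi_{P}(1)=\phi_{P'}(c')/\phi_{P'}(1)$, then $\eta_{P}(c_1)$ and $\eta_{P'}(c'_1)$ are the \emph{same} point of $\partial U(Q)$ (both equal to $\ov\psi_Q(\phi_Q(c_Q)\,e^{i\theta})$), hence $\Phi([P])=\Phi([P'])$, hence $[P]=[P']$ as unmarked classes. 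The remaining point is to upgrade this equality of unmarked classes to $c=c'$ in $(\lambda,c)$-space: two parameters $c,c'$ give the same unmarked class only if they differ by the symmetry $c'=1/c$ (reordering the critical points) or by another affine self-conjugacy, but by \Cref{lem:Psiinvc} we have $\Psi(1/c)=1/\Psi(c)$, so $\Psi(c)=\Psi(1/c)$ forces $\Psi(c)\in\{1,-1\}$, i.e.\ $c\in\{1,-1\}$, where $c=1/c$ anyway; and a self-conjugacy fixing $0$ and permuting $\{1,c\}$ forces $c\in\{1,-1\}$ as well. In all cases $c=c'$. The main obstacle is precisely this bookkeeping: making sure the labelling of critical points (the ordered pair $(c_0,c_1)=(1,c)$ versus $(c,1)$) is handled consistently when passing between the labelled parameter $c$ and the unlabelled Petersen--Tan picture, and checking that the only ambiguity is the involution $c\mapsto 1/c$, which $\Psi$ detects.

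Once continuity, injectivity and surjectivity are in hand, compactness of $Z_\lambda$ (it is a Jordan curve, hence compact) and the Hausdorff property of $\U$ give that $\Psi$ is a homeomorphism, completing the proof.
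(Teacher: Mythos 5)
Your proposal is correct and follows essentially the same route as the paper: continuity from the holomorphic dependence of $\phi_P$ on $P$, surjectivity from \Cref{lem:alltheta}, and injectivity by passing through \Cref{lem:ppu} to the injectivity of the Petersen--Tan map $\Phi$ and then resolving the residual ambiguity $c'=1/c$ via \Cref{lem:Psiinvc} and the values $\Psi(\pm1)=\pm1$. The only (harmless) addition is that you make explicit the final step that a continuous bijection from the compact set $Z_\lambda$ to $\U$ is a homeomorphism, which the paper leaves implicit.
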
 
\begin{proof}Note that
\begin{itemize}
\item It is continuous, since $\phi_P$ depends holomorphically on $P$.
\item By \Cref{lem:alltheta}, the map $\Psi$ is surjective.
\item To prove injectivity of $\Psi$ we will use injectivity of $\Phi$, see Eq.~\eqref{eq:inj} as follows:
\end{itemize}
Consider two maps $P_{\lambda,c}$, $P_{\lambda,c'}$ in $Z_\lambda$ that have the same image by $\Psi$. Then their affine class (without marked point) have the same image by $\Phi$ according to \Cref{lem:ppu}.
It follows that the two maps are affine conjugate. Hence either $c'=c$ or $c' = 1/c$.
In the latter case, by \Cref{lem:Psiinvc}, we have $\Psi(c')=1/\Psi(c)$. Since we assumed moreover that $\Psi(c')=\Psi(c)$, it follows that $\Psi(c) = 1/\Psi(c)$, hence $\Psi(c) = \pm 1$. We already know that $\Psi(1)=1$ and $\Psi(-1)=-1$ and hence we have that either $c$ or $1/c$ is equal to $\pm 1$ by the above analysis. But then $c=1/c$.
\end{proof}

\begin{figure}[ht]
	\begin{center}
   \includegraphics[width=\textwidth]{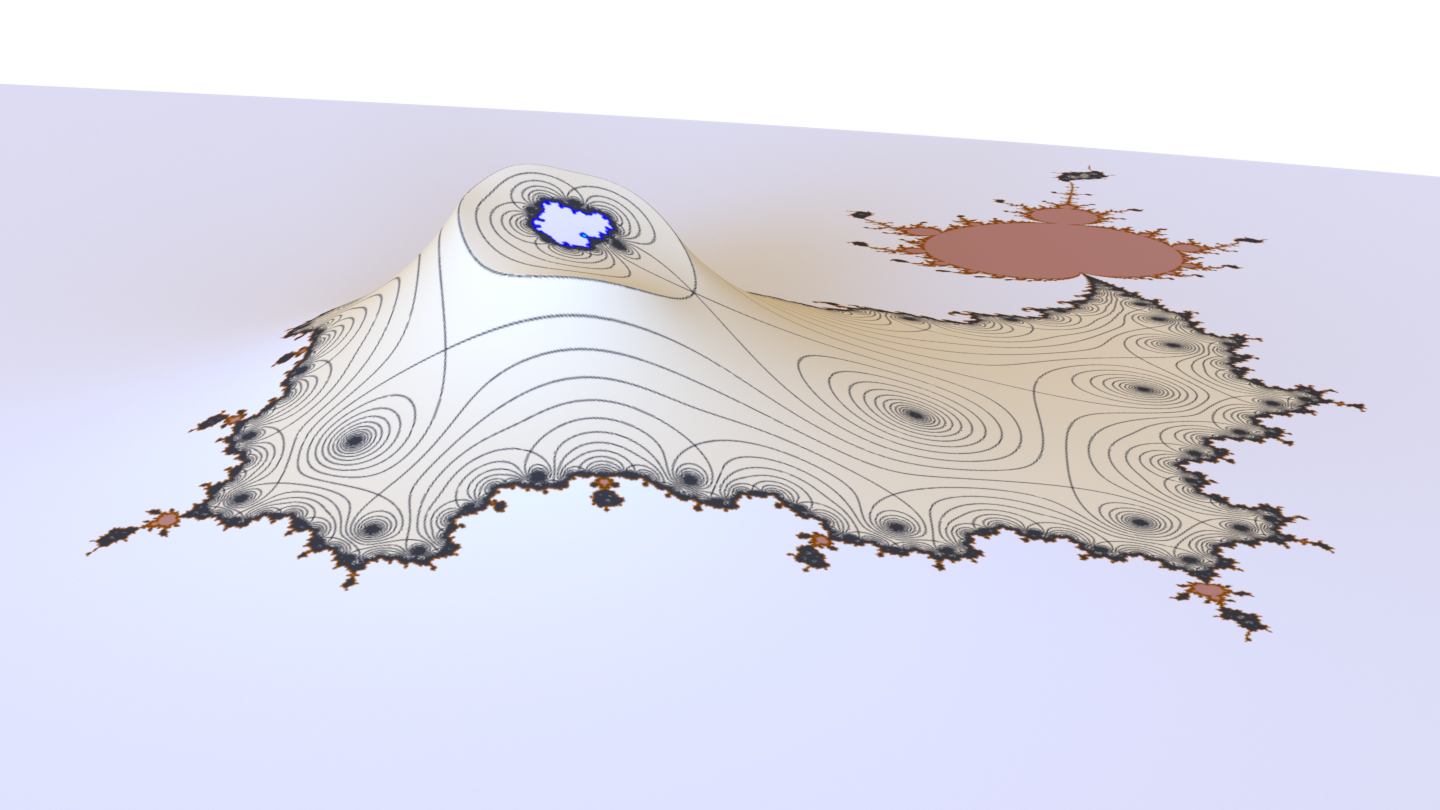}
   \end{center}
   \caption{3D rendering of the graph of the function $c\mapsto \log r(P_{\lambda,c}) - \log |c|$ for $\lambda = 0.8i$ and $c$ varying in a bounded subset of $\C$. This graph, a smooth surface except along a curve where it is creased, is textured with the bifurcation locus of the family $c\mapsto P_{\lambda,c}$. Compared to the function $-\log r$, we added $\log|c|$ and then took the opposite. This allows an elegant representation as a volcano looking scenery. The horizontal scale and the vertical scales have been chosen different to fine-tune this aspect.
   The modified function is defined on $\C$, has a limit as $c\tend 0$, is harmonic outside $Z_\lambda$ and tends to $-\infty$ when $c\tend \infty$.
  Its Laplacian is the opposite of the measure $\mu_\lambda$.}
   \label{pic:heightfield}
\end{figure}

\begin{figure}[ht]
	\begin{center}
   \includegraphics[width=12.5cm]{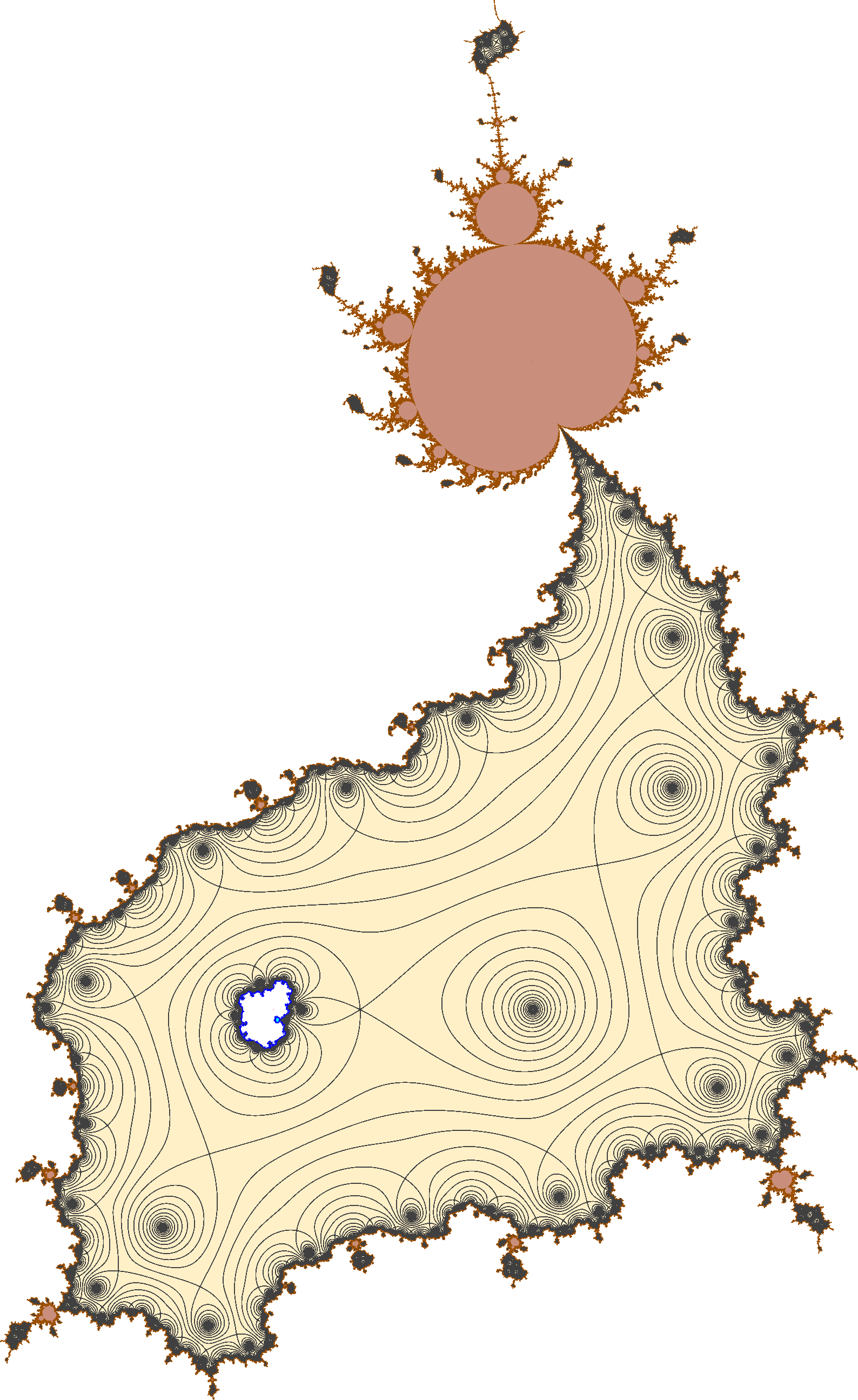}
   \end{center}
   \caption{The bifurcation locus together with equipotentials, c.f.\ \Cref{pic:heightfield,fig:bifattrclines}}
   \label{pic:ba2}
\end{figure}

\section{Siegel slices}\label{sec:5}

\subsection{Introduction}

Shishikura proved that all bounded type Siegel disks of polynomials are quasicircles with a critical point in the boundary.

This applies to our family: when $\theta$ is a bounded type irrational and $\lambda=e^{2\pi i\theta}$, then for all $c\in\C^*$ the Siegel disk of $P_{\lambda,c}$ at $0$ is a quasidisk containing at least one critical point.

Let us recall a theorem of Zakeri, in \cite{art:Zakeri}.
\begin{theorem}[Zakeri]\label{thm:Zak}
 if $\theta$ is a bounded type irrational number and $\lambda = e^{2\pi i\theta}$, let
  \[Z_\lambda = \{c\in\C^*\;|\;\text{both critical points belong to }\partial \Delta(P_{\lambda,c}).\}\]
  Then $Z_\lambda$ is a Jordan curve. Call $I$ and $E$ the bounded and unbounded components of its complement in $\C$. Then $I$ contains $0$ and for all $c\in I\setminus\{0\}$, the critical point on $\partial \Delta(P_{\lambda,c})$ is $z=c$; for all $c\in E$, the critical point on $\partial \Delta(P_{\lambda,c})$ is $z=1$.
\end{theorem}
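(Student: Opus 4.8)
The plan is to transpose, almost verbatim, the scheme used for the attracting slices in the proof of \Cref{thm:ll}. Write $\Delta=\Delta(P_{\lambda,c})$ for the Siegel disk and $\phi_P$ for its linearizing coordinate. By Shishikura's theorem quoted above, each $\Delta$ is a quasidisk with at least one critical point on $\partial\Delta$, so $\phi_P$ extends to a homeomorphism $\ov\Delta\to\ov{B(0,r)}$ (compare \Cref{lem:psiext}); and, exactly as in the Brjuno case of \Cref{prop:harmo}, the boundary $\partial\Delta$ moves by a holomorphic motion as $c$ varies over the $\theta$-slice, so the Siegel analogue of the fourth point of \Cref{lem:cP} holds: $Z_\lambda$ is closed in $\C^*$ and on each connected component of its complement it is the same critical point that lies on $\partial\Delta$. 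First I would record the easy membership facts: $1\in Z_\lambda$ (the critical points coincide), $-1\in Z_\lambda$ (the map $P_{\lambda,-1}$ commutes with $z\mapsto-z$, which preserves $0$ hence $\Delta$, so $\pm1\in\partial\Delta$ simultaneously), and $c\in Z_\lambda\iff 1/c\in Z_\lambda$ by the conjugacy \eqref{eq:sym}. I would also pin down the endpoints: as $c\to\infty$, $P_{\lambda,c}\to Q_\lambda$, which has a bounded-type Siegel disk whose quasicircle boundary contains $z=1$; a perturbation argument (the Siegel disk persists and moves continuously over the slice while $z=c$ escapes, hence $c\notin\ov\Delta$ for $|c|$ large) shows that $z=1$ is the critical point on $\partial\Delta$ for $|c|$ large — in particular $Z_\lambda$ is bounded — and, by \eqref{eq:sym}, that $z=c$ is the one for $|c|$ small.

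The core of the argument is the study of the \emph{angle map}
\[
\Psi:\ Z_\lambda\to\U,\qquad \Psi(c)=\phi_P(c)/\phi_P(1),
\]
which is continuous (holomorphic dependence of $\phi_P$), satisfies $\Psi(1)=1$, $\Psi(-1)=-1$, and $\Psi(1/c)=1/\Psi(c)$ (as in \Cref{lem:Psiinvc}). Its surjectivity is the Siegel analogue of \Cref{lem:alltheta} and would be proven by the same quasiconformal surgery: starting from the symmetric map $P_{\lambda,-1}$, for which $\phi_{P_{\lambda,-1}}$ conjugates $z\mapsto-z$ to $w\mapsto-w$ so that the two critical points land at antipodal points of $\partial B(0,r)$, one builds (exactly as in \Cref{lem:alltheta}, via a $2\pi$-periodic circle homeomorphism) a quasiconformal $V:\C\to\C$ commuting with $z\mapsto\lambda z$ and sending those two points to points of ratio $e^{it}$; one pulls the standard structure back through $V\circ\phi_{P_{\lambda,-1}}$ on $\Delta$, spreads the resulting Beltrami form over all iterated preimages of $\Delta$, extends it by $0$ elsewhere to a $P_{\lambda,-1}$-invariant form, straightens, and checks that the straightened map is a cubic polynomial with a Siegel disk of the same rotation number $\theta$ (hence multiplier $\lambda$) and with both critical points on $\partial\Delta$ realizing angle $e^{it}$; normalizing it into the form $P_{\lambda,c}$ exhibits the desired $c$.

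For injectivity of $\Psi$ I would use a rigidity argument: if $P_{\lambda,c},P_{\lambda,c'}\in Z_\lambda$ have the same $\Psi$-value, glue the two linearizations into a quasiconformal conjugacy on $\ov\Delta$ matching the marked critical points, pull it back along the dynamics to all iterated preimages of the Siegel disk, and extend it to a quasiconformal conjugacy $h$ of $\C$; its Beltrami coefficient is $P_{\lambda,c}$-invariant and supported on the Julia set, which for a bounded-type Siegel polynomial carries no area (or, alternatively, invoke a holomorphic-motion argument as in \cite{art:Zakeri}, or a Petersen--Tan-type model built directly for the Siegel slice, cf.\ \Cref{sub:PT} and \Cref{prop:PTo}), so $h$ is conformal, hence affine, whence $c'\in\{c,1/c\}$; and $c'=1/c$ forces $\Psi(c)=1/\Psi(c)=\pm1$, which by the membership facts above gives $c=c'$ — the very argument used for the injectivity lemma in \Cref{sec:attr}. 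Thus $\Psi$ is a continuous bijection from the compact set $Z_\lambda$ (closed in $\C^*$, bounded, and bounded away from $0$) onto $\U$, hence a homeomorphism, and $Z_\lambda$ is a Jordan curve. Equivalently, one may project by $c\mapsto v=\tfrac{c+c^{-1}}{2}$, show the image $I_\lambda$ is a Jordan arc joining $v=1$ to $v=-1$ whose full preimage is $Z_\lambda$, and lift, exactly as in the corollary following \Cref{lem:Il}. Finally, $\C\setminus Z_\lambda$ has a bounded component $I$ and an unbounded one $E$; since the critical point on $\partial\Delta$ is locally constant on each and equals $z=c$ for $|c|$ small and $z=1$ for $|c|$ large, we get $0\in I$ with $z=c$ on $\partial\Delta$ throughout $I\setminus\{0\}$ and $z=1$ on $\partial\Delta$ throughout $E$.

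The main obstacle is the rigidity step: proving that the conjugacy built by pullback is genuinely globally quasiconformal and then that its invariant dilatation must vanish — i.e.\ the zero-area (or holomorphic-motion/rigidity) input for bounded-type cubic Siegel Julia sets — together with the bookkeeping needed to guarantee that the surgery in the surjectivity step lands inside the normalized family $P_{\lambda,c}$. A secondary technical point, lighter but not entirely free, is establishing the Siegel analogue of the fourth point of \Cref{lem:cP}, namely the continuity of $\partial\Delta$ and of the critical point(s) on it as $c$ varies, which rests on the holomorphic motion of the Siegel-disk boundary over the bounded-type $\theta$-slice.
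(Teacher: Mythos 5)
This statement is not proved in the paper at all: it is Zakeri's theorem, imported verbatim from \cite{art:Zakeri} (where it is established by quasiconformal surgery on Blaschke models) and then used as an input to \Cref{thm:bddType}. So your proposal is in effect an attempt to reprove Zakeri's theorem by transposing the scheme of \Cref{thm:ll}, and in spirit it does retrace parts of Zakeri's actual argument (the angle map $\Psi$, surjectivity by a surgery starting from the symmetric parameter $c=-1$ as in \Cref{lem:alltheta}). But two load-bearing steps are not secured, and one supporting claim is false.

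First, the assertion that $\partial\Delta$ undergoes a holomorphic motion as $c$ varies over the whole $\theta$-slice cannot hold: by \Cref{prop:sullivan} it would make $c\mapsto\log r$ harmonic on all of $\C^*$, i.e.\ $\mu_\lambda=0$, contradicting \Cref{prop:totalmass} and the very content of \Cref{thm:bddType} ($\Supp\mu_\lambda=Z_\lambda\neq\emptyset$). \Cref{prop:harmo} only yields a motion for the quadratic-like restriction at large $|c|$, and \Cref{prop:holomo} only off $\Supp\mu_\lambda$. Hence the closedness of $Z_\lambda$, the local constancy of which critical point lies on $\partial\Delta$, and above all the continuity of $\Psi$ (which in the attracting case came from \Cref{lem:cP}) need a genuine substitute --- Hausdorff continuity of $c\mapsto\ov\Delta$ and of the extended linearizing parametrization for bounded type $\theta$ --- which you do not supply. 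Second, the injectivity of $\Psi$ is made to rest on the Julia set of a bounded-type cubic Siegel polynomial having zero area; this is neither quoted nor proved, and is not available off the shelf in this generality, while the paper offers no Siegel analogue of the Petersen--Tan injectivity \eqref{eq:inj} (cf.\ \Cref{sub:PT}, \Cref{prop:PTo}) to fall back on; the remaining alternative, ``invoke a holomorphic-motion argument as in \cite{art:Zakeri}'', is circular, since it amounts to citing the theorem being proved. So as written the proposal names, but does not close, precisely the steps where Zakeri's quasiconformal surgery does the real work; for the purposes of this paper the correct move is simply to cite \cite{art:Zakeri}, as the text does.
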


The set $Z_\lambda$ is referred to here as the \emph{Zakeri curve}. Given a measure $\mu$ let $\Supp\mu$ denote its support.
The object of this section is to prove:

\begin{theorem}\label{thm:bddType}
  Let $\theta$ be a bounded type irrational and $\lambda = e^{2\pi i\theta}$.
  Then the support of $\mu_\lambda$ is equal to the Zakeri curve:
  \[ \Supp \mu_\lambda = Z_\lambda.\]
\end{theorem}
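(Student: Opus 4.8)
The plan is to mimic the proof of \Cref{thm:ll} in the attracting case, transposing each step to the Siegel setting using Zakeri's theorem (\Cref{thm:Zak}) in place of the Petersen--Tan machinery. First I would recall the structural facts that carry over verbatim: for $\theta$ bounded type and $\lambda=e^{2\pi i\theta}$, the linearizing parametrization $\psi_{P_{\lambda,c}}$ maps its disk of convergence $B(0,r)$ conformally onto the Siegel disk $\Delta(P_{\lambda,c})$, so $r=r(P_{\lambda,c})$ is the conformal radius of the Siegel disk, and by \Cref{lem:cont} the function $c\mapsto -\log r$ is continuous, while by \Cref{prop:sh} it is subharmonic on $\C^*$. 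As in the attracting case, \cref{eq:rpc} gives $r(P)=|\phi_P(c_P)|$ for any critical point $c_P\in\partial\Delta$, where here $\phi_P$ is the linearizing coordinate on the Siegel disk, which by the holomorphic-motion version of \Cref{lem:holodep} (equivalently, the Siegel-disk analogue used in the proof of \Cref{prop:harmo}, case~2) depends holomorphically on the parameter on any region where the critical point on $\partial\Delta$ is well-defined and varies holomorphically.

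The inclusion $\Supp\mu_\lambda\subset Z_\lambda$ is the easy half: by \Cref{thm:Zak}, on the unbounded component $E$ of $\C\setminus Z_\lambda$ the critical point on $\partial\Delta$ is always $z=1$, and on the bounded component $I\setminus\{0\}$ it is always $z=c$; since $\phi_P(1)$ and $\phi_P(c)$ depend holomorphically on $c$ on these domains (holomorphic motion of the Siegel disk boundary), $-\log r=-\log|\phi_P(c_P)|$ is harmonic on $\C^*\setminus Z_\lambda$, so $\mu_\lambda=\Delta(-\log r)$ is supported in $Z_\lambda$. I would also note that the total mass is $2\pi$ already by \Cref{prop:totalmass}, so that part needs no new argument.

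For the reverse inclusion $Z_\lambda\subset\Supp\mu_\lambda$ I would run exactly \'Avila's argument as it appears in the proof of the support lemma for \Cref{thm:ll}. Assume, for contradiction, that $-\log r$ is harmonic on a ball $B=B(c_0,\rho)$ with $c_0\in Z_\lambda$; then $r(P_{\lambda,c})=|h(c)|$ for a non-vanishing holomorphic $h$ on $B$. On each component $C'$ of the intersection of $B$ with a complementary component of $Z_\lambda$, the ratio $\phi_P(c_P)/h(c)$ has constant modulus $1$, hence is a unimodular constant $u$; then the map $c\mapsto\zeta(c):=\ov\psi_{P_{\lambda,c}}(u\,h(c))$ is well-defined and continuous on $B$ (using the continuous dependence of $\ov\psi_P$ and of $\partial\Delta$ on the parameter, the Siegel-disk analogue of the third point of \Cref{lem:cP}), and is holomorphic on $B$ by the bounded-pointwise-limit trick applied to $c\mapsto\ov\psi_{P_{\lambda,c}}((1-\epsilon)u\,h(c))$, these approximants being uniformly bounded since they land in the filled Julia sets, which stay in a fixed ball as $c$ varies over a compact set. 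On $C'$ the function $\zeta$ equals one of the two critical points $z=1$ or $z=c$, hence by analytic continuation it equals that same critical point throughout $B$, which forces one critical point to lie on $\partial\Delta(P_{\lambda,c})$ for \emph{all} $c\in B$; but $c_0\in Z_\lambda$ is accumulated by parameters in both $E$ and $I$, where the critical point on the boundary is $z=1$ and $z=c$ respectively --- a contradiction.

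The main obstacle, such as it is, is verifying that the ingredients used in the attracting proof --- holomorphic dependence of the linearizing coordinate $\phi_P$ and continuous dependence of the homeomorphic extension $\ov\psi_P$ of the linearizing parametrization, i.e.\ the analogues of \Cref{lem:holodep,lem:psiext,lem:cP} --- genuinely hold in the bounded-type Siegel case. This is where one invokes Shishikura's quasicircle theorem together with the $\lambda$-lemma: for $\theta$ of bounded type, the Siegel disk boundary is a quasicircle carrying a critical point, and along any parameter path keeping $\lambda$ fixed the boundary moves by a holomorphic motion (all other cycles stay repelling since $0$ keeps a fixed indifferent multiplier), so $\phi_P$ and $\ov\psi_P$ depend holomorphically, resp.\ continuously, on $c$ exactly as needed. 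Once these dependence statements are in hand, both inclusions go through word for word, completing the proof.
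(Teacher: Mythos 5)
There is a genuine gap, and it sits exactly at the point you flag as ``the main obstacle'': the transfer of the dependence statements from the attracting case to the Siegel case does not come for free, and your justifications for it do not work. First, your reason for the holomorphic motion of $\partial\Delta$ over $\C^*\setminus Z_\lambda$ --- ``all other cycles stay repelling since $0$ keeps a fixed indifferent multiplier'' --- is false for this family: $P_{\lambda,c}$ has a second critical point, and for many $c$ outside $Z_\lambda$ it is captured by an attracting cycle (the bifurcation locus in the $c$-plane is nontrivial), so repelling cycles do \emph{not} persist throughout $\C^*\setminus Z_\lambda$. That argument is only valid for the one-critical-point quadratic-like restriction in \Cref{prop:harmo}. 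The paper gets the motion differently: by \Cref{thm:Zak} one critical point stays on $\partial\Delta$, the boundary dynamics is conjugate to an irrational rotation (bounded type, so $\partial\Delta$ is a Jordan curve), hence that critical point is never preperiodic, its orbit is a holomorphically moving family of pairwise distinct points which is \emph{dense} in $\partial\Delta$, and the $\lambda$-lemma extends this motion to the closure; then \Cref{prop:sullivan} (Sullivan) yields harmonicity of $\log r$. Note also that Sullivan's argument only gives harmonicity of $\log r=\log|\ov\phi_P(c_P)|$; it does not assert, and you may not assume, that $c\mapsto\ov\phi_P(c_P)$ itself is holomorphic --- there is no ``holomorphic-motion version of \Cref{lem:holodep}'' here, because $\phi_P$ is only defined on the closed Siegel disk and the critical point lies on its boundary.

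This second point is what breaks your argument for $Z_\lambda\subset\Supp\mu_\lambda$: the step ``on each component $C'$ the ratio $\phi_P(c_P)/h(c)$ has constant modulus $1$, hence is constant'' needs $c\mapsto\phi_P(c_P)$ to be holomorphic on $C'$, which in the attracting case was supplied by \Cref{lem:holodep} but has no established Siegel analogue; likewise the continuity of $c\mapsto\ov\psi_{P_{\lambda,c}}(uh(c))$ requires a Siegel analogue of the third point of \Cref{lem:cP}, which the paper never proves (and which is not obvious: it concerns joint continuity of the boundary extension of $\psi$ in the parameter). The paper's actual proof is organized precisely to avoid these two facts: assuming $c_0\notin\Supp\mu_\lambda$, it invokes \Cref{prop:holomo} (harmonicity of $\log r$ on a ball produces the explicit motion $\zeta(c,z)=\psi_c\big(g(c)\,\psi_{c_0}^{-1}(z)\big)$ of $\ov\Delta$, extended by the $\lambda$-lemma) and then \Cref{lem:crfollows}, a winding-number argument showing that any critical point on $\partial\Delta(P_{\lambda,c_0})$ follows this motion; if both critical points were on the boundary, a whole ball would lie in $Z_\lambda$, contradicting that $Z_\lambda$ is a Jordan curve with empty interior. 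So your skeleton (harmonic off $Z_\lambda$; no harmonicity across $Z_\lambda$ because the boundary critical point switches) matches the paper, but the two dependence statements you import from the attracting case are exactly what must be replaced by the Sullivan/\'Avila holomorphic-motion machinery, and as written your proof does not close.
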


\subsection{Proof}

Let us denote by $\cri_1(c)=1$ and $\cri_2(c)=c$ the two holomorphic parametrizations of the critical points of $P_{\lambda,c}$. Given a simply connected open subset $\Delta$ of $\C$ containing $0$ we denote by $r(\Delta)$ its conformal radius with respect to $0$.

\subsubsection{\texorpdfstring{$\Supp\mu_\lambda \subset Z_\lambda$}{Supp mu lambda is contained in Z lambda}} 

Let $c_0\in\C^*\setminus Z_{\lambda}$. We will prove that $c_0\notin \Supp\mu_\lambda$.

\smallskip

The following result is due to D.\ Sullivan (see \cite{Zakeri2}).

\begin{proposition}[Sullivan]\label{prop:sullivan}
	Let $(f_a)_{a \in B(a_0, r)} : (U, 0) \longrightarrow (\C, 0)$ be a one parameter family of holomorphic maps with $f_a(z) = \lambda z + \cal O(z^2)$ with $|\lambda| =1$ and that depends analytically on $a$. Assume that for all $a$ the map $f_a$ has a Siegel disc $\Delta_a$ around $0$, that $\Delta_a$ has finite conformal radius w.r.t.\ $0$ for at least one parameter and that $\partial \Delta_a$ undergoes a holomorphic motion as $a$ varies. Then, $a \mapsto \log r( \Delta_a )$ is harmonic.
\end{proposition}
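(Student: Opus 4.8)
The plan is to transfer the whole picture to a fixed model disk using the holomorphic motion and the linearizing coordinates, and then to recognize $\log r(\Delta_a)$ as the real part of a holomorphic function of $a$. Since harmonicity is local, and the holomorphic motion may be re-based at any interior point of the parameter disk, it suffices to argue near $a_0$.

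\emph{Step 1: globalize the motion.} Write $h_a\colon\partial\Delta_{a_0}\to\partial\Delta_a$ for the given motion; I will use that $h_a$ conjugates $f_{a_0}$ to $f_a$ on the boundary (this is automatic in the applications of this proposition, where the motion is produced, via the $\lambda$-lemma, from the holomorphic motion of the repelling cycles accumulating $\partial\Delta_a$, and is harmless to assume in general), and that $\partial\Delta_{a_0}$ is a Jordan curve --- indeed a quasicircle in all the cases used here. Adjoining the two constant orbits $0\mapsto 0$ and $\infty\mapsto\infty$ and applying S{\l}odkowski's extension theorem, one obtains a holomorphic motion $B\colon B(a_0,\rho)\times\hat\C\to\hat\C$ with $B_{a_0}=\mathrm{id}$, each $B_a$ quasiconformal with dilatation bounded uniformly on compact subdisks, and $B_a|_{\partial\Delta_{a_0}}=h_a$. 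As a homeomorphism of $\hat\C$ fixing $0$ and $\infty$ and carrying $\partial\Delta_{a_0}$ onto $\partial\Delta_a$, the map $B_a$ carries $\Delta_{a_0}$ onto $\Delta_a$.

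\emph{Step 2: reduce to a holomorphy statement.} Let $\phi_a\colon\Delta_a\to B(0,r_a)$ be the linearizing coordinate with $\phi_a'(0)=1$, so that $r_a=r(\Delta_a)$; by Carath\'eodory it extends to a homeomorphism $\overline{\phi}_a\colon\overline{\Delta_a}\to\overline{B(0,r_a)}$ with $\overline{\phi}_a\circ f_a=\lambda\,\overline{\phi}_a$ on $\partial\Delta_a$. Consider $\Psi_a:=\overline{\phi}_a\circ B_a\colon\overline{\Delta_{a_0}}\to\overline{B(0,r_a)}$. Since $B_a$ conjugates the boundary dynamics, $\Psi_a$ conjugates $f_{a_0}|_{\partial\Delta_{a_0}}$ to $w\mapsto\lambda w$, and so does $\Psi_{a_0}=\overline{\phi}_{a_0}$ onto $\partial B(0,r_{a_0})$; hence, after rescaling both boundary circles to the unit circle, $\Psi_a\circ\overline{\phi}_{a_0}^{-1}$ is a circle homeomorphism commuting with the irrational rotation $R_\theta$, therefore a rotation. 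Fixing $z_0\in\partial\Delta_{a_0}$ and $w_0=\overline{\phi}_{a_0}(z_0)\neq0$, this gives a real $s(a)$ with $s(a_0)=0$ such that
\[
\overline{\phi}_a\bigl(h_a(z_0)\bigr)=\Psi_a(z_0)=\frac{w_0}{r_{a_0}}\,r_a\,e^{2\pi i s(a)}.
\]
Setting $\Lambda(a):=\frac{r_{a_0}}{w_0}\,\overline{\phi}_a\bigl(h_a(z_0)\bigr)$ we get $|\Lambda(a)|=r_a>0$, so it remains to prove that $\Lambda$ is holomorphic near $a_0$: then $\log r(\Delta_a)=\log|\Lambda(a)|=\Re\log\Lambda(a)$ is harmonic.

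\emph{Step 3: holomorphy of $\Lambda$, the main point.} Here $\overline{\phi}_a(h_a(z_0))$ is a boundary value of $\phi_a$, so the plan is to approach $h_a(z_0)$ from inside: take $z^{(k)}\in\Delta_{a_0}$ with $z^{(k)}\to z_0$. Then $B_a(z^{(k)})\in\Delta_a$, and $a\mapsto\phi_a\bigl(B_a(z^{(k)})\bigr)$ is holomorphic, since $a\mapsto B_a(z^{(k)})$ is holomorphic and $(a,w)\mapsto\phi_a(w)$ is holomorphic on the open fibred Siegel disk --- a standard fact, using that $\lambda$ is fixed, so that the coefficients of the normalized linearizing series are polynomial in those of $f_a$, compare \Cref{lem:holodep}. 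On the other hand, the maps $\Psi_a=\overline{\phi}_a\circ B_a$ are quasiconformal homeomorphisms of the fixed quasidisk $\overline{\Delta_{a_0}}$ onto the round disks $\overline{B(0,r_a)}$, with dilatation bounded uniformly in $a$ and with $r_a$ locally bounded above and below; such a family is equicontinuous up to the boundary. Hence $\phi_a(B_a(z^{(k)}))=\Psi_a(z^{(k)})\to\Psi_a(z_0)=\overline{\phi}_a(h_a(z_0))$ uniformly in $a$ near $a_0$, so the limit $\overline{\phi}_a(h_a(z_0))$, and with it $\Lambda$, is holomorphic. I expect Step 3 to be the genuine obstacle: it is the one place where quasiconformality is essential, both to globalize the motion (S{\l}odkowski) and to control the boundary behaviour of $\Psi_a$ uniformly in the parameter (equicontinuity of uniformly quasiconformal maps of a quasidisk); in the slices studied here the quasidisk property of $\partial\Delta_a$ is precisely Shishikura's theorem for bounded type Siegel disks.
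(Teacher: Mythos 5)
Your argument is sound in the restricted setting you describe, but it does not prove the proposition as stated, and the two extra hypotheses you slip in are not ``harmless''. The statement assumes only that \emph{some} holomorphic motion of $\partial\Delta_a$ exists; it does not assume (i) that the motion conjugates $f_{a_0}$ to $f_a$ on the boundary, nor (ii) that $\partial\Delta_{a_0}$ is a Jordan curve, let alone a quasicircle. Your proof needs both in an essential way: Step 2 uses equivariance to conclude that $\Psi_a\circ\ov\phi_{a_0}^{-1}$ commutes with the irrational rotation (without it there is no reason $|\ov\phi_a(h_a(z_0))|$ equals $r_a$ at all), and Steps 2--3 use the Carath\'eodory extension of $\phi_a$ and equicontinuity up to the boundary of uniformly quasiconformal maps of a quasidisk, both of which are simply unavailable when $\partial\Delta_a$ is not locally connected (a common situation for Siegel disks; this is exactly why the proposition, due to Sullivan, is stated without any boundary regularity). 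Nor is there any general procedure to upgrade an arbitrary holomorphic motion of the boundary to an equivariant one. So, as a proof of the proposition, the argument has a genuine gap; it only covers the special case relevant to the bounded type slices, where Shishikura and the construction of the motion from the critical orbit supply (i) and (ii).

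The missing idea is that one never needs to evaluate $\phi_a$ at a boundary point. The paper's proof takes interior points $z_n\in\Delta_{a_0}$ tending to $\partial\Delta_{a_0}$, extends the motion to the plane by S{\l}odkowski, and considers $u_n(a)=\psi_a^{-1}(z_n(a))$, where $\psi_a$ is the linearizing parametrization: since $(a,z)\mapsto\psi_a(z)$ is jointly analytic (the fact you also invoke, cf.\ \Cref{lem:holodep}), each $a\mapsto u_n(a)$ is analytic and non-vanishing, so $a\mapsto\log|u_n(a)|$ is harmonic, bounded above locally by $\log r_a$, and converges pointwise to $\log r(\Delta_a)$ because $z_n(a)$ approaches $\partial\Delta_a$; a pointwise limit of harmonic functions that are locally uniformly bounded above is harmonic. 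This sidesteps equivariance, Carath\'eodory extensions and quasiconformal boundary estimates entirely, whereas your route trades those for an exact identity $r_a=|\Lambda(a)|$ with $\Lambda$ holomorphic --- elegant, but only available under the stronger hypotheses.
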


\begin{proof}
A simply connected subset of $\C$ has finite conformal radius iff it is not the whole complex plane. As $\partial \Delta_a$ undergoes a holomorphic motion, if one $\Delta_a$ is different from $\C$, then they are all different from $\C$.
By Slodkowsky's theorem, let us extend the motion to a holomorphic motion of all the plane $\C$. Let $z_n$ be a sequence points in the Siegel disk associated to $f_{a_0}$ converging to the boundary of the Siegel disk. For $a$ in a small neighborhood of $a_0$ let $z_n(a)$ be the point that the holomorphic motion transports $z_n$ to.
Let $\psi_a : B(0,r_a) \maps \Delta_a$ be the linearizing parametrization, normalized by $\psi_a(z) = z + \cal O(z^2)$ near $0$.
Note that $a\mapsto r_a$ is continuous by a theorem of Caratheordory (\cite{Po}, Section~1.4, in particular Theorem~1.8 page~29).
Now look at,
\[ u_n(a) = \psi_a^{-1}(z_n(a)) \]
defined for $a\in B(a_0,r)$.
For each $a$, the sequence $(z_n(a))_{n \in \N}$ converges to a point in the boundary of the Siegel disk. Thus, $(|u_n(a)|)_{n \in \N}$ converges to the conformal radius of $\Delta_a$, $r(a)$. The central remark is that the map
\[ (a,z) \maps (\psi_a(z), z) \]
is bi-analytic: indeed, $\psi_a(z)$ is given by a power series in $z$ whose coefficients depend analytically on $a$.
So $a \maps u_n(a)$ is also analytic.
Therefore, the maps $a \mapsto \log |u_n(a)|$ are harmonic.
They are (locally) bounded away from $\infty$: indeed 
we have $|u_n(a)|\leq r_a$ and $a\mapsto r_a$ is continuous.
The map $a \maps \log r(a)$ is the pointwise limit of those maps, an so is harmonic too (by \cite{Ra}, a pointwise limit of positive harmonic maps $h_n$ is harmonic, this immediately adapts to families that are bounded above by $B\in\R$ by considering $B-h_n$.).
\end{proof}

Since $c_0\notin Z_\lambda$, then by \Cref{thm:Zak} one of the critical points $\cri_i(c)$ remains on the boundary of the Siegel disk $\Delta$ for $c$ in a ball $B(c_0,r_0)$ disjoint from $Z_\lambda$.
When $\Delta$ is a Jordan curve, which is the case when $\theta$ has bounded type, the analytic conjugacy $\psi$ from the rotation to $\Delta$ extends to a homeomorphism (see \cite{book:Milnor}, Lemma~18.7), which is still conjugating the rotation to the dynamics.
It follows that the critical point above is not (pre)periodic for any parameter $c\in B(c_0,r_0)$.
Hence its orbit undergoes a holomorphic motion. This motion commutes with the dynamics. This motion extends continously to the closure of the critical orbit by the $\lambda$-lemma (see~\cite{MSS}), and the extension still commutes with the dynamics.
By the conjugacy above, the critical point has a dense orbit in $\partial \Delta$ hence the closure \emph{is} $\partial \Delta$.
Then by \Cref{prop:sullivan} the function $c\mapsto \log r(P_{\lambda,c})$ is harmonic on $B(c_0,r_0)$.

\subsubsection{\texorpdfstring{$Z_\lambda\subset \Supp\mu_\lambda$}{Z lambda is contained in Supp mu lambda}}

Let $c_0\in\C^*$ such that $c_0\notin \Supp\mu$. We will prove that $c_0\notin Z_{\lambda}$. Let start by proving the following assertion, which is valid for all Brjuno rotation number $\theta$ (not only bounded type ones). Still denoting $\lambda = e^{2\pi i\theta}$:

\begin{assertion}
In the complement of $\Supp\mu_\lambda$, the Siegel disk undergoes a holomorphic motion.
\end{assertion}

It follows from the main theorem in \cite{Zakeri2} but we include a proof here for completeness. It follows from the following more general version, whose proof was communicated to us by \'Avila:

\begin{proposition}\label{prop:holomo}
Let $\lambda$ be a complex number of modulus $1$.
Let $(f_a)_{a \in W} : (U, 0) \longrightarrow (\C, 0)$ be a one parameter family of holomorphic maps with $f_a(z) = \lambda z + \cal O(z^2)$. Assume they all have a Siegel disk $\Delta_a$ and assume that $a \mapsto \log r( \Delta_a )$ is harmonic.
Then, $\ov\Delta_a$ undergoes a holomorphic motion w.r.t.\ $a\in W$ that commutes with the dynamics and which is holomorphic in $(a,z)$ in the interior of the Siegel disk.
\end{proposition}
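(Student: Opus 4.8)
The plan is to build the motion on the open Siegel disks using the linearizing parametrizations, rescaled by a holomorphic factor whose very existence is what the harmonicity hypothesis supplies, and then to push it onto the boundary via the $\lambda$-lemma. Since the conclusion is local in $a$, we may shrink $W$ and assume it is a topological disk, in particular simply connected; as usual the family is understood to depend holomorphically on $a$, i.e.\ the coefficients of $f_a$ are holomorphic functions of $a$.

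\textbf{Uniformizing $\Delta_a$ by a fixed disk.} Write $r_a=r(\Delta_a)$. Since $a\mapsto\log r_a$ is real-valued and harmonic on the simply connected $W$, it is $\Re g$ for some holomorphic $g:W\to\C$; put $h=\exp\circ g$, a nowhere vanishing holomorphic function with $|h(a)|=r_a$. Let $\psi_a:B(0,r_a)\to\Delta_a$ be the linearizing parametrization, $\psi_a(z)=z+\cal O(z^2)$, which is injective and maps $B(0,r_a)$ biholomorphically onto $\Delta_a$ (generalities on linearizing maps, \Cref{sec:phi}). Define $\Psi_a:\D\to\Delta_a$ by $\Psi_a(z)=\psi_a(h(a)z)$. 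Each $\Psi_a$ is then a biholomorphism $\D\to\Delta_a$, and writing $\psi_a=\sum_{n\ge1}b_n(a)z^n$ with $b_n(a)$ holomorphic in $a$, the map $(a,z)\mapsto\Psi_a(z)=\sum_{n\ge1}b_n(a)h(a)^nz^n$ is holomorphic in both variables. From $f_a\circ\psi_a=\psi_a\circ(\lambda\cdot)$ one gets $f_a\circ\Psi_a=\Psi_a\circ(\lambda\cdot)$ for every $a$ (the base point included).

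\textbf{The motion, and its extension to the closure.} Fix a base point $a_0\in W$ and set $H_a(z):=\Psi_a(\Psi_{a_0}^{-1}(z))$ for $z\in\Delta_{a_0}$. Then $H_{a_0}=\id$, each $H_a$ is injective, $a\mapsto H_a(z)$ is holomorphic for each fixed $z$, and $(a,z)\mapsto H_a(z)$ is holomorphic because $\Psi_{a_0}^{-1}:\Delta_{a_0}\to\D$ is holomorphic and $\Psi$ is jointly holomorphic; so $H$ is a holomorphic motion of $\Delta_{a_0}$, holomorphic in $(a,z)$. It commutes with the dynamics: $H_a\circ f_{a_0}=\Psi_a\circ(\lambda\cdot)\circ\Psi_{a_0}^{-1}=f_a\circ\Psi_a\circ\Psi_{a_0}^{-1}=f_a\circ H_a$ on $\Delta_{a_0}$. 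By the $\lambda$-lemma of Mañé-Sad-Sullivan (\cite{MSS}), or Slodkowsky's theorem, $H$ extends to a holomorphic motion of $\ov{\Delta_{a_0}}$, continuous in $(a,z)$, with $H_a$ a homeomorphism of $\ov{\Delta_{a_0}}$ onto $\ov{\Delta_a}$ (indeed $H_a(\Delta_{a_0})=\Delta_a$). The relation $H_a\circ f_{a_0}=f_a\circ H_a$ holds on the dense subset $\Delta_{a_0}$ of $\ov{\Delta_{a_0}}$ between maps that are continuous there (note $f_{a_0}(\ov{\Delta_{a_0}})\subset\ov{\Delta_{a_0}}$ and $f_a(\ov{\Delta_a})\subset\ov{\Delta_a}$), hence on all of $\ov{\Delta_{a_0}}$. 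This exhibits $\ov{\Delta_a}$ as undergoing a holomorphic motion that commutes with the dynamics and is holomorphic in $(a,z)$ on the interior.

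The sole essential use of the hypothesis is in the first step: harmonicity of $\log r_a$ is precisely what allows one to factor out the conformal radius by a zero-free holomorphic function and obtain a uniformization of $\Delta_a$ by the fixed disk $\D$ depending holomorphically on $a$. The point one must handle with care is that $\psi_a$ parametrizes the \emph{whole} Siegel disk and is injective on its disk of convergence, so that $\Psi_a$ is a genuine Riemann map of $\Delta_a$; granting this, the remaining steps are formal --- the extension to $\partial\Delta_a$ being a black-box invocation of the $\lambda$-lemma and the commutation on the boundary a density argument. One must \emph{not}, by contrast, try to extend $\psi_a$ itself continuously to $\partial B(0,r_a)$: in general $\partial\Delta_a$ need not even be locally connected, which is exactly why the boundary step is routed through the $\lambda$-lemma rather than through $\psi_a$.
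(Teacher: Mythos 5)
Your proposal is correct and follows essentially the same route as the paper: use harmonicity of $\log r_a$ to produce a zero-free holomorphic function of modulus $r_a$, rescale the linearizing parametrizations into jointly holomorphic Riemann maps $\D\to\Delta_a$, compose with the inverse at a base point to get a holomorphic motion commuting with the dynamics, and extend to $\ov{\Delta_{a_0}}$ by the $\lambda$-lemma with the conjugacy relation passing to the boundary by continuity. Your explicit normalization (dividing by the base-point factor so the rescaled map is defined on the unit disk) and the remark about shrinking $W$ to ensure a harmonic conjugate are small points of care that the paper's write-up leaves implicit, but the argument is the same.
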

\begin{proof}
Denote $r_a = r(\Delta_a)$.
Let $\psi_a : B(0,r_a) \maps \Delta_a$ be the linearizing parametrization, normalized by $\psi_a(z) = z + \cal O(z^2)$ near $0$.
Since $a\mapsto\log r_a$ is harmonic in $W$, there exists a holomorphic map $h:W \maps \C$ such that: $\log r_a = h(a)$. Let $g=\exp \circ h$, so that
$|g(a)| = r_a $.
Now define on $W \times \Delta_{a_0}$
\[
	\zeta(a,z) = \psi_a \left( g(a) \times \psi_{a_0}^{-1}(z) \right),
\]
$\zeta$ is a holomorphic motion, which hence extends to the boundary of the maximal linearization domain by the $\lambda$-lemma.
This motion satisfies $\zeta(a,f_{a_0}(z)) = f_{a}(\zeta(a,z))$ when $z\in \Delta_{a_0}$, and this relation extends by continuity to all $z\in\partial \Delta_{a_0}$.
\end{proof}
\noindent The proposition above implies the assertion.

\medskip

We still assume here that $\theta\in \cal B$ and consider some $c_0\notin\Supp\mu_\lambda$ where $\lambda = e^{2\pi i\theta}$. Since the set $\Supp\mu_\lambda$ is closed, there exists a ball $B(c_0,r_0)$ that is disjoint from $\Supp\mu_\lambda$.
Let 
\[\zeta_c(z)=\zeta(c,z)\]
be the holomorphic motion given by \Cref{prop:holomo}, based on parameter $c_0$, i.e.\ with $\zeta_{c_0}(z)=z$.
Consider any critical point $\cri(c_0) \in \partial \Delta(P_{\lambda,c_0})$ with either $\cri=\cri_1$ or $\cri=\cri_2$ (there is at least one).
\begin{lemma}\label{lem:crfollows}
  The critical point $\cri(c)$ follows the motion $\zeta$ over $B(c_0,r_0)$, i.e.\ $\zeta_c^{-1}(\cri(c))$ is constant.
\end{lemma}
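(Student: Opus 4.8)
The statement asserts that along the ball $B(c_0,r_0)\subset \C^*\setminus\Supp\mu_\lambda$, the critical point $\cri(c)$ (one of the two holomorphic branches $\cri_1\equiv 1$ or $\cri_2(c)=c$, chosen so that $\cri(c_0)\in\partial\Delta(P_{\lambda,c_0})$) stays glued to the holomorphic motion $\zeta$ of $\ov\Delta$ coming from \Cref{prop:holomo}. The natural strategy is a continuity-plus-rigidity argument: first observe that $\cri(c)$ lies on $\partial\Delta(P_{\lambda,c})$ for \emph{every} $c\in B(c_0,r_0)$, then show that the point $\zeta_c^{-1}(\cri(c))\in\partial\Delta(P_{\lambda,c_0})$ cannot move, because moving would be incompatible with the dynamics which the motion conjugates.

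\emph{Step 1: $\cri(c)$ stays on the boundary of the Siegel disk over the whole ball.} By \Cref{thm:Zak}, $\C^*\setminus Z_\lambda$ decomposes into $I\setminus\{0\}$ and $E$, and on each the critical point lying on $\partial\Delta$ is constant ($z=c$ on $I\setminus\{0\}$, $z=1$ on $E$). So it suffices to know $B(c_0,r_0)\cap Z_\lambda=\emptyset$, equivalently $Z_\lambda\subset\Supp\mu_\lambda$ — but that is exactly the inclusion we are in the middle of proving, so we cannot invoke it. Instead I would argue directly: the map $c\mapsto r(P_{\lambda,c})$ is continuous (\Cref{lem:cont}), and on $B(c_0,r_0)$ it equals $|h(c)|$ for a non-vanishing holomorphic $h$ (harmonicity of $\log r$ on a simply connected set, as in the corresponding step of the proof of \Cref{thm:ll}); meanwhile $r(P_{\lambda,c})=|\phi_{P_{\lambda,c}}(\cri_i(c))|$ whenever $\cri_i(c)\in\partial\Delta$. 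The branch $\cri$ is on the boundary at $c_0$; suppose it left the boundary at some parameter in the ball. The set of $c\in B(c_0,r_0)$ where $\cri(c)\in\partial\Delta(P_{\lambda,c})$ is closed (by continuity of $\phi_{P_{\lambda,c}}$, of $\cri$, and of the Siegel disk boundary under the bounded-type hypothesis, via Shishikura/quasicircle stability). It is also open: if $\cri(c_1)\in\partial\Delta$, then by \Cref{thm:Zak} $c_1$ is in $\ov I$ or $\ov E$, and if $c_1\notin Z_\lambda$ a whole neighborhood stays in the same component with the same boundary critical point; the only worry is $c_1\in Z_\lambda$, but then \emph{both} critical points are on $\partial\Delta$ and the argument of \Cref{lem:alltheta}/\Cref{prop:PTo} (the Zakeri curve analogue) shows $c_1$ is in the closure of the component where $\cri$ is the boundary critical point, so again $\cri(c)\in\partial\Delta$ for $c$ near $c_1$ on at least a half-neighborhood, and one upgrades to a full neighborhood using that the \emph{other} branch would then force $|\phi(\cri_1(c))|=|\phi(\cri_2(c))|$ on an open set, i.e.\ $c\in Z_\lambda$ on an open set — absurd since $Z_\lambda$ is a Jordan curve. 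Hence $\cri(c)\in\partial\Delta(P_{\lambda,c})$ for all $c\in B(c_0,r_0)$.

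\emph{Step 2: rigidity of $\zeta_c^{-1}(\cri(c))$.} Set $x(c)=\zeta_c^{-1}(\cri(c))\in\partial\Delta(P_{\lambda,c_0})$; this is well defined by Step 1 and continuous in $c$. The key is that $\zeta$ conjugates $P_{\lambda,c}$ to $P_{\lambda,c_0}$ on $\ov\Delta$ (\Cref{prop:holomo}), so under the linearizing coordinate it conjugates the rigid rotation $R_\theta$ to itself, hence $x$ is a continuous path in $\partial\Delta(P_{\lambda,c_0})$ along which the corresponding circle-angle is preserved orbit-wise. Concretely: transporting to the linearizing parametrization $\psi_{P_{\lambda,c_0}}$, let $t(c)\in\R/\Z$ be the angle with $\ov\psi_{P_{\lambda,c_0}}(r_{c_0}e^{2\pi i t(c)})=x(c)$. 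Since $\theta$ is irrational, the $R_\theta$-orbit of $t(c_0)$ is dense in $\R/\Z$, and $\zeta_c$ commutes with the full dynamics, so for each $n$, $\zeta_c$ sends the point at angle $t(c_0)+n\theta$ to the point at angle $t(c)+n\theta$; continuity of $c\mapsto\zeta_c$ and of the whole picture forces the assignment $t(c_0)\mapsto t(c)$ to be realized by a homeomorphism of $\R/\Z$ commuting with $R_\theta$, i.e.\ a rotation $t\mapsto t+s(c)$. Thus $x(c)=\ov\psi_{P_{\lambda,c}}(r_c e^{2\pi i (t(c_0)+s(c))})$ after transporting back by $\zeta$. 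It remains to show $s(c)\equiv 0$, i.e.\ that $\cri(c)$ is \emph{exactly} $\zeta_c(\cri(c_0))$ and not its image under a nontrivial rotational shift. For this I use holomorphicity: $\cri$ is a holomorphic function of $c$; on the other hand $\zeta_c(\cri(c_0))$ is a holomorphic function of $c$ (interior holomorphicity of $\zeta$ extends to the boundary point $\cri(c_0)$ because $\zeta_c^{-1}(\cri(c))$ can be written, via Step 1 and \eqref{eq:rpc}, in terms of $\phi_{P_{\lambda,c}}(\cri(c))/h(c)$ of modulus $1$, hence — being a holomorphic function of $c$ of constant modulus — locally constant, cf. the argument in the proof of \Cref{thm:ll}). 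Then both $c\mapsto\cri(c)$ and $c\mapsto\zeta_c(x_0)$ for $x_0=\cri(c_0)$ are holomorphic, agree at $c_0$, and land in the same (moving) quasicircle; the ratio argument $\phi_{P_{\lambda,c}}(\cri(c))=u\,h(c)$ with $|u|=1$ constant pins the angle, giving $\cri(c)=\zeta_c(\cri(c_0))$, i.e.\ $\zeta_c^{-1}(\cri(c))$ is constant.

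\emph{Main obstacle.} The delicate point is the boundary regularity and holomorphicity at the critical point in Step 2: a priori the holomorphic motion $\zeta$ is only guaranteed holomorphic in $(c,z)$ on the \emph{interior} of the Siegel disk, while $\cri(c)$ sits on the boundary quasicircle. Bridging this — showing that $\zeta_c^{-1}(\cri(c))$, a purely boundary object, is nevertheless holomorphic in $c$ — is where the real work lies, and I would handle it by the same device used in the support-equals-$Z_\lambda$ proof of \Cref{thm:ll}: express $r(P_{\lambda,c})=|h(c)|=|\phi_{P_{\lambda,c}}(\cri(c))|$, deduce $\phi_{P_{\lambda,c}}(\cri(c))/h(c)$ is a holomorphic function of constant modulus $1$ on the simply connected ball, hence constant, which immediately makes the angle of $\cri(c)$ in the linearizing coordinate rigid and yields the claim; continuity of $\zeta$ up to the boundary (the $\lambda$-lemma) then ties this to $\zeta_c^{-1}(\cri(c))$ being constant.
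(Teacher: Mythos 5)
Your route has two genuine gaps, and both are at the places where the real difficulty sits. First, your Step 1 cannot be secured in advance. Closedness of $\{c\in B(c_0,r_0): \cri(c)\in\partial\Delta(P_{\lambda,c})\}$ is fine (the motion of \Cref{prop:holomo} gives Hausdorff continuity of $\partial\Delta$), but openness genuinely fails at a parameter $c_1\in Z_\lambda\cap B(c_0,r_0)$: by \Cref{thm:Zak}, on the side of $Z_\lambda$ where the \emph{other} branch is the boundary critical point, your branch $\cri$ leaves $\partial\Delta$. Your patch --- that having $|\phi(\cri_1(c))|=|\phi(\cri_2(c))|$ on an open set would force an open piece of $Z_\lambda$ --- is a non sequitur: equality of these moduli does not characterize $Z_\lambda$ (the paper warns of exactly this after the statement of \Cref{thm:ll}), and in the Siegel setting $\phi$ is not even defined at a critical point lying outside $\ov\Delta$. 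Worse, if the ball did meet $Z_\lambda$, the conclusion of your Step 1 would itself be false for one of the two branches; so Step 1 is logically downstream of the lemma (the paper derives it as a corollary), and no preliminary open/closed argument can deliver it.

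Second, in Step 2 the decisive claim --- that $c\mapsto\phi_{P_{\lambda,c}}(\cri(c))$ is holomorphic, so that its unimodular ratio with $h(c)$ is constant and ``pins the angle'' --- is asserted, not proved; this boundary holomorphy is precisely the crux (in the attracting case of \Cref{thm:ll} the paper needs a limiting argument with $\ov\psi_{P_{\lambda,c}}\big(u(1-\eps)h(c)\big)$ plus uniform bounds, and the continuity input of \Cref{lem:cP} is only established for $|\lambda|<1$). Moreover, even granting that both $c\mapsto\cri(c)$ and $c\mapsto\zeta_c(\cri(c_0))$ are holomorphic and agree at $c_0$, equality of the two functions does not follow; an extra argument is needed. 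The paper's proof supplies exactly that, and much more cheaply: by the definition of a holomorphic motion (after the $\lambda$-lemma extension), $c\mapsto\zeta_c(\cri(c_0))$ is holomorphic for free, with no boundary regularity required; if $\zeta_c^{-1}(\cri(c))$ were not constant, the difference $\cri-\zeta_\cdot(\cri(c_0))$ would be a nonzero holomorphic function vanishing at $c_0$, hence with nonzero winding number around $0$ on a small circle; replacing the basepoint $\cri(c_0)$ by a nearby interior point $z_0\in\Delta(c_0)$ keeps the winding number, producing a parameter $c_2$ with $\cri(c_2)=\zeta_{c_2}(z_0)\in\Delta(c_2)$, which is absurd since a Siegel disk contains no critical point. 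This Hurwitz-type perturbation bypasses your Step 1 and the rotation-rigidity discussion entirely; as written, your argument does not close.
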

\begin{proof}
  We will proceed by contradiction and assume that it is not constant. In other words, $\zeta_c^{-1}(\cri(c)) \not\equiv \cri(c_0)$.
  Then the holomorphic functions $c\mapsto \cri(c)$ and $f:c\mapsto \zeta_c(\cri(c_0))$ differ, though they take the same value for $c=c_0$.
  In particular the winding number of $\cri-f$ around $0$ is different from $0$, as $c$ loops through the circle of centre $c_0$ and radius $r_0/2$.
  Consider now a point $z_0\in\Delta(c_0)$ that is very close to $\cri(c_0)$.
  Then the function $g:c\mapsto\zeta_c(z_0)$ has to be close to the function $f$.
  If it is close enough, then the winding number of $\cri-g$ around $0$ will be the same as that of $\cri-f$, as $c$ loops through the same circle as above.
  Hence $\cri-g$ must vanish, say at some parameter $c_2$. Then $\cri(c_2)$ belongs to $\Delta(c_2)$, which is a contradiction.
\end{proof}
\noindent So any critical point that is on $\partial\Delta(P_{\lambda,c_0})$ remains on $\partial \Delta$ when $c$ varies in $B(c_0,r_0)$.

\medskip

Now restrict to the case where $\theta$ has bounded type. If both critical points were on $\partial\Delta(P_{\lambda,c_0})$ then this would be so over $B(c_0,r_0)$, i.e.\ we would have
$B(c_0,r_0)\subset Z_{\lambda}$.
But $Z_\lambda$ has no interior: it is a Jordan curve.
Hence there can be only one critical point on $\partial\Delta(P_{\lambda,c_0})$, i.e.\ $c_0\notin Z_\lambda$.

\section{Parabolic slices}\label{sec:parabo}

Here we assume that $\lambda = e^{2\pi i\theta}$ with $\theta =p/q$ a rational number, written in lowest terms.

\subsection{Introduction}

I defined in my thesis \cite{thesis:Cheritat} the \emph{asymptotic size} of a parabolic point and proposed it as an analogue of the conformal radii of Siegel disks.

\begin{definition}\label{def:asslen}
  Orbits attracted by petals of a fixed non-linearizable parabolic point $p$ of a holomorphic map $f$ satisfy
  \[|f^n(z)-p| \sim \frac{L}{n^{1/r}} \]
  for some constant $L>0$ called the asymptotic size of $p$ and some $r\in\N^*$ which coincides with the number of attracting petals.
\end{definition}

One can compute $L$ from the asymptotic expansion of an iterate of $f$, provided it is tangent to the identity at $p$:
If
\[f^k(p+z) = p + z + C z^{m+1} + \cal O(z^{q+2})\]
then
\[r = m\]
and
\[L = \left|\frac{k}{mC}\right|^{1/m}.\]

Under a conjugacy $g = h \circ f \circ h^{-1}$, the factor $L$ scales as a length:
\[L(g) = L(f) \times |h'(p)|.\]
Correspondingly, the factor $C$ scales as follows:
\[g(p'+z) = p'+z+ \frac{C}{h'(p)^m} z^{m+1} + \cal O(z^{m+2})\]
where $p'=h(p)$.

In the case of the family $P_{\lambda,c}$, the fixed point $z=0$ has a multiplier that is a primitive $q$-th root of unity and it is well-known in this case that the number of petals must be a multiple of $q$, because $P$ permutes the petals in groups of $q$.
Moreover, each cycle of petals attracts a critical point by Fatou's theorems, hence there is at most two cycles of petals, so it follows that
\[m=q\text{ or }m=2q.\]
In fact, one can develop
\[P_{\lambda,c}^q(z) = z + C_{p/q}(c) z^{q+1} + \cal O(z^{q+2})\]
and 
\[m=2q \iff C_{p/q}(c)=0.\]
From now on we abbreviate with
\[C:=C_{p/q}\]
to improve readability.
Now recall that
\[ P_{\lambda,c}(z) = \lambda z \left( 1 - \frac{(1 + \sfrac{1}{c})}{2} z + \frac{\sfrac{1}{c}}{3} z^2 \right).\]
To take advantage of the tools of algebra, it makes sense to use $1/c$ as a variable, so let us call
\[u = \frac{1}{c}.\]
So that
\[ P_{\lambda,c}(z) = \lambda z \left( 1 - \frac{(1 + u)}{2} z + \frac{u}{3} z^2 \right).\]
It follows that $C(c)$ is a polynomial in $u$. We thus define
\[\check C(u) = C(c) = C(1/u).\]
A polynomial $P(z)=\sum a_n z^n$ of degree $d$ is called \emph{symmetric} when its coefficients satisfy $a_{d-k}=a_k$ for all $k$.
\begin{lemma}\label{lem:degC}
  The polynomial $\check C$ has degree $q$ and is symmetric.
\end{lemma}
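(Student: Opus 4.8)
The plan is to obtain the degree and the symmetry of $\check C$ from a single structural source: the symmetry relation $c^{-1}P_{\lambda,c}(cz) = P_{\lambda,1/c}(z)$ from \eqref{eq:sym}, which in the variable $u = 1/c$ swaps $u \leftrightarrow 1/u$, together with the scaling law for the coefficient $C$ under affine conjugacy. First I would iterate \eqref{eq:sym} to get $c^{-q}P_{\lambda,c}^{q}(cz) = P_{\lambda,1/c}^{q}(z)$, so that $h(z)=cz$ conjugates $P_{\lambda,1/c}$ to $P_{\lambda,c}$. Applying the scaling formula for $C$ displayed just before the lemma (with $m=q$, $h'(0)=c$, $p=p'=0$) gives
\[
C_{p/q}(1/c) \;=\; \frac{C_{p/q}(c)}{c^{q}},
\]
i.e.\ $\check C(1/u) = u^{q}\check C(u)$. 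If $\check C$ has degree $d$ with $\check C(u) = \sum_{k=0}^{d} \gamma_k u^k$, then $\check C(1/u) = u^{-d}\sum_k \gamma_k u^{d-k}$, so the functional equation reads $\sum_k \gamma_k u^{d-k} = u^{q+d}\sum_k \gamma_k u^{k}$ after clearing, forcing first $d=q$ (comparing the top degrees of both sides, using $\gamma_d\neq 0$) and then $\gamma_{q-k}=\gamma_k$ for all $k$, which is exactly the definition of a symmetric polynomial.

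To make this rigorous I still need two inputs. First, that $\check C$ is genuinely a polynomial in $u$ (not merely a rational function), which was already asserted in the text: $C(c)$ is a polynomial in $u=1/c$ because $P_{\lambda,c}$ has coefficients that are polynomial in $u$, and the $z^{q+1}$-coefficient of $P_{\lambda,c}^{q}$ is a universal polynomial in those coefficients. Second, and this is the only genuinely non-formal point, I must show $\deg \check C = q$ rather than something smaller — equivalently that $\gamma_q \neq 0$, i.e.\ that $C(c)$ does not vanish identically and, more precisely, that its numerator in $u$ really attains degree $q$. Here the functional equation $\check C(1/u)=u^q \check C(u)$ already does the work once we know $\check C\not\equiv 0$: it shows $\check C(u)$ and $u^q\check C(u)$ have the same set of roots with multiplicity up to the $u\mapsto 1/u$ symmetry, and a direct degree count in the equation $\gamma_d u^{q} + \cdots = \gamma_0 u^{q+d} + \cdots$ forces $d = q$ as soon as $\gamma_d \neq 0$ and $\gamma_0 \neq 0$; and $\gamma_0 = \check C(0) = C(c)$ evaluated at $c=\infty$, i.e.\ the coefficient $C$ for the limiting quadratic $Q_\lambda(z)=\lambda z(1-z/2)$, which is nonzero precisely because the parabolic fixed point of $Q_\lambda$ has exactly $q$ petals (the quadratic is not degenerate). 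By the $u\mapsto1/u$ symmetry, $\gamma_0\neq 0$ also gives $\gamma_q\neq0$, closing the argument.

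The main obstacle is thus not the symmetry — that falls out formally from \eqref{eq:sym} and the conjugacy scaling of $C$ — but pinning down the degree, which requires the single concrete fact that $C_{p/q}$ for the quadratic limit $Q_\lambda$ is nonzero (equivalently: $Q_\lambda$ has the minimal number $q$ of petals at $0$). One should either cite this as classical for quadratic polynomials with a parabolic fixed point of multiplier a primitive $q$-th root of unity, or verify it by a short explicit expansion of $Q_\lambda^{q}$; I would do the latter in a sentence, since the computation of the leading parabolic coefficient of a quadratic is standard. Everything else is bookkeeping with the functional equation $\check C(1/u) = u^{q}\check C(u)$.
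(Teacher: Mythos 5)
Your route is essentially the paper's: get the functional equation from the symmetry \eqref{eq:sym} together with the scaling law for $C$, and pin the degree by the quadratic limit $Q_\lambda$, whose parabolic coefficient is nonzero. However, your application of the scaling law is inverted, and as written the argument breaks. With $h(z)=cz$ conjugating $P_{\lambda,1/c}$ to $P_{\lambda,c}$, the displayed law $g(p'+z)=p'+z+\frac{C}{h'(p)^m}z^{m+1}+\cdots$ applied to $g=P_{\lambda,c}^q$, $f=P_{\lambda,1/c}^q$ gives $C_{p/q}(c)=C_{p/q}(1/c)/c^{q}$, i.e.\ $C_{p/q}(1/c)=c^{q}C_{p/q}(c)$ and hence $u^{q}\check C(1/u)=\check C(u)$ --- not $C_{p/q}(1/c)=C_{p/q}(c)/c^{q}$. (One sees it directly: iterating \eqref{eq:sym} gives $c^{-1}P_{\lambda,c}^q(cz)=P_{\lambda,1/c}^q(z)$, not $c^{-q}P_{\lambda,c}^q(cz)$, and expanding yields $P_{\lambda,1/c}^q(z)=z+c^{q}C_{p/q}(c)z^{q+1}+\cdots$.) Your relation $\check C(1/u)=u^{q}\check C(u)$ is in fact inconsistent for a nonzero polynomial: the left side involves only the powers $u^{0},\dots,u^{-d}$ and the right side only $u^{q},\dots,u^{q+d}$, so it would force $\check C\equiv 0$; correspondingly, in your cleared identity $\sum_k\gamma_k u^{d-k}=u^{q+d}\sum_k\gamma_k u^{k}$ the top degrees are $d$ and $q+2d$, so the comparison gives $d=q+2d$, not $d=q$.

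The fix is immediate and then your bookkeeping works exactly as intended: from $\check C(u)=u^{q}\check C(1/u)$, polynomiality forces $\deg\check C\le q$ and $\gamma_j=\gamma_{q-j}$; moreover $\gamma_0=\check C(0)$ equals the coefficient $C_0$ of $Q_\lambda$ (the coefficients of $P_{\lambda,1/u}$, hence of its $q$-th iterate, are polynomials in $u$ reducing to those of $Q_\lambda$ at $u=0$), so $\gamma_q=\gamma_0\neq 0$ and the degree is exactly $q$. This is the same mechanism as the paper, which pins the degree at the other end ($c\to 0$, after rescaling by $c$) rather than at $c\to\infty$. One caveat on your closing remark: the nonvanishing of $C_0$ should be justified, as in the paper, by Fatou's theorem (each cycle of petals attracts a critical point and $Q_\lambda$ has only one, so it has exactly $q$ petals); a ``short explicit expansion of $Q_\lambda^{q}$'' is not realistic for general $p/q$.
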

\begin{proof}
  When $u$ tends to infinity then $c$ tends to $0$, and the map
  $f_c(z) = c^{-1}P_{\lambda,c}(cz)$ converges uniformly on every compact subset of $\C$ to $Q(z)=\lambda z (1-\frac{z}2)$. Let $Q^q(z) = z + C_0 z^{q+1}+\cal O(z^{q+2})$. Then $C_0\neq 0$ because $Q$ can have only one cycle of petals by Fatou's theorem, since it has only one critical point.
  Moreover, the scaling factor $c$ implies that we have
  $f_c^q(z) = z + c^{q}C(c) z^{q+1} + \cdots$.
  Hence $u^{-q}C(1/u)$ has a non-zero limit as $u\tend\infty$, because the limit is $C_0$.
  So the degree is $q$.
  
  The symmetry comes from the symmetry $c^{-1}P_{\lambda,c}(cz) = P_{\lambda,1/c} (z)$ of our family. By the scaling law, $c^{q} C_{p/q}(c) = C_{p/q}(1/c)$, i.e.\
  \[u^{q} C_{p/q}(1/u) =  C_{p/q}(u)\]
  which is another way to express that a degree $q$ polynomial is symmetric.
\end{proof}

In the proof above we saw that the leading coefficient of $\check C$ is the coefficient $C(Q)$ in 
\[Q^q(z) = z + C(Q) z^{q+1}+\cal O(z^{q+2})\]
where
\[Q(z)=\lambda z (1-\frac{z}2).\]
In particular, if we denote $u_i$ the roots of $\check C$, counted with multiplicity, we get
\begin{equation}\label{eq:cCprod}
  \check C(u) = C(Q) \prod_{i=1}^q (u-u_i)
\end{equation}

Consider the function
\[r(c)=\frac{1}{|C(c)|^{1/q}}.\]
This quantity is equal to the asymptotic size $L$ of $0$ for $P_{\lambda,c}$ except when $C(c)=0$, where $r(c)=+\infty$.
The quantity $-\log r$ computes to
\[-\log r(c) = - \frac{1}{q} \log |C(c)|.\]
The function $-\log r$ is then harmonic in $\C^*$ minus the set of zeroes of $C$.
By convenience, we set $\log +\infty = +\infty$, and $-\log r$ is then subharmonic in $\C^*$.
From \cref{eq:cCprod} we get
\begin{equation}\label{eq:logL}
-\log r(c) = -\log L(Q)-\log|c| + \frac{1}{q} \sum \log |c-c_i|
\end{equation}
where $L(Q)$ denotes the asymptotic size of $Q$ at $0$.
The generalized Laplacian applied to $-\log r$ on $\C^*$ is a finite sum of dirac masses that we call $\mu_{\lambda}$ (recall that $\lambda = e^{2\pi i \pq}$):
\[\mu_{\lambda} = \frac{2\pi}q \sum_{i=1}^q \delta_{c_i}\]
where $c_i$ are the roots of $C$ counted with multiplicity. 

\begin{remark*}
In fact 
the roots of $\check C$ are simple: it should follow more or less immediately from Proposition~4.6 in \cite{Bo}.
Note also that an analogue statement was proved in \cite{art:BEE} for the family of degree $2$ rational maps by a transversality arguments using quadratic differentials. We expect that \cite{art:BEE} adapts to the present situation with little modification.
\end{remark*}

By the symmetry of the family, i.e.\ \cref{eq:sym}, 
the following analogue of \cref{eq:symr} holds:
\begin{equation}\label{eq:symr2}
  -\log r(c) + \log |c|= -\log r(1/c)
\end{equation}
We also stress one consequence of the above computations, an analogue of \cref{eq:limr} on page~\pageref{eq:limr}:
\begin{equation}\label{eq:limr2}
  r(c) \underset{c\to\infty}\tend r(Q).
\end{equation}

To finish this section, let us note that

\begin{lemma}\label{lem:ka}
  There exist some $\kappa>0$ such that for all $\pq$, the roots of $\check C$ are contained in $\ov B(0,\kappa)$. The support of all the corresponding measures $\mu_{\lambda}$ are thus contained in $\ov B(0,\kappa)$.
\end{lemma}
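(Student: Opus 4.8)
The plan is to deduce the uniform bound from the escape estimate already contained in the proof of \Cref{lem:cesc}, combined with the symmetry of $\check C$. Recall first that $0$ is not a root of $\check C$ (its leading and constant coefficients both equal $C(Q)\neq 0$), so the roots of $\check C$ are exactly the reciprocals of the roots of $C = C_{p/q}$; and that $\Supp\mu_\lambda$ is precisely the set $\{c_1,\dots,c_q\}$ of roots of $C$. By the discussion preceding the lemma, a root $c$ of $C$ is a parameter for which $P_{\lambda,c}$ has $2q$ petals at $0$, i.e.\ both cycles of petals capture a critical point; in particular the critical point $z=c$ lies in the parabolic basin of $0$, hence in the (bounded) filled Julia set, hence \emph{not} in the basin of infinity of $P_{\lambda,c}$.

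Next I would revisit the proof of \Cref{lem:cesc} and observe that, when $|\lambda|=1$, the escape threshold it produces can be taken independently of $\lambda$. Indeed, there the escape radius is $R=\max\!\big(\sqrt{6|c|/|\lambda|},\,6|c+1|,\,\sqrt{12|c|}\big)$ and one computes $P_{\lambda,c}(c)=\lambda c(3-c)/6$; for $|\lambda|=1$ these combine to yield an absolute constant $\rho_0$ (for instance $\rho_0 = 75$ is comfortably sufficient) such that $|c|>\rho_0$ forces $|P_{\lambda,c}(c)|>R$, hence drives $c$ into the basin of infinity, no matter which root of unity $\lambda=e^{2\pi i p/q}$ is chosen. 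Together with the previous paragraph, every root $c$ of $C_{p/q}$ therefore satisfies $|c|\le\rho_0$.

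Finally I invoke the symmetry $c^{q}C_{p/q}(c)=C_{p/q}(1/c)$ (equivalently, the symmetry of $\check C$ from \Cref{lem:degC}, itself a consequence of \eqref{eq:sym}): if $c$ is a root of $C$ then so is $1/c$, whence $|1/c|\le\rho_0$ as well, so that every root $c$ of $C$ lies in the annulus $1/\rho_0\le|c|\le\rho_0$. Passing to reciprocals, the roots of $\check C$ lie in the same annulus, in particular in $\ov B(0,\rho_0)$; and $\Supp\mu_\lambda=\{c_1,\dots,c_q\}\subset\ov B(0,\rho_0)$ as well. Thus $\kappa=\rho_0$ works. The only point requiring a little care is the bookkeeping in the middle step — checking that the explicit threshold coming out of \Cref{lem:cesc} is genuinely uniform over the unit circle — but this is routine, since the sole $\lambda$-dependence there is through $|\lambda|=1$; no idea beyond those already in the paper is needed.
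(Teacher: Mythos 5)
Your argument is correct and is essentially the paper's own proof: the paper likewise argues that if the critical point $c$ escapes (using the explicit trap from the proof of \Cref{lem:cesc}, which is uniform over $|\lambda|=1$) then Fatou's theorem allows only one cycle of petals, so $C_{p/q}(c)\neq 0$, yielding an absolute escape threshold (the paper takes $\kappa=40$ rather than $75$). Your extra bookkeeping relating the roots of $C$ to those of $\check C$ via the symmetry $c^{q}C_{p/q}(c)=C_{p/q}(1/c)$ is a correct way of making explicit a step the paper leaves implicit.
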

\begin{proof} 
  If one critical point escapes to infinity, there can be only one cycle of petals by Fatou's theorem.
  For $c$ to escape, by the proof of \Cref{lem:cesc} and since $|\lambda|=1$, it is enough that $|c|>\kappa$ with
  $\kappa>3$ and $\frac{(\kappa-3)\kappa}{6} > \max\big(\sqrt{6\kappa},6(1+\kappa),\sqrt{12\kappa}\big)$.
  For instance, $\kappa=40$ is enough.
\end{proof}

\section{Limits of measures}\label{sec:7}

A note on terminology : by measures we will always mean \emph{positive} measures. If we ever need other kind of measures, we will call them signed measures or complex measures.

\subsection{Statement} 

How does $\mu_\lambda$ does depend on $\lambda$?

\begin{conjecture}[Buff]
  Let $\U_X = \{\,\exp(2i\pi x)\,;x\in X\,\}$ and let $\cal B$ denote the set of Brjuno numbers.
  The function $\lambda\in\D\cup\U_\Q\cup\U_{\cal B}\mapsto \mu_\lambda$ has a continuous extension to $\ov{\D}$ for the weak-$\ast$ topology on measures.
\end{conjecture}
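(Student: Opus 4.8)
The plan is to reduce the weak-$*$ statement to an $L^1_{\mathrm{loc}}$-convergence statement for the logarithmic potentials of the measures $\mu_\lambda$, and thence to a pointwise-in-$c$ continuity statement for (a suitable renormalisation of) the size function $r(P_{\lambda,c})$; this last step is where one must feed in the continuity theory for the conformal radius of Siegel disks and the asymptotic size of parabolic points, so the statement subsumes the convergence theorem of \Cref{sec:7} (which treats only $\lambda_n = e^{2\pi i p_n/q_n}\to\lambda = e^{2\pi i\theta}$ along the convergents of a bounded-type $\theta$). For the reduction: by \eqref{eq:logr} (and its parabolic analogue \eqref{eq:logL}) the logarithmic potential of $\mu_\lambda$ is the function $w_\lambda(c):=-\log r(P_{\lambda,c})+\log r(Q_\lambda)+\log|c|$ (with $r$ read as the asymptotic size $L$ in the parabolic case), so $\Delta w_\lambda=\mu_\lambda$ and $\mu_\lambda$ is recovered from $w_\lambda$. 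Each $\mu_\lambda$ has mass $2\pi$ (\Cref{prop:totalmass} and the explicit computation of \Cref{sec:parabo}), and for $\lambda$ in a compact subset of $\ov\D\setminus\{0\}$ it is supported in a fixed annulus of $\C^*$ (by \eqref{eq:R}, \Cref{lem:ka} and the symmetry $c\mapsto1/c$); hence $\{\mu_\lambda\}$ is weak-$*$ precompact and $\{w_\lambda\}$ is locally uniformly bounded above, so it is precompact in $L^1_{\mathrm{loc}}(\C)$. Since an $L^1_{\mathrm{loc}}$-limit of subharmonic functions is determined by its a.e.\ values, to prove the theorem it suffices to show that for every sequence $\lambda_n\to\lambda_\infty$ in the domain, $w_{\lambda_n}(c)$ converges for Lebesgue-a.e.\ $c$ to a limit depending only on $\lambda_\infty$ (and $c$); then $w_{\lambda_n}\to w_{\lambda_\infty}$ in $L^1_{\mathrm{loc}}$ and $\mu_{\lambda_n}=\Delta w_{\lambda_n}\to\Delta w_{\lambda_\infty}$ weak-$*$. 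The crucial point is that $w_\lambda(c)$ is a \emph{difference} of quantities that may individually degenerate (as $\lambda$ tends to a non-Brjuno number, or to $0$), the degeneration being governed by the arithmetic of the rotation number alone and therefore cancelling.

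The interior of the disk is immediate: if $\lambda_n\to\lambda_\infty$ with $\lambda_n,\lambda_\infty\in\D^*$, then $P_{\lambda_n,c}\to P_{\lambda_\infty,c}$ and $Q_{\lambda_n}\to Q_{\lambda_\infty}$ as polynomials fixing $0$ with an attracting multiplier, so \Cref{lem:cP} gives $r(P_{\lambda_n,c})\to r(P_{\lambda_\infty,c})$ and $r(Q_{\lambda_n})\to r(Q_{\lambda_\infty})$ for \emph{every} $c$, whence $w_{\lambda_n}(c)\to w_{\lambda_\infty}(c)$ and $\lambda\mapsto\mu_\lambda$ is continuous on $\D^*$. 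The point $\lambda_\infty=0$ (where $P_{0,c}$ degenerates to a constant, so \Cref{lem:cP} no longer applies) needs a separate elementary computation: since $P_{\lambda,c}=\lambda\,g_c$ with $g_c:=P_{1,c}$ independent of $\lambda$, one checks directly that $\phi_{P_{\lambda,c}}$ converges, locally uniformly on the relevant domains, to an explicit limit, and reads off the limiting value of $w_\lambda(c)$ (a little mass of $\mu_\lambda$ escapes to $0$ and to $\infty$ as $\lambda\to0$, which is allowed by weak-$*$ continuity).

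The crux is $\lambda_n\to\lambda_\infty\in\partial\D$. Here \Cref{lem:scsP} already supplies upper semicontinuity of $r(P_{\lambda,c})$ and of $r(Q_\lambda)$, and the issue is the matching lower bound: the linearisation domain must not collapse in the limit. Three cases must be handled, for a.e.\ fixed $c$ and for every approach direction (from $\D^*$ and along the circle). \textbf{(i)} $\lambda_\infty=e^{2\pi i\theta}$, $\theta$ a Brjuno irrational: one needs continuity of the conformal radius of the Siegel disk of $P_{\lambda,c}$ as the multiplier tends to $\lambda_\infty$; for the quadratic model $Q$ this is the Marmi--Moussa--Yoccoz / Buff--Chéritat continuity theorem for the size of quadratic Siegel disks, and the version for the cubic family with $c$ fixed follows by adapting those arguments (\Cref{lem:cont} and \cite{thesis:Cheritat} along irrational-to-irrational approaches, near-parabolic perturbation along rational-to-irrational ones). \textbf{(ii)} $\lambda_\infty=e^{2\pi i p/q}$: one needs that the conformal radii of the perturbed Siegel disks, and the asymptotic sizes of nearby parabolic maps, tend to $L(P_{p/q,c})$ --- a parabolic-implosion statement of the type proved in \Cref{sec:7}, but now required along \emph{every} sequence of rotation numbers (not just the convergents of a bounded-type $\theta$); moreover, for rational-to-rational approaches $p_n/q_n\to p/q$ it must be matched with the convergence $\frac{2\pi}{q_n}\sum_i\delta_{c_i(p_n/q_n)}\to\frac{2\pi}{q}\sum_i\delta_{c_i(p/q)}$, which reduces to continuity of the roots of the symmetric polynomials $\check C$ attached to $p_n/q_n$ (\Cref{lem:degC}) together with the implosion analysis controlling the mass carried by each cycle of petals. \textbf{(iii)} $\lambda_\infty=e^{2\pi i\theta}$, $\theta$ non-Brjuno (Cremer or Liouville): here $-\log r(P_{\lambda_n,c})\to+\infty$ and $\log r(Q_{\lambda_n})\to-\infty$, and the content is that $w_{\lambda_n}(c)$ still converges to a sequence-independent limit; this should follow by noting that $\log r(P_{\lambda,c})$ and $\log r(Q_\lambda)$ each coincide, up to the same Brjuno-function term of $\theta$, with a function continuous on the whole circle (Marmi--Moussa--Yoccoz-type estimates), so the Brjuno terms cancel in $w_\lambda(c)$, $\theta\mapsto w_{e^{2\pi i\theta}}(c)$ extends continuously to all $\theta$, and this circle extension is then matched with the approach from inside $\D$ through the perturbation estimates used in (i) and (ii). I expect (ii) --- a full parabolic-implosion analysis valid along all rotation numbers, extracting simultaneously the limiting support and the limiting mass distribution --- together with the non-Brjuno matching of (iii) to be the main obstacles; the remaining steps are the potential-theoretic reduction above and the invocation of already-available continuity theorems.
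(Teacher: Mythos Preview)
The statement you are attempting to prove is presented in the paper as a \emph{conjecture}, not a theorem; the paper explicitly says ``Here we prove a special case of this conjecture'' and then states and proves \Cref{thm:main} (convergence along the convergents $p_n/q_n$ of a bounded-type $\theta$). So there is no ``paper's own proof'' to compare against, and what you have written is not a proof but a proof \emph{strategy} --- as you yourself signal with phrases like ``should follow'' and ``I expect \ldots\ to be the main obstacles''.

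Your potential-theoretic reduction (pass from $\mu_\lambda$ to its logarithmic potential $w_\lambda$, use uniform mass and support bounds to get precompactness, then identify subsequential limits via a.e.\ pointwise convergence of $w_\lambda(c)$) is sound and is close in spirit to what the paper does in the special case: there too one works with $u_\theta = \mu_\lambda * \ell$ and proves $L^1_{\mathrm{loc}}$-convergence of potentials (\Cref{prop:super}) before passing to the Laplacian. The paper, however, does \emph{not} proceed by proving pointwise convergence of the potentials: it proves the one-sided bound $\limsup_n \sup_\C(u_n-u_\theta)\le 0$ (\Cref{prop:maj}) via explicit Jellouli-type estimates, and then uses a subharmonic trick (\Cref{prop:upper2}) together with the single anchor $u_n(0)=u_\theta(0)=0$ to upgrade this to $L^1_{\mathrm{loc}}$-convergence on $W_\theta$; the extension across the Zakeri curve is then done with non-thin sets. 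Your route via two-sided pointwise continuity of $r$ is more demanding.

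The genuine gaps are exactly where you locate them. In case~(ii) you need parabolic implosion valid along \emph{every} approach $\lambda_n\to e^{2\pi i p/q}$ (not just along convergents of a fixed bounded-type $\theta$), uniformly in the auxiliary parameter $c$; this is substantially harder than what \Cref{sec:7} establishes. In case~(iii) your argument rests on the assertion that $\log r(P_{\lambda,c})$ and $\log r(Q_\lambda)$ each differ from a function continuous on the whole circle by the \emph{same} Brjuno-type correction, so that the corrections cancel in $w_\lambda$. For the quadratic family $Q_\lambda$ this is the Buff--Ch\'eritat continuity theorem, but for the cubic family $P_{\lambda,c}$ (with $c$ fixed) the analogous statement is not available in the literature and cannot be taken for granted; moreover, even granting circle-continuity, the matching of this circle extension with radial limits from inside $\D$ at non-Brjuno points is a separate issue you have not addressed. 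Finally, your treatment of $\lambda\to 0$ is too quick: the formula $P_{\lambda,c}=\lambda g_c$ does not by itself give convergence of $\phi_{P_{\lambda,c}}$ on a domain stable as $\lambda\to 0$, and the support bound \eqref{eq:R} blows up there, so the precompactness argument breaks down and a genuinely different analysis is needed.
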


This states several things: that $\mu_\lambda$ depends continuously on $\lambda$, even at parameters for which $0$ is neutral with rational or Brjuno rotation number, but also that is has a limit at non-Brjuno irrational rotation numbers.

Here we prove a special case of this conjecture:

\begin{theorem}\label{thm:main}
  Let $\theta$ be a bounded type irrational and $p_n/q_n$ its approximants. Then $\mu_{e^{2\pi i\pqn}}\tend \mu_{e^{2\pi i\theta}}$ for the weak-$\ast$ topology.
\end{theorem}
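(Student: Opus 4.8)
The strategy is to show weak-$\ast$ convergence by combining three ingredients: a uniform compactness statement for the supports, convergence of the potentials away from the limit support, and identification of the limiting mass. First I would note that by \Cref{lem:ka} and \cref{eq:R} all the measures $\mu_{\lambda_n}$ (with $\lambda_n = e^{2\pi i p_n/q_n}$) and $\mu_\lambda$ are supported in a common compact annulus $K \subset \C^*$, so the sequence $(\mu_{\lambda_n})$ is tight and, since each has total mass $2\pi$ (by \Cref{prop:totalmass} in the parabolic case the mass is $\frac{2\pi}{q_n}\cdot q_n = 2\pi$, and by \Cref{thm:bddType}'s setup for $\mu_\lambda$), any subsequence has a weak-$\ast$ convergent sub-subsequence with limit a measure $\nu$ of mass $\le 2\pi$, supported in $K$. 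It then suffices to show every such $\nu$ equals $\mu_\lambda$.

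To identify $\nu$, I would work with the logarithmic potentials. Set $U_n(c) = -\log r(P_{\lambda_n,c})$ and $U(c) = -\log r(P_{\lambda,c})$; by \cref{eq:logr} and \cref{eq:logL} these are, up to the harmonic correction $-\log|c|$ and an additive constant ($-\log r(Q_{\lambda_n})$ resp.\ $-\log r(Q_\lambda)$), the logarithmic potentials of $\mu_{\lambda_n}$ resp.\ $\mu_\lambda$. The key analytic input I would invoke is the convergence $r(P_{\lambda_n,c}) \to r(P_{\lambda,c})$ locally uniformly in $c$ on $\C^*$: this should follow from the theory in \cite{thesis:Cheritat} that was already cited for the continuity statement in \Cref{lem:cont} — for bounded type $\theta$ with convergents $p_n/q_n$, the conformal radius (equivalently the asymptotic size in the parabolic approximants) converges to the conformal radius of the Siegel disk, and this is what \Cref{sec:7} is designed to prove. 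Granting $U_n \to U$ locally uniformly, hence in $\Luloc$, taking distributional Laplacians gives $\mu_{\lambda_n} \to \Delta U = \mu_\lambda$ in the sense of distributions; combined with the tightness above this upgrades to weak-$\ast$ convergence and forces $\nu = \mu_\lambda$, in particular the total mass of $\nu$ is the full $2\pi$ with no escape of mass.

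The main obstacle — and the real content of \Cref{sec:7} — is establishing the locally uniform convergence $r(P_{\lambda_n,c}) \to r(P_{\lambda,c})$, i.e.\ that the asymptotic size of the parabolic point of $P_{\lambda_n,c}$ converges to the conformal radius of the Siegel disk of $P_{\lambda,c}$, uniformly for $c$ in compact subsets of $\C^*$. Pointwise this is a known phenomenon (parabolic implosion / control of linearization along bounded-type approximants, as in \cite{thesis:Cheritat}), but uniformity in the parameter $c$ requires care: one needs the quasiconformal/geometric bounds underlying Shishikura's quasicircle theorem to be uniform as $c$ ranges over a compact set, so that the Siegel disks and their approximating parabolic basins have uniformly controlled geometry. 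I would handle this by a normal-families argument: extract limits of the linearizing parametrizations $\psi_{P_{\lambda_n,c}}$ using the univalence bounds (Bieberbach–de Branges, as in the proof of \Cref{prop:sh}) together with the uniform lower bound on $r$ coming from the bounded-type hypothesis, show any such limit linearizes $P_{\lambda,c}$ and therefore coincides with $\psi_{P_{\lambda,c}}$, and check that no subsequence can have $\limsup r(P_{\lambda_n,c})$ strictly larger than $r(P_{\lambda,c})$ (this direction uses upper semicontinuity-type arguments in the spirit of \Cref{lem:scsP}) nor strictly smaller (this is the delicate half, requiring the implosion estimates). Once the convergence is uniform on compacts, the rest is soft.
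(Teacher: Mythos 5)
Your outer skeleton (common compact support, mass $2\pi$, weak-$\ast$ compactness, identification of subsequential limits through potentials) matches the paper, but the step you label the ``key analytic input'' --- locally uniform two-sided convergence $r(P_{\lambda_n,c})\to r(P_{\lambda,c})$ on $\C^*$ --- is a genuine gap, and it is exactly what the paper does not prove and is structured to avoid. Two concrete problems. First, your normal-families plan is ill-posed: the approximants $P_{\lambda_n,c}$ have multiplier a root of unity, so they admit no linearizing parametrization $\psi_{P_{\lambda_n,c}}$ at all; the quantity $r_n(c)$ is the asymptotic size $L$, defined through the coefficient $C_n(c)$ in $P_{\lambda_n,c}^{q_n}(z)=z+C_n(c)z^{q_n+1}+\cdots$, not a radius of convergence. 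Consequently an upper-semicontinuity argument ``in the spirit of \Cref{lem:scsP}'' says nothing about $\limsup_n r_n(c)$: that lemma compares radii of convergence of linearizers, which are $0$ for the parabolic maps. Second, the direction you delegate to implosion estimates, namely $\liminf_n r_n(c)\ge r_\theta(c)$, is the only direction the paper actually obtains (uniformly in $c$), via Jellouli-type control of the first $q_n$ iterates inside the Siegel disk (\Cref{lem:jellouli}, \Cref{lem:l1}) together with the quadratic-family theorem $L(Q_n)\to r(Q_\theta)$ from \cite{thesis:Cheritat}; this yields only the one-sided potential inequality $\limsup_n\sup_\C(u_n-u_\theta)\le 0$ of \Cref{prop:maj}. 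The reverse inequality --- hence the locally uniform convergence you assume, in particular on and near the Zakeri curve, where all the mass sits --- does not follow from the cited continuity results (which concern $c\mapsto r_\theta(c)$ for fixed Brjuno $\theta$, and the quadratic family), and the paper never establishes it, even pointwise.

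The paper closes this gap by potential theory rather than by upgrading the convergence of $r$: since $u_\theta$ is harmonic off $\Supp\mu_\theta$, the differences $u_n-u_\theta$ are subharmonic on $W_\theta$, vanish at $c=0$ (\Cref{cor:zero}), and are asymptotically $\le 0$ by \Cref{prop:maj}; the sub-mean-value argument of \Cref{prop:upper2} then gives $u_n\to u_\theta$ in $\Luloc(W_\theta)$, and by the symmetry \cref{eq:symu} also on $1/W_\theta$ (\Cref{prop:super}). For any subsequential weak-$\ast$ limit $\mu$, the potential $u=\mu*\ell$ agrees with $u_\theta$ on $W_\theta\cup 1/W_\theta$, and the identification is pushed across the support itself --- which for bounded type is the Zakeri curve, so $\C=\ov{W_\theta}\cup\ov{1/W_\theta}$ by \Cref{thm:bddType} --- using non-thinness of connected sets, whence $\mu=\Delta u=\mu_\theta$ (\Cref{cor:c2}). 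To repair your proposal you must either genuinely prove the missing upper bound $\limsup_n r_n(c)\le r_\theta(c)$ uniformly on compacts (substantial new work, not in the references you invoke), or replace that step by an argument of this one-sided, potential-theoretic type.
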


We recall that the locally finite Borel measures on $\R^n$ are called the Radon measures and are in natural bijection with positive linear functionals on the space of continuous real-valued function with compact support $C^0_c(\R^n)$ (no need to endow this space with a topology, thanks to positivity).
Weak-$\ast$ convergence $\mu_n\tend \mu$ for locally finite Borel measures on $\R^n$ means that for all continuous function $\phi:\C\to\R$ (dubbed test functions) with compact support,
\[\int \phi \mu_{n} \tend \int \phi \mu.\]

\subsection{Generalities}

The proof of \Cref{thm:main} will use potential theory and we will recall a few generalities about this and other things.

We first recall a classic fact: the space of Borel measures supported on $\ov B(0,R)$ and of mass $\leq M$ is compact for the weak-$\ast$ topology, which is metrizable. In particular for a sequence of such measures to converge, it is enough to prove the uniqueness of extracted limits.
  
The \emph{potential} of a finite Borel measure $\mu$ with compact support in the plane is
defined by
\[ u(z) = \int \frac{1}{2\pi} \log|z-w| d\mu(w)\]
It is a subharmonic function and satisfies 
\[\Delta u = \mu\]
in the sense of distributions and better:
\[\int (\Delta \phi) u = \int \phi \mu \text{ for all $\phi\in C^0_c(\C)$}\]
i.e.\ the test functions can be taken $C^0$ instead of $C^\infty$.

In the rest of this section on generalities, we will carefully avoid the language of distributions.  \Cref{sub:pfthmmain}. 

\begin{remark*}
Note the difference of convention with \cite{Ra}: there, we have $u(z) = \int \log|z-w| d\mu(w)$ and $\Delta u = 2\pi\mu$.
\end{remark*}

\begin{lemma}
We have the following expansion:
\begin{equation}\label{eq:expanu}
  u(z) = \frac{\on{mass} \mu}{2\pi}\log |z| + 0 + o(1).
\end{equation}
\end{lemma}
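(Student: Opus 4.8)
The plan is to peel the leading $\log|z|$ off the integral defining the potential and to show that what remains is genuinely $o(1)$ (with no surviving constant). Concretely, for $z\neq 0$ write $\log|z-w| = \log|z| + \log\bigl|1 - w/z\bigr|$, so that
\[
u(z) = \frac{1}{2\pi}\log|z|\int d\mu(w) + \frac{1}{2\pi}\int \log\Bigl|1 - \frac{w}{z}\Bigr|\, d\mu(w) = \frac{\on{mass}\mu}{2\pi}\,\log|z| + \frac{1}{2\pi}\int \log\Bigl|1 - \frac{w}{z}\Bigr|\, d\mu(w).
\]
It then suffices to prove that the remaining integral tends to $0$ as $z\to\infty$; this simultaneously identifies the constant term of the expansion as $0$.

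For the remainder I would use compactness of the support: fix $R$ with $\Supp\mu \subset \ov B(0,R)$. For $|z|\geq 2R$ and $w\in\Supp\mu$ one has $|w/z|\leq \tfrac12$, and the elementary inequality $\bigl|\log|1-t|\bigr|\leq |t|/(1-|t|)$ for $|t|<1$ gives $\bigl|\log|1-w/z|\bigr|\leq 2R/|z|$. Integrating against $\mu$, whose total mass is finite,
\[
\Bigl|\frac{1}{2\pi}\int \log\Bigl|1-\frac{w}{z}\Bigr|\, d\mu(w)\Bigr| \leq \frac{R\,\on{mass}\mu}{\pi\,|z|} \xrightarrow[z\to\infty]{} 0,
\]
which is exactly the claimed estimate. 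Alternatively one can invoke dominated convergence: for each fixed $w$ the integrand $\log|1-w/z|$ tends to $0$, and for $|z|\geq 2R$ it is bounded by the integrable (indeed bounded) function $2R/|z|$, hence the integral vanishes in the limit.

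I do not expect a real obstacle here; the only point deserving a line of justification is the uniform bound $\bigl|\log|1-t|\bigr|\leq |t|/(1-|t|)$, which follows from the power series $\log(1-t) = -\sum_{n\geq 1} t^n/n$ together with $\bigl|\log|1-t|\bigr| = \bigl|\Re\log(1-t)\bigr|\leq |\log(1-t)|$. Everything else is a direct computation, so the write-up will be short.
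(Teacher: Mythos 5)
Your proof is correct and follows essentially the same route as the paper: split $\log|z-w|=\log|z|+\log|1-w/z|$, pull out the mass, and bound the remainder by a constant times $1/|z|$ for $|z|\ge 2R$ using $\bigl|\log|1-t|\bigr|\le C|t|$ on $|t|\le\tfrac12$. The only difference is cosmetic (you make the constant explicit via the power series of $\log(1-t)$), so nothing needs to change.
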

\begin{proof} This is well-know, we recall a proof here. When $z$ is not in the support then:\\
$\ds\int \frac{1}{2\pi} \log|z-w| d\mu(w) = \int \frac{1}{2\pi} \left(\log|z| +\log \Big| 1-\frac wz \Big|\right) d\mu(w) = \frac{\on{mass}\mu}{2\pi} \log|z| +$\\
$\ds \frac{1}{2\pi}\int \log \Big| 1-\frac wz \Big| d\mu(w)$.
Now if the support of $\mu$ is contained in $B(0,R)$ and $|z|>2R$ then for $w$ in the support,
$\big|\log |1-w/z|\big| \leq C|w/z|$ with $C>0$ some constant, hence\\
$\ds \left|\int \log \Big| 1-\frac wz \Big| d\mu(w) \right|\leq \frac{C'}{|z|}$ with $C' = C\int |w| d\mu(w)$.
\end{proof}

Let
\[ \ell(z) = \frac{1}{2\pi}\log|z|\]
This map is locally $L^1$, so for $\phi\in\C^\infty_c(\C)$, the convolution $\phi * \ell$ is 
also $C^{\infty}$. However, it does not necessarily have compact support.
The potential of a (finite with compact support) Borel measure defined above is just
\[ u = \mu * \ell\]
where $*$ refers to the convolution operator. 
Note that $\mu * \ell \in \Luloc(\C)$, i.e.\ $\mu * \ell$ is locally integrable w.r.t.\ the Lebesgue measure (this can be deduced from the first part of the statement below, but this is also a classical fact for subharmonic functions, see the paragraph after remark following the lemma).

We will need the following classical lemma:

\begin{lemma}\label{lem:fub}
  For a finite measure with compact support $\mu$ and a continuous test function $\phi$ with compact support, then $\phi\times (\mu * \ell)$ is integrable with respect to the Lebesgue measure and:
  \[\int \phi\times (\mu * \ell) = \int (\phi * \ell) \times \mu.\]
\end{lemma}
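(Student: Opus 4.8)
The plan is to recognize both sides as the integral of the single function $(z,w)\mapsto \phi(z)\,\ell(z-w)$ against the product of the Lebesgue measure (in $z$) and $\mu$ (in $w$), and then to invoke Fubini--Tonelli. The only thing that really needs checking is that this function is absolutely integrable for that product measure; the integrability of $\phi\times(\mu*\ell)$ will fall out of the same estimate.

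First I would fix conventions. Since $\ell(z)=\frac{1}{2\pi}\log|z|$ is even, $(\mu*\ell)(z)=\int\ell(z-w)\,d\mu(w)$ and $(\phi*\ell)(w)=\int\phi(z)\,\ell(z-w)\,dz$, so that
\[
\int \phi(z)\,(\mu*\ell)(z)\,dz=\int\!\!\int \phi(z)\,\ell(z-w)\,d\mu(w)\,dz,\qquad \int (\phi*\ell)(w)\,d\mu(w)=\int\!\!\int \phi(z)\,\ell(z-w)\,dz\,d\mu(w).
\]
Thus the asserted identity is precisely the interchange of the order of integration, and it is legitimate as soon as $(z,w)\mapsto\phi(z)\,\ell(z-w)$ is integrable for $dz\otimes d\mu$.

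To verify this, choose $A,B$ with $\Supp\phi\subset\ov B(0,A)$ and $\Supp\mu\subset\ov B(0,B)$. Using $|\phi|\le\|\phi\|_\infty$ and Tonelli for the nonnegative integrand,
\[
\int\!\!\int |\phi(z)|\,|\ell(z-w)|\,dz\,d\mu(w)\le \|\phi\|_\infty\int\!\left(\int_{\ov B(0,A)}|\ell(z-w)|\,dz\right)d\mu(w).
\]
For $|w|\le B$ the inner integral is at most $\int_{\ov B(0,A+B)}|\ell|$, which is finite because $\log|\cdot|$ is locally integrable in the plane; here the two-dimensionality is what saves us, since $\int_{\ov B(0,R)}\bigl|\log|u|\bigr|\,du<\infty$. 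Hence the double integral is bounded by $\|\phi\|_\infty\,\mu(\C)\,\int_{\ov B(0,A+B)}|\ell|<\infty$. On one hand this finiteness shows that $z\mapsto\phi(z)(\mu*\ell)(z)$ is Lebesgue integrable (and in particular that $\mu*\ell\in\Luloc(\C)$); on the other hand it is exactly the hypothesis of Fubini's theorem, which then gives the claimed equality.

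I do not expect a genuine obstacle in this proof: the argument is a routine application of Fubini--Tonelli, and the single point one must not overlook is the local integrability of $\log|\cdot|$ in dimension two, combined with the two compact-support assumptions that confine the whole computation to a fixed ball.
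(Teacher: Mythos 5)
Your proof is correct and follows essentially the same route as the paper: both sides are identified as the double integral of $\phi(z)\ell(z-w)$ against $d\lambda(z)\,d\mu(w)$, absolute integrability is checked by a bound uniform in $w\in\Supp\mu$ (the paper splits $\ell$ into $\ell^\pm$, you invoke local integrability of $\log|\cdot|$ directly, which is only a cosmetic difference), and Fubini--Tonelli gives both the integrability of $\phi\times(\mu*\ell)$ and the interchange.
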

(Note: the left hand side is to be understood as an integral over the Lebesgue measure 
of the integrable function: $\phi\times (\mu * \ell)$;
the right hand side as the integral of the continuous function $(\phi * \ell)$ over the measure $\mu$. 
The function $\phi * \ell$ is continous since $\phi$ is continuous with compact support and $\ell\in\Luloc(\C)$.)

\begin{proof}
  This is a simple application of the Fubini theorems, but we will check it carefully.
  Let $\lambda$ be the Lebesgue measure.
  Consider the product measure $\lambda\times\mu$ and the measurable function $(z,w)\mapsto \phi(z)\ell(z-w)$. 
  Fixing $w$, the integral of its absolute value with respect to $d\lambda(z)$ is $\int_\R |\phi(z)\ell(z-w)| d\lambda(z) = \int_{\Supp \phi} |\phi(z)\ell(z-w)| d\lambda(z)$, i.e.\ we can restrict $z$ to the support of $\phi$. Using  $v=z-w$ we get 
  $\int_{\Supp \phi} |\phi(z)\ell(z-w)| d\lambda(z) \leq \big(\!\max|\phi| \big)\int_{v\in -w+\Supp\phi} |\ell(v)|d\lambda(v)$.
  Decompose 
  \[\ell(v) = \log |v| = \max(0,\log |v|) + \min(0,\log|v|) = \ell^+(v) - \ell^-(v).\]
  We have
  \[  |\ell(v)|  = \ell^-(v) + \ell^+(v) \]
  so, denoting $S_w = -w+\Supp\phi$:
  \begin{eqnarray*}
  \int_{v\in S_w} |\ell(v)| & = & \int_{S_w} \ell^-(v) + \int_{S_w} \ell^+(v)
  \\ 
  & \leq &  \int_{\C} \ell^-  +  \lambda(\Supp\phi) \max_{v\in S_w} \ell^+(v) 
  \end{eqnarray*}
  One computes $\int_{\C} \ell^- = \pi/2 < +\infty$. Let $R>0$ so that $\Supp\phi\subset B(0,R)$ and $R'>0$ so that $\Supp \mu \subset B(0,R')$.
  Then $\forall w \in\Supp\mu$, $\forall v\in S_w$:
  \[\ell^+(v) \leq \max\big (0,\log(|w|+R)\big) \leq \max\big (0,\log(R'+R)\big).\]  
  Summing up: there is some $K>0$ such that for all $w$ in the support of $\mu$:
  \[\int_\C |\phi(z)\ell(z-w)|d\lambda(z) \leq K.\]
  Since $\mu$ has finite mass it follows that
  \[\int_{\C\times\C} |\phi(z)\ell(z-w)|d\lambda(z) d\mu(w)<+\infty.\]
  So we can apply the Fubini-Tonelli theorem, from which follows that the function $z\mapsto \phi(z) \left(\int_\C \ell(z-w) d\mu(w)\right)$ is $L^1$ with respect to the Lebesgue measure and that we have
  \[\int_\C \phi(z) \left(\int_\C \ell(z-w) d\mu(w)\right)d\lambda(z) = 
    \int_\C \left(\int_\C \phi(z) \ell(z-w) d\lambda(z)\right)  d\mu(w)
  .\]
  The left hand side is equal to $\int \phi\times (\mu * \ell)$ 
  and using $\ell(z-w) = \ell(w-z)$ the right hand side is equal to $\int (\phi * \ell) \times \mu$.
\end{proof}
 
\begin{remark*}
  This would be tempting to write the conclusion of the previous lemma as
  \[ \langle \mu * \ell, \phi\rangle = \langle \mu, \phi * \ell\rangle\]
  except that as a test function, $\phi * \ell$ is indeed smooth but does not necessarily have compact support, so we prefer to avoid this notation.
\end{remark*}

We recall that a subharmonic function on a connected open subset $X$ of $\R^n$
that is not $\equiv -\infty$ is in $\Luloc(X)$, see theorem~2.5.1 in \cite{Ra} or corollary~3.2.8 in \cite{Ho}. Also, a subharmonic function is upper semi-continuous and takes values in $[-\infty,+\infty)$, hence it is always locally bounded from above.

\subsection{Proof of Theorem~\ref{thm:main}}\label{sub:pfthmmain}

Let us denote by
\[u_{\theta} = \text{the potential of $\mu_\lambda$} = \mu_\lambda * \ell\]
where $\lambda = e^{2\pi i \theta}$ for $\theta$ a Brjuno number or a rational number.
Recall that when $\theta$ is a Brjuno number, we call $r(c)$ the conformal radius of the Siegel disk and when $\theta$ is rational, we set $r(c) =L$, the asymptotic size of the parabolic point when it has $q$ petals, and $r(c)=+\infty$ when it has $2q$ petals. We use $\log +\infty = +\infty$ as a convenience when speaking of $\log r$.
In the rational case $\mu_\lambda$ is a sum of Dirac masses: $\mu_\lambda = \sum_{i=1}^{q} \frac{2\pi}{q} \delta_{c_i}$.
It follows that $u_\pq(c) = \frac{1}{q}\sum \log |c-c_i|$.

In the rational or Brjuno case, from \cref{eq:logr,eq:logL} it follows that, for $c\neq 0$:
\begin{equation}\label{eq:pot}
u_\theta(c) = -\log r(P_{\lambda,c}) + \log|c| + \log r(Q_\lambda).
\end{equation}
From \cref{eq:symr,eq:symr2} we have
\[  -\log r(P_{\lambda,1/c}) = -\log r(P_{\lambda,c}) + \log |c|.
\]
and since by \cref{eq:pot} we have
\[ -\log r(P_{\lambda,c})  = u_\theta(c) - \log|c| - \log r(Q_\lambda)\]
we get $\forall c\in\C^*$:
\begin{equation}
  u_\theta(c) = -\log r(P_{\lambda,1/c}) + \log r(Q_\lambda)
\end{equation}
which will interest us when $c\tend 0$ : indeed by \cref{eq:limr,eq:limr2}
\begin{corollary}\label{cor:zero}
  For $\theta$ a Brjuno number or a rational number:
  \[u_\theta(0) = 0\]
\end{corollary}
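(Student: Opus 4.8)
The corollary will follow directly from the identity recorded just above its statement,
\[ u_\theta(c) = -\log r(P_{\lambda,1/c}) + \log r(Q_\lambda) \qquad (c\in\C^*), \]
which was obtained by substituting the symmetry relations \eqref{eq:symr} (Brjuno case), resp.\ \eqref{eq:symr2} (rational case), into \eqref{eq:pot}. The plan is to let $c\to 0$ in this identity, invoking the known limit of $r(P_{\lambda,\cdot})$ at $\infty$, and then to observe that the limit so obtained really is the value $u_\theta(0)$, because $u_\theta$ is regular at the origin.

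\textbf{Step 1: the limit as $c\to 0$.} As $c\to 0$ we have $1/c\to\infty$, so by \eqref{eq:limr} when $\theta$ is Brjuno, resp.\ \eqref{eq:limr2} when $\theta$ is rational, $r(P_{\lambda,1/c})\to r(Q_\lambda)$, where $Q_\lambda(z)=\lambda z(1-\frac z2)$. In both regimes $r(Q_\lambda)\in(0,+\infty)$: when $\theta$ is Brjuno, $Q_\lambda$ is linearizable and its Siegel disk at $0$ is a bounded domain, so its conformal radius is finite and positive; when $\theta=p/q$, the quadratic $Q_\lambda$ is unicritical, hence has a single cycle of petals, so its asymptotic size $r(Q_\lambda)=L(Q_\lambda)$ is finite and positive. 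In either case $\log$ is continuous at $r(Q_\lambda)$, so $-\log r(P_{\lambda,1/c})\to-\log r(Q_\lambda)$, whence $u_\theta(c)\to-\log r(Q_\lambda)+\log r(Q_\lambda)=0$ as $c\to 0$.

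\textbf{Step 2: from the limit to the value at $0$.} It remains to identify $\lim_{c\to 0}u_\theta(c)$ with $u_\theta(0)$. Here $u_\theta=\mu_\lambda*\ell$ is the logarithmic potential of $\mu_\lambda$, and $\Supp\mu_\lambda$ is a compact subset of $\C^*$ — this is part of \Cref{prop:p2} in the Brjuno case and \Cref{lem:ka} in the rational case — so in particular $0\notin\Supp\mu_\lambda$. A logarithmic potential is harmonic, hence continuous, on the complement of the support of its measure, so $u_\theta$ is continuous at $0$ and the two quantities agree. (Equivalently, $u_\theta(0)=\int \frac{1}{2\pi}\log|w|\,d\mu_\lambda(w)$ is a convergent integral and $c\mapsto\int\frac{1}{2\pi}\log|c-w|\,d\mu_\lambda(w)$ is continuous at $c=0$, again because the support is bounded away from $0$.)

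\textbf{Main obstacle.} There is no genuine difficulty; the only points that deserve a line of justification are that $r(Q_\lambda)$ is finite and nonzero in both regimes, so that the two logarithmic terms cancel exactly, and that $\Supp\mu_\lambda$ stays away from the origin, which makes $u_\theta$ continuous there. Both facts have already been established in the preceding sections, so the proof of the corollary is essentially a one-line passage to the limit.
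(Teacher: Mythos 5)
Your proof is correct and follows essentially the same route as the paper: the identity $u_\theta(c)=-\log r(P_{\lambda,1/c})+\log r(Q_\lambda)$ combined with the limits \eqref{eq:limr}, \eqref{eq:limr2} as $c\to 0$, plus the (implicit in the paper) continuity of the potential at $0$ because $\Supp\mu_\lambda$ is bounded away from the origin. Your added checks that $r(Q_\lambda)\in(0,+\infty)$ in both regimes are sound and merely make explicit what the paper takes for granted.
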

 
By the symmetry formulas \cref{eq:symr,eq:symr2} we have, for all $\theta$ Brjuno or rational:
\begin{equation}\label{eq:symu}
  u_\theta(c) = u_\theta(1/c) + \log |c|
\end{equation}

To alleviate the notations and in particular nested subscript and superscripts, which tend to be hard to read, we first set
\[
\setlength{\arraycolsep}{3pt}
\begin{array}{rclrclrclrclrcl}
  P[\lambda,c] &=& P_{\lambda,c}
  &
  Q[\lambda] &=& Q_{\lambda}  
  &
  u[\lambda] &=& u_{\lambda}  
  &
  \mu[\lambda] &=& \mu_{\lambda}
  &
  r[\lambda](c) &=& r(P_{\lambda,c})
\end{array}
\]
and then we will use the following abuse of notations:
\[
\setlength{\arraycolsep}{3pt}
\begin{array}{rclrclrclrcl}
P_{n,c} &=& P[e^{2\pi i\pqn},c]
&
Q_{n} &=& Q[e^{2\pi i\pqn}]
&
u_n &=& u[e^{2\pi i\pqn}]
&
\mu_n &=& \mu[e^{2\pi i\pqn}]
\\
P_{\theta,c} &=& P[e^{2\pi  i\theta},c]
&
Q_{\theta} &=& Q[e^{2\pi  i\theta}]
&
u_\theta &=& u[e^{2\pi i \theta}]
&
\mu_\theta &=& \mu[e^{2\pi i \theta}]
\end{array}
\]
and denote
\[
\begin{array}{rcl}
r_n(c) &=& r[e^{2\pi i\pqn}](c)
\\
r_\theta(c) &=& r[e^{2\pi i\theta}](c).
\end{array}
\]

One key point is the following bound.
\begin{proposition}\label{prop:maj}
  \[\limsup_{n\to\infty} \sup_\C (u_n-u_\theta) \leq 0.\]
\end{proposition}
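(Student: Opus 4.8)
\emph{Overview of the plan.} The idea is to split $\C$ into a neighbourhood of $0$, a neighbourhood of $\infty$, and a fixed compact annulus in between. On the two neighbourhoods the difference $u_n-u_\theta$ will be small \emph{uniformly in $n$} just from the fact that all the measures involved have mass $2\pi$ and uniformly bounded support. On the annulus the bound will follow from a \emph{pointwise} inequality $\limsup_n u_n(c)\le u_\theta(c)$ upgraded to a uniform one by a normal‑families argument, and the pointwise inequality is where the dynamics (and the bounded‑type hypothesis) enters.

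\emph{Step 1: uniform control near $0$ and near $\infty$.} Each $\mu_n$ equals $\tfrac{2\pi}{q_n}\sum_i\delta_{c_i}$, so it has mass $2\pi$ and, by \Cref{lem:ka}, is supported in a common ball $\ov B(0,\kappa)$; similarly $\mu_\theta$ has mass $2\pi$ (\Cref{prop:totalmass}) and is supported on the compact set $Z_\lambda$ (\Cref{thm:bddType}). The quantitative form of the computation proving \eqref{eq:expanu} gives $|u_n(c)-\log|c||\le K/|c|$ for $|c|\ge 2\kappa$, with $K$ independent of $n$, and the same for $u_\theta$; hence $u_n-u_\theta\to 0$ uniformly in $n$ as $c\to\infty$. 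Feeding this into the symmetry relation \eqref{eq:symu}, which holds for every $u_n$ and for $u_\theta$, gives $|u_n(c)|\le K'|c|$ for $|c|$ small, uniformly in $n$ (consistent with $u_n(0)=u_\theta(0)=0$, \Cref{cor:zero}), and likewise for $u_\theta$; so $u_n-u_\theta\to 0$ uniformly in $n$ as $c\to 0$. Thus, given $\epsilon>0$, I can fix $\rho\in(0,1)$ with $|u_n-u_\theta|<\epsilon$ off the compact annulus $A=\{\rho\le|c|\le\rho^{-1}\}$ for all $n$, reducing the statement to proving $\limsup_n\sup_A(u_n-u_\theta)\le 0$.

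\emph{Step 2: from pointwise to uniform on $A$.} Each $u_n=\mu_n*\ell$ is subharmonic on $\C$, and the family $(u_n)$ is locally uniformly bounded above (all $\mu_n$ are positive, of mass $2\pi$, with support in $\ov B(0,\kappa)$). Moreover $u_\theta$ is continuous on $\C$: on $\C^*$ this follows from \eqref{eq:pot} and the continuity of $c\mapsto r(P_{\lambda,c})$ (\Cref{lem:cont}, Brjuno case), and continuity at $0$ from \eqref{eq:symu} and \eqref{eq:limr}. Consequently, once the \emph{pointwise} inequality $\limsup_n u_n(c)\le u_\theta(c)$ is known for every $c\in\C$, Hartogs' lemma (see \cite{Ho} or \cite{Ra}) upgrades it to: for each $\epsilon>0$ and all $n$ large, $u_n\le u_\theta+\epsilon$ on $A$. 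Combined with Step 1 this yields $\sup_\C(u_n-u_\theta)\le\epsilon$ for $n$ large, and letting $\epsilon\to 0$ finishes the proof.

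\emph{Step 3 and the main obstacle.} By \eqref{eq:pot}, for $c\in\C^*$,
\[ u_n(c)-u_\theta(c)=\log\frac{r_\theta(c)}{r_n(c)}+\log\frac{r(Q_n)}{r(Q_\theta)}, \]
so the pointwise bound reduces to the two dynamical facts $\liminf_n r_n(c)\ge r_\theta(c)$ for each $c\in\C^*$, and $r(Q_n)\to r(Q_\theta)$. The first is a semicontinuity statement: the size function $(\lambda,c)\mapsto-\log r(P_{\lambda,c})$, extended to roots of unity by the asymptotic size, is upper semicontinuous under perturbation of the multiplier (the parabolic analogue of \Cref{lem:scsP}; see \cite{thesis:Cheritat}), giving $\limsup_n(-\log r_n(c))\le-\log r_\theta(c)$; I expect this to be the soft part. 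The second, which via \eqref{eq:limr} and \eqref{eq:limr2} is the case $c\to\infty$, says that the asymptotic size of the $p_n/q_n$‑parabolic fixed point of the quadratic $Q_\lambda$ converges to the conformal radius of its $\theta$‑Siegel disk — and \emph{this} is the hard input and the only place the bounded‑type hypothesis is truly used, since for non‑Diophantine limits the size fails to be continuous. It is the known continuity of the size of quadratic Siegel disks at bounded‑type rotation numbers (\cite{thesis:Cheritat}, \cite{Yoccoz}), proved by parabolic‑implosion/near‑parabolic renormalization estimates; everything else in the argument is potential theory and soft semicontinuity.
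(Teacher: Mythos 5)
Your argument is correct in outline, but it reaches the uniformity in $c$ by a genuinely different route than the paper. The paper works directly with the dynamics uniformly in the parameter: for $|c|\ge 1$ it establishes a $c$-independent Lipschitz bound $|P_{n,c}-P_{\theta,c}|\le M'|\pqn-\theta|$ on the Siegel disk, combines it with Koebe-type distortion estimates for the linearizing parametrization, and runs the Jellouli induction over the first $q_n$ iterates uniformly in $c$, ending with the explicit bound $-\log r_n(c)\le-\log r_\theta(c)+\frac{1+q_n}{q_n}\log\frac1{1-2\eps}$ on $\{|c|\ge1\}$ and then the symmetry \eqref{eq:symu}; the case $|c|\le 1$ and hence the supremum over all of $\C$ follow at once. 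You instead get uniformity for free from potential theory: mass $2\pi$ and uniformly bounded supports (\Cref{lem:ka}, \Cref{prop:totalmass}, \eqref{eq:R}) control $u_n-u_\theta$ near $0$ and $\infty$ uniformly in $n$, and on the remaining compact annulus Hartogs' lemma for subharmonic functions (the $u_n$ are potentials, locally uniformly bounded above, and $u_\theta$ is continuous by \Cref{lem:cont} and \eqref{eq:symu}) upgrades the pointwise inequality $\limsup_n u_n(c)\le u_\theta(c)$ to a uniform one. This is a legitimate and arguably slicker argument, and it only needs the pointwise semicontinuity in $c$, which is exactly what \Cref{lem:jellouli} together with \Cref{lem:l1} provide for the fixed family $\alpha\mapsto P_{e^{2\pi i\alpha},c}$. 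Two caveats. First, your label ``soft part'' and ``parabolic analogue of \Cref{lem:scsP}'' understates that input: \Cref{lem:scsP} excludes roots of unity (where the radius of convergence is $0$ by convention), and the statement you actually need is the Jellouli--Ch\'eritat estimate that the first $q_n$ iterates stay close to the rotation, i.e.\ $\limsup_n|C_n(c)|^{1/q_n}\le 1/r_\theta(c)$; had you to prove it rather than cite \cite{thesis:Cheritat}/\cite{thesis:Jellouli}, you would redo most of the paper's computation, only without the uniformity in $c$. Second, you correctly isolate $L(Q_n)\to r(Q_\theta)$ as the hard input; the paper quotes it from \cite{thesis:Cheritat} as well, so on that point the two proofs coincide. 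What the paper's route buys is a self-contained, quantitative estimate with an explicit error term valid uniformly on $\{|c|\ge1\}$; what yours buys is brevity, at the cost of invoking Hartogs' lemma and the pointwise parabolic estimate as black boxes.
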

\noindent Its proof is based on a study in \cite{thesis:Cheritat} and is postponed to \Cref{sub:pfmaj}.

\medskip

We then prove a partial result in the more general case when $\theta$ is a Brjuno number:
\begin{proposition}\label{prop:super}
Let $\theta$ be a Brjuno number, $\lambda=e^{2\pi i \theta}$ and let $W_\theta$ be the component containing a neighborhood of $0$ of the complement of the support of $\mu_{\lambda}$. Then
$u_{n} \tend u_\theta$ in $\Luloc (W_\theta)$.
\end{proposition}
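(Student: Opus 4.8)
The plan is to deduce the statement from a compactness argument in $\Luloc$, the single nontrivial ingredient being \Cref{prop:maj}. I will show that every subsequence of $(u_n)$ has a further subsequence converging in $\Luloc(\C)$ to a subharmonic function whose restriction to $W_\theta$ equals $u_\theta$; since $\Luloc(W_\theta)$ is metrizable, uniqueness of these subsequential limits on $W_\theta$ forces $u_n\tend u_\theta$ in $\Luloc(W_\theta)$.

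First, compactness of $(u_n)$. By \Cref{lem:ka} the measures $\mu_n$ are all supported in a fixed ball $\ov B(0,\kappa)$, and each $\mu_n$ has total mass $2\pi$ (see \Cref{sec:parabo}); hence $u_n(z)=\frac1{2\pi}\int\log|z-w|\,d\mu_n(w)\le\log(|z|+\kappa)$, so the $u_n$ are subharmonic on $\C$ and locally uniformly bounded above. Since $u_n(0)=0$ for every $n$ by \Cref{cor:zero}, no subsequence can tend to $-\infty$ locally uniformly. By the standard compactness theorem for subharmonic functions (see \cite{Ho}, \S4.1, or \cite{Ra}), every subsequence of $(u_n)$ therefore admits a further subsequence $(u_{n_k})$ converging in $\Luloc(\C)$ to some subharmonic $v\not\equiv-\infty$.

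Next, two estimates on such a limit $v$. On one hand, by \Cref{prop:maj} we may write $u_n\le u_\theta+\epsilon_n$ with $\epsilon_n\tend 0$; integrating over an arbitrary ball $B$ and letting $k\tend\infty$ gives $\int_B(u_\theta-v)\,dA\ge 0$, hence $v\le u_\theta$ almost everywhere, and then everywhere by comparing solid averages and using subharmonicity of $u_\theta$. On the other hand, the solid sub-mean inequality gives $\frac1{|B(0,\rho)|}\int_{B(0,\rho)}u_{n_k}\,dA\ge u_{n_k}(0)=0$ for every $\rho>0$; passing to the limit in $k$ and then letting $\rho\tend 0$ yields $v(0)\ge 0=u_\theta(0)$. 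Now use that $W_\theta$ is disjoint from $\Supp\mu_\lambda$, so that $u_\theta$ is harmonic on $W_\theta$: the function $h:=u_\theta-v$ is superharmonic on the connected open set $W_\theta$, nonnegative there, and vanishes at the interior point $0\in W_\theta$. By the minimum principle $h\equiv 0$ on $W_\theta$, i.e.\ $v=u_\theta$ on $W_\theta$, which is exactly what the subsequence argument requires.

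The crux is \Cref{prop:maj}, whose proof is deferred to \Cref{sub:pfmaj}: it amounts to an asymptotic lower bound, uniform in $c$, for the conformal radius of the Siegel disk of $P_{e^{2\pi i\theta},c}$ in terms of the asymptotic sizes of the parabolic points of $P_{e^{2\pi i\pqn},c}$, and relies on quantitative estimates from \cite{thesis:Cheritat}. Everything else above is soft potential theory. Two caveats worth flagging: one only identifies the limit on $W_\theta$, neither globally nor the limit of $\mu_n$ off $W_\theta$, which is why the statement is merely partial; and the argument never needs a weak-$\ast$ subsequential limit of $(\mu_n)$ at all, only the $\Luloc$-compactness of $(u_n)$ together with the normalizations $u_n(0)=0$ and $u_n\le u_\theta+o(1)$.
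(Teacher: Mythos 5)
Your proof is correct, but it follows a genuinely different route from the paper. The paper isolates an elementary, self-contained statement (its Proposition~\ref{prop:upper2}, the ``trick suggested by Xavier Buff''): a sequence of subharmonic functions $f_n$ on a connected open set with $f_n(x_0)\to 0$ at one point and $\limsup_n\sup_K f_n\le 0$ on every compact $K$ converges to $0$ in $\Luloc$; this is applied to $f_n=u_n-u_\theta$ on $W_\theta$ and proved by hand via an open--closed--nonempty connectedness argument, propagating smallness of $\int|f_n|$ from ball to ball using the sub-mean inequality composed with disk automorphisms. You instead invoke the standard compactness theorem for subharmonic functions locally uniformly bounded above (the local upper bound coming from the uniform mass $2\pi$ and the uniform support bound of \Cref{lem:ka}, which the paper only needs later, in the proof of \Cref{cor:c2}; one could equally get it from \Cref{prop:maj} itself since $u_\theta$ is locally bounded above), extract $\Luloc(\C)$ limits $v$, and identify $v=u_\theta$ on $W_\theta$ by combining $v\le u_\theta$ (from \Cref{prop:maj} and comparison of solid means), $v(0)\ge 0=u_\theta(0)$ (from \Cref{cor:zero} and the sub-mean inequality), and the strong minimum principle applied to the superharmonic function $u_\theta-v$ on the connected set $W_\theta$; the subsequence principle then gives convergence. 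Both arguments rest on the same three inputs (\Cref{prop:maj}, \Cref{cor:zero}, harmonicity of $u_\theta$ off the support), and your identification-of-limits steps are all sound, including the passage from the a.e.\ inequality to the everywhere inequality and the computation of $v(0)$ via shrinking solid means. What each buys: your version is shorter and outsources the hard analysis to a classical compactness theorem (do check that the reference you give for it matches the edition of \cite{Ho} actually cited in the paper); the paper's version is quantitative and self-contained, and its auxiliary Proposition~\ref{prop:upper2} needs no information about masses or supports of the $\mu_n$ at this stage.
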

\noindent Note that in this proposition, the convergence is only claimed on $W_\theta$. However, by the symmetry formula \cref{eq:symu}, the convergence also occurs on $1/W_\theta := \{\,1/z\,;\,z\in W_\theta\text{ and }z\neq 0\,\}$.
In \Cref{sub:pfsuper} we deduce \Cref{prop:super} from \Cref{prop:maj}.
In fact, we will only need a weaker result than \Cref{prop:maj}: that
\[\limsup_{n\to\infty} \sup_K (u_n-u_\theta) \leq 0\]
for all compact subset $K$ of the set $W_\theta$; but our method gives the stronger \Cref{prop:maj}.

\begin{remark*}
We do not need the following fact but find it interesting: $1/W_\theta$ is disjoint from $W_\theta$: otherwise $W_\theta$ would be a neighborhood of $0$ and of $\infty$ and one of the critical point would remain on $\partial \Delta(P_{\theta,c})$ by \Cref{lem:crfollows}, which contradicts \Cref{lem:cesc}).
\end{remark*}

Let us denote
\[D_n \stend D\]
the weak-$\ast$ convergence.

We will then complete the job by the following:
\begin{corollary}\label{cor:c2}
If $\theta$ is a Brjuno number and $\C= \ov{ W_\theta}\cup \ov{1/W_\theta}$  then
$\mu_n  \stend \mu_\theta$.
\end{corollary}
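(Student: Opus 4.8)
The plan is to combine \Cref{prop:super} with its mirror image under the involution $c \mapsto 1/c$, using the symmetry formula \cref{eq:symu}, and to upgrade $\Luloc$ convergence of potentials to weak-$\ast$ convergence of their Laplacians. First I would observe that, since by hypothesis $\C = \ov{W_\theta} \cup \ov{1/W_\theta}$, the two open sets $W_\theta$ and $1/W_\theta$ together cover $\C^*$ up to the common boundary, which is the support $Z_\theta = \Supp \mu_\theta$ (more precisely: every point of $\C^*$ that is not in $W_\theta \cup 1/W_\theta$ lies in $\Supp\mu_\theta$). On $W_\theta$ we have $u_n \tend u_\theta$ in $\Luloc(W_\theta)$ by \Cref{prop:super}. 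Applying the change of variables $c \mapsto 1/c$ to the identity $u_n(c) = u_n(1/c) + \log|c|$ from \cref{eq:symu} (valid for the approximants since each $p_n/q_n$ is rational, hence Brjuno) and the same identity for $u_\theta$, and noting that $c \mapsto 1/c$ is a biholomorphism of $\C^*$ that preserves $\Luloc$ convergence on compact subsets avoiding $0$, I get $u_n \tend u_\theta$ in $\Luloc(1/W_\theta \setminus\{0\})$ as well; since $0 \notin 1/W_\theta$ anyway (as $W_\theta$ is a neighborhood of $0$, hence unbounded in the $1/c$ picture only if $W_\theta$ contains $\infty$, which we need not assume), we conclude $u_n \tend u_\theta$ in $\Luloc(\C^*)$.

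Next I would handle the point $0$. By \Cref{cor:zero} we have $u_n(0) = u_\theta(0) = 0$ for every $n$, and moreover near $0$ each $u_n$ and $u_\theta$ is harmonic (since $0 \notin \Supp\mu_n$ by \Cref{lem:ka} for $n$ large — the roots of $\check C$ stay in a fixed ball away from $0$ — wait, more carefully: $0$ has a fixed punctured neighborhood disjoint from all $\Supp\mu_n$ and from $\Supp\mu_\theta$, because $W_\theta$ contains a neighborhood of $0$ and the analogous statement holds uniformly for the approximants via the estimates behind \Cref{prop:super}). On that fixed neighborhood $V$ of $0$, the functions $u_n$ are harmonic with $u_n(0)=0$, and by \cref{eq:symu} together with \cref{eq:limr,eq:limr2} they are uniformly bounded there. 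A uniformly bounded sequence of harmonic functions converging in $\Luloc(V \setminus\{0\})$ converges in $\Luloc(V)$ (the singularity at $0$ is removable in the limit and the convergence propagates by the mean value property / Harnack). Hence $u_n \tend u_\theta$ in $\Luloc(\C)$.

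Finally I would pass from potentials to measures. Both $\mu_n$ and $\mu_\theta$ are supported in the fixed ball $\ov B(0,\kappa)$ (\Cref{lem:ka} and \eqref{eq:R}) and all have total mass $2\pi$ (\Cref{prop:totalmass}; in the parabolic case $\frac{2\pi}{q}\cdot q = 2\pi$). By the compactness of the space of measures of mass $2\pi$ supported in $\ov B(0,\kappa)$ for the weak-$\ast$ topology, it suffices to identify every weak-$\ast$ limit of a subsequence with $\mu_\theta$. So let $\mu_{n_k} \stend \nu$. For any test function $\phi \in C^\infty_c(\C)$, we have, using $\Delta u_n = \mu_n$ in the sense that $\int (\Delta\phi)\, u_n = \int \phi\, \mu_n$ (the potential identity recalled before \Cref{lem:fub}, valid with $C^0$ test functions, a fortiori $C^\infty$ ones),
\[
\int \phi \, d\nu = \lim_k \int \phi\, d\mu_{n_k} = \lim_k \int (\Delta\phi)\, u_{n_k} = \int (\Delta\phi)\, u_\theta = \int \phi\, d\mu_\theta,
\]
where the third equality uses $u_{n_k} \tend u_\theta$ in $\Luloc$ (so in $L^1$ on the compact $\Supp\Delta\phi$) and $\Delta\phi \in C^0_c$. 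Hence $\nu = \mu_\theta$, every subsequence has the same limit, and $\mu_n \stend \mu_\theta$.

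The main obstacle in this argument is the removability/harmonicity bookkeeping at the origin and, more importantly, securing a \emph{fixed} (independent of $n$) punctured neighborhood of $0$ on which all $u_n$ are harmonic and uniformly bounded — this is what makes the $\Luloc$ convergence on $\C^*$ promote to $\Luloc(\C)$, and it is really a consequence of the uniformity already present in \Cref{prop:super} combined with \Cref{lem:ka}. Everything else is the standard potential-theoretic dictionary: $\Luloc$ convergence of subharmonic potentials implies weak-$\ast$ convergence of their (distributional) Laplacians, packaged here without the language of distributions via the identity in \Cref{lem:fub}.
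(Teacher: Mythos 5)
There is a genuine gap at the pivotal step: from $u_n\tend u_\theta$ in $\Luloc(W_\theta)$ (\Cref{prop:super}) and, via \cref{eq:symu}, in $\Luloc(1/W_\theta)$, you conclude ``$u_n\tend u_\theta$ in $\Luloc(\C^*)$'' and then in $\Luloc(\C)$. But $W_\theta\cup 1/W_\theta$ does \emph{not} cover $\C^*$: its complement contains $\Supp\mu_\theta$ (the Zakeri curve in the bounded type case), and the hypothesis $\C=\ov{W_\theta}\cup\ov{1/W_\theta}$ only gives that this union is \emph{dense}, not that it is everything, nor even that the complement has zero Lebesgue measure. \Cref{prop:super} says nothing about convergence of $u_n$ near points of $\Supp\mu_\theta$, and that is exactly where your final computation needs it: in the chain $\int\phi\,d\mu_{n_k}=\int(\Delta\phi)u_{n_k}\tend\int(\Delta\phi)u_\theta$ the support of $\Delta\phi$ must be allowed to meet $\Supp\mu_\theta$, so you are implicitly using $L^1$ convergence on compacts that hit the support. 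In fact $u_n\tend u_\theta$ in $\Luloc(\C)$ is essentially equivalent to the conclusion $\mu_n\stend\mu_\theta$ you are trying to prove, so invoking it here is circular. (The discussion of harmonicity and uniform bounds at $c=0$ is harmless but not where the difficulty lies; $0$ is interior to $W_\theta$ and already covered by \Cref{prop:super}.)

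The paper's proof is structured precisely to avoid this issue: it extracts a weak-$\ast$ convergent subsequence $\mu_{n_k}\stend\mu$ (all masses are $2\pi$ with supports in a fixed ball), sets $u=\mu*\ell$, and uses \Cref{lem:fub} to get $u_{n_k}\lambda\stend u\lambda$ as measures; combined with \Cref{prop:super} this gives $u=u_\theta$ a.e.\ on $W_\theta\cup 1/W_\theta$, hence everywhere there since both are subharmonic. The missing ingredient in your argument is then supplied by fine potential theory: a connected set is non-thin at every point of its closure (Theorem~3.8.3 in \cite{Ra}), so the hypothesis $\C=\ov{W_\theta}\cup\ov{1/W_\theta}$ lets one propagate the equality $u=u_\theta$ from the open sets to their closures, i.e.\ to all of $\C$, and only then conclude $\mu=\Delta u=\Delta u_\theta=\mu_\theta$. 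To repair your proof you would need either this non-thinness (or some comparable) argument to identify the limit potential across $\Supp\mu_\theta$, or an independent proof of $L^1_{\mathrm{loc}}$ convergence of $u_n$ across the support, which \Cref{prop:super} does not provide.
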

\noindent In \Cref{sub:pfc2} we deduce \Cref{cor:c2} from \Cref{prop:super}. 

\medskip

\Cref{cor:c2} applies in particular to the case when $\theta$ has bounded type since by \Cref{thm:bddType} we know that the support is a Jordan curve. This proves \Cref{thm:main}.

\medskip

In the subsequent sections, we prove the three statements above.

\subsection{Proof of Corollary~\ref{cor:c2} from Proposition~\ref{prop:super}}\label{sub:pfc2}

Recall that the measures $\mu_n$ and $\mu_\theta$ all have total mass $2\pi$ and support in a common ball $\ov B(0,R)$ for some $R>0$ by \Cref{lem:qlr,prop:harmo,lem:ka}.
By weak-$\ast$ compactness of the set $E$ of Borel measures of mass $2\pi$ on $\ov B(0,R)$, it is enough to prove that for all subsequence of $\mu_n$ that has a weak-$\ast$ limit $\mu\in E$, then $\mu=\mu_\theta$.

Recall that $u_n = \mu_n * \ell$ and $u_\theta = \mu_\theta * \ell$.
Let
\[u = \mu * \ell.\]
Then $u$ is a subharmonic function.

\begin{lemma}\label{lem:eqW} Let $\lambda$ be the Lebesgue measure:
  \[u_n\lambda \stend u\lambda\]
\end{lemma}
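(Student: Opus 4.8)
The plan is to deduce weak-$\ast$ convergence of the measures $u_n\lambda$ to $u\lambda$ directly from the weak-$\ast$ convergence $\mu_n\stend\mu$ together with uniform integrability of the potentials. First I would recall that a test function $\phi\in C^0_c(\C)$ is fixed; the goal is to show $\int\phi\,u_n\,d\lambda\to\int\phi\,u\,d\lambda$. By \Cref{lem:fub} applied to each $\mu_n$ and to $\mu$, this is equivalent to showing
\[
\int (\phi*\ell)\,d\mu_n \tend \int (\phi*\ell)\,d\mu.
\]
Now $\phi*\ell$ is a continuous function on $\C$ (since $\phi\in C^0_c$ and $\ell\in\Luloc$), so this would be immediate from $\mu_n\stend\mu$ \emph{if} $\phi*\ell$ had compact support. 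It does not, but all the measures $\mu_n$ and $\mu$ are supported in the common compact ball $\ov B(0,R)$, so only the values of $\phi*\ell$ on $\ov B(0,R)$ matter. Concretely, pick a cutoff function $\chi\in C^0_c(\C)$ with $\chi\equiv 1$ on $\ov B(0,R)$; then $\int(\phi*\ell)\,d\mu_n=\int \chi\cdot(\phi*\ell)\,d\mu_n$ for every $n$ and likewise for $\mu$, and $\chi\cdot(\phi*\ell)\in C^0_c(\C)$, so weak-$\ast$ convergence of $\mu_n$ to $\mu$ gives exactly what we want.

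So the main steps, in order, are: (1) fix $\phi\in C^0_c(\C)$ and invoke \Cref{lem:fub} to rewrite $\int\phi\,u_n\,d\lambda=\int(\phi*\ell)\,d\mu_n$ and $\int\phi\,u\,d\lambda=\int(\phi*\ell)\,d\mu$; (2) observe that $\phi*\ell$ is continuous on $\C$; (3) use that $\Supp\mu_n\cup\Supp\mu\subset\ov B(0,R)$ (from \Cref{lem:qlr}, \Cref{prop:harmo}, \Cref{lem:ka}) to replace $\phi*\ell$ by a compactly supported continuous function $g$ that agrees with it on $\ov B(0,R)$, so the integrals are unchanged; (4) conclude $\int g\,d\mu_n\to\int g\,d\mu$ from $\mu_n\stend\mu$. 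Combining these chains of equalities yields $\int\phi\,u_n\,d\lambda\to\int\phi\,u\,d\lambda$, which is precisely $u_n\lambda\stend u\lambda$.

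I do not expect any serious obstacle here; the only point that needs a little care is that weak-$\ast$ convergence of measures against a \emph{fixed} continuous function $g$ with compact support is literally the definition, so step (4) is free, and the cutoff trick in step (3) is elementary once one notes the supports are contained in a fixed ball. One might worry about whether $u_n$ is genuinely in $\Luloc$ so that $u_n\lambda$ makes sense as a locally finite measure — but this is guaranteed since each $u_n=\mu_n*\ell$ is a potential of a finite compactly supported measure, hence subharmonic and not identically $-\infty$, hence in $\Luloc(\C)$ as recalled after \Cref{lem:fub}. The same applies to $u=\mu*\ell$. Thus all the objects $u_n\lambda$ and $u\lambda$ are well-defined locally finite (signed, but in fact the potentials are locally bounded above) measures and the convergence statement is meaningful.

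Thus I would write the proof of \Cref{lem:eqW} as: let $\phi\in C^0_c(\C)$; by \Cref{lem:fub}, $\int\phi\,(u_n)\,d\lambda=\int(\phi*\ell)\,d\mu_n$ and $\int\phi\,u\,d\lambda=\int(\phi*\ell)\,d\mu$; choosing $\chi\in C^0_c(\C)$ equal to $1$ on a ball containing all the supports, these equal $\int \chi(\phi*\ell)\,d\mu_n$ and $\int\chi(\phi*\ell)\,d\mu$ respectively; since $\chi(\phi*\ell)$ is continuous with compact support, $\mu_n\stend\mu$ gives $\int\chi(\phi*\ell)\,d\mu_n\to\int\chi(\phi*\ell)\,d\mu$, and we are done.
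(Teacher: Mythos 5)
Your proof is correct and follows essentially the same route as the paper: both rewrite $\int \phi\, u_n\, d\lambda = \int (\phi*\ell)\, d\mu_n$ (and likewise for $\mu$) via \Cref{lem:fub} and then pass to the limit using $\mu_n \stend \mu$. Your cutoff function $\chi$ is only a small added precision — the paper invokes weak-$\ast$ convergence directly against the non-compactly-supported function $\phi*\ell$ — and it is indeed justified precisely because all the measures live in a common compact ball.
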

\begin{proof}
  For all continuous test function $\phi$, the function $\phi * \ell$ is continuous (but not necessarily with compact support) and by \Cref{lem:fub}:
  \[\int (\phi * \ell) \times \mu_n = \int \phi \times (\ell * \mu_n) =\int \phi u_n\lambda\]
  and 
  \[\int (\phi * \ell) \times \mu = \int \phi \times (\ell * \mu) = \int \phi u\lambda.\]
  Now $\int (\phi * \ell) \times \mu_n \tend \int (\phi * \ell) \times \mu$ by definition of weak-$\ast$ convergence of $\mu_n$ to $\mu$.
\end{proof}

By \Cref{prop:super}, $u_n\tend u_\theta$ in $\Luloc(W_\theta)$.
By the symmetry relations \cref{eq:symu}, this also holds on $1/W_\theta$.
This implies the following weaker statement: $u_{n}\lambda \stend u_\theta\lambda$ on $W_\theta\cup 1/ W_\theta$.

It follows that $u\lambda=u_\theta\lambda$ on $W_\theta\cup 1/ W_\theta$, so $u=u_\theta$
almost everywhere on $W_\theta\cup 1/ W_\theta$. Since both functions are subharmonic, this implies (see theorem~2.7.5 in \cite{Ra})
\[u = u_\theta \text{ on $W_\theta\cup 1/ W_\theta$.}\]

To extend this equality to $\C$, we will use the notion of non-thin sets, see \cite{Ra}.

\begin{definition}[Def.\ 3.8.1 page~79 in \cite{Ra}]
A subset $S$ of $\C$ is \emph{non-thin} at $\zeta\in \C$, if $\zeta \in \ov{S\setminus\{\zeta\}}$ and if for all subharmonic function defined in a neighborhood of $\zeta$, 
  \[\limsup_{\substack{z\to\zeta \\ z\in S\setminus\{\zeta\}}} u(z) = u(\zeta).\]
\end{definition}

\begin{theorem}[Thm.\ 3.8.3 page~79 of \cite{Ra}]
  A connected set containing more than one point is non-thin at every point of its closure.
\end{theorem}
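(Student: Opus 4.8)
The plan is to verify, for a connected set $S$ with more than one point and a point $\zeta\in\overline S$, the two clauses in the definition of non‑thinness. The first clause, $\zeta\in\overline{S\setminus\{\zeta\}}$, is immediate: a connected set with more than one point has no isolated point (an isolated point would be a nonempty proper closed‑and‑open subset), so every point of $S$ is a limit point of $S$; taking a ball around it that omits $\zeta$ shows $S\subseteq\overline{S\setminus\{\zeta\}}$, whence $\zeta\in\overline S\subseteq\overline{S\setminus\{\zeta\}}$.

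For the second clause I would fix a subharmonic function $u$ defined near $\zeta$; since $u$ is upper semicontinuous, $\limsup_{z\to\zeta,\,z\in S\setminus\{\zeta\}}u(z)\le u(\zeta)$ automatically, so the task is to exclude strict inequality. One may assume $u(\zeta)$ finite (if $u(\zeta)=-\infty$ the equality is trivial) and, after subtracting a constant, that $u(\zeta)=0$. Suppose toward a contradiction that $u<-a$ on $S\cap(\overline B(\zeta,\rho)\setminus\{\zeta\})$ for some $a,\rho>0$. The tempting elementary route is the sub‑mean inequality on the circles $\{|z-\zeta|=t\}$: connectedness of $S$ together with $\zeta\in\overline S$ forces (by the intermediate value property of $z\mapsto|z-\zeta|$ on the connected set $S$) each small such circle to meet $S$, hence to meet the open set $\{u<-a\}$ in a nonempty arc on which $u$ is very negative; but $u$ may be large on the rest of the circle, so this alone does not push the circular mean below $u(\zeta)=0$. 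This is exactly where the difficulty lies: one needs the portion of $S$ surrounding $\zeta$ to be ``fat'' at every scale, not merely nonempty.

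To quantify that, I would route through the characterisation of thin sets by logarithmic capacity (the planar Wiener criterion, see \cite{Ra}): with $S_n:=S\cap\{2^{-n-1}\le|z-\zeta|\le 2^{-n}\}$, thinness of $S$ at $\zeta$ is equivalent to convergence of a series of the form $\sum_n n/\log\!\big(1/\mathrm{cap}(S_n)\big)$. It then suffices to bound $\mathrm{cap}(S_n)$ from below. Since $S$ is connected, contains points arbitrarily close to $\zeta$ and (for $n$ large) points at distance $>2^{-n}$ from $\zeta$, standard continuum theory (boundary bumping) produces a subcontinuum of $\overline S$ inside the closed annulus $\{2^{-n-1}\le|z-\zeta|\le 2^{-n}\}$ meeting both of its bounding circles; restricted to the open annulus it still has diameter $\gtrsim 2^{-n}$ and lies in $\overline{S_n}$. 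Using that a connected set of diameter $d$ has logarithmic capacity at least $d/4$, and that capacity is unchanged under closure, we get $\mathrm{cap}(S_n)=\mathrm{cap}(\overline{S_n})\gtrsim 2^{-n}$, so $\log(1/\mathrm{cap}(S_n))=O(n)$, the general term of the Wiener series is bounded below by a positive constant, the series diverges, and $S$ is non‑thin at $\zeta$. The main obstacle is conceptual rather than computational: recognising that the naive circle‑average argument cannot succeed and that one must invoke the capacity/Wiener characterisation; after that only two classical capacity estimates and a little care in comparing $S_n$ with $\overline{S_n}$ remain.
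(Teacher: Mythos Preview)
The paper does not prove this statement; it is quoted from Ransford \cite{Ra} and used as a black box, so there is no proof in the paper to compare your attempt against.

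Regarding your argument: the route via the Wiener criterion is reasonable in spirit, but there is a real gap. The continuum you obtain by boundary-bumping lives in $\overline S$, so what you actually bound from below is the capacity of (a subset of) $\overline{S_n}$; you then invoke ``capacity is unchanged under closure'' to conclude $\mathrm{cap}(S_n)=\mathrm{cap}(\overline{S_n})$. This is false in general for logarithmic capacity: $\mathbb Q\cap[0,1]$ is polar while its closure has capacity $1/4$. Monotonicity only gives $\mathrm{cap}(S_n)\le\mathrm{cap}(\overline{S_n})$, which is the wrong direction for your purpose, and showing that $\overline S$ is non-thin at $\zeta$ does not by itself yield that $S$ is non-thin (subsets of thin sets are thin, not supersets). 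A related issue is that the Wiener criterion in \cite{Ra} is formulated for Borel sets, whereas an arbitrary connected set need not be Borel. The argument can be salvaged, but not through this closure step; one has to exploit the connectedness of $S$ itself rather than of $\overline S$.

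For comparison, Ransford's own proof in \cite{Ra} is much shorter and does not go through the Wiener criterion at all. It rests on the preceding result (Theorem~3.8.2 there), to the effect that a set thin at $\zeta$ must be disjoint from arbitrarily small circles $|z-\zeta|=r$. Your intermediate-value observation --- that a connected $S$ with more than one point and $\zeta\in\overline S$ meets \emph{every} sufficiently small circle about $\zeta$ --- then finishes the proof of 3.8.3 in one line. So the ``elementary route'' you considered and set aside is, modulo that one cited lemma, exactly the efficient argument.
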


By hypothesis, $\C=\ov{W_\theta}\cup \ov{1/W_\theta}$.
We already know that $u=u_\theta$ on $W_\theta$ and on $1/W_\theta$.
Now for all $\zeta \in \ov{W_\theta}$, 
\[ u(\zeta) = \limsup_{\substack{z \to \zeta \\ z \in W_\theta}} u(z) = \limsup_{\substack{z \to \zeta \\ z \in W_\theta}} u_\theta(z) = u_\theta(\zeta)\]
and similarly with $1/W_\theta$ in place of $W_\theta$.

Hence $u=u_\theta$ on $\C$, so their generalized Laplacians are equal: $\mu = \mu_\theta$.
This ends the proof of \Cref{cor:c2}.

\subsection{Proof of Proposition~\ref{prop:super} from Proposition~\ref{prop:maj}}\label{sub:pfsuper}

We will use a nice trick suggested by Xavier Buff.

The subharmonic function $u_\theta$ is harmonic on $W_\theta$, hence $u_n-u_\theta$ is subharmonic on $W_\theta$.
Moreover by \Cref{cor:zero}, $u_n(0) = 0 = u_\theta(0)$ so $u_n-u_\theta$ vanishes at the origin.

By \Cref{prop:maj}, the \Cref{prop:super} will be a consequence of the following proposition applied to the functions $f_n = u_n - u_\theta$ on $X=W_\theta$ and with $x_0=0$.

\begin{proposition}\label{prop:upper2}
Assume that $f_n$ is a sequence of subharmonic functions on a connected open subset $X$ of $\C$ and assume that
\[\exists x_0\in X\text{ such that } f_n(x_0) \tend 0 \]
and that for every compact subset  $K$ of $X$,
\[ \limsup_{n\to\infty}\ (\sup_K f_n) \leq 0\]
Then $f_n \tend 0$ in $\Luloc(X)$.
\end{proposition}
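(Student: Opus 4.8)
The plan is to exploit the two classical compactness features of subharmonic functions: that a family locally bounded above either converges locally uniformly to $-\infty$ or has a subsequence converging in $\Luloc$ to a subharmonic limit, and that $\Luloc$-limits of subharmonic functions can be computed via decreasing regularizations. Since $\Luloc(X)$ convergence can be checked along subsequences, it suffices to show that every subsequence of $(f_n)$ has a further subsequence converging to $0$ in $\Luloc(X)$. Fix such a subsequence; relabel it $(f_n)$.

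First I would use the hypothesis $\limsup_n(\sup_K f_n)\le 0$ for every compact $K$ to get that $(f_n)$ is locally uniformly bounded above on $X$: for each compact $K$ there is $n_K$ and a constant (namely $1$) so that $f_n\le 1$ on $K$ for $n\ge n_K$, and the finitely many remaining $f_n$ are each bounded above on $K$ since they are subharmonic (hence locally bounded above). By the standard dichotomy for sequences of subharmonic functions bounded above on a connected open set (see e.g.\ Theorem~3.2.12 / Theorem~4.1.9-type results; in \cite{Ra} this is Theorem~2.7.2-ish combined with Hörmander's Theorem~3.2.13 in \cite{Ho}), either $f_n\to-\infty$ locally uniformly on $X$, or a subsequence converges in $\Luloc(X)$ to some subharmonic function $f$. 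I would rule out the first alternative using $f_n(x_0)\to 0$: I need a lower bound on $f_n$ near $x_0$ that survives. Here I would invoke the submean inequality together with the upper bound: on a small closed ball $\ov B(x_0,\rho)\subset X$ we have $f_n(x_0)\le \frac{1}{\pi\rho^2}\int_{B(x_0,\rho)} f_n\,d\lambda$, and writing the integrand as $(\text{const bound}) - (\text{const bound} - f_n)$ with $f_n\le M$ on that ball for $n$ large, the uniform upper bound $M$ plus $f_n(x_0)\to 0$ forces $\int_{B(x_0,\rho)}(M-f_n)\,d\lambda$ to stay bounded, i.e.\ $\int_{B(x_0,\rho)} f_n\,d\lambda$ stays bounded below. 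Hence $f_n\not\to-\infty$ in $\Luloc$, excluding the first alternative; so after passing to a further subsequence, $f_n\to f$ in $\Luloc(X)$ with $f$ subharmonic (or $f\equiv-\infty$, also excluded by the same integral bound).

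It remains to identify $f$ with $0$. Upper bound: since $f_n\to f$ in $\Luloc$, the limit satisfies $f(z)=\limsup_{r\to 0}\frac{1}{\pi r^2}\int_{B(z,r)} f\,d\lambda$ (equality for subharmonic $f$), and from $\limsup_n \sup_K f_n\le 0$ one gets $\frac{1}{\pi r^2}\int_{B(z,r)} f\,d\lambda\le 0$ for every small ball, hence $f\le 0$ on $X$. Lower bound: $f$ is subharmonic, hence in $\Luloc$ and locally bounded above, and it satisfies the submean inequality; evaluating at $x_0$ via the argument above gives $f(x_0)\ge\limsup_n f_n(x_0)=0$ along the relevant subsequence — more carefully, $f(x_0)=\lim_{r\to0}\frac{1}{\pi r^2}\int_{B(x_0,r)} f\,d\lambda = \lim_{r\to0}\lim_n \frac{1}{\pi r^2}\int_{B(x_0,r)} f_n\,d\lambda$, and the submean inequality $f_n(x_0)\le\frac{1}{\pi r^2}\int_{B(x_0,r)} f_n\,d\lambda$ plus $f_n(x_0)\to 0$ and the uniform upper bound give that this double limit is $\ge 0$. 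So $f(x_0)\ge 0$. A nonpositive subharmonic function on a connected open set attaining the value $0$ at an interior point is identically $0$ by the maximum principle. Therefore $f\equiv 0$, and since the limit is the same ($0$) along every subsequence, $f_n\to 0$ in $\Luloc(X)$.

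The main obstacle I anticipate is the careful handling of the lower bound near $x_0$: one must be sure that $f_n(x_0)\to 0$ genuinely prevents the ``escape to $-\infty$'' of the $\Luloc$-norm, and that the interchange of the limit in $n$ with the spatial average over $B(x_0,r)$ is legitimate (it is, because $\Luloc$ convergence gives $L^1$ convergence on the fixed ball $B(x_0,r)$). Everything else is a routine invocation of standard subharmonic compactness and maximum-principle facts from \cite{Ra} (or \cite{Ho}); no new computation with the dynamics is needed.
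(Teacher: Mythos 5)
Your proof is correct, but it takes a genuinely different route from the paper's. You reduce to subsequences (legitimate, since $\Luloc(X)$ is metrizable), invoke the classical compactness dichotomy for sequences of subharmonic functions that are locally uniformly bounded above (Hörmander/Ransford: either locally uniform convergence to $-\infty$, which $f_n(x_0)\to 0$ excludes, or an $\Luloc$-convergent subsequence with subharmonic limit $f$), and then identify $f\equiv 0$ by computing shrinking ball averages to get $f\le 0$ and $f(x_0)\ge 0$, finishing with the maximum principle on the connected set $X$; all the individual steps (the uniform local upper bound, the interchange of the limit in $n$ with the average over a fixed ball via $L^1$ convergence, and the recovery of pointwise values of a subharmonic function from its ball averages) are sound. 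The paper instead gives a self-contained connectedness argument: it considers the set $\mathcal{A}$ of points possessing a ball $B$ with $\int_B|f_n|\to 0$, shows $\mathcal{A}$ is nonempty (it contains $x_0$, by exactly the average estimate you use there), open, and closed --- the closedness being the main point, proved by precomposing $f_n$ with conformal automorphisms of a disk $B$ around a boundary point of $\mathcal{A}$ so as to move the base point of the submean inequality into a smaller disk $B'\Subset B$ where $\int_{B'}|f_n|\to 0$ is already known --- and concludes $\mathcal{A}=X$ by connectedness. Your approach buys brevity at the price of citing a nontrivial (though standard) compactness theorem and an extraction of subsequences; the paper's approach uses only the submean inequality and an explicit change of variables, stays entirely elementary, and yields the $L^1$ convergence on balls directly for the full sequence.
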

\begin{proof}
  For $n$ big enough we have $f_n(x_0)\neq -\infty$, so $f_n \not\equiv - \infty$, so it is in $\Luloc(X)$.
  Consider the set $\cal A$ of points of $X$ which have an open ball neighborhood $B$ on which $\int_B |f_n|\tend 0$. Then we claim that relative to $X$, the set $\cal A$ is open, closed and non-empty.
  
  Open is immediate.
  
  Closed follows from the following argument: Let $x\in X$ such that $x$ is in the closure of $\cal A$. Let $B=B(x,\eps)$ be compactly contained in $X$.
  By hypothesis, there is s sequence $M_n\in\R$ such that $M_n\to 0$ and such that for all $n$,
  \[f_n \leq M_n\]
  on $B$. We have $M_n-f_n\geq0$ and $|f_n| = |-M_n+M_n-f_n| \leq |M_n| + M_n-f_n$, so it is enough to prove that $\int_B(M_n-f_n) \tend 0$.
  For $y\in B$, let
  \[\phi_y (z) = x + \frac{(z-x)+(y-x)}{1 + \ov{(y-x)}(z-x)/\eps^2}\]
  which is a conformal automorphism of $B$ mapping $x$ to $y$.
  Since $f_n\circ \phi_y$ is also subharmonic (by corollary~2.4.3 in \cite{Ra}, subharmonicity is invariant by an analytic change of variable in the domain), we have
  \begin{equation}\label{eq:e6}
    f_n(y) \lambda(B) \leq \int_B f_n\circ\phi_y(z) d\lambda(z)
  \end{equation}
  Since $x\in \ov{\cal A}$, there is some $x'\in B$ such that $x'\in\cal A$, hence there is some open ball $B'\subset X$ containing $x'$ and a sequence $M'_n>0$ such that $M'_n\tend 0$ and such that $\forall n$,
  \[\int_{B'} |f_n|\leq M'_n.\]
  This is a fortiori true if we replace $B'$ by any open ball contained in $B'$, hence we can assume that $B'\Subset B$.
  For any $f\in L^1(B)$, we have by the change of variable $w=\phi_y(z)$,
  $z=\phi_y^{-1}(w)$:
  \[\int_{B} f(\phi_y(z)) d\lambda(z)
  = \int_{B} f(w) |(\phi_{y}^{-1})'(w)|^2 d\lambda(w)\]
  If we let $y$ vary in $B'\Subset B$ then the complicated term $|(\phi_{y}^{-1})'(w)|$ remains bounded away from $0$ and $\infty$ by constants that depend only on $B$ and $B'$.
  Let $c\in(0,1)$ be a lower bound. 
  Now recall that $f_n\leq M_n$, hence taking $f = M_n-f_n\geq 0$ above,
  \[ \int_{B} (M_n-f_n(\phi_y(z))) d\lambda(z)   
  \geq c^2 \int_{B} (M_n-f_n(w)) d\lambda(w)\]  
  Whence
  \[\int_{B} (M_n-f_n(w)) d\lambda(w) \leq c^{-2}\left(M_n\lambda(B) - \int_B f_n\circ\phi_y\right)\]
  and using \cref{eq:e6} we get
  \[\int_{B} M_n-f_n  \leq c^{-2}(M_n - f_n(y))\lambda(B)\]
  Averaging the above over $y\in B'$ implies
  \begin{equation}\label{eq:e7}
  \frac{1}{\lambda(B')}\int_{B} M_n-f_n \leq c^{-2}\frac{1}{\lambda(B')} \int_{B'} M_n-f_n
  \end{equation}
  Now $\int_{B'} M_n-f_n$, which is non-negative, tends to $0$: indeed, $M_n-f_n\leq |f_n| + |M_n|$ and $\int_{B'} f_n\leq M'_n$.
  By \cref{eq:e7}, it follows that $\int_{B} M-f_n\tend 0$. Q.E.D.

  Non-empty: we prove that $x_0\in \cal A$.
  Let $B(x_0,\eps)$ compactly contained in $X$.
  By hypothesis, there is a sequence $M_n\tend 0$ such that for all $n$, $f_n \leq M_n$ on $B$.
  By subharmonicity, $f_n(x_0) \lambda(B) \leq \int_B f_n$.
 it follows that $\int_B |M_n-f_n| = \int_B M_n-f_n \leq (M_n-f_n(x_0))\lambda(B)$. 
  Hence $\int_B |f_n| \leq \int_B |M_n| + \int_B |M_n-f_n| \leq (|M_n| + M_n - f_n(x_0))\pi \eps^2$.
  
  By connectedness $\cal A=X$.
\end{proof}
This ends the proof of \Cref{prop:super}.

\subsection{Proof of Proposition~\ref{prop:maj}}\label{sub:pfmaj}

To prove it, we will adapt an inequality in \cite{thesis:Cheritat}, for which the following lemma from~\cite{thesis:Jellouli}, was crucial.
\begin{lemma}[Jellouli]\label{lem:jellouli}
Let:
\begin{itemize}
\item $f_\alpha : (U,0) \longrightarrow (\C,0)$ be a family of holomorphic map defined for $\alpha$ in an open interval $I$ on a common domain $U$ and fixing the origin with multiplier $\o{\alpha}$,
\item $\alpha_\ast\in I$ be irrational,
\item $\alpha_n = \pqn$ be the convergents of $\alpha_\ast$.
\end{itemize}
We assume that $\forall \alpha,\beta\in I$, $\|f_\alpha-f_\beta\|_\infty \leq \Lambda |\alpha-\beta|$ and that $f_{\alpha_\ast}$ is linearizable at $0$.
Then $f_{\alpha_n}^{q_n} \longrightarrow \id$ uniformly on every compact subsets of $\Delta_{\alpha_\ast}$.
\end{lemma}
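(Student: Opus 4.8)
The plan is to work in the linearizing coordinate of $f_{\alpha_\ast}$, where $f_{\alpha_\ast}$ becomes a genuine rotation, so that $f_{\alpha_n}$ appears as a small perturbation of a rotation $R_n$ satisfying $R_n^{q_n}=\id$. Since $\alpha_\ast$ is irrational and $f_{\alpha_\ast}$ is linearizable, fix the biholomorphism $\phi:\Delta_{\alpha_\ast}\to B(0,r)$ with $\phi(0)=0$, $\phi'(0)=1$ conjugating $f_{\alpha_\ast}$ to $R_\ast:z\mapsto\o{\alpha_\ast}z$ on $B(0,r)$. It suffices to treat a compact set $K=\phi^{-1}(\ov B(0,\rho))$ with $0<\rho<r$; fix also $\rho<\rho'<\rho''<r$ and $K''=\phi^{-1}(\ov B(0,\rho''))$, which is $f_{\alpha_\ast}$-invariant since $R_\ast$ preserves $\ov B(0,\rho'')$. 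The Lipschitz hypothesis $\|f_\alpha-f_\beta\|_\infty\leq\Lambda|\alpha-\beta|$ gives $f_{\alpha_n}\to f_{\alpha_\ast}$ uniformly, so for $n$ large $f_{\alpha_n}(K'')$ stays in a fixed compact subset of $\Delta_{\alpha_\ast}$, and hence $g_n:=\phi\circ f_{\alpha_n}\circ\phi^{-1}$ is a well-defined holomorphic perturbation of $R_\ast$ on $B(0,\rho'')$, fixing $0$ with multiplier $\o{\alpha_n}$.

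First I would record, using $|\alpha_\ast-\alpha_n|<1/(q_nq_{n+1})$ (the standard estimate for convergents), the Lipschitz bound on $(f_\alpha)$, and the local Lipschitz continuity of $\phi$ near $K''$, that $\|g_n-R_\ast\|_{B(0,\rho'')}=O(1/(q_nq_{n+1}))$. Writing $R_n:z\mapsto\o{\alpha_n}z$ and $e_n:=R_n^{-1}\circ g_n$ (so $e_n(z)=z+\cal O(z^2)$), this yields $\delta_n:=\|e_n-\id\|_{B(0,\rho'')}=\|g_n-R_n\|_{B(0,\rho'')}=O(1/(q_nq_{n+1}))$. Because $q_n\alpha_n=p_n\in\Z$ we have $R_n^{q_n}=\id$, and regrouping the alternating product with the help of $R_n^{q_n}=\id$ gives the telescoping identity $g_n^{q_n}=\tilde e_{n,q_n-1}\circ\cdots\circ\tilde e_{n,1}\circ\tilde e_{n,0}$ with $\tilde e_{n,j}:=R_n^{-j}\circ e_n\circ R_n^{j}$. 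The point is that each $R_n^{j}$ is a Euclidean rotation fixing $0$, hence preserves every round disk $B(0,t)$ and is an isometry of it, so every $\tilde e_{n,j}$ is holomorphic on $B(0,\rho'')$, tangent to $\id$ at $0$, with $\|\tilde e_{n,j}-\id\|_{B(0,\rho'')}=\delta_n$. A one-line induction then shows that the partial compositions of the $\tilde e_{n,j}$ displace points of $B(0,\rho)$ by at most $q_n\delta_n$, which for $n$ large is $<\rho'-\rho$, so all intermediate images remain inside $B(0,\rho'')$ and the uniform bound on $\tilde e_{n,j}-\id$ applies at every step. Hence $\|g_n^{q_n}-\id\|_{B(0,\rho)}\leq q_n\delta_n=O(1/q_{n+1})\to0$, and transporting back by $\phi$ and $\phi^{-1}$ (locally Lipschitz) gives $f_{\alpha_n}^{q_n}\to\id$ uniformly on $K$.

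The main obstacle is precisely the interaction between $q_n\to\infty$ and the fact that $f_{\alpha_n}^{q_n}$ is a very long composition: naively propagating $\|f_{\alpha_n}-f_{\alpha_\ast}\|$ through $q_n$ compositions multiplies the error by a product of Lipschitz constants of order $L^{q_n}$, which is useless. The cure is the combination above --- linearizing coordinate of $f_{\alpha_\ast}$ together with $R_n^{q_n}=\id$ --- which rewrites $g_n^{q_n}$ as a product of $q_n$ maps each within $O(1/(q_nq_{n+1}))$ of the identity, and, crucially, conjugating a near-identity map by a rigid rotation neither increases its sup-distance to $\id$ nor moves the round disk on which that distance is controlled. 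The displacements therefore merely add, giving $q_n\delta_n\to 0$ with no amplification; the only arithmetic input is $|\alpha_\ast-\alpha_n|=O(1/(q_nq_{n+1}))$, so boundedness of type plays no role in this particular lemma.
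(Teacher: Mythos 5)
Your proposal is correct and follows essentially the same route as the paper: conjugate by the linearizing map of $f_{\alpha_\ast}$ and show by induction that the first $q_n$ iterates of the conjugated map stay within $O(k/(q_nq_{n+1}))$ of the rigid rotation orbit, then evaluate at $k=q_n$, which is exactly the paper's inductive estimate $|F_{\alpha_n}^k(z)-e^{2i\pi k\pqn}z|\leq C|z|k/q_n^2$. Your telescoping factorization of $g_n^{q_n}$ into rotation-conjugates of the near-identity map $e_n$ is just a clean repackaging of that induction, with the domain control handled the same way.
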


Its proof goes by conjugating $f_\alpha$ by the normalized linearizing map $\phi_{\alpha_\ast}$ of $f_{\alpha_\ast}$, this gives $F_\alpha = \phi_{\alpha_\ast} \circ f_\alpha\circ\phi_{\alpha_\ast}^{-1}$, and proving the following two things by induction: let \[ r_\ast = r(\Delta(f_{\alpha_\ast}))\]
for every compact $K \subset r_\ast \D$, there exists $N=N(K) \in \N$ and $C=C(K) > 0$ such that $\forall n \geq N$, the $q_n$ first iterates of $F_{\alpha_n}$ are defined on $K$ and do not leave $r(\Delta_{\alpha}) \D$ and
$\forall z \in K, \, \forall n \geq N, \, \forall k \in \N$:
\[ k \leq q_n \implies |F_{\alpha_n}^k (z) - e^{2 i \pi k \pqn} z | \leq \frac{C |z| k}{q_n^2} \]
We will take advantage of one consequence of this: let us write the expansions
\[f_{\alpha_n}^{q_n}(z) = z + C_n z^{q_n+1} + \cal O(z^{q_n+2})\]
Then
\begin{lemma}\label{lem:l1}
Under the same asumptions, 
\[\limsup |C_n|^{1/q_n} \leq 1/r_\ast\]
\end{lemma}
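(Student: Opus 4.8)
The plan is to extract the coefficient $C_n$ from a Cauchy integral of the iterate $f_{\alpha_n}^{q_n}$ and use Jellouli's estimate to control the integrand on a circle of radius close to $r_\ast$. First I would recall that, working in the linearizing coordinate of $f_{\alpha_\ast}$, the conjugated map $F_{\alpha_n} = \phi_{\alpha_\ast}\circ f_{\alpha_n}\circ \phi_{\alpha_\ast}^{-1}$ satisfies, by the displayed consequence of Jellouli's lemma, that for every compact $K\subset r_\ast\D$ there are $N(K)$ and $C(K)$ with
\[
|F_{\alpha_n}^{q_n}(z) - e^{2\pi i q_n \pqn} z| \le \frac{C(K)\,|z|}{q_n}
\]
for $z\in K$ and $n\ge N(K)$; note $e^{2\pi i q_n p_n/q_n}=1$, so $F_{\alpha_n}^{q_n}(z) = z + O(1/q_n)$ uniformly on $K$. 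The relation between $C_n$ (the coefficient of $z^{q_n+1}$ in $f_{\alpha_n}^{q_n}$) and the analogous coefficient $\widetilde C_n$ of $F_{\alpha_n}^{q_n}$ is a scaling by the derivative $\phi_{\alpha_\ast}'(0)=1$ together with higher-order corrections; more precisely, since $\phi_{\alpha_\ast}(z) = z + O(z^2)$, conjugation does not change the first non-linear coefficient once $f_{\alpha_n}^{q_n}$ is tangent to identity. Actually the cleanest route is to avoid the change of coordinate for the coefficient extraction and instead note that $\widetilde C_n$ and $C_n$ have the same modulus asymptotics: $|C_n|^{1/q_n}$ and $|\widetilde C_n|^{1/q_n}$ have the same $\limsup$ because $\phi_{\alpha_\ast}$ is univalent with bounded derivatives on compact subsets of its disk of convergence.

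The key step is then a Cauchy estimate for $\widetilde C_n$. Fix $\rho < r_\ast$ and let $K = \ov B(0,\rho)$. For $n\ge N(K)$ the iterate $F_{\alpha_n}^{q_n}$ is holomorphic on $K$ (all $q_n$ intermediate iterates stay in $r(\Delta_{\alpha_n})\D$, which contains $K$ for $n$ large since $r(\Delta_{\alpha_n})\to r_\ast$ — here I would invoke upper semicontinuity, or rather the lower bound part, of the conformal radius along the sequence, which follows from Jellouli's setup). Write $g_n(z) = F_{\alpha_n}^{q_n}(z) - z$, a holomorphic function on $\ov B(0,\rho)$ vanishing to order $q_n+1$ at $0$ with leading coefficient $\widetilde C_n$. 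By the Cauchy formula,
\[
|\widetilde C_n| = \left|\frac{1}{2\pi i}\oint_{|z|=\rho} \frac{g_n(z)}{z^{q_n+2}}\,dz\right| \le \frac{\sup_{|z|=\rho}|g_n(z)|}{\rho^{q_n+1}}.
\]
By Jellouli's estimate, $\sup_{|z|=\rho}|g_n(z)| \le C(K)\rho/q_n \le C(K)$ for $n$ large, so
\[
|\widetilde C_n|^{1/q_n} \le \left(\frac{C(K)}{\rho}\right)^{1/q_n}\cdot \frac{1}{\rho} \xrightarrow[n\to\infty]{} \frac{1}{\rho}.
\]
Hence $\limsup_n |\widetilde C_n|^{1/q_n} \le 1/\rho$, and since $\rho < r_\ast$ was arbitrary, $\limsup_n |\widetilde C_n|^{1/q_n} \le 1/r_\ast$. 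Combining with the comparison between $C_n$ and $\widetilde C_n$ gives $\limsup_n |C_n|^{1/q_n}\le 1/r_\ast$.

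The main obstacle I anticipate is the bookkeeping for the change of coordinate: one needs that passing from $f_{\alpha_n}^{q_n}$ to $F_{\alpha_n}^{q_n}$ does not inflate the $q_n$-th root of the leading coefficient in the limit. This is where I would be careful — $\phi_{\alpha_\ast}$ is univalent on $r_\ast\D$ but its inverse is only defined on $\psi_{\alpha_\ast}(r_\ast\D) = \Delta_{\alpha_\ast}$, and one must check that $K$ and its image stay well inside the relevant domains for all large $n$, using that $r(\Delta_{\alpha_n})\to r_\ast$ (or at least $\liminf r(\Delta_{\alpha_n}) \ge r_\ast$, which is exactly what the first displayed induction in Jellouli's argument gives, since the $q_n$ iterates of $F_{\alpha_n}$ stay in $r(\Delta_{\alpha_n})\D$ starting from any compact of $r_\ast\D$). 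Once that is in place, the fact that $\phi_{\alpha_\ast}'(0)=1$ and $\phi_{\alpha_\ast}$ is bi-Lipschitz on $K$ with constants independent of $n$ makes the comparison $|C_n|^{1/q_n}\sim |\widetilde C_n|^{1/q_n}$ routine, since a multiplicative constant raised to the power $1/q_n$ tends to $1$.
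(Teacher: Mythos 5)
Your proof is correct and takes essentially the same route as the paper: pass to the linearizing coordinate $F_{\alpha_n}=\phi_{\alpha_\ast}\circ f_{\alpha_n}\circ\phi_{\alpha_\ast}^{-1}$, extract the coefficient by a Cauchy integral over a circle of radius $\rho<r_\ast$, take $q_n$-th roots, and let $\rho\to r_\ast$. The only differences are cosmetic: the paper bounds $F_{\alpha_n}^{q_n}$ directly by $r_\ast$ instead of applying Jellouli's estimate to $F_{\alpha_n}^{q_n}-\id$ (either constant vanishes after taking $q_n$-th roots), and your hedging about comparing $C_n$ with $\widetilde C_n$ is unneeded, since the scaling law for the leading coefficient under a conjugacy $h$ with $h'(0)=1$ gives exact equality $\widetilde C_n=C_n$, which is precisely what the paper invokes.
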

\begin{proof}
  Since $\phi_{\alpha_*}'(0)=1$, it follows that
  \[F_{\alpha_n}^{q_n}(z) = z + C_n z^{q_n+1} + \cal O(z^{q_n+2})\]
  for the same $C_n$ as for $f_{\alpha_n}$. Consider any $\rho<r_\ast$.
  For $n$ big enough, $F_{\alpha_n}^{q_n}$ is defined on $B(0,\rho)$.
  It has to take values in $B(0,r_\ast)$.
  By the Cauchy formula,
  \[
    C_n = \frac{1}{i 2\pi} \int_{\partial B(0,\rho)} \frac{F_{\alpha_n}^{q_n}(z)}{z^{q_n+2}} dz
  \]
  consequently
  \[ |C_n| \leq \frac{r_\ast}{\rho^{1+q_n}} \]
  hence
  \[ |C_n|^{1/q_n} \leq \frac{1}{\rho}\left(\frac{r_\ast}{\rho}\right)^{1/q_n}\]
  so
  \[ \limsup |C_n|^{1/q_n}\leq \frac{1}{\rho} .\]
  Since this is valid for all $\rho<r_\ast$, the conclusion follows.
\end{proof}

Now to prove the \Cref{prop:maj}, we will need a form of uniformity of the above computations when the family depends on a supplementary parameter in Jellouli's lemma, so we will dig into its proof and pay attention to uniformity.

Let $\psi_{c}$ be the linearizing parametrization of $P_{\theta,c}$ and $r(c)$
be its radius of convergence. Recall that $\psi_c$ is a holomorphic function defined on $B(0,r(c))$ whose image is the Siegel disk of $P_{e^{2\pi i \theta},c}$.
Recall that I proved im my thesis \cite{thesis:Cheritat} that $r$ is a continuous function of $c$.
Let 
\[f_{n,c}(z) = \psi_c^{-1} \circ P_{n,c} \circ \psi_c.\]
This function is defined in some subset of $B(0,r(c))$.

We will use
\begin{theorem}\label{lem:d0}
For all Schlicht functions $f$,
\[\forall z \in\D,\ d(f(z),\partial f(\D))\geq d_0(|z|) = \frac{1}{4}\left(\frac{1-|z|}{1+|z|}\right)^2\]
where $d$ denotes the Euclidean distance.
\end{theorem}
\begin{proof}
By corollary~1.4 in \cite{Po} we have $d(f(z),\partial f(\D))\geq \frac{1}{4}(1-|z|^2) |f'(z)|$ and by equation~(11) in theorem~1.6 in \cite{Po}, $|f'(z)|\geq \frac{1-|z|}{(1+|z|)^3}$, so $d(f(z),\partial f(\D))\geq \frac{1}{4}\left(\frac{1-|z|}{1+|z|}\right)^2$.
\end{proof}

In the case of our linearizing parametrization $\psi_\theta$, we can apply the above to $z\in\D \mapsto r^{-1}\psi_\theta(r z)$ where $r$ is the conformal radius of $\Delta_\theta=\Delta(P_{\theta,c})$, which yields:
\begin{equation}\label{eq:distB}
  d(\psi_\theta(z),\partial \Delta_\theta) \geq rd_0(r^{-1}|z|).
\end{equation}

\begin{theorem}\label{lem:d1}
For all injective holomorphic $f:\D\to\C$, if $a\in f(\D)$ and $b\in\C$ is such that $|b-a| \leq \frac{1}{2} d(a,\partial f(\D))$ then $b\in f(\D)$ and
\[|f^{-1}(b)-f^{-1}(a)| \leq 2 |b-a|/d(a,\partial f(\D)).\]
\end{theorem}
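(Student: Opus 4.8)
The plan is to reduce the whole statement to the Schwarz lemma applied to the inverse map $g = f^{-1}$, suitably rescaled. Write $\delta = d(a,\partial f(\D))$. First I would note that since $f(\D)$ is open and $a\in f(\D)$, the open Euclidean ball $B(a,\delta)$ is contained in $f(\D)$: indeed $B(a,\delta)$ is connected, contains $a\in f(\D)$, and is disjoint from $\partial f(\D)$, so it cannot leave $f(\D)$. Since $|b-a|\le\tfrac12\delta<\delta$, this already gives $b\in f(\D)$, which is the first assertion.

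For the quantitative part, consider $g=f^{-1}:f(\D)\to\D$, which is holomorphic with values in $\D$. Restricting it to $B(a,\delta)$ and rescaling, set
\[ h(w) = g(a+\delta w),\qquad w\in\D, \]
which is well defined because $a+\delta w\in B(a,\delta)\subset f(\D)$, and which maps $\D$ into $\D$. Now compose with the Möbius automorphism $\sigma$ of $\D$ sending $h(0)$ to $0$, namely $\sigma(z)=(z-h(0))/(1-\overline{h(0)}\,z)$; then $\sigma\circ h$ fixes $0$, so by the Schwarz lemma $|\sigma(h(w))|\le|w|$ for every $w\in\D$. Unwinding this identity and using $|h(0)|,|h(w)|<1$,
\[ |h(w)-h(0)| = |\sigma(h(w))|\cdot|1-\overline{h(0)}\,h(w)| \le |w|\bigl(1+|h(0)|\,|h(w)|\bigr) \le 2|w|. \]

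Finally I would substitute $w=(b-a)/\delta$, which lies in $\D$ since $|b-a|\le\tfrac12\delta$; then $h(w)=g(b)=f^{-1}(b)$ and $h(0)=g(a)=f^{-1}(a)$, and the displayed inequality reads
\[ |f^{-1}(b)-f^{-1}(a)| \le 2\,\frac{|b-a|}{\delta} = 2\,\frac{|b-a|}{d(a,\partial f(\D))}, \]
which is exactly what is claimed. I do not expect a genuine obstacle here: the argument is a textbook rescaling plus the Schwarz lemma. The only points requiring a word of care are that $\delta$ is positive (because $f(\D)$ is open) and finite (because an injective holomorphic map on $\D$ cannot have image all of $\C$, so $\partial f(\D)\neq\emptyset$), but neither fact is actually used beyond ensuring the statement is non-vacuous.
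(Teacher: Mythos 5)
Your proof is correct. The inclusion $B(a,\delta)\subset f(\D)$ is justified properly (connectedness plus disjointness from $\partial f(\D)$), the map $h(w)=f^{-1}(a+\delta w)$ is a well-defined holomorphic self-map of $\D$, and the Schwarz lemma applied to $\sigma\circ h$ together with the crude bound $|1-\overline{h(0)}\,h(w)|\le 2$ gives exactly the stated inequality after the substitution $w=(b-a)/\delta$. The route is genuinely different from the paper's, though both rest on the same Schwarz--Pick principle: the paper works with the hyperbolic metric on $U=f(\D)$, using the density bound $\rho_U(w)\le 1/d(w,\partial U)$, integrating along the straight Euclidean segment from $a$ to $b$ (this is where the factor $\tfrac12$ in the hypothesis is used, to keep $d(w,\partial U)\ge \tfrac12 d(a,\partial U)$ along the segment), then transporting the hyperbolic length bound to $\D$ by conformal invariance and comparing the hyperbolic and Euclidean metrics on $\D$. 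Your argument replaces all of this by a single application of the Schwarz lemma to the rescaled restriction of $f^{-1}$ to the inscribed disk $B(a,\delta)$; it avoids the hyperbolic-metric machinery and any path integration, and in fact it only uses the hypothesis $|b-a|\le\tfrac12 d(a,\partial f(\D))$ to guarantee $w\in\D$, so it proves the same conclusion under the weaker assumption $|b-a|<d(a,\partial f(\D))$. The price is nothing in this instance (same constant $2$), and your closing remarks on $\delta$ being positive and finite are accurate, if not strictly needed.
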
 
\begin{proof} Let $a'=f^{-1}(a)$, $b' = f^{-1}(b)$ and $U=f(\D)$.
We use the Schwarz-Pick hyperbolic metric on $\rho_U(w)|dw|$ on $U$. By classical estimates, $\rho_U(w) \leq \frac{1}{d(w,\partial U)} \leq \frac{2}{d(a,\partial U)}$ if $w\in B(a,\frac12 d(a,\partial U))$, so the straight segment from $a$ to $b$ has hyperbolic length $\leq 2 |b-a| / d(a,\partial U)$.
It follows that the hyperbolic distance in $\D$ from $a'$ to $b'$ is $\leq 2 |b'-a'| /d(a,\partial U)$.
The result then follows from the fact that on $\D$ the hyperbolic distance is greater than the Euclidean distance.
\end{proof}

In the case of our linearizing parametrization $\psi_\theta : r\D \to \Delta_\theta$, this gives
\begin{equation}\label{eq:distB2}
  |\psi_\theta^{-1}(b) - \psi_\theta^{-1}(a)| \leq 2 r |b-a|/d(a,\partial \Delta_\theta)
\end{equation}
under the condition $|b-a| \leq \frac{1}{2} d(a,\partial \Delta_\theta)$.

\begin{remark*}
In the previous two theorems, we do not need the explicit bounds but only the existence of a bound, so a compactness argument could also have given a quick proof of them.
\end{remark*}

We have
\begin{equation}\label{eq:e5}
P_{n,c}-P_{\theta,c} = (e^{2\pi i \pqn}-e^{2\pi i \theta}) \left( 1 - \frac{(1 + \sfrac{1}{c})}{2} z + \frac{\sfrac{1}{c}}{3} z^2 \right)
\end{equation}

We will now prove inequalities for
\[\fbox{$\ds|c|\geq 1$}\]
and deduce later inequalities for $|c|\leq 1$ using the symmetry of the family.

\begin{lemma}
There exists $M>0$ such that for all Brjuno number $\theta$, for all $c\in\C$ with $|c|\geq 1$ then the Siegel disk of $P_{e^{2\pi i \theta},c}$ is contained in $B(0,M)$.
\end{lemma}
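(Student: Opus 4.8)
The plan is to split the range $|c|\geq 1$ into a compact part $1\leq |c|\leq\rho_0$ and a tail $|c|\geq\rho_0$, where $\rho_0=\frac{7^2}{7^2/2-7-10}\left(\frac12+\frac73\right)=\frac{833}{45}$ is exactly the threshold appearing in \Cref{lem:qlr} when $|\lambda|=1$; uniformity in $\theta$ will be automatic because $\lambda=e^{2\pi i\theta}$ always has modulus $1$.

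For $|c|\geq\rho_0$ I will invoke \Cref{lem:qlr}: it gives a quadratic-like restriction $g=P_{\lambda,c}\colon V\to B(0,10/|\lambda|)=B(0,10)$ with $V\subseteq B(0,7/|\lambda|)=B(0,7)$. Since $g$ and $P_{\lambda,c}$ have the same germ at $0$, they have the same linearizing power series $\psi$, hence the same radius of convergence $r$, and the Siegel disk is $\Delta=\psi(B(0,r))$. I then claim $\Delta\subseteq V$. Indeed, if not, set $\rho_1=\inf\{\rho<r\,;\,\psi(\ov{B(0,\rho)})\not\subseteq V\}$; a short compactness argument shows $0<\rho_1<r$, that $\psi(\ov{B(0,\rho_1)})\subseteq\ov V$, and that there is $w$ with $|w|=\rho_1$ and $\psi(w)\in\partial V$. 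Since $\partial V\subseteq P_{\lambda,c}^{-1}(\partial B(0,10))$ we get $|P_{\lambda,c}(\psi(w))|=10$, whereas $P_{\lambda,c}(\psi(w))=\psi(\lambda w)$ with $\lambda w\in\ov{B(0,\rho_1)}$, so $|\psi(\lambda w)|\leq 7$ — a contradiction. Hence $\Delta\subseteq V\subseteq B(0,7)$ for all $|c|\geq\rho_0$, with no dependence on $\theta$.

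For $1\leq |c|\leq\rho_0$ I will use the explicit escape radius from the proof of \Cref{lem:cesc}: with $a_3=\lambda/(3c)$, $a_2=-\lambda(1+1/c)/2$, $a_1=\lambda$, every $z$ with $|z|>R:=\max\!\big(\sqrt{6|c|/|\lambda|},\,6|c+1|,\,\sqrt{12|c|}\big)$ lies in the basin of infinity, so the filled Julia set, which contains $\Delta$, is contained in $\ov{B(0,R)}$. For $|\lambda|=1$ and $1\leq|c|\leq\rho_0$ one has $R\leq 6(\rho_0+1)$, uniformly in $c$ and in $\theta$ (alternatively, just quote continuity of the escape radius on the compact set $\{|\lambda|=1\}\times\{1\leq|c|\leq\rho_0\}$). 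Taking $M=6(\rho_0+1)$, which exceeds $7$, then works in both ranges, proving the lemma. The step requiring real care is the first one: it is not enough to say that $\Delta$ lies in the filled Julia set of $P_{\lambda,c}$ (which may well be large when $|c|$ is large), one must trap $\Delta$ inside the bounded domain $V$ of the quadratic-like restriction, and the argument above achieves this using only the intrinsic nature of the radius of convergence, so no extra quadratic-like machinery is needed.
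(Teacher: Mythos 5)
Your proof is correct and takes essentially the same route as the paper: the same splitting of $|c|\geq 1$ at the threshold $833/45$, with \Cref{lem:qlr} handling large $|c|$ and the explicit escape radius from \Cref{lem:cesc} handling the compact range $1\leq|c|\leq 833/45$, ending with the same bound $6(833/45+1)$. The only difference is that you supply the detailed argument showing the Siegel disk is trapped inside the domain $V$ of the quadratic-like restriction, a point the paper asserts without elaboration.
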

\begin{proof}
By \Cref{lem:qlr}, if $|c|>833/45$ then the Siegel disk is contained in $B(0,10)$.
Otherwise by \Cref{lem:cesc}, a trap in the basin of infinity is given by $|z|>R$ with $R = \max(6|c+1|,\sqrt{12|c|})$.
If $1\leq|c|\leq R_1=833/45$, we have $R\leq R_2:=6(R_1+1)$.
\end{proof}

It follows, using \cref{eq:e5} and $|c|\geq 1$ that on $\Delta(P_{\theta,c})$, 
\begin{equation}\label{eq:e4}
|P_{n,c}-P_{\theta,c}| \leq M'\left|\pqn-\theta\right| 
\end{equation}
with $M' = 2\pi \left( 1 + M + \frac{1}{3} M^2 \right)$ which is \emph{independent of $c$}.

Let us denote $r=r_\theta(c)$.
A point $z\in B(0,r)$ will be in the domain of $f_{n,c}$ iff $P_{n,c}(\psi_\theta(z))\in \Delta_\theta$.
With $\theta$ in place of $n$, we have $P_{\theta,c}(\psi_\theta(z)) = \psi_\theta(R_\theta(z)) \in \Delta_\theta$. By \cref{eq:e4},
\[|P_{n,c}(\psi_\theta(z))-P_{\theta,c}(\psi_\theta(z))| \leq M'\left|\pqn-\theta\right|\]
and by \cref{eq:distB},
\[d(\psi_\theta(R_\theta(z)) , \partial\Delta_\theta) \geq r d_0(r^{-1}|z|).\]
For $f_{n,c}(z)$ to be defined, having $|P_{n,c}(\psi_\theta(z)) - \psi_\theta(R_\theta(z)) |\leq d(\psi_\theta(R_\theta(z)) , \partial\Delta_\theta) $ will be enough, hence it is enough that
\[M'\left|\pqn-\theta\right| \leq r d_0(r^{-1}|z|).\]
In particular, the domain of $f_{n,c}$ contains $B(0,(1-\eps) r)$ as soon as
\[M'\left|\pqn-\theta\right| \leq r d_0(1-\eps).\]
Consider any $z\in B(0,r)$. By applying \cref{eq:distB2} to $a = P_{\theta,c}(\psi_\theta(z)) = \psi_\theta(R_\theta(z))$ and $b=P_{n,c}(\psi_\theta(z))$ we get that if 
\[M'|\pqn-\theta| \leq \frac{1}{2} r d_0(r^{-1}|z|)\]
then $M'|\pqn-\theta| \leq \frac{1}{2}d(a,\partial \Delta_\theta)$ and hence we can apply \cref{eq:distB2}:
\begin{align*}
 |f_{n,c}(z)-R_\theta(z)| & =  |\psi_\theta^{-1}(b)-\psi_\theta^{-1}(a)|
 \\
 & \leq 2 r |b-a| / d(a,\partial\Delta_\theta) \leq 2M'|\pqn-\theta|/d_0(r^{-1}|z|).
\end{align*}

To sum up:
\begin{corollary}
For $|c|\geq 1$ and $0<\eps<1$, for all $n$ such that
\begin{equation}\label{eq:cond}
\frac{M'|\pqn-\theta|}{d_0(1-\eps)} \leq \frac{1}{2} r_\theta(c)
\end{equation}
then $D := B(0,(1-\eps) r_\theta(c)) \subset \on{dom}f_{n,c}(z)$ and $\forall z\in D$
\begin{equation}
  |f_{n,c}(z)-R_\theta(z)| \leq 2\frac{M'|\pqn-\theta|}{d_0(1-\eps)}.
\end{equation}
\end{corollary}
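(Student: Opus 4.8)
The plan is simply to repackage the three displayed estimates established in the paragraphs immediately preceding the statement; the only extra ingredient is that the auxiliary function $d_0(t)=\tfrac14\left(\tfrac{1-t}{1+t}\right)^2$ is strictly decreasing on $[0,1)$. Writing $r=r_\theta(c)$, this monotonicity gives, for every $z\in D=B(0,(1-\eps)r)$, that $r^{-1}|z|\le 1-\eps$ and hence $d_0(r^{-1}|z|)\ge d_0(1-\eps)>0$. So throughout the argument one may freely replace $d_0(r^{-1}|z|)$ by the smaller quantity $d_0(1-\eps)$ in lower bounds and by the larger one in upper bounds.

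First I would establish the inclusion $D\subset\on{dom}f_{n,c}$. Fix $z\in D$ and set $a=P_{\theta,c}(\psi_\theta(z))=\psi_\theta(R_\theta(z))$ and $b=P_{n,c}(\psi_\theta(z))$, where $\psi_\theta$ is the linearizing parametrization of $P_{\theta,c}$ (so $f_{n,c}=\psi_\theta^{-1}\circ P_{n,c}\circ\psi_\theta$). By \eqref{eq:distB} we have $d(a,\partial\Delta_\theta)\ge r\,d_0(r^{-1}|z|)\ge r\,d_0(1-\eps)$; by \eqref{eq:e4} together with the hypothesis \eqref{eq:cond} we have $|b-a|\le M'|\pqn-\theta|\le\tfrac12 r\,d_0(1-\eps)\le\tfrac12 d(a,\partial\Delta_\theta)$. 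In particular $b\in\Delta_\theta$, so $z\in\on{dom}f_{n,c}$; since $z\in D$ was arbitrary, $D\subset\on{dom}f_{n,c}$. The same inequality $|b-a|\le\tfrac12 d(a,\partial\Delta_\theta)$ is exactly the hypothesis needed to invoke \eqref{eq:distB2}, which then gives
\[|f_{n,c}(z)-R_\theta(z)|=|\psi_\theta^{-1}(b)-\psi_\theta^{-1}(a)|\le\frac{2r|b-a|}{d(a,\partial\Delta_\theta)}\le\frac{2M'|\pqn-\theta|}{d_0(r^{-1}|z|)}\le\frac{2M'|\pqn-\theta|}{d_0(1-\eps)},\]
which is the asserted bound, uniform in $z\in D$.

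I do not expect a real obstacle, since all the analytic substance is already in place: the Koebe-type distance estimates \eqref{eq:distB} and \eqref{eq:distB2}, and the $c$-uniform Lipschitz bound \eqref{eq:e4}, whose constant $M'$ depends on $c$ only through the uniform confinement $\Delta(P_{\theta,c})\subset B(0,M)$, itself valid for $|c|\ge1$. The one point worth double-checking is the bookkeeping of the factor $\tfrac12$: the domain statement alone would only require $M'|\pqn-\theta|\le r\,d_0(1-\eps)$, whereas the application of \eqref{eq:distB2} requires the stronger $M'|\pqn-\theta|\le\tfrac12 r\,d_0(1-\eps)$, and this stronger inequality is precisely what hypothesis \eqref{eq:cond} asserts; so \eqref{eq:cond} is exactly calibrated to make both halves of the conclusion go through.
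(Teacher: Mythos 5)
Your proof is correct and follows essentially the same route as the paper, which derives the corollary as a direct summary ("To sum up") of the estimates \eqref{eq:e4}, \eqref{eq:distB} and \eqref{eq:distB2} combined with the monotonicity of $d_0$ exactly as you do. Your explicit bookkeeping of the factor $\tfrac12$ and of $d_0(r^{-1}|z|)\ge d_0(1-\eps)$ matches the paper's intended argument.
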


\medskip

Recall that by the theory of continued fractions, 
\[\forall n\in\N,\ |\pqn-\theta| \leq \frac{1}{q_n^2}.\]

Consider now the condition
\begin{equation}\label{eq:cond2}
\frac{M'/q_n}{d_0(1-\eps)} \leq  \frac{\eps}{2}r_\theta(c).
\end{equation}
Note that $r_\theta(c)$ has a lower bound on $|c|\geq 1$, since it is continuous w.r.t.\ $c$ and has a limit when $c\tend\infty$ by \cref{eq:limr}.
It follows that for a fixed $\eps$, 
as soon as $n$ is big enough
the condition above will be satisfied for all $c\in\C$ with $|c|\geq 1$.

Let us still denote $r=r_\theta(c)$. Now if \cref{eq:cond2} is satisfied then a fortiori \cref{eq:cond} is satisfied.
Assume now that $0<\eps<1/2$. Then for $n$ big enough as above, we can then prove by induction on $k$ with $0\leq k \leq q_n$ that
for all $z\in B(0,(1-2\eps)r)$ (note the factor $2$ in front of $\eps$), $f_{n,c}^k(z)$ is defined,
\[
  |f_{n,c}^k(z)-R_\theta^k(z)| \leq 2 \frac{M'k/q_n^2}{d_0(1-\eps)}.
\]
and $f_{n,c}^k(z)\in B(0,(1-\eps)r)$.
By a similar computation as before, it follows that 
\[|C_n(c)| \leq \frac{r}{((1-2\eps)r)^{1+q_n}} = \frac{1}{r^{q_n}} \cdot \frac{1}{(1-2\eps)^{1+q_n}}\]
Recall that $r_n(c) = \frac{1}{|C_n(c)|^{1/q_n}}$, hence
\[-\log r_n(c) = \frac{\log|C_n(c)|}{q_n}\]
so
\[ -\log r_n(c) \leq - \log r_\theta(c) + \frac{1+q_n}{q_n} \log\frac{1}{1-2\eps}.\]
Now
\[u_n(c)-u_\theta(c) = -\log r_n(c) + \log r_\theta(c) +\log L(Q_n) - \log r(Q_\theta),\]
hence
\[u_n(c)-u_\theta(c) \leq  \frac{1+q_n}{q_n} \log\frac{1}{1-2\eps} + \log L(Q_n) - \log r(Q_\theta)\]

Now we use the following theorem from \cite{thesis:Cheritat}.
\begin{theorem}[Chéritat]
\[L(Q_n) \tend r(Q_\theta)\]
\end{theorem}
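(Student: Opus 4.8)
The plan is to prove separately the two one-sided estimates
\[ \liminf_n L(Q_n)\ \ge\ r(Q_\theta)\qquad\text{and}\qquad \limsup_n L(Q_n)\ \le\ r(Q_\theta). \]
I set $r_\ast = r(Q_\theta)$, which is finite and positive since $\theta$ is Brjuno, and I consider the one-parameter family $\alpha\in\R\mapsto f_\alpha := Q_{\o\alpha}$: it is real-analytic, locally uniformly Lipschitz in $\alpha$ (its coefficients are trigonometric functions of $\alpha$), fixes $0$ with multiplier $\o\alpha$, and is linearizable at $\alpha=\theta$. Because $Q_{\o{\pqn}}$ is unicritical, Fatou's theorem gives it exactly one cycle of $q_n$ petals, so in the expansion $f_{\pqn}^{q_n}(z)=z+C_n z^{q_n+1}+\cal O(z^{q_n+2})$ one has $C_n\neq 0$ and, by \Cref{def:asslen} with $k=m=q_n$, $L(Q_n)=|C_n|^{-1/q_n}$. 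The assertion is thus equivalent to $|C_n|^{1/q_n}\to 1/r_\ast$.

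The lower bound is immediate from the machinery already in place: \Cref{lem:jellouli}, applied to $(f_\alpha)$ with $\alpha_\ast=\theta$ and $\alpha_n=\pqn$, gives $f_{\pqn}^{q_n}\to\id$ uniformly on compact subsets of $\Delta(Q_\theta)$, and \Cref{lem:l1} then yields $\limsup_n|C_n|^{1/q_n}\le 1/r_\ast$, that is $\liminf_n L(Q_n)\ge r_\ast$.

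The upper bound, equivalently $\liminf_n|C_n|^{1/q_n}\ge 1/r_\ast$, is the substantial direction, and it is a parabolic-implosion statement. The geometric picture is: an attracting petal of $Q_{\o{\pqn}}^{q_n}$ at $0$ has Euclidean diameter comparable to $|C_n|^{-1/q_n}=L(Q_n)$ — in the attracting Fatou coordinate it is a half-plane, while that coordinate behaves like $-1/(q_n C_n z^{q_n})$ near $0$ — and the $q_n$ such petals, permuted cyclically by $Q_{\o{\pqn}}$, sit with approximate $q_n$-fold rotational symmetry about $0$ and all lie in the filled Julia set of $Q_{\o{\pqn}}$, which by \Cref{lem:qlr} (case $|\lambda|=1$) is contained in the fixed ball $B(0,10)$ uniformly in $n$. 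Assuming $\limsup_n L(Q_n)=\ell>r_\ast$, I would pass to a subsequence realizing this $\limsup$ and extract, from suitably normalized inverse attracting Fatou coordinates of the $Q_{\o{\pqn}}^{q_n}$, a limiting univalent map on the round disk $B(0,\ell)$ that semiconjugates $z\mapsto\lambda z$ to $Q_\theta$; since this contradicts the fact that $r_\ast$ is the radius of convergence of $\psi_{Q_\theta}$, the estimate follows. The delicate and essential step is controlling the $q_n$-fold compositions as $q_n\to\infty$ — the same difficulty as in \Cref{lem:jellouli}, now run in the opposite direction — and this is the main obstacle; the argument is carried out in \cite{thesis:Cheritat}. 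When $\theta$ has bounded type, the only case needed for \Cref{thm:main}, one may alternatively proceed more concretely, using that $\partial\Delta(Q_\theta)$ is then a quasicircle through the critical point $z=1$ together with the explicit critical value $\o{\pqn}/2$ of $Q_{\o{\pqn}}$.
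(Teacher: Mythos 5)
Your first half is fine and matches the tools the paper has already set up: applying \Cref{lem:jellouli} to the family $\alpha\mapsto Q_{\o\alpha}$ (Lipschitz on a bounded common domain, linearizable at $\alpha=\theta$ since $\theta$ is Brjuno) and then \Cref{lem:l1} does give $\limsup_n |C_n|^{1/q_n}\le 1/r(Q_\theta)$, i.e.\ $\liminf_n L(Q_n)\ge r(Q_\theta)$, and your identification $L(Q_n)=|C_n|^{-1/q_n}$ (one cycle of $q_n$ petals, $C_n\neq 0$, by Fatou since $Q$ is unicritical) is correct. Note, however, that the paper itself gives no proof of this theorem: it is imported wholesale from \cite{thesis:Cheritat}, so there is no internal argument to compare against.

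The genuine gap is the other inequality, $\limsup_n L(Q_n)\le r(Q_\theta)$, which you yourself identify as ``the substantial direction'' and then do not prove: the paragraph you offer is a heuristic outline whose every nontrivial step is exactly the hard content. Concretely, (i) the claim that an attracting petal of $Q_n^{q_n}$ has diameter comparable to $L(Q_n)$ needs uniform constants as $q_n\to\infty$, and the Fatou-coordinate asymptotics $-1/(q_nC_nz^{q_n})$ only control a region whose shape degenerates with $q_n$; (ii) extracting from ``suitably normalized inverse attracting Fatou coordinates'' a univalent limit on $B(0,\ell)$ that conjugates $z\mapsto\lambda z$ to $Q_\theta$ is precisely a parabolic implosion/explosion statement with period tending to infinity, which is not routine and is the core of the cited thesis; and (iii) you end by writing that ``the argument is carried out in \cite{thesis:Cheritat}'', which is exactly what the paper already does for the full statement, so the proposal does not actually supply a proof of this direction. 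As it stands, the attempt proves only the easier inequality and outsources the essential one; to count as a proof it would have to carry out the compactness/normalization argument for the Fatou coordinates (or the parabolic-explosion lower bound on $r(Q_\theta)$ in terms of $L(Q_n)$) in detail, including the case distinctions needed when $q_{n+1}/q_n$ is merely bounded.
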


It follows that for $n$ big enough,
\[\limsup_{n\to\infty} \sup_{|c|\geq 1} u_n(c)-u_\theta(c) \leq \log\frac{1}{1-2\eps}\]
Since this is true for all $\eps$:
\[\limsup_{n\to\infty} \sup_{|c|\geq 1} (u_n(c)-u_\theta(c)) \leq 0.\]

The case
\[\fbox{$\ds 0<|c|\leq 1$}\]
immediately follows from 
\[u_n(1/c)-u_\theta(1/c) = u_n(c)-u_\theta(c)\]
which is a consequence of \cref{eq:symu}.

This ends the proof of \Cref{prop:maj}.

\bibliographystyle{alpha} 
\bibliography{biblio}

\end{document}